\newcounter{hours}\newcounter{minutes}
\newtheorem*{rep@theorem}{\rep@title}
\newcommand{\newreptheorem}[2]{%
\newenvironment{rep#1}[1]{%
 \def\rep@title{#2 \ref{##1}}%
 \begin{rep@theorem}}%
 {\end{rep@theorem}}}
\theoremstyle{theorem}
\newtheorem{definition}{Theorem}[section]
\newtheorem{thm}{Theorem}[section]
\newtheorem{lem}[thm]{Lemma}
\newtheorem{cor}[thm]{Corollary}
\newtheorem{prop}[thm]{Proposition}
\theoremstyle{definition}
\newtheorem{ex}[thm]{Example}
\newtheorem*{Claim}{Claim}
\newtheorem{DEF}[definition]{Definition}
\newtheorem{hyp}{Assumption}
\theoremstyle{remark}                  
\newtheorem{rem}[thm]{Remark}
\def\R{{\mathbb R}}
\def\real{{\mathbb R}}
\def\integer{{\mathbb Z}}
\def\e{\varepsilon}
\newcommand{\floor}[1]{\lfloor #1 \rfloor}
\DeclareMathOperator*{\argmin}{\arg\!\min}
\def\liminf{\mathop{\lim\,\inf}\limits}%
\def\limsup{\mathop{\lim\,\sup}\limits}%
\def\argmin{\mathop{\arg\,\min}\limits}%
\numberwithin{equation}{section}
\begin{document}

\title{Dynamic Stability of Equilibrium Capillary Drops}
\author{ William M. Feldman and Inwon C. Kim}
\address{Dept. of Mathematics, UCLA.}
\email{ wfeldman10@math.ucla.edu,  ikim@math.ucla.edu}
 \thanks{Both authors have been partially supported by NSF  DMS-0970072.}
 \keywords{}
\begin{abstract}
We investigate a model for contact angle motion of quasi-static capillary drops resting on a horizontal plane. We prove global in time existence and long time behavior (convergence to equilibrium) in a class of star-shaped initial data for which we show that topological changes of drops can be ruled out for all times. Our result applies to any drop which is initially star-shaped with respect to a a small ball inside the drop, given that the volume of the drop is sufficiently large. For the analysis, we combine geometric arguments based on the moving-plane type method with energy dissipation methods based on the formal gradient flow structure of the problem.
\end{abstract}

\maketitle
\tableofcontents

\section{Introduction}
We consider the function $u(x,t): \R^N\times [0,\infty) \to[0,\infty)$ solving the following free boundary problem
\begin{equation}\label{eqn: CLMV}\tag{P-V}
\left \{ \begin{array}{lll}
         -\Delta u(x,t)  = \lambda(t) & \text{ in }&  \{u>0\},\\ \\
         \mathcal{V} = F(|Du|)& \text{ on } & \partial\{u>0\},
         \end{array}\right.
\end{equation}
where the Lagrange multiplier $\lambda(t)$ is associated to the volume constraint 
\begin{equation}\label{volume}
\int u(\cdot,t) dx\equiv V.
\end{equation}
(see Figure 1.)
\begin{figure}[t]
\begin{center}
\includegraphics[scale=0.7]{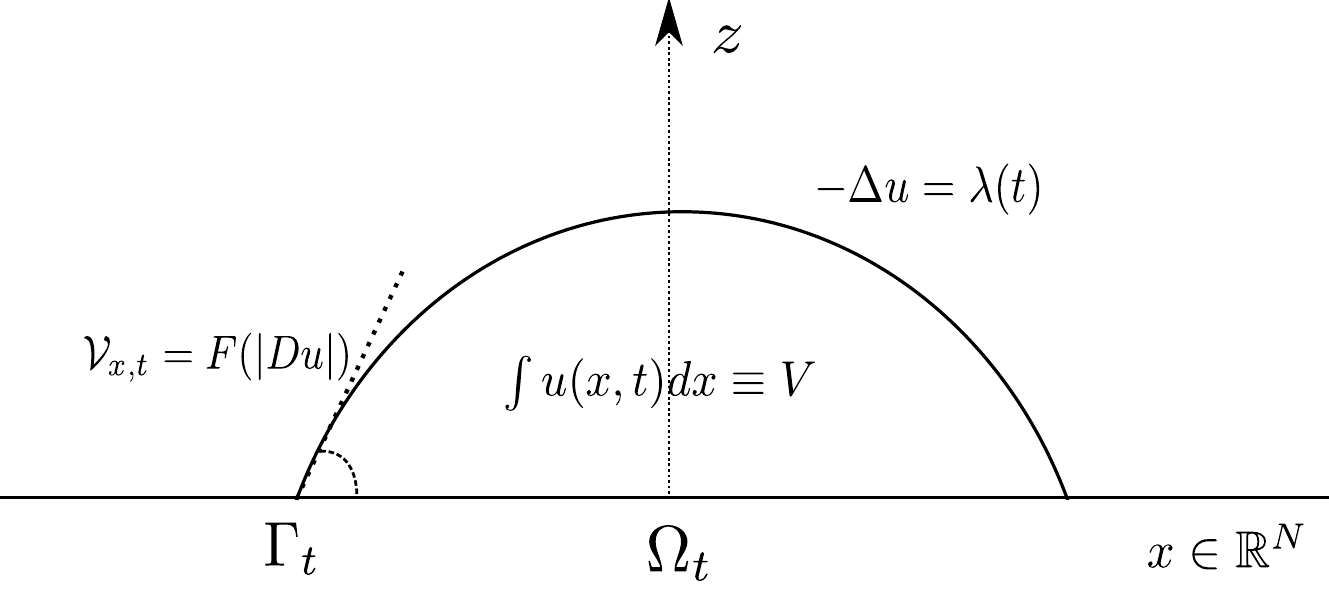}
\end{center}
\caption{Description of the problem}
\label{fig: problem}
\end{figure}

$\mathcal{V}=\mathcal{V}_{x,t}$ given above denotes the (outward) normal velocity of the free boundary $\partial\{u>0\}$ at $(x,t)$.  The prescribed normal velocity $F:\R\to \R$ is a continuous and strictly increasing function with $F(1)=0$, where $1$ denotes the equilibrium contact angle of the profile $u$ with the surface. See Section 2 for a rigorous formulation of weak solutions for \eqref{eqn: CLMV}.
\medskip

In two dimensions, \eqref{eqn: CLMV} is a simplified model to describe a liquid droplet resting on a flat surface. The model is quasi-static, the speed of the contact line is much slower than the capillary relaxation time: see below for more discussion on the derivation of the model (also see e.g. \cite{AD}, \cite{greenspan}). In this context, $u$ denotes the height of the drop. We call $\Omega_t := \{u(\cdot,t)>0\}$ the {\it wetted set} and the free boundary $\Gamma_t := \partial \{u(\cdot,t)>0\}$ the {\it contact line} between the liquid and the flat surface. 
 The function 
$$F: [0,+\infty) \to \mathbb{R}$$ 
represents the dependence of the normal velocity of the contact line $\Gamma_t$ on the contact angle.  The deviation of the contact angle $|\nabla u|$ on $\Gamma_t(u)$ from the equilibrium value $1$ is responsible for the motion of the contact line. The relation between the velocity of the contact line and the contact angle $|\nabla u |$ is not a settled issue, and various velocity laws have been proposed and studied (e.g. \cite{blake}, \cite{greenspan}, \cite{hocking}) in the fluids literature. This motivates us to study our problem \eqref{eqn: CLMV} with the most general possible $F$, at least for the study of geometric properties of solutions.
\medskip

 In section 4 and 5, we will focus on $F(|Du|) = |Du|^2 -1$ for simplicity of presentation, since the associated energy structure is simplest for this choice of $F$. The approach we present nevertheless will apply to the general $F$, as we will discuss in section 4.

\medskip

Our aim is to address the long time behavior of the solution of \eqref{eqn: CLMV}. Note that the equilibrium solution $v(x)$ for \eqref{eqn: CLMV} solves 
\begin{equation}\label{eqn: EQ}\tag{EQ}
\left \{ \begin{array}{lll}
         -\Delta v(x) = \lambda &\text{ in }& \{v>0\}, \\ \\
|Dv|=1 &\text{ on } &\partial\{v>0\}. 
         \end{array}\right.
\end{equation}
Assuming that $v$ is $C^{2,\alpha}$, the classical result of Serrin \cite{Serrin71} yields that $v$ is radial. We would like to show the dynamic stability of the above result, in the context of our model. More precisely, we would like to show that the solutions of \eqref{eqn: CLMV} uniformly converges to the round drop given by \eqref{eqn: EQ}. Such a stability result is, to the best of the authors'  knowledge, new in any model describing evolution of volume-preserving drops. (see \cite{aftalon} and \cite{brandolini} for stability results on stationary problems).
\medskip

In general, the behavior of evolving drops is highly difficult to observe due to the diverse possibility of topological changes the drop may go through, and due to the generic non-unique nature of the evolution (see \cite{AD} for examples). On the other hand, it is reasonable to expect that the drops stay simply connected if its initial shape is star-shaped with respect to some ball inside of the drop.  This is what we confirm in terms of {\it $\rho$-reflection} (see Definition~\ref{rho_reflection}) which measures star-shapedness of a drop with respect to the reflection comparison. Numerical simulations suggest that initially convex drops may develop an wedge while it contracts (see e.g. \cite{glasner}), and thus our definition, which allows wedges on the free boundary, seems to be appropriate to address the global-time behavior of the drop. 
\medskip

\begin{thm}[Theorem 5.1]\label{main}
Suppose $u_0$ satisfies $\rho$-reflection and $\int u_0 = V$, with $0<\rho \leq \frac{1}{10}V^{\tfrac{1}{N+1}}$. Then the following holds:
\begin{itemize}
\item[(a)] there exists an ``energy" solution (see the definition in section 5)  $u(x,t)$ of \eqref{eqn: CLMV} with initial data $u_0$ which stays star-shaped for all times and is H\"{o}lder continuous in space and time variable.
\item[(b)]  Any energy solution of \eqref{eqn: CLMV} with initial data $u_0$ converges uniformly modulo translation to the unique equilibrium solution $v$ of \eqref{eqn: EQ} with volume $V$. Moreover the set $\{u(\cdot,t)>0\}$ converges, modulo translation, to a ball  in Hausdorff topology.
\end{itemize}
\end{thm}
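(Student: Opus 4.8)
The plan is to establish parts (a) and (b) in stages, with the geometric $\rho$-reflection property of Definition~\ref{rho_reflection} serving as the backbone throughout. I would first construct the energy solution in part (a) by a time-discretization (minimizing movements / De Giorgi) scheme adapted to the gradient-flow structure of \eqref{eqn: CLMV} for the choice $F(|Du|) = |Du|^2 - 1$: at each step solve an obstacle-type variational problem under the volume constraint \eqref{volume}, producing a piecewise-constant-in-time family of profiles, then pass to the limit in the step size. The energy being dissipated is (a multiple of) $\int |Du|^2 + |\{u>0\}|$, and the discrete scheme yields the standard energy-dissipation inequality; H\"older regularity in space comes from the fact that each time slice is (a truncation of) a solution of $-\Delta u = \lambda(t)$ on its positivity set with controlled $\lambda(t)$, and H\"older regularity in time comes from the dissipation bound. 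The crucial point is that the $\rho$-reflection property must be shown to be preserved by the discrete scheme: this is a moving-plane / reflection-comparison argument applied to the one-step minimization, using that reflecting a minimizer across a hyperplane at distance $\geq\rho$ from the center and taking the relevant max/min produces a competitor with no larger energy, so the minimizer must already be reflection-ordered. Persistence of $\rho$-reflection for all times then rules out topological changes and keeps the drop star-shaped, giving (a).

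For part (b), the strategy is the classical one for gradient flows: the energy $E(u(\cdot,t))$ is nonincreasing and bounded below, hence converges, and the dissipation being integrable in time forces $u(\cdot,t_k)$ along a subsequence to converge (in $L^2$, and uniformly by the equicontinuity from (a)) to a limit $u_\infty$ that is a critical point of $E$ under the volume constraint, i.e. a (weak) solution of \eqref{eqn: EQ}. I would then upgrade $u_\infty$ to $C^{2,\alpha}$ regularity --- here the preserved $\rho$-reflection is essential, since it confines the free boundary geometry (no cusps pointing inward, Lipschitz-type bounds on $\Gamma_\infty$ via the reflection property) enough to bootstrap regularity of the overdetermined problem --- and invoke Serrin's theorem \cite{Serrin71} to conclude $u_\infty$ is radial, i.e. the unique equilibrium $v$ of volume $V$, up to translation. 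Finally, to promote subsequential convergence to full convergence modulo translation, I would use a {\L}ojasiewicz--Simon-type inequality near the manifold of translated radial equilibria (the energy functional is analytic and the equilibrium set is a finite-dimensional manifold coming from translations), or, if that is technically heavy, a softer argument: any subsequential limit is a translate of $v$, the energy is continuous and strictly minimized only on that orbit, and the $\rho$-reflection property pins the center of mass so that translations cannot run off to infinity; combining these yields $u(\cdot,t) \to v(\cdot - c)$ uniformly and $\{u(\cdot,t)>0\} \to B$ in Hausdorff distance.

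The main obstacle I expect is the interplay between the weak (energy/viscosity) notion of solution and the geometric reflection argument: one must show that the $\rho$-reflection comparison is genuinely inherited by whatever weak solution the scheme produces, which requires a stable formulation of reflection-monotonicity that survives the discrete-to-continuous limit and the possible lack of classical regularity of $\Gamma_t$. A secondary difficulty is the regularity of the limiting equilibrium $u_\infty$: Serrin's method needs a $C^{2,\alpha}$ domain, and obtaining this from only the star-shapedness encoded in $\rho$-reflection — rather than assuming smoothness a priori — is delicate and is where the quantitative hypothesis $\rho \leq \tfrac{1}{10} V^{1/(N+1)}$ (ensuring the reflection ball is small relative to the drop) should be doing real work. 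The condition $F(1)=0$ and strict monotonicity of $F$ enter to guarantee that critical points of the motion law are exactly solutions of \eqref{eqn: EQ}, so no spurious equilibria appear.
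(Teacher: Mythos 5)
Your plan for part (a) hinges on showing that the one--step minimizer of the discrete scheme inherits reflection--ordering by a ``reflect and take max/min'' competitor argument. That is exactly the step the paper states is \emph{not} a priori clear: the energy $\mathcal{J}(\omega)=\int_\omega|Du[\omega]|^2+|\omega|$ involves the volume--constrained Dirichlet energy (equivalently the Lagrange multiplier $\lambda[\omega]$), which is not additive under cutting along a hyperplane, and the coupling term $h^{-1}\widetilde{\text{dist}}^2(\omega_k,\omega)$ must also be controlled under your rearrangement relative to the previous step $\omega_k$; no such competitor estimate is provided, and none is known. The paper's actual route is different and this difference is where most of the work lies: star--shapedness is \emph{imposed} as a constraint in the admissible class of the minimizing--movement scheme (Definition~\ref{def: JKO scheme}), the continuum limit is shown to satisfy only a \emph{restricted} barrier property (Lemma~\ref{lem: barrier property for grad flow}), and one must then prove a comparison principle of the limit with star--shaped viscosity sub/supersolutions (Proposition~\ref{prop: comparison for grad flow}), identify the limit with the Perron viscosity solution of (P-$\lambda$) by an open--closed argument in time, and invoke the a priori estimate Lemma~\ref{lem: reflection for all time} --- a barrier argument using the lower bound on $\lambda[\Omega_t]$ when the drop is squeezed into $B_{5\rho}$ --- to show $B_\rho(0)$ never exits the positivity set, so the constraint never binds and $\rho$-reflection is preserved at the \emph{viscosity} level (Proposition~\ref{prop: reflection comparison}, Lemma~\ref{lem: preservation}). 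Note also that this is where the hypothesis $\rho\lesssim V^{1/(N+1)}$ does its work (not, as you guess, in the regularity of the limiting equilibrium), and that the viscosity theory requires first truncating the velocity to $\max\{|Du|^2-1,M\}$ and passing $M\to\infty$ with uniform star--shapedness (Lemma~\ref{lem: sending M to infty}); your proposal addresses none of these steps.

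For part (b), your subsequential--limit-plus-Serrin outline matches the paper in spirit (the paper uses time--translated flows, the quasi--triangle inequality for $\widetilde{\text{dist}}$ and energy decay to show any Hausdorff subsequential limit is a stationary viscosity solution, then De Silva's Lipschitz--to--$C^{1,\alpha}$ theorem and the hodograph transform before Serrin), but your proposed upgrade to convergence to a \emph{single} translate via a {\L}ojasiewicz--Simon inequality or ``pinning the center of mass'' is unsupported: the paper explicitly remarks that the solution lacks the regularity needed for such dissipation arguments and leaves uniqueness of the limiting center open (Proposition~\ref{conditional} gives it only under an extra uniform $C^{1,\alpha}$ hypothesis). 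Since the theorem only asserts convergence modulo translation, you should drop that claim; as written it asserts more than is proved or known.
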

 \medskip

   \begin{rem}\label{boost}
It should be pointed out that the result is not a perturbative one. In fact for any initial base of the drop $\{u_0>0\}$ which is star-shaped with respect to some ball inside it (e.g. a star or a triangle), our theorem implies given that the drop has sufficiently large volume. Indeed the proof is based on geometric, moving-plane method-type arguments (see section 3.6). As a consequence we obtain explicit estimates on the size of the parameter $\rho$.     \end{rem}
 
 \begin{rem}
 It may be possible to obtain the rate of the convergence by further investigation of the formal energy dissipation inequality carried out in the beginning of section 4. Unfortunately in our setting $u$ is not regular enough for the calculation to go through.
 \end{rem}

 \begin{rem}
   Corresponding results were proved for convex solutions of the volume preserving, anisotropic mean curvature flow by Belletini et. al. \cite{BCCN09}, but their approach strongly depends on the level set formulation of the problem as well as the convexity preserving and regularizing feature of the mean curvature flow. 
\end{rem}

 \medskip

  It is an open question whether  there exist other geometric properties besides $\rho$-reflection that are preserved throughout the evolution \eqref{eqn: CLMV}. In fact, finding such a geometric property is one of the main novel features in our result. Let us point out that, in particular, it is unknown whether the convexity of the drop is preserved in the system \eqref{eqn: CLMV}.

\medskip

\subsection{The Model}\label{sec: The Model}
The energy of a static droplet which occupies a subset of $\mathbb{R}^{N+1}$ resting on the plane $\{x_{N+1}=0\}$ is given by
\begin{equation}\label{eq: full energy}
J(E) = P(E\cap \{x_{N+1}>0\})+\int_{E\cap \{x_{N+1}=0\}} \sigma d\mathcal{H}^{N}+ \int_{E} \rho g x_{N+1}dx.
\end{equation}
Here $P(F)$ is the perimeter of a subset $F$ of $\mathbb{R}^N$ defined at least for bounded domains with Lipschitz boundaries.  Stationary droplets correspond to minimizers of the energy $J$ when the volume $|E| = V$ is fixed. The first term is the energy due to surface tension of the free surface of the droplet.  The second term is the energy due to the adhesion between the droplet and the surface it rests on, $\sigma$ is called the relative adhesion coefficient and in general it can be spatially dependent.  In this paper however we will assume $\sigma$ is constant.  The third is a gravitational potential energy, $\rho$ is the mass per unit volume of water and $g$ the acceleration due to gravity.  We will neglect the effects of gravity and assume that 
$$ g = 0$$
one expects this to be a reasonable approximation when the total volume is small.

We will make the further assumption that our droplets are the region under the graph of a function $u:\mathbb{R}^N \to [0,+\infty)$,
$$ E = \{ (x',x_{N+1}) \in \mathbb{R}^{N}\times [0,+\infty): x_{N+1}<u(x')\}.$$
Now, thinking of the droplet in terms of the function $u$, its height profile, the energy simplifies to become,
\begin{equation}\label{eq: function energy}
J(u) = \int _{u>0} \sqrt{1+|Du|^2} dx+\sigma |\{u>0\}|.
\end{equation}
The Euler-Lagrange equation corresponding to fixed volume energy minimizers involves the mean curvature of the free surface.  Finally, to simplify our analysis, we linearize \eqref{eq: function energy} and taking $\sigma = -1/2$ for concreteness we get
\begin{equation}
\mathcal{J}(u) = \int _{\mathbb{R}^N} |Du|^2 dx+|\{u>0\}|.
\end{equation}
Formally our problem \eqref{eqn: CLMV} can be written as the gradient flow associated with the energy $\mathcal{J}(u)$ (see the heuristic argument in section 4). Based on this structure a minimizing movement scheme in the spirit of \cite{Mielke} was carried out for the general energy $J(E)$ in \cite{AD} to derive a weak solution in the continuum limit, where the solution is given as an evolution of  entire drop (sets of finite perimeters in $\R^{N+1}$). In our paper we show that, in the graph setting with the energy $\mathcal{J}(u)$, for a large class of initial data (see Remark~\ref{boost})  we are able to give a point-wise description on the movement of the contact line $\partial\{u>0\}$ using the notion of viscosity solutions.

\subsection{ Main challenges and strategy}

\medskip

The major difficulty of this problem is the lack of strong compactness (either in the associated energy given above or in the problem itself)  which hinders most approximation arguments without geometric restrictions. In particular the standard comparison principle fails due to $\lambda(t)$, or, more fundamentally, the volume preserving nature of the problem.  We emphasize that one cannot simplify the problem by replacing $\lambda$ fixed in the problem, especially when one is interested in the long-time behavior of the solutions: in fact, one can show that, if we fix $\lambda$ then most drops either vanish in finite time or grow to infinity. In comparison to other volume-preserving problems such as the volume-preserving mean curvature flow  or the Hele-Shaw flow with surface tension, the evolution of our problem \eqref{eqn: CLMV}  is driven by the contact angle, not by higher-order regularizing terms such as the mean curvature. This aspect of the problem challenges, for example,  the diffusive-interface approach which are present in many other problems.

\medskip

To get around aforemented difficulties and establish the existence of solutions for \eqref{eqn: CLMV}, we utilize the energy structure of the problem. Formally, 
solutions of the problem \eqref{eqn: CLMV} are gradient flows of an energy associated to the capillary drop problem.  This formulation of the problem is best understood from the perspective of the wetted set. The energy of a capillary drop of volume $V$ resting above a domain $\Omega \subset \mathbb{R}^N$ is taken to be
\begin{equation}\label{eq: energy}
\mathcal{J}(\Omega) := \int_\Omega |Du|^2 +|\Omega|
\end{equation}
where, for now, $u$ is the height profile above the wetted set $\Omega$ and is defined by
\begin{equation}
u := \argmin \{u \in H^1_0(\Omega): \int_{\mathbb{R}^N} u = V \}.
\end{equation}
Then one models the motion of the capillary droplet as a gradient flow of the functional $\mathcal{J}$ in an appropriate space of subsets of $\mathbb{R}^N$. 

\medskip
 A rigorous justification of this approach is carried out using a regularized discrete gradient flow scheme in the space of Cacciopoli sets (sets of finite perimeter) in \cite{GrunewaldKim11} (also see \cite{AD} for more general approach).  Since the analysis has been performed in the general setting which allows pinching and merging of droplet components, the resulting continuum solution is rather weak and is unstable under the variation of initial data. In this paper we show that much stronger results can be obtained when the initial data satisfies the $\rho$-reflection (see Theorem~\ref{main}). Our analysis will be based on the aforementioned energy structure of the problem with geometric arguments (reflection maximum principle)  as well as a modified viscosity solution theory where one views $\lambda(t)$ as a prescribed parameter.

 \medskip

 Let us explain the geometric argument in more detail. For {\it a priori} given  positive and continuous $\lambda(t)$, let us consider the problem
\begin{equation}\label{eqn: CLML}\tag{P-$\lambda$}
\left\{ \begin{array}{lll}
-\Delta u(x,t) = \lambda(t) &\text{ in }& \{u>0\}, \\ \\
 u_t = F(|Du|)|Du| & \text{ on } & \partial\{u>0\}. 
\end{array}\right.
\end{equation}
For a given function $\lambda(t)$ as above and under some assumption on the niceness of the initial data $\Omega_0$ and boundedness and ellipticity assumptions on $F$, a comparison principle holds for \eqref{eqn: CLML} and, by a standard application of Perron's method, there exist global-in-time viscosity solutions. 

\medskip

\medskip

We show, under an additional assumption on the star-shapedness of the initial data $\Omega_0$ ($\rho$-reflection), the existence of such a function $\lambda(t)$ defined for all times such that a viscosity solutions $u$ of \eqref{eqn: CLML} satisfies
$$ \int u(x,t) \ dx = \int u(x,0) \ dx =: V$$
for all $t>0$. The proof is  carried out by showing that, when $\Omega_0$ is strongly star-shaped,  then the ``energy solution", constructed with the discrete time scheme associated with gradient flow structure mentioned above, stays star-shaped and coincides with the viscosity solution of \eqref{eqn: CLML} with the corresponding $\lambda$. We point out that it is not a priori clear whether the discrete-time energy solutions preserve the star-shaped condition, so we incorporate the restriction directly into the approximate scheme (See section 4) to obtain the continuum limit. The introduction of geometric restriction to the gradient flow scheme seems to be new and of independent interest. 

\medskip

\subsection{Outline of the paper} 
In section 2 we recall known results about the solutions of $(EQ)$.  It turns out that viscosity solutions of $(EQ)$ with connected and Lipschitz positive phase are radial and minimize the associated energy.

\medskip

 In section 3 we investigate the geometric properties of the viscosity solutions of \eqref{eqn: CLML}. In particular we show that $\rho$-reflection property is preserved over the time (see Corollary ~\ref{reflectiontime}), based on the reflection maximum principle. We point out that the reflection maximum principle holds only when the solution of \eqref{eqn: CLML} is stable under perturbations, which is the case when the solutions are star-shaped (see section 3.5 as well as the Appendix). 

\medskip

In section 4 we discuss the energy structure of our original problem \eqref{eqn: CLMV}. Motivated by the formal gradient flow structure, we construct a solution of \eqref{eqn: CLMV} as a continuum limit of  a discrete-time ``minimizing movement" scheme following the approach of \cite{Chambolle04}, \cite{Mielke} and \cite{AD}. By putting the $\rho$-reflection property as a constraint for the minimizing movements, we obtain uniform convergence of solutions in the continuum limit. An important result proved here is Proposition 4.7, which states that the limiting ``energy" solution is indeed a viscosity solution of \eqref{eqn: CLML} with volume-preserving property.

\medskip

In section 5 we make use of the energy structure of the problem to show that any energy solution obtained in section 4 uniformly converges to the radial solution as $t\to\infty$, modulo translation (Theorem~\ref{thm: convergence thm}) We point out that our result does not imply that our solutions converge to a unique radial solution centered at a given point, since the drops may slowly move around a range of round profiles and may not converge to a single one.  The physical uniqueness of the limiting profile and the characterization of its center remain open at the moment (Though see Proposition~\ref{conditional} for the discussion on the uniqueness).

\section{The Equilibrium Problem}\label{sec: The Equilibrium Problem}
Now consider the volume constrained minimization problem 
\begin{equation}\label{equilibrium problem min form}
 \inf \{J(u): u \in H^1(\mathbb{R}^N) \text{ and } \int_{\mathbb{R}^N} u = V\}.
 \end{equation}
One can show immediately using symmetric decreasing rearrangements that any infimizer of 
$$ \inf \{J(u): u \in H^1(\mathbb{R}^N) \text{ and } \int_{\mathbb{R}^N} u = V \text{ and } u \text{ radial }\} $$
 is also an infimizer of \eqref{equilibrium problem min form}.  That the only minimizers are radial is more delicate.  One can show this using a theorem from \cite{BrothersZiemer88}.  We will not discuss the specifics here since this fact will not be needed for our arguments.
 
 \medskip
 
To find the unique (up to translation) minimizers of \eqref{equilibrium problem min form} among radial functions first fix a particular radius $r$ for the radially symmetric support set of the droplet 
 $$\{ u>0\}=B_r(0).$$
 Now the minimization becomes 
 $$ \inf \{ J(u): u \in H^1_0(B_r(0)) \text{ and } \int_{\mathbb{R}^N} u = V \text{ and } u \text{ radial }\} $$
 the unique infimizer is just the solution of the Dirichlet problem for the Laplace operator with lagrange multiplier $\lambda=\lambda(r) \in (0,+\infty)$ chosen so that the height profile
$$ u_r(x) = \max\{\frac{\lambda}{2N}(r^2 -|x|^2),0\}$$
has the correct volume. Then explicitly calculating $\lambda(r)$ and minimizing $J$ over $r>0$ one can easily check that for
$$r_* = (\frac{N}{N+2})^{1/(N+1)}V^{1/(N+1)} $$
$u_{r_*}$ is the strict minimizer of $J$ among radial $H^1$ functions and therefore -- by the rearrangement argument that was mentioned above -- is also a minimizer among all $H^1$ functions.

Alternatively, one can consider the Euler-Lagrange equation for \eqref{equilibrium problem min form}  which is given by,
\begin{equation}\label{eqn: equilibrium problem euler lagrange}\tag{$EQ$}
\left\{ \begin{array}{lll}
-\Delta u = \lambda & \text{ in } &  \{u>0\}, \\ \\
   \lambda>0 & \text{ s.t.} & \int u = V, \\ \\
|Du| = 1 & \text{ on } & \partial\{u>0\}. 
\end{array}\right.
\end{equation}
Then there is a classical theorem of Serrin \cite{Serrin71} regarding the uniqueness of solutions of $(EQ)$.
\begin{thm}\label{thm: serrin}\textup{(Serrin)}
Let $u: \mathbb{R}^N \to [0,+\infty)$ be compactly supported such that $\partial \{u >0\}$ a $C^2$ hypersurface in $\mathbb{R}^N$ and $u$ is a classical solution of (EQ).  Then $\{u>0\} = B_{r^*}(x_0)$ for some $x_0 \in \mathbb{R}^N$ and 
\begin{equation}\label{eqn: equilibrium radius}
r^* = (\frac{N}{N+2})^{1/(N+1)}V^{1/(N+1)}.
\end{equation}
\end{thm}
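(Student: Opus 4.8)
The plan is to establish the symmetry statement by the moving--plane (Alexandrov reflection) method and then read off the radius from the radial ordinary differential equation together with the volume constraint. Set $\Omega:=\{u>0\}$. First one records the standing facts: since $-\Delta u=\lambda>0$ in $\Omega$ and $u=0$ on $\partial\Omega$, the strong maximum principle gives $u>0$ in $\Omega$, and Hopf's lemma gives $\partial_\nu u<0$ on $\partial\Omega$, so the overdetermined condition reads $\partial_\nu u\equiv-1$ for $\nu$ the outward unit normal; moreover $\Omega$ is bounded with $C^2$ boundary.

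Fix a unit vector $e$; after a rotation assume $e=e_N$. For $\mu\in\mathbb R$ write $T_\mu=\{x_N=\mu\}$, $\Omega_\mu=\Omega\cap\{x_N>\mu\}$, let $x^\mu$ be the reflection of $x$ in $T_\mu$, let $\Omega_\mu'=(\Omega_\mu)^\mu$ be the reflected cap, and set $u_\mu(x)=u(x^\mu)$. When $\mu$ is slightly below $\max_{\overline\Omega}x_N$ the reflected cap lies in $\Omega$; since $-\Delta(u-u_\mu)=0$ there while $u-u_\mu=0$ on $T_\mu$ and $u-u_\mu=u\ge0$ on the remaining part of $\partial\Omega_\mu'$ (which reflects onto $\partial\Omega$), the maximum principle gives $u\ge u_\mu$ in $\Omega_\mu'$. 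Let $\mu_0$ be the least $\mu$ down to which the configuration stays admissible, meaning $\Omega_{\mu'}'\subseteq\Omega$ and $T_{\mu'}$ is nowhere orthogonal to $\partial\Omega$, for every $\mu'\ge\mu$. By continuity $u\ge u_{\mu_0}$ in $\Omega_{\mu_0}'$, and criticality forces one of two cases: (i) $\partial\Omega_{\mu_0}'$ is internally tangent to $\partial\Omega$ at some $p\notin T_{\mu_0}$; (ii) $T_{\mu_0}$ meets $\partial\Omega$ orthogonally at some $q$. In case (i), $w:=u-u_{\mu_0}\ge0$ is harmonic in $\Omega_{\mu_0}'$ with $w(p)=0$, while at $p$ the outward normal of $\Omega_{\mu_0}'$, the outward normal of $\Omega$, and the reflected outward normal of $\Omega$ at $p^{\mu_0}$ all coincide (by the tangency), so $\partial_\nu u(p)=\partial_\nu u_{\mu_0}(p)=-1$ and hence $\partial_\nu w(p)=0$; Hopf's lemma then forces $w\equiv0$. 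In case (ii) one checks that $w$ and $\nabla w$ both vanish at $q$ — the tangential derivatives agree because $u=u_{\mu_0}$ on $T_{\mu_0}$, and $\nabla u(q)$ lies in $T_{\mu_0}$ since $\partial\Omega\perp T_{\mu_0}$ there — whereupon Serrin's corner boundary--point lemma yields $w\equiv0$. In either case $\Omega$ is symmetric across $T_{\mu_0}$; since $e$ was arbitrary, $\Omega$ is symmetric across a hyperplane orthogonal to every direction, and the usual argument (via the centroid) gives $\Omega=B_{r^*}(x_0)$. With $\Omega$ a ball, $u$ is radial and solves $u''+\tfrac{N-1}{r}u'=-\lambda$ on $(0,r^*)$ with $u(r^*)=0$ and $u'(r^*)=-1$, whence $u(r)=\tfrac{\lambda}{2N}\big((r^*)^2-r^2\big)$ and $\lambda=N/r^*$; substituting into $\int_{B_{r^*}}u=V$ and performing the same elementary integration as above for radial competitors recovers the value of $r^*$ in \eqref{eqn: equilibrium radius}.

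The delicate ingredient is case (ii): Serrin's corner lemma — a Hopf--type statement at a boundary corner of opening less than $\pi$, to the effect that a nonnegative supersolution vanishing to sufficiently high order at the corner must vanish identically — is the genuinely new analytic input and requires a barrier tailored to the wedge. One can sidestep it with Weinberger's method instead: put $P=|\nabla u|^2+\tfrac{2\lambda}{N}u$, note $\Delta P=2|D^2u|^2-\tfrac{2\lambda^2}{N}\ge0$ because $|D^2u|^2\ge(\Delta u)^2/N=\lambda^2/N$, observe $P\equiv1$ on $\partial\Omega$ so $P\le1$ in $\Omega$ by the maximum principle, and combine this with a Rellich--Pohozaev integral identity to force $\int_\Omega\big(|D^2u|^2-(\Delta u)^2/N\big)=0$; equality in Cauchy--Schwarz then gives $D^2u\equiv-\tfrac{\lambda}{N}\Id$, so $u$ is a radial quadratic and $\Omega$ a ball. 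This trades the corner lemma for a boundary integration by parts that still uses the $C^2$ hypothesis, but is otherwise elementary.
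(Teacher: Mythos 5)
The paper does not actually prove this theorem; it is quoted from Serrin with the remark that the proof combines moving planes with a Hopf-type lemma at corners, and your outline follows exactly that route. Your treatment of the first critical case (interior tangency away from $T_{\mu_0}$) is correct, but the second case has a genuine gap. At a point $q$ where $T_{\mu_0}$ meets $\partial\Omega$ orthogonally you verify only that $w=u-u_{\mu_0}$ and $\nabla w$ vanish at $q$ and then invoke the corner lemma to conclude $w\equiv 0$. That is not what the corner lemma gives: it states that, unless $w\equiv 0$, for any direction $s$ entering the corner non-tangentially either $\partial_s w(q)>0$ or $\partial_s^2 w(q)>0$. First-order vanishing alone therefore produces no contradiction; one must also show that the second directional derivatives of $w$ vanish at $q$, and this is the heart of Serrin's corner case. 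That verification requires a second-order Taylor expansion of $u$ and of $u_{\mu_0}$ at $q$, using both boundary conditions ($u=0$ and $\partial_\nu u=-1$ on a $C^2$ surface) together with the orthogonality of $T_{\mu_0}$ to $\partial\Omega$; it is precisely where the $C^2$ hypothesis and the constancy of the Neumann data enter, and it is absent from your write-up (your own paraphrase of the lemma, ``vanishing to sufficiently high order,'' concedes that more than $\nabla w(q)=0$ is needed).

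Two smaller points. The Weinberger alternative you sketch ($P=|\nabla u|^2+\tfrac{2\lambda}{N}u$ subharmonic, $P\le 1$ by the maximum principle, a Rellich--Pohozaev identity forcing equality and hence $D^2u\equiv-\tfrac{\lambda}{N}\Id$) is indeed a complete, elementary way to bypass the corner lemma and its structure is correct, but as written the Pohozaev step is asserted rather than carried out, so it does not yet close the gap either. Finally, the radius: carrying out your own computation, $\lambda=N/r^*$ and $\int_{B_{r^*}}u=V$ give $(r^*)^{N+1}=(N+2)V/|B_1|$, consistent with the expression for $\lambda[B_R]$ used in Lemma \ref{lem: reflection for all time}, but differing by a dimensional constant from the formula displayed in \eqref{eqn: equilibrium radius}; so you should not claim the substitution ``recovers'' that formula without reconciling the normalization.
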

Serrin's proof of this theorem used the method of moving planes and a variant of the Hopf Lemma for domains with corners.  The best possible spatial regularity we are able to show for the evolving contact line in \eqref{eqn: CLML} is Lipschitz.  In order to apply Serrin's result to show the convergence to equilibrium, we need the same result to hold for viscosity solutions of $(EQ)$ with positivity sets which are Lipschitz domains.  It is not clear whether it is possible to use a variant of the moving planes method to show this.  Instead we use some regularity results for free boundary problems.  First a theorem of De Silva from \cite{DeSilva09} shows that viscosity solutions of $(EQ)$ with Lipschitz free boundaries are classical solutions.

\begin{thm}\label{thm: de silva}\textup{(De Silva) (Lipschitz implies $C^{1,\alpha}$)} Let $\Omega \subseteq \mathbb{R}^N$ a domain and $u:\mathbb{R}^N\to [0,+\infty)$ be a viscosity solution of the free boundary problem in $\Omega$,
\begin{equation}\label{problem}
\left\{\begin{array}{lll}
-\sum_{i,j=1}^N a_{ij} \partial_{ij}u = f & \text{ in } & \{x \in \Omega : u(x)>0\} \\ \\
|Du| = g & \text{ on } & \Gamma(u) := \partial\{u>0\}\cap\Omega
\end{array}\right.
\end{equation}
where $a_{ij}$, $g \in C^{0,\beta}(\Omega)$ for some $0 < \beta \leq 1$ and $f \in C(\Omega) \cap L^{\infty}(\Omega)$ and $g \geq 0$. Moreover, we have the ellipticity condition: there exists $0<\lambda<\Lambda$ such that for all $\xi \in \partial B_1(0)\subset\mathbb{R}^N$ and all $x \in \Omega$,
$$ \lambda \leq \sum a_{ij}(x)\xi_i\xi_j \leq \Lambda. $$
Then if $x_0 \in \Gamma(u)$, $g(x_0)>0$ and $\Gamma(u)$ is a Lipschitz graph in a neighborhood of $x_0$ then $\Gamma(u)$ is $C^{1,\alpha}$ in a smaller neighborhood of $x_0$ for $\alpha>0$ depending only on $N$, $\beta$, and the ellipticity constants $\lambda$ and $\Lambda$.
\end{thm}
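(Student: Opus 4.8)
The plan is to deduce the theorem from a quantitative \emph{improvement of flatness} estimate which is then iterated dyadically, after first reducing the Lipschitz hypothesis to a flatness hypothesis. Normalizing $x_0=0$ and $g(0)=1$, rescale: for $u_r(x):=r^{-1}u(rx)$ the function $u_r$ solves a problem of the same form with data $a^{r}_{ij}(x)=a_{ij}(rx)$, $f_r(x)=r\,f(rx)$, $g_r(x)=g(rx)$, so that $\norm{f_r}_{L^\infty(B_1)}=O(r)$ and $[a^{r}_{ij}]_{C^{0,\beta}(B_1)}+[g_r]_{C^{0,\beta}(B_1)}=O(r^{\beta})$; all of these can be made as small as we like. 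Since $g(0)>0$, $u$ is nondegenerate and Lipschitz up to $\Gamma(u)$, and combined with the Lipschitz graph hypothesis, standard Caffarelli-type arguments — enlarging the cone of monotonicity directions of $u$ using the interior Harnack inequality and the boundary condition — show that $u_r$ is $\bar\varepsilon$-flat in $B_1$ once $r$ is small, i.e. $(x\cdot\nu_r-\bar\varepsilon)^{+}\le u_r\le(x\cdot\nu_r+\bar\varepsilon)^{+}$ on $B_1$ for some unit vector $\nu_r$. It then suffices to prove the following \emph{improvement of flatness}: there are universal $\bar\varepsilon>0$, $\rho\in(0,\tfrac14)$, $C_0>0$ (depending only on $N,\beta,\lambda,\Lambda$) such that if $u$ solves \eqref{problem}, $0\in\Gamma(u)$, $g(0)=1$, $(x_N-\varepsilon)^{+}\le u\le (x_N+\varepsilon)^{+}$ on $B_1$ with $\varepsilon\le\bar\varepsilon$, and $\norm{f}_{L^\infty(B_1)}+[a_{ij}]_{C^{0,\beta}}+[g]_{C^{0,\beta}}\le\varepsilon^{2}$, then $u_\rho:=\rho^{-1}u(\rho\,\cdot)$ is $\tfrac{\varepsilon}{2}$-flat in $B_1$ in some direction $\nu$ with $|\nu-e_N|\le C_0\varepsilon$.

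This step I would prove by compactness and contradiction. If it fails there are solutions $u_k$, data $a^{k}_{ij},f_k,g_k$, and flatness parameters $\varepsilon_k\to 0$ for which all hypotheses hold and the conclusion fails; form the normalized remainders
\[
\tilde u_k(x):=\frac{u_k(x)-(x\cdot e_N)^{+}}{\varepsilon_k},\qquad x\in B_1\cap\overline{\{u_k>0\}},
\]
which satisfy $|\tilde u_k|\le 2$. The crucial technical tool is a \textbf{Harnack inequality at the free boundary}: if $u$ is $\varepsilon$-flat in a ball $B_r(y)$ (with data scaled accordingly) then it is $(1-c)\varepsilon$-flat in $B_{r/2}(y)$ for a universal $c>0$ — equivalently, the two parallel graphs trapping $\Gamma(u)$ can be brought a definite fraction closer upon halving the scale. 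Iterating this yields a uniform interior H\"older modulus for the $\tilde u_k$, so along a subsequence $\tilde u_k\to\tilde u_\infty$ locally uniformly on $B_{1/2}^{+}:=B_{1/2}\cap\{x_N>0\}$, while $\Gamma(u_k)\to B_{1/2}\cap\{x_N=0\}$ in Hausdorff distance.

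Passing to the limit in the interior equation — using $a^{k}_{ij}\to\bar a_{ij}$ uniformly to a constant positive-definite matrix and $\norm{f_k}_{L^\infty}/\varepsilon_k\le\varepsilon_k\to 0$ — gives $\sum\bar a_{ij}\partial_{ij}\tilde u_\infty=0$ in $B_{1/2}^{+}$, and linearizing $|Du_k|=g_k$ about the half-plane solution $(x_N)^{+}$, using $(g_k-1)/\varepsilon_k\to 0$, gives a homogeneous (co-normal) Neumann condition for $\tilde u_\infty$ on $B_{1/2}\cap\{x_N=0\}$. By classical boundary regularity for this linear mixed problem, $\tilde u_\infty$ is $C^\infty$ up to $\{x_N=0\}$ near $0$, so $|\tilde u_\infty(x)-\ell(x)|\le C_N|x|^{2}$ near $0$ for an affine function $\ell$ whose gradient, by the boundary condition, records an infinitesimal tilt of the direction $e_N$. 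Unravelling the normalizations forces $(u_k)_\rho$ to be $\tfrac{\varepsilon_k}{2}$-flat in $B_1$ in a direction $\nu_k$ with $|\nu_k-e_N|\le C_0\varepsilon_k$, once $\rho$ is a small universal constant and $k$ is large — contradicting the choice of $u_k$. The $C^{1,\alpha}$ conclusion then follows from the standard iteration: starting at the $\bar\varepsilon$-flat scale produced by the Lipschitz reduction and applying the improvement at each dyadic scale $\rho^{m}$ (the data smallness is restored since $\norm{f_{\rho^m}}_{L^\infty}\le\rho^{m}\norm{f}_{L^\infty}\le 4^{-m}\varepsilon^{2}=\varepsilon_m^{2}$ for $\rho\le\tfrac14$, and likewise for the H\"older seminorms), the flatness decays geometrically, $\varepsilon_m\le 2^{-m}\bar\varepsilon$, and the optimal directions satisfy $|\nu_{m+1}-\nu_m|\le C_0 2^{-m}\bar\varepsilon$, so $\nu_m\to\nu_\infty$ and $\Gamma(u)$ has a tangent hyperplane at $0$ with modulus of continuity $O(r^{\alpha})$ for $\alpha=\alpha(N,\beta,\lambda,\Lambda)>0$; the estimate being uniform as the base point ranges over a neighborhood, $\Gamma(u)\in C^{1,\alpha}$ there.

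The main obstacle, and the technical heart of the whole argument, is the free boundary Harnack inequality in the presence of a right-hand side $f$ and of merely H\"older-continuous coefficients $a_{ij}$: one establishes it by sliding barriers of the form $(1+\delta\phi)(x_N)^{+}$ corrected by solutions of $-\sum a_{ij}\partial_{ij}w=f$, invoking the interior Harnack inequality together with the comparison principle for the viscosity free boundary condition, and it is exactly this estimate that supplies the compactness making the linearization step legitimate — cruder oscillation bounds do not suffice. The subcritical scaling $f_r(x)=r\,f(rx)$ is precisely what keeps the errors summable over the dyadic iteration; a right-hand side that did not scale favorably would break the scheme.
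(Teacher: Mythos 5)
The paper does not prove this theorem at all: it is quoted from De Silva \cite{DeSilva09} and used as a black box, so there is no internal proof to compare against. Your sketch is a faithful outline of De Silva's actual argument: reduce the Lipschitz hypothesis to flatness, prove an improvement-of-flatness step by compactness via a partial Harnack inequality at the free boundary, linearize to a constant-coefficient Neumann problem whose smooth solutions supply the improved direction, and iterate dyadically using the favorable scalings $\norm{f_r}_{L^{\infty}}=r\norm{f}_{L^{\infty}}$ and $[a^{r}_{ij}]_{C^{0,\beta}}+[g_r]_{C^{0,\beta}}=O(r^{\beta})$. Two points are glossed at the sketch level. First, the compactness step is not plain locally uniform convergence of the $\tilde u_k$, since these are defined on the varying sets $\overline{\{u_k>0\}}\cap B_1$; De Silva works with the graphs (the $\varepsilon$-domain variations) and extracts a subsequence converging in Hausdorff distance, which is exactly what the partial Harnack oscillation decay controls, and the limit is then shown to be a viscosity solution of the linearized Neumann problem. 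Second, in \cite{DeSilva09} the Lipschitz-to-flat reduction goes through blow-up limits: the rescalings converge to a global solution of the homogeneous constant-coefficient problem with Lipschitz free boundary, which is flat near the origin by Caffarelli's theorem for the clean problem; your proposal instead re-runs the cone-of-monotonicity enlargement directly for the inhomogeneous, variable-coefficient problem, which would require redoing Caffarelli's machinery with a right-hand side rather than getting flatness for free from the blow-up. Neither point is a genuine gap in the intended argument; as a blind reconstruction of the cited proof, the proposal is essentially correct.
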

Using above result, higher regularity of $u$ can be derived from the Hodograph method \cite{KN}  when the coefficients are smooth and $\Gamma(u)$ is locally Lipschitz. See Appendix C for more details .

\begin{cor}\label{smoothness}
Let $u$ solve \eqref{problem} with $(a_{ij})$ as identity matrix and $f=g=1$. In addition suppose that $\Gamma(u)$ is locally Lipschitz and is bounded. Then $u$ and $\Gamma(u)$ is $C^{\infty}$. 
\end{cor}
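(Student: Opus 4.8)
The plan is to use Theorem~\ref{thm: de silva} (De Silva) to upgrade the Lipschitz regularity of $\Gamma(u)$ to $C^{1,\alpha}$, and then bootstrap via a hodograph–Kinderlehrer–Nirenberg argument to full $C^\infty$ regularity. Concretely, first I would observe that with $a_{ij}=\delta_{ij}$, $f=g=1$, the hypotheses of Theorem~\ref{thm: de silva} are met: the coefficients are (trivially) $C^{0,\beta}$ for every $\beta\le 1$, $f\in C\cap L^\infty$, and $g\equiv 1>0$ everywhere, so the ellipticity condition holds with $\lambda=\Lambda=1$. Since $\Gamma(u)$ is assumed to be locally a Lipschitz graph at every one of its points, De Silva's theorem applies at each $x_0\in\Gamma(u)$ and yields that $\Gamma(u)$ is $C^{1,\alpha}$ in a neighborhood of $x_0$, for a uniform $\alpha>0$ depending only on $N$. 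By covering the (bounded, hence compact) free boundary with finitely many such neighborhoods we conclude $\Gamma(u)\in C^{1,\alpha}$ globally.

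The second step is the bootstrap. Once $\Gamma(u)$ is $C^{1,\alpha}$, near any boundary point one can write the free boundary as a graph $x_N = \varphi(x')$ with $\varphi\in C^{1,\alpha}$, flatten it by a change of variables, and apply the partial hodograph transform of Kinderlehrer–Nirenberg \cite{KN}: one of the spatial variables is exchanged with $u$ itself (using $|Du|=1\ne 0$ on $\Gamma(u)$, so $\partial_N u$ does not vanish near the boundary, which makes the transform a valid local diffeomorphism). In the hodograph variables the overdetermined free boundary problem becomes a (nonlinear, uniformly elliptic) fixed-boundary problem with the free boundary mapped to a flat piece of a hyperplane, with analytic structural dependence and smooth ($C^\infty$, here even constant) data. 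Standard elliptic Schauder theory then gives $C^{1,\alpha}\Rightarrow C^{2,\alpha}\Rightarrow\cdots$, so the transformed solution is $C^\infty$ up to the flat boundary; transforming back, $u\in C^\infty(\overline{\{u>0\}})$ near $x_0$ and $\varphi\in C^\infty$, hence $\Gamma(u)\in C^\infty$. Interior smoothness of $u$ in $\{u>0\}$ is immediate since $-\Delta u = 1$ there. A covering argument again upgrades the local statements to the global claim. I would defer the detailed verification of the hodograph computation to Appendix~C, as the excerpt indicates.

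The main obstacle is not conceptual but bookkeeping: one must check carefully that the Kinderlehrer–Nirenberg machinery, which is classically stated for analytic or $C^{m,\alpha}$ data, indeed produces $C^\infty$ regularity here, and that the nonlinearity arising after the hodograph change of variables is smooth in its arguments with the right ellipticity — this is where the constancy of $f$ and $g$ and the nonvanishing of $|Du|$ on $\Gamma(u)$ are used. A secondary technical point is ensuring the initial $C^{1,\alpha}$ regularity from De Silva is genuinely enough to initiate the hodograph bootstrap (one needs the free boundary graph to be $C^{1,\alpha}$ and $u$ to be, say, $C^{1,\alpha}$ up to the boundary, which follows from $C^{1,\alpha}$ regularity of $\Gamma(u)$ together with elliptic boundary estimates for $-\Delta u=1$ in a $C^{1,\alpha}$ domain with $|Du|=1$ on the boundary). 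None of these present a real difficulty, but they are the steps that require care; everything else is a direct citation.
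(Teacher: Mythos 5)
Your proposal is correct and follows essentially the same route as the paper: first invoke De Silva's theorem (Theorem~\ref{thm: de silva}) to upgrade the Lipschitz free boundary to $C^{1,\alpha}$ (whence $u\in C^{1,\alpha}$ up to the boundary), then apply the Kinderlehrer--Nirenberg partial hodograph transform, exchanging $x_N$ with $u$ using the nondegeneracy $|Du|=1$ on $\Gamma(u)$, to obtain a uniformly elliptic nonlinear fixed-boundary problem with flat boundary and smooth data, and bootstrap to $C^\infty$. This is exactly the argument sketched in Appendix~C of the paper, so no further comparison is needed.
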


\section{Viscosity Solutions}\label{sec: Viscosity Solutions}
\subsection{Basic Definitions and Assumptions}\label{sec: Basic Definitions and Assumptions}
We will recall the basic viscosity solution theory for \eqref{eqn: CLML}.  A more detailed exposition for free boundary problems of a similar form can be found in \cite{Kim03}.  First we restate the problem under consideration,
\begin{equation}\tag{\ref{eqn: CLML}}
\left\{
\begin{array}{lll}
-\Delta u(x,t) = \lambda(t) & \text{ in } & \Omega_t, \\ \\
 u_t = F(|Du|)|Du| & \text{ on } & \Gamma_t. 
\end{array}\right.
\end{equation}
$\lambda: [0,+\infty) \to [0,+\infty)$ will be bounded and continuous.  We will work in a space-time parabolic domain $Q = U\times(a,b]$  where $0\leq a <b\leq +\infty$ and $U$ is a domain (possibly unbounded) in $\mathbb{R}^N$, with parabolic boundary,
$$\partial_p Q := \overline{U}\times \{t=0\} \cup \partial U \times [a,b].$$  
The positive phase of a height profile $u: Q \to [0,+\infty)$ is defined as
\begin{equation}
 \Omega(u) := \{u>0\} \quad \text{ and } \quad \Gamma(u) := \partial \Omega(u)
 \end{equation}
 and at particular times,
 $$ \Omega_t(u) = \Omega(u) \cap \{ (y,s) \in Q: s=t\} \quad \text{ and } \quad \Gamma_t(u) = \partial\Omega_t(u). $$
The dependence on $u$ will be omitted when it is unambiguous which droplet profile we are referring to. The free boundary velocity $F$ will satisfy the boundedness and ellipticity assumption,
\begin{hyp}\label{hyp: F incr cond}
$F:[0,+\infty)\to\real$ is strictly monotone increasing and continuous, and $F(1)=0$.
\end{hyp}
\begin{hyp}\label{hyp: F cond}
There exists $c>0$ such that for $\epsilon$ sufficiently small,
\begin{equation}\label{eqn: F condition}
(1+\epsilon)F((1+\epsilon)^{-1}s)+c\epsilon \geq F(s).
\end{equation}
\end{hyp}
Notice if Assumption 2 holds then necessarily $F$ has sublinear growth at $\infty$.  For any $s>0$ using \eqref{eqn: F condition} $\floor{s/\epsilon}$ times with $\epsilon\to 0$ yields:
$$F(s) \leq cs+\max\{(1+s)F(0),0\}.$$

The monotonicity assumption on $F$ implies, at least formally, that the problem \eqref{eqn: CLML} has a comparison principle.  This is the underlying reason why viscosity solutions are the natural definition of weak solution for this PDE.  The second assumption is important for proving the strong comparison results and thereby the regularity of the viscosity solutions of \eqref{eqn: CLML}.  It is not clear to the authors whether this assumption is only technical.  Some examples which satisfy the Assumptions \ref{hyp: F incr cond} and \ref{hyp: F cond} are
\begin{ex}
If $F(s) := s^p-1$ for $p\leq 1$ then by a simple calculation \eqref{eqn: F condition} is satisfied for all $\epsilon>0$ with $c=1$.  
\end{ex}
The following example in the case $p=2$ will be important to us later,
\begin{ex}
Let $M>0$ and $p>1$, suppose $F(s) := \max\{s^p-1,M\}$ then \eqref{eqn: F condition} is satisfied for $\epsilon \leq 1+\tfrac{1}{2}(p-1)^{-1}$ and $c \geq 2(M+2)$.  The calculation is given below,
\begin{align*}
\max\{(1+\epsilon)[(1+\epsilon)^{-p}s^p-1)],M\}+c\epsilon  &\geq   \max\{s^p-1-(p-1)\epsilon s^p-\epsilon,M\} +c\epsilon \\
& \geq  \max\left\{s^p-1-2(M+2)\epsilon,M\right\} +c\epsilon \\
& \geq \max\left\{s^p-1+\left(c-2(M+2)\right)\epsilon,M\right\} \\
&\geq \max\{s^p-1,M\}.
\end{align*}
Above in the second line we have used that if 
$$ s^p-1-(p-1)\epsilon s^p-\epsilon \leq M$$
then
$$ s^p \leq 2(M+2). $$
\end{ex}

 Now we turn to defining a notion of solution for \eqref{eqn: CLML}.  For $E \subset \real^N\times[0,+\infty)$ we use the notation $C^{2,1}(E)$ for functions $f$ with two continuous derivatives in the spatial variables and one continuous derivative in time. First we define a classical solution of the free boundary problem.
 \begin{DEF} (Classical Solutions)
 A profile $u: Q \to [0,+\infty)$ is a classical subsolution (supersolution) for the free boundary problem \eqref{eqn: CLML} on $Q$ if $ u \in C^{2,1}(\overline{\Omega(u)}\cap Q)$ and \eqref{eqn: CLML} is satisfied in the pointwise classical sense with equality signs replaced by $\leq$ ($\geq$ in the supersolution case).  Classical solutions are both super and subsolutions.  
 \end{DEF}
  For general initial data the contact line motion problem will not have a classical solution.  To get existence of a weak solution we define the notion of viscosity solution of \eqref{eqn: CLML}.   First though we define the following standard notion.
  \begin{DEF}\label{strictlyseparated} (Strictly separated)
Let $v$, $w$ be defined on a set $D\subseteq\mathbb{R}^N$ then we say $v$ and $w$ are strictly separated on $D$ and write $ v \prec w$ if $\overline{\Omega(v)}\cap\overline{D}$ is compact and $v<w$ on $\overline{\Omega(v)}\cap D$.
\end{DEF}

Next we define subsolutions and supersolutions, then a viscosity solution is defined to satisfy both the sub and supersolution properties.  Informally, $u$ is a subsolution of \eqref{eqn: CLML} if $u$ cannot be crossed from above by any strict classical supersolution.  More precisely:

\begin{DEF}\label{def: subsolution} (Subsolution) A non-negative upper semi-continuous function $u:Q \to \mathbb{R}_+$ is a subsolution of \eqref{eqn: CLML} if, for any parabolic neighborhood $Q' \subseteq Q$, and any strict classical supersolution $\phi$ with $u \prec \phi$ on $\partial _p Q'$, then $u \leq (\phi)_+$ in $Q'$.
\end{DEF}
\begin{DEF}\label{def:supersolution} (Supersolution) A non-negative lower semi-continuous function $u:Q \to \mathbb{R}_+$ is a supersolution of \eqref{eqn: CLML} if, for any parabolic neighborhood $Q' \subseteq Q$, and any strict classical supersolution $\phi$ with $u \prec \phi$ on $\partial _p Q'$, then $ \phi \leq u$ in $Q'$.
\end{DEF}
\begin{DEF}\label{viscositysolution}
(Solution) A viscosity solution of \eqref{eqn: CLML} is a non-negative continuous function $u$ on $Q$ which is both a supersolution and a subsolution.
\end{DEF}

Naturally, one can assign boundary data on the parabolic boundary.  We will usually have $Q = \mathbb{R}^N \times (0,T]$ and in that case this will reduce to assigning initial data.
\begin{DEF} \label{barrier}(Subsolution with boundary data $g$)
Let $g:\partial_p Q \to [0,+\infty)$ bounded then $u$ is a subsolution of \eqref{eqn: CLML} on $Q$ with boundary data $g$ if $u$ is a subsolution of \eqref{eqn: CLML} on $Q$ as defined above and
$$ \limsup_{(y,s)\to (x,t)\in\partial_p Q} u(y,s) \leq g(x,t)$$
\end{DEF}
Supersolutions and solutions are then defined analogously.  

\medskip

  For the rest of the paper, in the case $Q=\real^N\times (0,T]$, when we assign initial data we will specify the positivity set as some $\Omega_0 \subset \real^N$ an open domain and then $u_0$ the initial data will be taken to solve
$$ -\Delta u_0 (x) = \lambda(0) \quad \text{ and} \quad u_0(x)=0 \text{ for } x\in\Gamma_0 = \partial\Omega_0. $$
 As mentioned in the introduction, it is often more natural to think of the problem as an evolution on the positivity sets.  
 \begin{rem}\label{rem: continuity of solutions}
 In general, the solution of \eqref{eqn: CLML} is not expected to be continuous.  First of all, the solutions can vanish or blow up in finite time.  The following example in the case $N=1$ demonstrates these behaviors.
 \begin{ex}
 Consider in $N=1$ the problem \eqref{eqn: CLML} with $ \lambda(t) = \lambda$ for some $\lambda>0$
 and initial data
 \begin{equation*}
 \begin{array}{ll}
 u_0(x) = \tfrac{\lambda}{2N}(1-x^2)_+, & \Omega_0 = [-1,1].
 \end{array}
 \end{equation*}
  The solution of \eqref{eqn: CLML} with the above initial data and inhomogeneity will take the form
 $$ u(x,t) = \tfrac{\lambda}{2N}(r(t)-x^2)_+ $$
 where $r(t)$ solves the ordinary differential equation
 $$ \dot{r}(t) = F\left(\tfrac{\lambda}{N}r(t)\right) $$
 with initial data $r(0) = 1$.  From basic ODE theory there are choices of $F$ for which some solutions of the above equation blow up in finite time.  For example if 
 \begin{equation*}
 \begin{array}{lll}
 F(s) = s^2-1 & \text{ and } & \lambda \geq 2N,
 \end{array}
 \end{equation*}
  then first of all $\dot{r}(t)>0$ as long as the solution exists and moreover
 $$\dot{r}(t) \geq 4r(t)^2-1 \geq 3r(t)^2 $$
 an equation for which finite time blow up is well known.  (This behavior is ruled out by Assumption 2 on $F$ which implies sublinear growth at $\infty$.)  Alternatively, even when $F$ does not have superlinear growth solutions can disappear in finite time. Consider now any $F$ such that 
 $$F(0)<0$$
 then let $-F(0)>\delta>0$ and choose $\lambda$ sufficiently small that 
 $$F\left(\tfrac{\lambda}{N}\right)<-\delta<0.$$
 Then we have the following differential inequality for $r(t)$,
 $$\dot{r}(t) \leq -\delta$$
 so the solution $u(x,t)$ must become extinct before time $1/\delta$.
 \end{ex}
 Even without blow up the solution may be discontinuous for certain initial data:  one can construct an example in one dimension where two adjacent drops merge after a short time, causing a discontinuity. To address the long time behavior of our solutions without the complexity of multiple components, we will make restrictions on the initial data such that our solutions actually are continuous.
 \end{rem}
 
  The following lemma clarifying the connection between classical and viscosity solutions is standard in the viscosity solutions theory. For example see \cite{BrandleVazquez05, Kim03}.
 
\begin{lem}
Suppose that $u:Q \to [0,+\infty)$ is a classical subsolution (supersolution) of \eqref{eqn: CLML} in $Q$ then it is also a viscosity subsolution (supersolution) in $Q$.  Conversely suppose that $u$ is a viscosity subsolution (supersolution) of \eqref{eqn: CLML} in $Q$ and moreover $u$ is sufficiently regular, in particular
$$u \in C^{2,1}(\overline{\Omega(u)}\cap Q),$$
then $u$ is a classical subsolution (supersolution) in $Q$.
\end{lem}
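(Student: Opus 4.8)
The plan is to establish the two implications separately, in each case by the standard ``first time of contact'' comparison argument; I treat the subsolution statement, the supersolution statement being entirely symmetric. Throughout, recall that for a profile $w$ with $w\in C^{2,1}(\overline{\Omega(w)}\cap Q)$ and $|Dw|>0$ on $\Gamma(w)$ the free boundary condition in \eqref{eqn: CLML} reads $w_t=F(|Dw|)\,|Dw|$ on $\Gamma(w)$, equivalently the outward normal velocity of $\Gamma(w)$ equals $F(|Dw|)$, and that a \emph{strict} classical supersolution satisfies the supersolution inequalities strictly.

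\textbf{Classical $\Rightarrow$ viscosity.} Let $u$ be a classical subsolution, let $Q'\subseteq Q$ be a parabolic neighbourhood, and let $\phi$ be a strict classical supersolution with $u\prec\phi$ on $\partial_pQ'$; I must show $u\le(\phi)_+$ on $Q'$. Suppose not. Replacing $\phi$ by $\phi+\delta t$ for small $\delta>0$ (which keeps it a strict supersolution) and using upper semicontinuity of $u$, continuity of $(\phi)_+$, and compactness of $\overline{\Omega(u)}\cap\overline{Q'}$, one finds a first time $t_0$ and an interior point $(x_0,t_0)$ of $Q'$ with $u(x_0,t_0)=(\phi)_+(x_0,t_0)$ and $u\le(\phi)_+$ for $t\le t_0$. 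If $\phi(x_0,t_0)>0$ then $x_0$ is an interior spatial maximum of $u(\cdot,t_0)-\phi(\cdot,t_0)$, so $-\Delta(u-\phi)(x_0,t_0)\ge0$; but the subsolution inequality for $u$ and the strict supersolution inequality for $\phi$ force $-\Delta(u-\phi)(x_0,t_0)<0$ (invoking the strong maximum principle if the interior inequality for $\phi$ is only non-strict), a contradiction. If $\phi(x_0,t_0)=0$, then $x_0\in\Gamma_{t_0}(u)\cap\Gamma_{t_0}(\phi)$, the two free boundaries are $C^1$ near $x_0$, internally tangent with $\Omega_t(u)\subseteq\Omega_t(\phi)$ locally for $t\le t_0$, hence share a unit normal; comparing one-sided normal derivatives of $\phi\ge u\ge0$ gives $|Du(x_0,t_0)|\le|D\phi(x_0,t_0)|$, and then monotonicity of $F$ together with the strict free boundary inequality for $\phi$ yields that $\Gamma(\phi)$ moves strictly faster outward at $x_0$ than $\Gamma(u)$. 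Integrating this velocity gap backwards from $t_0$ forces $\Omega_t(u)\not\subseteq\Omega_t(\phi)$ for $t$ slightly smaller than $t_0$, contradicting minimality of $t_0$.

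\textbf{Viscosity $+$ regularity $\Rightarrow$ classical.} Argue by contraposition: suppose $u\in C^{2,1}(\overline{\Omega(u)}\cap Q)$ is a viscosity subsolution that is not a classical subsolution, and exhibit an admissible test supersolution violating Definition~\ref{def: subsolution}. If the interior inequality fails, there is $(x_0,t_0)$ with $x_0\in\Omega_{t_0}(u)$ and $-\Delta u(x_0,t_0)=\lambda(t_0)+2\delta$, $\delta>0$; by continuity this persists on a small cylinder $Q'=B_r(x_0)\times(t_0-r^2,t_0]\subset\subset\Omega(u)$, and $\phi(x,t):=u(x,t)+\epsilon\big(r^{-2}(|x-x_0|^2+t_0-t)-1\big)$ is, for $\epsilon$ small (after a harmless further shift by a tiny positive constant), a strict classical supersolution on $Q'$ with vacuous free boundary condition, satisfying $u\prec\phi$ on $\partial_pQ'$ but $u(x_0,t_0)>(\phi)_+(x_0,t_0)$, contradicting the subsolution property. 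If instead the free boundary inequality fails, there is $(x_0,t_0)$ with $x_0\in\Gamma_{t_0}(u)$ and $u_t(x_0,t_0)>F(|Du(x_0,t_0)|)\,|Du(x_0,t_0)|$, where $|Du(x_0,t_0)|>0$ (otherwise the free boundary condition would read $0>0$); one then builds, in a neighbourhood of $(x_0,t_0)$ and in coordinates adapted to the $C^1$ hypersurface $\Gamma(u)$, a flat strict classical supersolution $\phi$ whose positive phase is a translating half-space whose boundary moves at exactly its own admissible speed $F(|D\phi|)$, with slope and speed tuned so that $\Omega(\phi)$ lies strictly inside $\Omega(u)$ on the parabolic boundary of a small cylinder but $\Gamma(u)$, moving strictly faster, overtakes it at $(x_0,t_0)$ — again contradicting Definition~\ref{def: subsolution}.

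The supersolution case follows by reversing inequalities in each step, consistently with Definition~\ref{def:supersolution}. The only genuinely delicate point is the flat-barrier construction in the free boundary case of the converse: one must simultaneously arrange strict ordering of $\Omega(\phi)$ and $\Omega(u)$ on the parabolic boundary, strict crossing at $(x_0,t_0)$, and the supersolution inequalities for $\phi$, which is precisely where the continuity of $F$ and of $Du$ up to $\Gamma(u)$, together with the strict velocity gap, are used; the remaining ingredients (existence of a first contact point, internal tangency of touching $C^1$ free boundaries, and the one-sided gradient comparison) are routine and are carried out in detail for closely related problems in \cite{Kim03, BrandleVazquez05}.
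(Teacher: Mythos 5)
The paper itself does not prove this lemma; it records it as standard and points to \cite{Kim03,BrandleVazquez05}, so your proposal has to stand on its own. Your first implication (classical $\Rightarrow$ viscosity, via a first contact point, the elliptic second-derivative contradiction at an interior touching point, and the normal-velocity comparison $\mathcal{V}_u\le F(|Du|)\le F(|D\phi|)<\mathcal{V}_\phi$ at a free boundary touching point) and the interior case of the converse (the paraboloid perturbation $u+\epsilon(r^{-2}(|x-x_0|^2+t_0-t)-1)$ on a cylinder compactly contained in $\Omega(u)$) are exactly the standard arguments and are essentially correct.

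The gap is in the step you yourself flag as delicate: the barrier construction when the free boundary inequality fails. As written it does not work, for three concrete reasons. First, a ``flat'' translating half-space profile has $-\Delta\phi=0<\lambda(t)$ (recall $\lambda>0$), so it is not even a supersolution of the interior equation, let alone a strict one; you must bend the profile, e.g.\ take $\phi\approx a\,(x\cdot e-st)_+-\tfrac{\lambda+\delta}{2}(x\cdot e-st)_+^2$ or a radial barrier, so that $-\Delta\phi>\lambda$ while $|D\phi|$ on $\Gamma(\phi)$ stays close to the chosen slope $a$. Second, a boundary that ``moves at exactly its own admissible speed $F(|D\phi|)$'' is a solution of the boundary condition, not a strict supersolution; you need speed $F(|D\phi|)+\delta'$, and it is precisely the continuity of $F$ together with the assumed gap $u_t(x_0,t_0)>F(|Du|)|Du|(x_0,t_0)$ that lets you choose $a>|Du|(x_0,t_0)$ and $\delta'>0$ small so that $F(a)+\delta'$ is still strictly below the normal speed of $\Gamma(u)$ at $(x_0,t_0)$. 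Third, your ordering on the parabolic boundary is reversed: to contradict Definition~\ref{def: subsolution} you need $u\prec\phi$ on $\partial_pQ'$, which forces $\Omega(u)\cap\partial_pQ'$ to lie strictly inside $\Omega(\phi)$ with $u<\phi$ there; the contradiction then comes from $\Gamma(u)$, moving faster, exiting $\Omega(\phi)$ at (or just after) $(x_0,t_0)$, so that $u>(\phi)_+$ at some interior point of $Q'$. With ``$\Omega(\phi)$ strictly inside $\Omega(u)$ on $\partial_pQ'$'' the hypothesis $u\prec\phi$ fails and no contradiction is obtained. Finally, the parenthetical dismissal of the degenerate case $|Du|(x_0,t_0)=0$ is not airtight: there the failed inequality reads $u_t(x_0,t_0)>0$, not ``$0>0$'', and one must argue separately (e.g.\ by differentiating $u=0$ along the moving boundary, using that $\Gamma(u)$ moves with finite speed for a $C^{2,1}$ profile) that $u_t=0$ whenever $Du=0$ on $\Gamma(u)$, so that no violation can occur at such points. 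None of these repairs requires a new idea—they are the standard fixes in \cite{Kim03,BrandleVazquez05}—but as stated the barrier in this case is not an admissible strict classical supersolution and the comparison configuration is set up backwards.
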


An important basic property of viscosity solutions theory is the stability of viscosity solutions under uniform convergence.  We state this in the following lemma.  For example see \cite{CrandallIshiiLions92}.
\begin{lem}\label{lem: uniform stability}
Let $u_n:\real^N\times[0,T]$ be a sequence of viscosity solutions of the problems
\begin{equation*}
\left\{
\begin{array}{lll}
-\Delta u_n(x,t) = \lambda_n(t) & \text{ in } & \Omega_t(u_n), \\ \\
 \partial_tu_n = F_n(|Du_n|)|Du_n| & \text{ on } & \Gamma_t(u_n). 
\end{array}\right.
\end{equation*}
where $F_n$ are all monotone increasing and continuous.  Suppose that $u_n \to u$, $F_n \to F$ and $\lambda_n \to \lambda$ uniformly on compact sets, then $u$ is a viscosity solution of 
\begin{equation*}
\left\{
\begin{array}{lll}
-\Delta u(x,t) = \lambda(t) & \text{ in } & \Omega_t(u), \\ \\
 \partial_tu = F(|Du|)|Du| & \text{ on } & \Gamma_t(u). 
\end{array}\right.
\end{equation*}
\end{lem}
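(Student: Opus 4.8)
The plan is to check directly from Definitions~\ref{def: subsolution}--\ref{viscositysolution} that the uniform limit $u$ is both a viscosity subsolution and a viscosity supersolution of the limiting problem; the two cases are symmetric, so I only describe the subsolution part. Accordingly, fix a parabolic neighborhood $Q'$ and a strict classical supersolution $\phi$ of the limiting problem with $u \prec \phi$ on $\partial_p Q'$; the goal is to deduce $u \le (\phi)_+$ in $Q'$. The idea is to transport this configuration to the $n$-th problem, where $u_n$ is by hypothesis a subsolution, and then pass to the limit.

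Two facts must be established for $n$ large. \emph{(i) $\phi$ is still a strict classical supersolution of the $n$-th problem.} Since $\phi$ is $C^{2,1}$ on $\overline{\Omega(\phi)}\cap\overline{Q'}$ with strict inequalities $-\Delta\phi>\lambda$ in $\Omega(\phi)$ and $\partial_t\phi>F(|D\phi|)|D\phi|$ on $\Gamma(\phi)$, and with $|D\phi|$ bounded away from $0$ on the compact set $\Gamma(\phi)\cap\overline{Q'}$ (nondegeneracy being part of strictness), the defects in these inequalities are bounded below by a positive constant; because $\lambda_n\to\lambda$ and $F_n\to F$ uniformly on compacts, the inequalities persist, with $\lambda_n$, $F_n$ in place of $\lambda$, $F$, once $n$ is large. \emph{(ii) $u_n\prec\phi$ on $\partial_p Q'$}, possibly after replacing $\phi$ by an arbitrarily small perturbation that is still a strict classical supersolution. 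Here one uses that $K:=\overline{\Omega(u)}\cap\partial_p Q'$ is compact and, since $u\ge 0$ and $u<\phi$ on $K$, contained in $\{\phi>0\}$, so $\phi$ is bounded below by a positive constant on a neighborhood of $K$ while $u<\phi$ on $K$ with a uniform gap; combined with the uniform convergence $u_n\to u$ on the compact set $\partial_p Q'$, together with a control on how far the positive phase of $u_n$ on $\partial_p Q'$ can stray from $K$, this yields $u_n\prec\phi$ on $\partial_p Q'$ for $n$ large. Granting (i) and (ii), the subsolution property of $u_n$ for the $n$-th problem gives $u_n\le(\phi)_+$ in $Q'$ for all large $n$; letting $n\to\infty$ (and, if a perturbation of $\phi$ was used, letting its parameter tend to $0$), the uniform convergence $u_n\to u$ yields $u\le(\phi)_+$ in $Q'$. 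Hence $u$ is a viscosity subsolution, and by the symmetric argument (testing against strict classical subsolutions) a viscosity supersolution, so it is a viscosity solution of the limiting problem.

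The main obstacle is fact (ii), i.e.\ inheriting the strict separation on the parabolic boundary in the limit: in contrast to the classical fixed-domain viscosity theory, the positive phases $\Omega(u_n)$ need not converge strongly to $\Omega(u)$, so a small (necessarily shrinking, hence ultimately harmless) positive phase of $u_n$ could a priori sit where $u\equiv 0$ and where $\phi$ also vanishes. The resolution exploits precisely that one tests against a \emph{strict} classical supersolution: the strict inequalities and the nondegeneracy $|D\phi|>0$ on $\Gamma(\phi)$ leave room to absorb this defect --- by a small dilation or other perturbation of $\phi$ keeping it a strict classical supersolution of the $n$-th problem --- after which the argument closes. Everything else is a routine stability argument of the type standard in viscosity solution theory (cf.\ \cite{CrandallIshiiLions92}).
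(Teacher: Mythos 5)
First, a remark on the comparison: the paper does not actually prove Lemma \ref{lem: uniform stability}; it is stated as standard with a pointer to \cite{CrandallIshiiLions92}, so your proposal has to be judged against what a complete argument in this free-boundary setting would require rather than against a written proof.

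Your overall strategy (transport the test configuration $(Q',\phi)$ to the $n$-th problem and pass to the limit) is the right one, and step (i) is fine. The gap is in step (ii), and it is exactly at the point you flag but do not resolve. The relation $u_n\prec\phi$ of Definition \ref{strictlyseparated} is a constraint on the \emph{support} of $u_n$: it fails as soon as $u_n>0$ at a single point of $\partial_p Q'$ lying outside $\overline{\Omega(\phi)}$, no matter how small the value of $u_n$ there. Uniform convergence $u_n\to u$ controls values, not supports: $u_n$ may carry a stray positive phase (e.g. a thin extra component of height $o(1)$) sitting at distance of order one from $\overline{\Omega(u)}$ and from $\overline{\Omega(\phi)}$, and intersecting $\partial_p Q'$ for every $n$. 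Your proposed remedy --- a small dilation or perturbation of $\phi$ that remains a strict classical supersolution --- only enlarges $\Omega(\phi)$ by an arbitrarily small amount and therefore cannot reach such far-away stray support; the phrase ``a control on how far the positive phase of $u_n$ on $\partial_p Q'$ can stray from $K$'' is precisely the missing ingredient, and nothing in the hypotheses of the lemma supplies it. (Note also that the two directions are not symmetric: for the supersolution property the support constraint in $\prec$ falls on the \emph{fixed} test function $\phi$, so that direction goes through with just the uniform gap argument; the difficulty is confined to the subsolution direction you treat.) A correct completion needs either an additional input guaranteeing that the positive phases do not stray --- for instance Hausdorff convergence $\Omega_t(u_n)\to\Omega_t(u)$, which is exactly what the paper establishes (via uniform strong star-shapedness and the equicontinuity of Corollary \ref{cor: equicontinuity}) before invoking this lemma in Lemma \ref{lem: sending M to infty} and in Theorem \ref{thm: convergence thm}, and with which your $\delta$-gap argument does close --- or a genuinely different treatment of the support condition (e.g. localizing to a sub-neighborhood whose parabolic boundary provably avoids $\Omega(u_n)$, which again requires information about where $\Omega(u_n)$ can be, not just about the size of $u_n$). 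As written, the argument does not establish the lemma.
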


As mentioned in the introduction, it is useful sometimes to formulate the problem \eqref{eqn: CLML} as an evolution on domains in $\real^N$.  We quickly define what it means for an evolution
$$\Omega_t : [0,+\infty) \to \{ \text{ bounded subsets of } \real^N\} $$
to solve \eqref{eqn: CLML} in the viscosity sense.
\begin{DEF}
The evolution $\Omega_t$ is a supersolution (subsolution) of \eqref{eqn: CLML} if and only if, 
\begin{equation}\label{eqn: drp above}
u[\Omega_t] := 
\left \{ \begin{array}{lll}
         -\Delta u(x,t)  = \lambda(t) & \text{ in }&  \Omega_t\\ \\
         u=0 & \text{ on } & \Gamma_t.
         \end{array}\right.
\end{equation}
is a supersolution (subsolution) in the sense of Definition \ref{def:supersolution} (Definition \ref{def: subsolution}).
\end{DEF}

\subsection{Sup and Inf Convolutions}
An important property of subsolutions (supersolutions) is the closure under sup (inf) convolutions. These convolutions will be used to perturb the originial solutions and to overcome the lack of scaling invariance in the problem \eqref{eqn: CLML}.

\begin{lem}\label{lem: spaceconvs}
\textup{(Convolutions in space) }
\begin{enumerate}[(a)]
\item Let $u$ be a subsolution of \eqref{eqn: CLML} on $\mathbb{R}^N\times[0,+\infty)$, and $r>0$, $c \geq 0$ and define the sup-convolution of $u$ 
\begin{equation}\label{eqn: spacesupconv}
 \widetilde{u}(x,t) := \sup_{y \in B_{r-ct}(x)}u(y,t). 
 \end{equation}
 Then $\widetilde{u}$ is a subsolution with free boundary speed $F(|D\widetilde{u}|)-c$ as long as $r-ct>0$.
 \item Let $u$ be a supersolution of \eqref{eqn: CLML} on $\mathbb{R}^N\times[0,+\infty)$, and $r,c>0$ and define the inf-convolution of $u$ 
\begin{equation}\label{eqn: spaceinfconv}
 \widetilde{u}(x,t) := \inf_{y \in B_{r-ct}(x)}u(y,t). 
 \end{equation}
 Then $\widetilde{u}$ is a supersolution with free boundary speed $F(|D\widetilde{u}|)+c$ as long as $r-ct>0$.
\end{enumerate}
\end{lem}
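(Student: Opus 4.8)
The plan is to prove (a) and then get (b) by an essentially identical argument (or by applying (a) to $-u$ with the roles of sub/super reversed, after accounting for sign conventions). So I focus on (a). The strategy is the standard one in viscosity-solution theory for free boundary problems: verify the defining property of a subsolution directly, namely that $\widetilde u$ cannot be crossed from above, in a parabolic neighborhood, by a strict classical supersolution of the \emph{shifted} problem with free boundary speed $F(|D\phi|)-c$. So suppose $\phi$ is such a strict classical supersolution on some $Q' = U' \times (a,b]$, with $\widetilde u \prec \phi$ on $\partial_p Q'$, and suppose for contradiction that $\widetilde u$ touches $\phi$ from below at some first time $t_0 \in (a,b]$, at a point $x_0$. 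By the definition of the sup-convolution, there is $y_0 \in \overline{B_{r-ct_0}(x_0)}$ with $u(y_0,t_0) = \widetilde u(x_0,t_0) = \phi(x_0,t_0)$, and $u(\cdot,t) \le \widetilde u(\cdot + \text{(shift)},t)$ near this contact point. The key geometric observation is that the map $(x,t)\mapsto (x - x_0 + y_0\ ?\ ,\,t)$ — more precisely the translation taking $x_0$ to $y_0$ combined with an inward normal shift of the ball radius — converts $\phi$ into a test function for $u$ itself: define $\psi(x,t) := \phi(x + z(t), t)$ where $z(t)$ is chosen so that $z(t_0) = y_0 - x_0$ and $|z(t)|$ tracks the radius $r - ct$ of the convolution ball, so that the ball $B_{r-ct}(x)$ always contains $x + z(t)$ with equality of distance at the contact point.

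The heart of the matter is the free boundary condition bookkeeping. In the interior $\{\psi > 0\}$, $\psi$ is just a translate of $\phi$ in space (for each fixed $t$), so $-\Delta \psi = -\Delta\phi \ge \lambda(t)$ is preserved: spatial translation commutes with the Laplacian. On the free boundary, the normal velocity of $\{\psi>0\}$ differs from that of $\{\phi>0\}$ by the component of $\dot z(t)$ in the outward normal direction; since $|z(t)| = r - ct$ and the contact happens when $x+z(t)$ is on the sphere $\partial B_{r-ct}(x)$ in the \emph{outward} radial direction of the sup-convolution, the worst case is $|\dot z| = c$ pointing so as to \emph{slow down} the expansion, i.e. it subtracts at most $c$ from the normal speed. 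Hence $\psi$ is a strict classical supersolution of the \emph{original} problem \eqref{eqn: CLML} (speed $F(|D\psi|)$) near the contact point — because $\phi$ was a strict supersolution of the problem with speed $F(|D\phi|) - c$, and the $-c$ was exactly compensated. Since $u$ is a subsolution of \eqref{eqn: CLML} and $u \prec \psi$ on the parabolic boundary of a small neighborhood (inherited from $\widetilde u \prec \phi$ plus the fact that $t_0$ was the first crossing time), we get $u \le (\psi)_+$ there, contradicting $u(y_0,t_0) = \psi(y_0 - z(t_0) + \dots, t_0)$... i.e. contradicting the assumed first touching. This contradiction shows no such crossing occurs, so $\widetilde u$ is a subsolution with the claimed speed.

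For part (b), the inf-convolution $\widetilde u(x,t) = \inf_{y\in B_{r-ct}(x)} u(y,t)$ is handled the same way: one checks $\widetilde u$ cannot be crossed from below by a strict classical supersolution $\phi$; the translate $\psi(x,t) = \phi(x - z(t), t)$ now realizes the infimum, the interior equation is again preserved under spatial translation, and the normal-velocity shift by $|\dot z| \le c$ this time can only \emph{speed up} the contraction of $\{\phi>0\}$, so $\phi$'s being a supersolution with speed $F(|D\phi|)+c$ makes $\psi$ a supersolution of the original problem, and the supersolution property of $u$ gives the needed inequality. One small technical point to be careful about throughout: the radius $r - ct$ must stay positive, which is exactly the hypothesis $r - ct > 0$, and near $t = (r/c)$ the convolution ball degenerates to a point so $\widetilde u \to u$; the argument only needs to run on time intervals where $r - ct$ is bounded below, which suffices since the subsolution property is local in time.

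I expect the main obstacle to be the rigorous treatment of the free boundary touching: a priori $\widetilde u$ is only upper semicontinuous, the sup is over a closed ball (attained, by upper semicontinuity, at some $y_0$ possibly on the boundary of the ball), and one must handle the case where the contact point $x_0$ lies on $\Gamma(\widetilde u)$ versus in the interior of $\{\widetilde u > 0\}$ — in the interior case it is a routine comparison of the Laplacian under translation, but on the free boundary one must carefully set up the moving test function $\psi$ and verify that the geometry of the shrinking ball produces exactly a $\pm c$ correction to the normal velocity and no worse. Making the choice of $z(t)$ precise (it should be the radial vector of length $r-ct$ in the direction $y_0 - x_0$, so that $x \mapsto x + z(t)$ maps the contact configuration correctly and $\dot z(t) = -c \, (y_0-x_0)/|y_0-x_0|$ has length exactly $c$) and checking the sign works out in one's favor is where the real content lies; everything else is bookkeeping that mirrors standard arguments in, e.g., \cite{Kim03}.
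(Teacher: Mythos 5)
Your overall strategy is the same as the paper's (transfer the test function to $u$ via a moving perturbation whose drift of size $c$ compensates the shift in the free boundary speed, using the convolution inequality to keep the boundary ordering), but as written there are two real problems. First, the bookkeeping of the translation is inconsistent: with $\psi(x,t)=\phi(x+z(t),t)$ and $z(t_0)=y_0-x_0$ the contact configuration is mapped the wrong way (you want the translate at time $t_0$ to carry $y_0$ to $x_0$, so that $\psi(y_0,t_0)=\phi(x_0,t_0)=u(y_0,t_0)$), and your two statements ``it subtracts at most $c$ from the normal speed'' and ``the $-c$ was exactly compensated'' cannot both hold: for part (a) you need the drift to \emph{add} $c$ to the normal velocity of the transferred function, which does happen at the contact point precisely because $y_0-x_0$ is the inward normal direction there (the distance-realizing geometry forces the alignment).

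Second, and more seriously, a rigid translation changes the normal velocity by $-\dot z\cdot\nu(x)$, which equals the full $+c$ only at boundary points whose normal is aligned with the contact normal; at nearby points of $\Gamma(\phi)$ inside the test neighborhood the correction is strictly smaller and can even have the opposite sign, so your $\psi$ is \emph{not} a strict classical supersolution of the original problem \eqref{eqn: CLML} on the whole parabolic neighborhood $Q'$, which is what Definition \ref{def: subsolution} requires before you may invoke the sub/supersolution property of $u$. You could try to localize to a tiny cylinder where the normals of $\Gamma(\phi)$ stay within the margin allowed by strictness, but then you also need the strict separation $u\prec\psi$ on the parabolic boundary of that small cylinder, which your sketch does not produce. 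The paper sidesteps both difficulties: it translates the test function by the fixed vector joining the crossing point to the point realizing the convolution, then sup-convolves it over balls of radius $c(t_0-t)$ (a direction-independent change of the free boundary speed by exactly $c$), re-solves the Dirichlet problem inside the enlarged positive set to restore $-\Delta\psi_2=\lambda(t)$, and uses the radius bookkeeping $|x_1-x_0|+c(t_0-t)\le r-ct$ to get the ordering against $u$ on the entire parabolic boundary of the translated cylinder. Your translation idea can likely be repaired along these lines, but the velocity-correction step as you state it would fail.
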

Before we prove this let us note the continuity properties of the sup and inf convolutions.
\begin{lem}\label{lem: sup inf continuity} \textup{(Continuity of sup and inf convolutions)}
Let $f : \mathbb{R}^N\to \mathbb{R}$ and let $S : \mathbb{R}^N \to \mathcal{B}(\mathbb{R}^N)$ such that $S(x)$ is compact for all $x \in \mathbb{R}^N$ and $S$ is continuous under the Hausdorff distance $d_H$. Define the sup and inf convolutions 
$$ f^{S}(x) = \sup_{h\in S(x)}f(y+h) \quad \text{ and } \quad f_{S}(x)=\inf_{h \in S(x)}f(y+h). $$
Then we have the following
\begin{enumerate}[(i)]
\item If $f$ is upper semi-continuous then $f^S$ is as well.
\item If $f$ is lower semi-continuous the $f_S$ is as well.
\item If $f$ is continuous then both $f^S$ and $f_S$ are as well. 
\end{enumerate}
\end{lem}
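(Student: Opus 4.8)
The statement to prove is Lemma 3.19 (Continuity of sup and inf convolutions): given $f : \mathbb{R}^N \to \mathbb{R}$ and a set-valued map $S : \mathbb{R}^N \to \mathcal{B}(\mathbb{R}^N)$ with $S(x)$ compact and $S$ continuous in the Hausdorff distance, then $f^S(x) = \sup_{h \in S(x)} f(x+h)$ inherits upper semi-continuity from $f$, $f_S$ inherits lower semi-continuity, and both are continuous if $f$ is.

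\begin{proof}
We prove (i); then (ii) follows by replacing $f$ with $-f$ and observing $(-f)^S = -(f_S)$, and (iii) is immediate from (i) and (ii).

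First reduce to the case of a \emph{fixed} compact set. Fix $x_0 \in \mathbb{R}^N$ and let $\epsilon>0$. Since $f$ is upper semi-continuous and $S(x_0)$ is compact, the set $K := x_0 + S(x_0)$ is compact, so $f$ is bounded above on a neighborhood of $K$ and attains its supremum over $K$; moreover for each $y \in K$ there is an open ball $B_{r_y}(y)$ on which $f < f(y) + \epsilon \le f^S(x_0) + \epsilon$. By compactness finitely many such balls cover $K$; let $\delta_1>0$ be a Lebesgue number, so that the $\delta_1$-neighborhood $N_{\delta_1}(K)$ satisfies $\sup_{N_{\delta_1}(K)} f \le f^S(x_0) + \epsilon$. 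Now use Hausdorff continuity of $S$: there is $\delta_2>0$ such that $|x-x_0|<\delta_2$ implies $d_H(S(x),S(x_0))<\delta_1/2$, hence $x + S(x) \subseteq N_{\delta_1}(x_0 + S(x_0)) = N_{\delta_1}(K)$ for $|x - x_0| < \delta := \min\{\delta_1/2,\delta_2\}$. Therefore $f^S(x) = \sup_{h \in S(x)} f(x+h) \le \sup_{N_{\delta_1}(K)} f \le f^S(x_0) + \epsilon$ for all $x$ with $|x-x_0|<\delta$, which is precisely upper semi-continuity of $f^S$ at $x_0$.

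The main point requiring care is that Hausdorff-closeness of $S(x)$ to $S(x_0)$ controls only the set $x+S(x)$, not the set $x_0 + S(x)$ — one must absorb the translation of the base point into the enlargement, which is why we pass to a $\delta_1$-neighborhood of $K$ rather than to $K$ itself, and shrink $\delta$ accordingly; this also forces using a single modulus of semicontinuity valid uniformly on the compact set $K$ (obtained via the Lebesgue-number argument) rather than the pointwise definition. Everything else is routine: for (ii) the same argument applied to $-f$ gives lower semi-continuity of $f_S = -(-f)^S$, and for (iii), continuity of $f$ gives both one-sided bounds $f^S(x_0) - \epsilon \le f^S(x)$ (by choosing a point $h_0 \in S(x_0)$ nearly achieving the sup and a nearby $h \in S(x)$, again using Hausdorff continuity) and $f^S(x) \le f^S(x_0) + \epsilon$ from (i), hence continuity of $f^S$; similarly for $f_S$.
\end{proof}
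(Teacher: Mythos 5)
Your proof is correct and follows essentially the same route as the paper's: compactness of $x_0+S(x_0)$ produces a uniform $\epsilon$-enlargement on which $f$ is controlled, and Hausdorff continuity of $S$ places $x+S(x)$ inside that enlargement. The differences are only cosmetic --- you phrase upper semicontinuity pointwise via a Lebesgue-number covering where the paper works with sublevel sets and the positive distance between the compact set $x_0+S(x_0)$ and the closed set $\{f\geq\alpha\}$, and you spell out the reduction of (ii) to (i) via $f\mapsto -f$ and the lower bound in (iii), which the paper leaves as brief remarks.
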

\begin{proof}
We will prove (i), (ii) is similar and (iii) follows from (i) and (ii) by noting that suprema of lower semi-continuous functions are lower semicontinuous and infima of upper semicontinuous functions are upper semi-continuous. Let $\alpha \in \mathbb{R}$ we will show that the sub level sets $\{f^S < \alpha\}$ are open.  Let $x_0 \in \{f^S < \alpha\}$ then $x_0+S(x_0) \subset \{f <\alpha\}$.  Since $x_0+S(x_0)$ is compact, $\{f\geq\alpha\}$ is closed, and they are disjoint, they must be a positive distance apart. That is, there exists $\epsilon>0$ such that 
$$ \bigcup_{h \in S(x_0)} B_\epsilon(x_0+h) \subseteq \{f<\alpha\}. $$
By continuity of $S$ there exists $\delta<\epsilon/2$ such that $|x-x_0|<\delta$ means that $d_H(S(x),S(x_0))<\epsilon/2$. Let $x \in B_{\epsilon/2}(x_0)$ and since upper semi-continuous functions achieve their maximum on compact sets there exists $h \in S(x)$ such that $f^S(x) = f(x+h)$.  Then there exists $h_0$ in $S(x_0)$ such that $|h-h_0| <\epsilon/2$ so that
$$ |(x+h)-(x_0+h_0)| <\epsilon\quad  \text{ and hence }\quad x+h \in B_{\epsilon}(x_0+h_0) \subseteq \{f<\alpha\}. $$ 
So $f^S(x) <\alpha$ and therefore $ B_{\epsilon/2}(x_0) \subseteq \{f^S<\alpha\}$.

\end{proof}
{\it The proof of Lemma \ref{lem: spaceconvs}:} We will only prove the inf convolution case (b), since the proof of (a) is essentially the same.  As in \eqref{eqn: spaceinfconv} for $r,c>0$ define the inf convolution of a supersolution $u$,
\begin{equation*}
 \widetilde{u}(x,t) := \inf_{y \in B_{r-ct}(x)}u(y,t). 
 \end{equation*}
 Suppose that for some parabolic domain $Q \subseteq \real^N \times [0,r/c)$ there exists a strict classical subsolution $\phi\in C^{2,1}(Q \cap \overline{\Omega(\phi)})$ with the free boundary speed $F(|D\phi|) + c$, which satisfies
 $$ \phi \prec \widetilde{u} \ \hbox{ on } \ \partial_pQ \ \hbox{ and } \  \phi(x_0,t_0)>\widetilde{u}(x_0,t_0) \ \hbox{ for some } (x_0,t_0) \in Q.$$
Let $x_1$ be the point where $\widetilde{u}(x_1,t_0)= u(x_0,t_0)$. Now we define the translated (and convoluted) test function
$$ \psi (x,t) := \sup_{|x-y| \leq c(t_0-t), (y,t)\in Q} \phi (y+(x_0-x_1),t) \hbox{ for } t\leq t_0,$$
and  define $\psi_2$ by  the solution of the Dirichlet problem
$$
-\Delta \psi_2(\cdot,t) = \lambda(t) \hbox{ in } \{\psi>0\}\cap Q, \quad \psi_2=\psi \hbox{ on the lateral boundary of }\{\psi>0\}\cap Q.
$$
Note that the free boundary speed of $\psi_2$ has been decreased by $c$ over the free boundary speed of $\phi$ in $Q$.  In other words, $\psi_2$ is a classical subsolution with free boundary speed $F(|D\psi|)$. 

Due to the definition of $\widetilde{u}$, we have $\psi_2 \leq u$ on the parabolic neighborhood of $Q +(x_1-x_0)$ and $\psi_2>u$ at $(x_1,t_0)$, which yields a contradiction. 


\hfill$\Box$

\subsection{Comparison}\label{sec: Comparison}
We state the general strictly separated comparison principle for viscosity solutions of \eqref{eqn:CLML},
\begin{thm}\label{thm: strictly separated comparison} \textup{(Comparison for strictly separated data)}
Suppose $u$ is a supersolution and $v$ a subsolution of \eqref{eqn: CLML} on $Q$. Suppose $u$ and $v$ are strictly separated (defined below) on the parabolic boundary of $Q$,
$$v \prec u \quad \text{ on } \quad \partial_p Q $$
then $v < u$ in $\overline{\Omega(v)} \cap Q$, in particular $\Gamma(v)$ cannot touch $\Gamma(u)$ from the interior on any compact subset of $Q$.
\end{thm}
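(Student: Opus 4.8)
The plan is to argue by contradiction through a \emph{first-touching-time} analysis, localized near a space-time point where the two free boundaries first meet. The one structural feature that makes this tractable is that $u$ and $v$ solve the \emph{same} interior equation $-\Delta(\cdot)=\lambda(t)$; hence in any region contained in $\Omega(v)\cap\Omega(u)$ the function $u-v$ is, at each fixed time, harmonic, so the strong maximum principle and Hopf's lemma are available. The naive comparison argument fails only at \emph{equality} of the free-boundary velocities, and the role of the sup/inf convolutions of Lemma~\ref{lem: spaceconvs} is both to regularize the a priori merely semicontinuous free boundaries and to open up a \emph{strict} velocity gap.

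\textbf{Regularization.} Fix small $r,s,c,c'>0$ and replace $v$ by its shrinking-ball sup-convolution $\widetilde v(x,t)=\sup_{y\in B_{r-ct}(x)}v(y,t)$ and $u$ by the shrinking-ball inf-convolution $\widetilde u(x,t)=\inf_{y\in B_{s-c't}(x)}u(y,t)$. By Lemma~\ref{lem: spaceconvs}, on the time interval where $r-ct>0$ and $s-c't>0$ the profile $\widetilde v$ is a subsolution of \eqref{eqn: CLML} with free-boundary speed $F(|D\widetilde v|)-c$ and $\widetilde u$ a supersolution with speed $F(|D\widetilde u|)+c'$; moreover $\Omega_t(\widetilde v)$ has the interior ball property of radius $r-ct$ while $\Omega_t(\widetilde u)$ has the exterior ball property of radius $s-c't$. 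Since $\widetilde v\downarrow v$ and $\widetilde u\uparrow u$ as $r,s,c,c'\to 0$, and $v\prec u$ on $\partial_pQ$, using the continuity statements of Lemma~\ref{lem: sup inf continuity} we may arrange (shrinking the cylinder slightly if necessary) that $\widetilde v\prec\widetilde u$ on $\partial_pQ$. It therefore suffices to prove $\widetilde v<\widetilde u$ on $\overline{\Omega(\widetilde v)}\cap Q$ and then let $r,s,c,c'\to 0$.

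\textbf{Touching configuration and gradient inequality.} Suppose $\widetilde v<\widetilde u$ fails, and let $t_0$ be the infimum of times $t$ at which $\overline{\Omega_t(\widetilde v)}$ ceases to be compactly contained in $\Omega_t(\widetilde u)$, or at which $\widetilde v$ attains $\widetilde u$. By semicontinuity $\widetilde v\le\widetilde u$ at time $t_0$; an interior touching (a point $x_0$ interior to $\Omega_{t_0}(\widetilde u)$ with $\widetilde v(x_0,t_0)=\widetilde u(x_0,t_0)>0$) is excluded by the strong maximum principle applied to the non-negative harmonic function $\widetilde u-\widetilde v$, so there is $x_0\in\Gamma_{t_0}(\widetilde v)\cap\Gamma_{t_0}(\widetilde u)$. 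At $x_0$ the interior ball $B^{\mathrm{in}}\subseteq\Omega_{t_0}(\widetilde v)\subseteq\Omega_{t_0}(\widetilde u)$ and the exterior ball $B^{\mathrm{out}}\subseteq\R^N\setminus\Omega_{t_0}(\widetilde u)\subseteq\R^N\setminus\Omega_{t_0}(\widetilde v)$ are both tangent at $x_0$, so $\Gamma_{t_0}(\widetilde v)$ and $\Gamma_{t_0}(\widetilde u)$ are $C^{1,1}$ and mutually tangent near $x_0$; by the non-degeneracy coming from $\lambda(t_0)>0$ (an interior-ball barrier forces a definite growth rate of a solution of $-\Delta w=\lambda$), $\widetilde v$ and $\widetilde u$ are $C^1$ up to the free boundary at $x_0$ with non-vanishing gradients there. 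Applying Hopf's lemma to $\widetilde u-\widetilde v\ge 0$ on $B^{\mathrm{in}}$ (it is harmonic and vanishes at $x_0$) gives $|D\widetilde u(x_0,t_0)|\ge|D\widetilde v(x_0,t_0)|>0$, whence by strict monotonicity of $F$
\[ F(|D\widetilde v(x_0,t_0)|)-c \;\le\; F(|D\widetilde u(x_0,t_0)|)-c \;<\; F(|D\widetilde u(x_0,t_0)|)+c'. \]

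\textbf{Contradiction, and the main obstacle.} By the choice of $t_0$ the front $\Gamma_t(\widetilde v)$ reaches $\Gamma_t(\widetilde u)$ at $x_0$ at time $t_0$, so the outward normal velocity of $\Gamma_t(\widetilde v)$ at $x_0$ cannot be smaller than that of $\Gamma_t(\widetilde u)$; at the now-classical point $x_0$ these velocities are governed respectively by $F(|D\widetilde v|)-c$ and $F(|D\widetilde u|)+c'$, forcing $F(|D\widetilde v(x_0,t_0)|)-c\ge F(|D\widetilde u(x_0,t_0)|)+c'$, which contradicts the displayed inequality. Concretely one realizes this comparison by placing in a small parabolic neighborhood of $(x_0,t_0)$ an explicit spherical-front strict classical supersolution moving at a normal velocity intermediate between the two, chosen together with its radius (using $\lambda$) so that $\widetilde v$ lies strictly inside it on the parabolic boundary of the neighborhood yet $\Gamma(\widetilde v)$ crosses it in the interior, violating Definition~\ref{def: subsolution}; a mirror construction violates Definition~\ref{def:supersolution} for $\widetilde u$. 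Letting $r,s,c,c'\to 0$ gives $v<u$ on $\overline{\Omega(v)}\cap Q$, in particular $\Gamma(v)$ cannot meet $\Gamma(u)$ from inside $\Omega(u)$ on any compact subset of $Q$. The delicate point is precisely this last step together with the regularity claims of the previous paragraph: that $t_0$ is attained, that a genuine touching point exists, and that the normal, gradients, and normal velocities of the regularized fronts there are classical so that the velocity comparison is legitimate. This is exactly what the two convolutions buy — the two-sided ball property pins the common free boundary to $C^{1,1}$ near $x_0$, the non-degeneracy from $\lambda(t)>0$ bounds the gradients below, and the shared right-hand side $\lambda(t)$ keeps $\widetilde u-\widetilde v$ harmonic so Hopf applies — while the strict constants $c,c'>0$ upgrade the borderline inequality $F(|D\widetilde v|)\le F(|D\widetilde u|)$ to the strict one the argument needs.
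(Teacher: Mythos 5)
Your overall strategy (perturb the sub- and supersolution by moving-ball sup/inf convolutions to open a strict velocity gap, then argue at a first touching point of the free boundaries) is the standard route, and it is the one the paper itself points to: the paper does not prove Theorem \ref{thm: strictly separated comparison} in place but cites the literature, and the detailed model of the argument appears later in the proof of Proposition \ref{prop: comparison for grad flow}, which uses the \emph{space-time} convolutions of Appendix \ref{sec: convolutions}. Measured against that, your write-up has a genuine gap exactly at the step you yourself flag as ``the delicate point.''

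The gap is that you regularize only in space (Lemma \ref{lem: spaceconvs}), so at the touching point $(x_0,t_0)$ you have interior/exterior \emph{spatial} balls at the single time $t_0$, but nothing that controls the free boundaries in time. Consequently the touching point is not ``classical'': the outward normal velocities of $\Gamma(\widetilde v)$ and $\Gamma(\widetilde u)$ at $(x_0,t_0)$ need not exist, the gradients $|D\widetilde v|,|D\widetilde u|$ need not exist up to the boundary (two tangent balls at one time do not give $C^{1,1}$ free boundaries near $x_0$, nor differentiability of the solutions there), and Hopf's lemma cannot be applied in the form you use it. (Also, $\widetilde u-\widetilde v$ is only superharmonic in the viscosity sense, not harmonic, though that part is harmless.) The machinery that repairs exactly these points is what you omit: convolutions over space-time balls/ellipsoids (Lemma \ref{lem: spacetimeconvs}) give interior and exterior \emph{space-time} balls at the touching point, whose tangent hyperplanes define advance speeds $m,m'$ with $m'+\delta\le m$; one must then rule out horizontal tangent planes ($m=\pm\infty$) by barrier arguments, prove a one-sided nontangential gradient estimate of the type $\liminf_{d\to0}\widetilde v(x_0-d\nu,t_0)/d\ \ge$ (a quantity tied to $m$) as in Lemma \ref{lem: nontangential estimate}, and only then build the radial strict classical super/subsolution whose crossing violates Definition \ref{def: subsolution}. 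Your sketch asserts the conclusions of these steps (well-defined normal velocities, nondegenerate classical gradients, $C^{1,1}$ fronts) as if they followed from the spatial two-ball tangency, which they do not; since the viscosity definitions here are barrier-based, the final contradiction cannot be reached without this space-time regularization and the nontangential estimates. On the positive side, your observation that no dilation is needed (so Assumption \ref{hyp: F cond} plays no role in this strictly separated statement, the velocity gap coming solely from the shrinking-ball convolutions) is correct and consistent with how the paper uses this theorem.
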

\begin{proof}
The proof can be found in various places in the literature \cite{Kim03,BrandleVazquez05,CaffarelliVazquez99, KimPozar12}. We will go into more detail later in Section \ref{sec: Comparison}.
\end{proof}

 As mentioned in Remark \ref{rem: continuity of solutions}, we will need to make a geometric restriction on our initial data to expect the existence of a viscosity solution which is stable under a family of perturbations.  First, we recall the definition of a set star-shaped with respect to a point:
\begin{DEF}
A domain $\Omega \subseteq \real^N$ is called star-shaped with respect to a point $x$ if for every $y \in \partial \Omega$ the line segment between $x$ and $y$ is contained in $\overline{\Omega}$.
\end{DEF}
Then the assumption on our initial data $\Omega_0$ will be called \textit{strong star-shapedness}: 
\begin{DEF}\label{hyp: starshapedness}
We call a domain $\Omega$ \textit{strongly star-shaped} if there exists $r>0$ such that $\Omega$ is star-shaped with respect to every point of $B_r(0)$.  For each $R>0$ we define the class of uniformly bounded strongly star-shaped sets:
\begin{equation}
\begin{array}{ll}
\mathcal{S}_{r,R}:= \{\Omega \subset B_{R}(0): \Omega \text{ star-shaped with respect to } B_r(0)\}, &
\mathcal{S}_{r} := \bigcup_{R>0} \mathcal{S}_{r,R},
\end{array}
\end{equation}
and the class of strongly star-shaped sets
 \begin{equation}
\mathcal{S}_0 := \bigcup_{r>0} \mathcal{S}_{r}.
\end{equation}
\end{DEF}
That the ball $B_r(0)$ is centered at the origin is only for convenience, the problem is translation invariant. We will often refer to the strongly star-shaped property by saying that a set $\Omega_0 \in \mathcal{S}_{r,R}$ in order to clarify the role of $r$.  We note a basic property of sets in $\mathcal{S}_{r,R}$ -- they are Lipschitz domains with the Lipschitz constant depending only on $r,R$.  First, we define a notation for cones with apex at the origin. For $x \in \real^N$ and $\theta \in (0,\pi)$ the cone in the direction $x$ with opening angle $\theta$ is called,
\begin{equation}\label{eqn: cone def}
C(x,\theta):= \left\{y: \langle x , y \rangle \geq (\cos\theta)|x||y|\right\}.
\end{equation}
We show that $\Omega \in \mathcal{S}_{r}$ have interior and exterior cones at every boundary point:
\begin{lem}\label{lem: sstolip}
The following are equivalent for a bounded domain $\Omega \supset B_r(0)$:
\begin{enumerate}[(i)]
\item The domain $\Omega \in \mathcal{S}_{r}$. 
\item There is an $\epsilon_0 \in (0,\infty]$ such that for all $x \in  \Omega^C$ there is an exterior cone to $\Omega$,
\begin{equation}
\begin{array}{lll}
 x+C\left(x,\theta_x\right)\cap B_{\epsilon_0}(x) \subset \Omega^C & \text{ where } & \sin\theta_x = \frac{r}{|x|}.
 \end{array}
 \end{equation}
\item For all $x \in \partial \Omega$ there is an interior 'cone' to $\Omega$,
\begin{equation}
\begin{array}{lll}
 \left(x+C\left(-x,\theta_x\right)\right) \cap C\left(x,\tfrac{\pi}{2}-\theta_x\right) \cup B_r(0) \subset \Omega & \text{ where } & \sin\theta_x = \frac{r}{|x|}.
 \end{array}
 \end{equation}
 \item There exists $\epsilon_0>0$ so that
  \begin{equation}\label{eqn: inf conv containment}
  \Omega \subset\subset \bigcap_{|z| \leq a\epsilon} \left[(1+\epsilon)\Omega+z\right] \  \hbox{ for all } \epsilon_0>\epsilon>0,
 \end{equation}
 for every $0<a<r$.
\end{enumerate}
\end{lem}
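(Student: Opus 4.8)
The plan is to prove the equivalences in the cycle (i) $\Rightarrow$ (iii) $\Rightarrow$ (ii) $\Rightarrow$ (i), and then handle (iv) separately by showing (i) $\Leftrightarrow$ (iv). For (i) $\Rightarrow$ (iii): assume $\Omega$ is star-shaped with respect to every point of $B_r(0)$, and fix $x \in \partial\Omega$. The point is that the convex hull of $\{x\} \cup B_r(0)$ lies in $\overline{\Omega}$, since every segment from a point of $B_r(0)$ to $x$ does. A short computation identifies this convex hull (minus the ball itself) with the solid region $\left(x + C(-x,\theta_x)\right) \cap C\left(x,\tfrac{\pi}{2}-\theta_x\right)$, where $\sin\theta_x = r/|x|$ is exactly the half-angle subtended by $B_r(0)$ as seen from $x$: the cone $x+C(-x,\theta_x)$ is the union of rays from $x$ that meet $\overline{B_r(0)}$, and intersecting with the half-space-like cone $C(x,\tfrac{\pi}{2}-\theta_x)$ truncates it at the ``equator'' of tangent lines so one stays in the hull. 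One should check this containment is into $\Omega$ itself (not just $\overline{\Omega}$), which follows because interior points of the convex hull of an open-ball-plus-point configuration lie in the interior of $\Omega$ by the star-shapedness applied with a slightly enlarged ball, or directly from openness.

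For (iii) $\Rightarrow$ (ii): given $x \in \Omega^C$, I want an exterior cone $x + C(x,\theta_x) \cap B_{\epsilon_0}(x) \subset \Omega^C$. This is essentially the contrapositive/dual of (iii): if some $y$ in the exterior cone at $x$ were in $\Omega$, then by the interior cone property at (a point near) the boundary crossing, or more cleanly by a direct convexity/separation argument — the ray from $y$ back through the region near $B_r(0)$ would have to exit $\Omega$, but the geometry of the angle $\theta_x$ forces $x$ (and a whole neighborhood-sized piece of the cone) to lie inside, a contradiction. The precise way I would run this: suppose $y \in x + C(x,\theta_x)$ with $|y-x|$ small and $y \in \Omega$; then consider the cone of rays from $y$ hitting $B_r(0)$ — it has half-angle $\theta_y$ close to $\theta_x$, and one checks $x$ lies in the \emph{interior} of this cone when $|y - x|$ is small enough relative to the ``slack'' in the angle, so $x \in \overline{\Omega}$; upgrading to $x \in \Omega$ (contradicting $x \in \Omega^C$ when $x$ is actually exterior, or handling $x \in \partial\Omega$ as the boundary case) requires a little care with an $\epsilon$-enlargement of the ball, exactly as above. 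For (ii) $\Rightarrow$ (i): given $x \in B_r(0)$ and $y \in \partial\Omega$, I must show the segment $[x,y] \subset \overline\Omega$. If not, some $z \in (x,y)$ lies in $\Omega^C$; apply the exterior cone at $z$ with angle $\theta_z$ where $\sin\theta_z = r/|z|$. Since $x \in B_r(0)$ and $z$ is on the segment from $x$ to $y$, the point $y$ lies in the cone $z + C(z,\theta_z)$ (again because $\theta_z$ is precisely the half-angle of the ball seen from $z$, and $x \in B_r(0)$ sits at the ``opposite'' side so the ray $z \to y$ points away from the ball within that angular aperture) — but then a neighborhood of $z$ along the ray toward $y$ lies in $\Omega^C$, and iterating (or taking the supremum of such bad $z$) shows the whole subsegment from $z$ to $y$ is exterior, contradicting $y \in \partial\Omega = \partial(\Omega^C)$ having points of $\Omega$ arbitrarily nearby. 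The uniform choice of $\epsilon_0$ (independent of the point, depending only on $r$ and $R = \operatorname{diam}$) comes out of these arguments because the angles $\theta_x$ are bounded below by $\arcsin(r/R)$.

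Finally, (i) $\Leftrightarrow$ (iv): the containment $\Omega \subset\subset \bigcap_{|z| \le a\epsilon}[(1+\epsilon)\Omega + z]$ says that shrinking $(1+\epsilon)\Omega$ back by a translation of size $a\epsilon$ still swallows $\Omega$ with room to spare, for all small $\epsilon$. This is really a quantitative restatement of the interior-cone property (iii): at a boundary point $x$, dilating by $1+\epsilon$ about the origin pushes $x$ outward to $(1+\epsilon)x$, which has moved a distance $\epsilon|x|$ outward along $x$; the interior cone at $x$ with half-angle $\theta_x$, $\sin\theta_x = r/|x|$, guarantees that a ball of radius $\sim \epsilon |x| \sin\theta_x = \epsilon r$ around $x$ — hence, since $a < r$, a ball of radius $a\epsilon$ around $x$ — is contained in the dilated set near $(1+\epsilon)x$ minus any allowed translation, which is exactly the ``$\subset\subset$'' with margin $(r-a)\epsilon$. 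Conversely, if (iv) holds then for each boundary direction the set has an interior cone opening at least the angle with $\sin\theta = a/|x|$ for every $a<r$, hence with $\sin\theta = r/|x|$, giving (iii). I would carry this out by reducing to a single boundary point and computing the distance from $\Omega$ to the complement of $(1+\epsilon)\Omega + z$ there.

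\medskip

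The main obstacle I anticipate is purely geometric bookkeeping: pinning down the exact cone $\left(x+C(-x,\theta_x)\right) \cap C\left(x,\tfrac{\pi}{2}-\theta_x\right) \cup B_r(0)$ as the convex hull of $B_r(0) \cup \{x\}$, and being careful about the open-versus-closed and interior-versus-closure distinctions so that the interior/exterior cone containments land in $\Omega$ and $\Omega^C$ respectively rather than in their closures — the star-shapedness hypothesis gives segments in $\overline\Omega$, and upgrading to the open set uniformly (via a slightly smaller ball $B_{r'}(0)$, $r' < r$, which still works for all the $\theta$-estimates since $a < r$ is the only thing actually needed) is the delicate point. The logical implications themselves are short once the right picture is fixed.
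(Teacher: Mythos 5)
Your outline is sound and the geometric facts you invoke (the ice-cream-cone hull of $B_r(0)\cup\{x\}$, the half-angle $\theta_x=\arcsin(r/|x|)$, the inscribed ball of radius $\epsilon|x|\sin\theta_x=\epsilon r$ at distance $\epsilon|x|$ from the apex) are correct, so I see no step that would actually fail; but your route differs from the paper's in the part that the paper actually writes out. For (i)--(iii) the paper offers only ``the proof is essentially in the picture,'' so your cycle (i)$\Rightarrow$(iii)$\Rightarrow$(ii)$\Rightarrow$(i) via the convex hull and last-exit-point arguments is a legitimate filling-in of the same picture. For (iv), however, the paper works entirely on the exterior side: it records the covering identity $x+C(x,\theta_x)\cap B_{\epsilon_0 r}(x)\subset\bigcup_{\epsilon_0>\epsilon>0}\bigcup_{0<a<r}B_{a\epsilon}((1+\epsilon)x)$ and then observes that ``$B_{a\epsilon}((1+\epsilon)x)\subset\subset\Omega^C$ for every $x\in\Omega^C$'' is, after complementation, literally the statement $\Omega\subset\subset\bigcap_{|z|\le a\epsilon}[(1+\epsilon)\Omega+z]$; thus (ii)$\Leftrightarrow$(iv) drops out with no case analysis. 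Your version argues on the interior side, from the interior cone (iii) at boundary points plus dilation. That is the same inscribed-ball computation in disguise, but it buys you extra bookkeeping the paper's formulation avoids: (ii) and the ball condition are quantified over \emph{all} of $\Omega^C$, not just $\partial\Omega$, so the exterior formulation automatically covers points of $\Omega$ far from the boundary and makes the strict containment $\subset\subset$ (with margin $(r-a)\epsilon$, uniformly in $\epsilon<\epsilon_0$ and over $\Omega$) immediate from $a<r$, whereas in your boundary-point version you must separately argue the uniformity of the compact containment and treat deep interior points; your converse (iv)$\Rightarrow$(iii) is also cleaner if run as (iv)$\Rightarrow$(i) directly, noting that $\frac{y-\epsilon b'}{1+\epsilon}=(1-s)y-s b'$ with $s=\frac{\epsilon}{1+\epsilon}$ sweeps initial pieces of the segments from $y$ toward $B_a(0)$, which iterates to star-shapedness with respect to every $a<r$ and hence $B_r(0)$. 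Finally, the open-versus-closed issue you flag ((iii)$\Rightarrow$(ii) needs $\Omega^C$, not just $\overline\Omega^{\,C}$, and the hull argument a priori lands in $\overline\Omega$) is real and is exactly what your $r'<r$ enlargement trick must handle; the paper's picture-proof silently assumes it, so spelling it out is the main added value of your write-up.
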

\begin{proof}
The proof for parts (i)-(iii) is essentially in the picture, see Figure~\ref{fig: strong star shaped}.  For part (iv) let us first note that (see figure \ref{fig: strong star shaped}),
 $$ x+C(x,\theta_x)\cap B_{\epsilon_0r}(x)  \subset \bigcup_{\epsilon_0>\epsilon>0}\bigcup_{0<a<r} B_{a\epsilon}((1+\epsilon)x).$$
 Therefore by (ii) $\Omega \in \mathcal{S}_r$ if and only if for all $x \in  \Omega^C$ all $\epsilon_0>\epsilon>0$ and all $0<a<r$,
 $$ B_{a\epsilon}((1+\epsilon)x) \subset \subset \Omega^C,$$
 or equivalently, for every $\epsilon_0>\epsilon>0$ and all $0<a<r$,
 $$ \{ x: d(x,(1+\epsilon)\Omega^C) \leq a\epsilon \} \subset\subset \Omega^C. $$
 Again, equivalently, for every $\epsilon_0>\epsilon>0$ and all $0<a<r$,
 \begin{align*}
 \Omega &\subset \subset \{ x: d(x,(1+\epsilon)\Omega^C) \leq a\epsilon \}^C  = \real^N\setminus\bigcup_{|z| \leq a \epsilon} [(1+\epsilon)\Omega^C+z]  = \bigcap_{|z| \leq a\epsilon} \left[(1+\epsilon)\Omega+z\right]. 
 \end{align*}
 
 \begin{figure}[t]
\begin{center}
\includegraphics[scale=0.5]{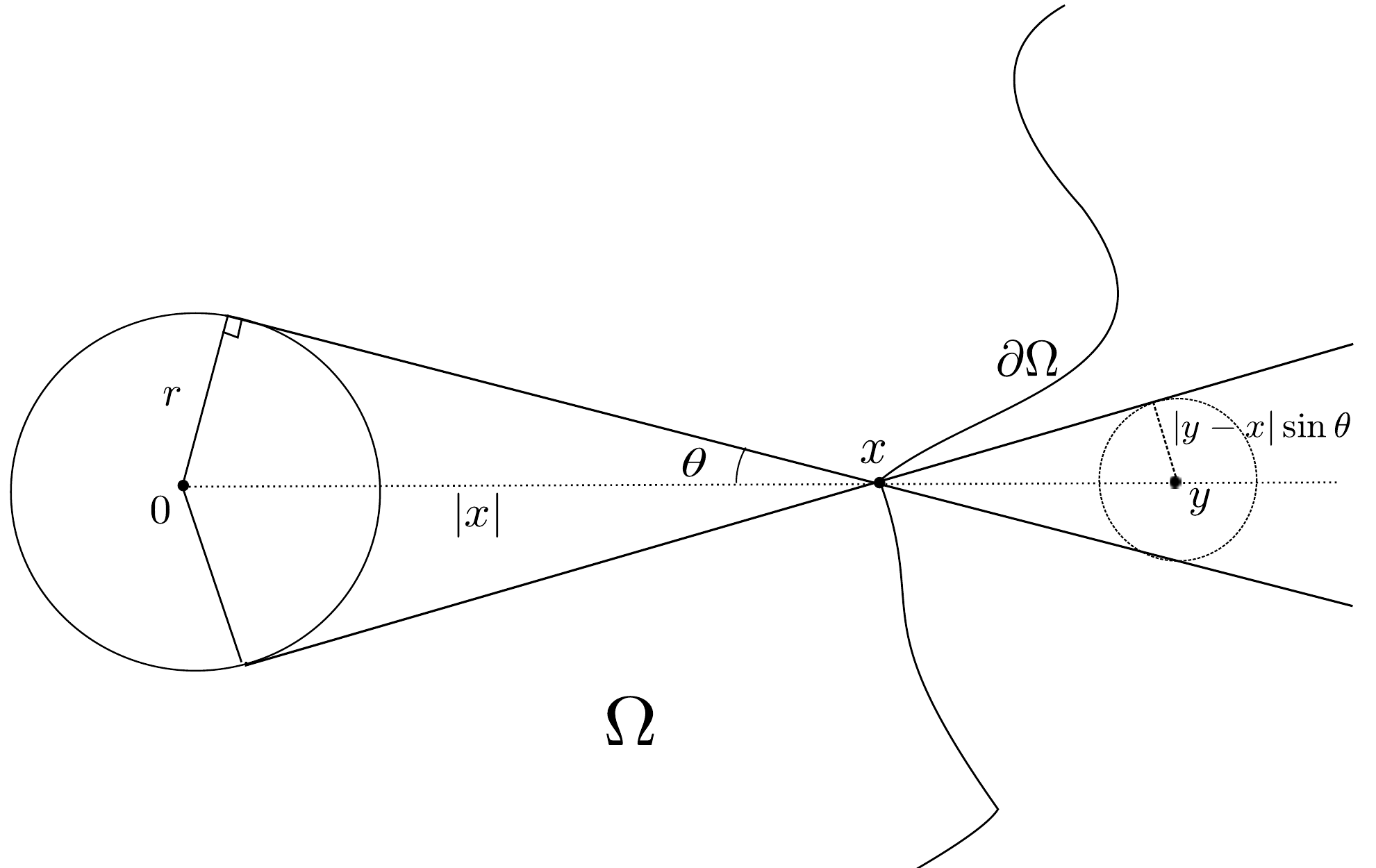}
\end{center}
\caption{Interior and exterior cones for a strongly star-shaped domain.}
\label{fig: strong star shaped}
 \end{figure}
\end{proof}

We prove a comparison principle for reflections through hyperplanes.  This will be very useful to us later because the reflection ordering is a property which does not depend on the radius from the strong star-shapedness property.  Let $H$ be a hyperplane in $\mathbb{R}^N$ with $y \in H$ define the reflection through $H$ by
$$ \phi_H(x) = x-2\langle x-y, \nu_H\rangle\nu_H.$$
The symmetry of the problem \eqref{eqn: CLML} with respect to reflections at least formally implies that if a solution $u(\cdot,t)$ and its reflection $u(\phi_H(\cdot),t)$ through $H$ are initially ordered in the half spaces of $H$ then they will remain so. 
\begin{prop}\label{prop: reflection comparison}
\textup{(Reflection Comparison)} Suppose $u:\mathbb{R}^N\times[0,T]\to[0,+\infty)$ is a solution of \eqref{eqn: CLML} such that $\Omega_t(u)\in\mathcal{S}_{r}$ for some $r>0$ and for all $t \in [0,T]$.  Let $H$ be a hyperplane in $\mathbb{R}^N$ with $H\cap B_r(0) = \emptyset$ and define $H_+$ and $H_-$ the half spaces of $H$ such that $B_r(0) \subset H_+$. Suppose that $\phi_H(\Omega_0)\cap H_+ \subseteq\Omega_0 \cap H_+ $ i.e.
$$ u(\phi_H(x),0) \leq u(x,0) \text{ for } x \in H_+.$$
Then we have the ordering:
$$ u(\phi_H(x),t) \leq u(x,t) \text{ for } (x,t) \in H_+\times[0,T].$$
\end{prop}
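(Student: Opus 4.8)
The plan is to use the reflection $\phi_H$ to produce a competitor function and then invoke the strictly separated comparison principle (Theorem~\ref{thm: strictly separated comparison}) together with the stability of star-shaped solutions under the sup/inf convolutions of Lemma~\ref{lem: spaceconvs}. Define $v(x,t) := u(\phi_H(x),t)$ for $x \in H_+$. Since $\phi_H$ is an isometry and $-\Delta$, $|Du|$ and the velocity law $F$ are all rotation- and reflection-invariant, $v$ is a viscosity solution of \eqref{eqn: CLML} on $H_+\times[0,T]$ with the same $\lambda(t)$; the parabolic domain here is $Q = H_+\times(0,T]$ whose parabolic boundary consists of $H_+\times\{0\}$ and $H\times[0,T]$. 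On $H_+\times\{0\}$ we have $v \leq u$ by hypothesis, and on $H\times[0,T]$ we have $v = u$ because $\phi_H$ fixes $H$ pointwise. So $u$ and $v$ are ordered (though not \emph{strictly} separated) on $\partial_p Q$, and we want to propagate $v \leq u$ into $Q$.

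The first key step is to handle the lack of strict separation on $\partial_p Q$. Because $\Omega_t(u) \in \mathcal{S}_r$ with $H\cap B_r(0)=\emptyset$ and $B_r(0)\subset H_+$, the reflected domain $\phi_H(\Omega_t(u))\cap H_+$ sits inside the "outer" region relative to $B_r(0)$, and strong star-shapedness gives interior/exterior cone control (Lemma~\ref{lem: sstolip}) that lets us shrink $v$ slightly and strictly separate it from $u$ near $H$. Concretely, I would perturb $u$ by a small inf-convolution $\widetilde u(x,t) = \inf_{y\in B_{\delta - ct}(x)} u(y,t)$ (or dilate: use $(1+\epsilon)u((1+\epsilon)^{-1}x, \cdot)$ combined with a small translation $z$ with $|z|\le a\epsilon$, $a<r$, exploiting part (iv) of Lemma~\ref{lem: sstolip}), which by Lemma~\ref{lem: spaceconvs} is still a supersolution with a slightly faster free-boundary speed $F(|D\cdot|)+c$; Assumption~\ref{hyp: F cond} is exactly what lets us absorb the extra speed $c$ at the cost of replacing $\lambda(t)$ by a comparable $\lambda(t)+O(\epsilon)$, or more simply lets us compare $v$ against this perturbed $u$ over a short time and iterate. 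After the perturbation, $v \prec \widetilde u$ strictly on $\partial_p Q$, so Theorem~\ref{thm: strictly separated comparison} yields $v < \widetilde u$ in $\overline{\Omega(v)}\cap Q$; letting the perturbation parameter go to zero gives $v \le u$ on $Q$, i.e. the claimed ordering $u(\phi_H(x),t) \le u(x,t)$ for $(x,t)\in H_+\times[0,T]$.

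The second technical point is that the comparison theorem as stated requires the subsolution's positive phase to be compactly contained in $\overline{Q}$; here $\Omega(v)\cap H_+$ may touch the lateral boundary $H$. This is resolved by the fact that on $H$ itself $u = v$, so near $H$ the two profiles agree and no crossing of free boundaries can originate there — one runs the comparison argument on $H_+$ with the boundary data $v = u$ prescribed on $H\times[0,T]$, using Definition~\ref{barrier} for solutions with boundary data, and the interior cone condition from Lemma~\ref{lem: sstolip}(iii) to guarantee the geometry near $H$ is Lipschitz and the barriers fit.

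I expect the main obstacle to be the combination of the non-strict ordering on $\partial_p Q$ with the volume/Lagrange-multiplier coupling: one must perturb $u$ into a strict supersolution without breaking the structure of the equation, and the natural perturbations (spatial dilation or inf-convolution) change both the interior PDE's right-hand side $\lambda(t)$ and the boundary speed. The role of strong star-shapedness (through Lemma~\ref{lem: sstolip}) is precisely to make these perturbations compatible with the geometry — this is why the hypothesis $\Omega_t(u)\in\mathcal S_r$ and $H\cap B_r(0)=\emptyset$ appears — and checking that Assumption~\ref{hyp: F cond} lets the extra boundary speed be absorbed is where the real work lies; everything else is a routine application of the comparison principle and a limiting argument.
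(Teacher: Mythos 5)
Your overall strategy (reflect, perturb to get strict separation, apply Theorem \ref{thm: strictly separated comparison}, then pass to the limit and iterate in time) is indeed the paper's strategy, but the way you assemble the perturbation has a genuine gap. Your primary choice, the plain inf-convolution $\widetilde u(x,t)=\inf_{y\in B_{\delta-ct}(x)}u(y,t)$, moves $u$ \emph{down}: its positive phase is an eroded copy of $\Omega_t(u)$, so $\widetilde u\le u$ and the claimed strict separation $v\prec\widetilde u$ on $\partial_p Q$ simply fails (moreover, an inf-convolution of a supersolution over a shrinking ball is already a supersolution with \emph{extra} speed $+c$, so in that step there is nothing for Assumption \ref{hyp: F cond} to absorb). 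The perturbation must push $u$ strictly \emph{up}, and this is where the dilation and Assumption \ref{hyp: F cond} actually interact: the paper sets $v_\epsilon(x,t)=(1+\epsilon)^{2}u((1+\epsilon)^{-1}x,t)$ --- the exponent $2$ is forced, since it gives $-\Delta v_\epsilon=\lambda(t)$ exactly, whereas your $(1+\epsilon)u((1+\epsilon)^{-1}x,\cdot)$ gives $-\Delta=(1+\epsilon)^{-1}\lambda<\lambda$ and destroys the interior supersolution property --- and then takes $u_\epsilon(x,t)=\inf_{y\in B_{a\epsilon-c\epsilon t}(x)}v_\epsilon(y,t)$. Lemma \ref{lem: sstolip}(iv) together with the strong maximum principle gives $u<u_\epsilon$ everywhere; the dilation degrades the boundary speed to $(1+\epsilon)F((1+\epsilon)^{-1}\cdot)$, the shrinking inf-convolution adds back $+c\epsilon$, and Assumption \ref{hyp: F cond} is precisely the statement that the combination dominates $F$. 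In particular $\lambda$ is never perturbed; your suggestion that Assumption \ref{hyp: F cond} lets one trade the extra speed against replacing $\lambda(t)$ by $\lambda(t)+O(\epsilon)$ is not available --- that assumption says nothing about the bulk equation, and a function that is a supersolution only for a smaller multiplier is not a supersolution of \eqref{eqn: CLML}, so Theorem \ref{thm: strictly separated comparison} would not apply at fixed $\epsilon$.

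Once the perturbation is done correctly, your second worry (the lateral boundary $H$, where $u=v$) disappears without the vague ``no crossing can originate on $H$'' argument: since $u<u_\epsilon$ strictly everywhere, one has $v=u<u_\epsilon$ on $H\times[0,a/c)$ and $v\le u<u_\epsilon$ at $t=0$, so $v\prec u_\epsilon$ on all of $\partial_p\bigl(H_+\times(0,a/c]\bigr)$ and the strictly separated comparison applies verbatim on that domain. Letting $\epsilon\to0$ (so that $u_\epsilon\searrow u$) and iterating in time steps of length $a/c$, as you anticipated, completes the proof.
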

A similar argument will give the more standard strong comparison result,
\begin{lem}\label{lem: strong comparison} \textup{(Strong comparison)}
Suppose $u$ and $v$ are respectively a sub- and supersolution of \eqref{eqn: CLML} on $\mathbb{R}^N\times [0,+\infty)$ which are initially ordered,
$$\Omega_0(u)\subseteq\Omega_0(v).$$
 Suppose that that there exists $T\geq0$ such that for each $t \in [0,T]$ either $\Omega_t(u)$ or $\Omega_t(v)$ is in $\mathcal{S}_{r,R}$ for some $r,R>0$ independent of $t$.  Then let $c$ from Assumption~\ref{hyp: F cond} and define $t_0 := r^2/(Rc)$ then
$$
\Omega_t(u) \subseteq \Omega_t(v) \hbox{ for } t\leq T+t_0.
$$
\end{lem}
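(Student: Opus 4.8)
The plan is to reduce the strong (touching) comparison to the strictly separated comparison of Theorem~\ref{thm: strictly separated comparison} by perturbing the sub- and supersolution with the spatial sup-/inf-convolutions of Lemma~\ref{lem: spaceconvs}, and then to use the characterization of strong star-shapedness in Lemma~\ref{lem: sstolip}(iv) to keep the perturbations strictly ordered for a definite amount of time. Concretely, suppose for definiteness that it is $\Omega_t(v)$ which lies in $\mathcal{S}_{r,R}$ for $t \le T$ (the other case is symmetric after relabeling). Fix $0 < a < r$ and, for small $\ep>0$, consider the inf-convolution of $v$ over the shrinking balls $B_{a\ep - c'\ep t}(\cdot)$ with appropriately chosen speed increment. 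By Lemma~\ref{lem: spaceconvs}(b), the resulting profile $\widetilde v$ is still a supersolution of \eqref{eqn: CLML}, now with free boundary speed $F(|D\widetilde v|) + c'$ for the chosen constant; the point of subtracting $c'\ep t$ from the radius is precisely to convert the extra speed back so that, after renormalizing, $\widetilde v$ is a genuine supersolution of the \emph{same} problem on a time interval of length comparable to $a\ep/c'$.

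The key geometric input is that, since $\Omega_t(v)\in\mathcal{S}_{r,R}$, Lemma~\ref{lem: sstolip}(iv) gives
$$ \Omega_t(v) \subset\subset \bigcap_{|z|\le a\ep}\bigl[(1+\ep)\Omega_t(v) + z\bigr]. $$
Dilating $v$ by $(1+\ep)$ (and rescaling the volume/Lagrange-multiplier appropriately, which is harmless here since $\lambda$ is a prescribed bounded continuous function) and then taking the inf over translations $|z|\le a\ep$ realizes exactly the inf-convolution $\widetilde v$; hence $\Omega_t(v)\subset\subset\Omega_t(\widetilde v)$, i.e. $\widetilde v$ is strictly separated \emph{above} $v$, and in particular $\widetilde v$ is strictly separated above $u$ initially because $\Omega_0(u)\subseteq\Omega_0(v)\subset\subset\Omega_0(\widetilde v)$. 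One also has to account for the lost ground from the shrinking convolution radius $a\ep - c'\ep t$: this stays positive up to time $t_0 \sim a/c'$, and optimizing the bookkeeping (taking $a\to r$, matching $c'$ to the constant $c$ from Assumption~\ref{hyp: F cond} via the sublinear-growth estimate $F(s)\le cs + \max\{(1+s)F(0),0\}$, and using $R$ to control the dilation displacement $(1+\ep)x - x$ which is at most $\ep R$) yields the claimed lifetime $t_0 = r^2/(Rc)$ — the factor $r^2/R$ arising because the translation budget $a\ep$ must dominate the dilation displacement $\ep R$ only after a further rescaling of $\ep$ by $r/R$.

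With $\widetilde v \succ u$ on $\partial_p(\mathbb{R}^N\times[0,T+t_0])$ (strict separation is automatic at spatial infinity since the drops are bounded, and holds initially by the above), Theorem~\ref{thm: strictly separated comparison} gives $\Omega_t(u)\subseteq\Omega_t(\widetilde v)$ for all $t\le T+t_0$. Finally, one lets $\ep\to 0$: the inf-convolutions $\widetilde v$ converge uniformly to $v$ (by Lemma~\ref{lem: sup inf continuity} together with the Hausdorff continuity of the shrinking-ball multifunction), the associated positivity sets converge, and the inclusion passes to the limit to give $\Omega_t(u)\subseteq\Omega_t(v)$ for $t\le T+t_0$, as desired.

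The main obstacle I expect is the bookkeeping in the second step: one must simultaneously (i) keep the convolution radius $a\ep - c'\ep t$ positive, (ii) ensure the dilation-plus-translation perturbation strictly contains $\Omega_t(v)$ via Lemma~\ref{lem: sstolip}(iv) \emph{uniformly in} $t\le T$, and (iii) reconcile the extra free-boundary speed $c'$ picked up from the inf-convolution with the speed defect absorbed by the $(1+\ep)$-dilation, using Assumption~\ref{hyp: F cond} in the form that shrinking the drop by $(1+\ep)$ changes $F$ by at most $c\ep$. Getting the three scales — translation budget $\sim r\ep$, dilation displacement $\sim R\ep$, and time $\sim t_0$ — to line up so that the clean constant $t_0 = r^2/(Rc)$ comes out is the delicate part; everything else is a routine appeal to the already-established convolution and comparison lemmas.
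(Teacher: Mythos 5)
Your proposal follows essentially the same route as the paper, which simply defers to the proof of Proposition \ref{prop: reflection comparison}: perturb the star-shaped competitor by a $(1+\ep)$-dilation combined with the shrinking-ball inf-/sup-convolution of Lemma \ref{lem: spaceconvs}, absorb the extra speed $c\ep$ via Assumption \ref{hyp: F cond}, get strict initial separation from Lemma \ref{lem: sstolip}(iv) together with the maximum principle, apply Theorem \ref{thm: strictly separated comparison}, let $\ep\to0$, and iterate. The one caveat is your accounting for $t_0=r^2/(Rc)$: the containment in Lemma \ref{lem: sstolip}(iv) needs the translation budget $a\ep$ to be dominated by the minimal outward boundary displacement $\ep r$ coming from the cone geometry (the maximal displacement $\ep R$ plays no role), so each step of the construction is valid for a time $a/c$ with any $a<r$, independent of $\ep$ and of $R$; since $r^2/(Rc)<r/c$ this already covers the stated $t_0$ and the lemma follows, but the ``rescaling of $\ep$ by $r/R$'' you invoke is not the actual mechanism behind the constant.
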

\begin{proof}
The proof is a slightly easier version of the proof of Proposition \ref{prop: reflection comparison} so we omit it.
\end{proof}

 In order to get a strong comparison type result we want to slightly perturb the supersolution (or the subsolution) so that strict separation holds initially and we can use Theorem \ref{thm: strictly separated comparison}.  To achieve this we use the inf and sup convolutions introduced above.  This will be where the 'technical' assumption on $F$ \ref{hyp: F cond} comes in.

\textit{Proof of Proposition \ref{prop: reflection comparison}:} Let $\epsilon_0$ and $c$ from Assumption \ref{hyp: F cond}, $0<a<r$, and define for $0<\epsilon \leq \epsilon_0$,
\begin{align*}
v_\epsilon(x,t) &= (1+\epsilon)^2u((1+\epsilon)^{-1}x,t);\hbox { and } \\
 u_\epsilon(x,t) = &\inf_{y \in B_{a\epsilon-c\epsilon t}(x)} v_\epsilon(y,t) \ \  \text{ for } \ \ 0\leq t < \frac{a}{c}. 
\end{align*}
Then, from Lemma \ref{lem: sstolip} part (iv) and the strong maximum principle for harmonic functions, we have
$$
u(x,t) < u_\epsilon(x,t)\ \text{ in }\ \mathbb{R}^N \times [0,\frac{a}{c})\hbox{ for } \epsilon>0. 
$$ 
We check the super-solution property of $u_\epsilon$.  Note that $v_\epsilon(x,t)$ is a supersolution of,
\begin{equation*}
\left \{ \begin{array}{lll}
         -\Delta v_\epsilon(x,t) \geq \lambda(t) & \text{ in } & \Omega_t(v_\epsilon)\\ \\
          \partial_tv_\epsilon \geq (1+\epsilon)F((1+\epsilon)^{-1}|Dv_\epsilon|)|Dv_\epsilon| & \text{ on } & \Gamma_t(v_\epsilon).
         \end{array}\right.
\end{equation*}
Then, from Lemma \ref{lem: spaceconvs}, $u_\epsilon(\cdot,t)$ is a supersolution of 
\begin{equation*}
\left \{ \begin{array}{lll}
         -\Delta u_\epsilon(x,t) \geq \lambda(t) & \text{ in } & \Omega_t(u_\epsilon)\\ \\
          \partial_tu_\epsilon \geq [(1+\epsilon)F((1+\epsilon)^{-1}|Du_\epsilon|)+c\epsilon]|Du_\epsilon| & \text{ on } & \Gamma_t(v_\epsilon).
         \end{array}\right.
\end{equation*}
Using Assumption \ref{hyp: F cond} we conclude that $u_\epsilon$ is a super-solution of \eqref{eqn: CLML}.

Since $u_\epsilon$ is a  super-solution of \eqref{eqn: CLML}, by the strictly separated comparison principle Theorem~\ref{thm: strictly separated comparison},
$$ u(\phi_H(x),t) \leq u_\epsilon(x,t) \text{ in }\ H_+ \times [0,\frac{a}{c}). $$
Since $u_\epsilon \searrow u$ as $\epsilon\to 0$,  it follows that
$$ u(\phi_H(x),t) \leq u(x,t) \text{ in }\ H_+ \times [0,\frac{a}{c}).$$
 We now iterate $\floor{T_0c/a}+1$ times to conclude.

\hfill$\Box$

\subsection{Short Time Existence}
As usual in the viscosity solutions theory, the comparison theorem is the key ingredient needed to use Perron's method to show existence.  In the existence proof we will use the following elementary facts about solutions of Poisson's equation at the boundary.

\begin{lem}\label{lem: holder in lip domain} \textup{(Boundary H\"{o}lder Estimates in Lipschitz Domains)} Let $\Omega \subset \real^N$ be a domain and $x_0 \in \partial\Omega$ and suppose that there exists $\theta \in (0,\pi/2)$ and $\nu \in S^{N-1}$ such that,
$$ x_0+\{x: \langle x,\nu \rangle>\cos\theta|x| \}\subset\real^N\setminus\Omega.  $$
In other words, $\Omega$ has an exterior cone of opening angle $\theta$ at $x_0$.  Suppose $\Lambda>\lambda>0$ and $u: \Omega \to [0,+\infty)$ satisfies 
  \begin{equation*}
  \begin{array}{lll}
        -\Delta u (x)\leq \lambda & \text{ for} & x \in \Omega \\ \\
        u(x) = 0 & \text{ on } & \partial\Omega
        \end{array}
     \end{equation*}
        Then there exists $\alpha = \alpha(\theta,N) \in (0,1)$ and $C=C(\Lambda,N)>0$ such that for all $h>0$,
        $$\sup_{|z|\leq h} u(x_0+z) \leq Ch^\alpha .$$
\end{lem}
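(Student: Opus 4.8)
\emph{The plan.} After a translation we may take $x_{0}=0$, and (extending $u$ by zero outside $\Omega$) the quantity to bound is $\sup_{\Omega\cap B_{h}(0)}u$. Since the open cone $\mathcal C:=\{x:\langle x,\nu\rangle>(\cos\theta)|x|\}$ lies in the closed set $\real^{N}\setminus\Omega$ while $\Omega$ is open, taking closures gives $\overline{\mathcal C}\subset\real^{N}\setminus\Omega$, i.e.\ $\Omega\subset K:=\{x:\langle x,\nu\rangle<(\cos\theta)|x|\}$, an open cone with vertex at $0$. I would then prove geometric decay of the dyadic maxima $m_{k}:=\sup_{\Omega\cap B_{2^{-k}}(0)}u$ by comparing $u$, on each ball $B_{2^{-k}}(0)$, with an explicit supersolution built from a single scale-invariant harmonic model; the exterior-cone hypothesis enters only through a strict contraction of this model from one dyadic scale to the next.

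\emph{The model and the one-step estimate.} Let $\psi$ be the bounded harmonic function on $K\cap B_{1}(0)$ with boundary values $1$ on $\partial B_{1}(0)\cap\overline K$ and $0$ on $\partial K\cap B_{1}(0)$; then $0\le\psi\le1$, and $\psi$ is continuous on the compact set $\overline{K}\cap\overline{B_{1/2}(0)}$ since the cone $\mathcal C$ makes $0$ a regular boundary point. The quantitative heart of the argument is the claim
\begin{equation*}
\gamma:=\sup_{\overline K\cap\overline{B_{1/2}(0)}}\psi\in(0,1),
\end{equation*}
with $\gamma$ depending only on $\theta$ and $N$ by scale invariance of $K$: $\psi>0$ in $K\cap B_{1}(0)$ and $\psi<1$ there by the strong maximum principle, since $\psi\equiv1$ would contradict $\psi=0$ on the nonempty set $\partial K\cap B_{1}(0)$. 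Writing $\psi_{\rho}(x):=\psi(x/\rho)$, which solves the same problem in $K\cap B_{\rho}(0)$, I fix $k$ and compare $u$ on $\Omega\cap B_{2^{-k}}(0)\subset K\cap B_{2^{-k}}(0)$ with
\begin{equation*}
\Phi_{k}(x):=m_{k}\,\psi_{2^{-k}}(x)+\tfrac{\lambda}{2N}\bigl(4^{-k}-|x|^{2}\bigr).
\end{equation*}
On $\Omega\cap B_{2^{-k}}(0)$ one has $-\Delta\Phi_{k}=\lambda\ge-\Delta u$; on $\partial\Omega\cap B_{2^{-k}}(0)$ one has $\Phi_{k}\ge0=u$; and on $\overline\Omega\cap\partial B_{2^{-k}}(0)$ one has $\Phi_{k}=m_{k}\ge u$. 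Hence the maximum principle on the bounded domain $\Omega\cap B_{2^{-k}}(0)$ gives $u\le\Phi_{k}$ there, and restricting to the half-scale ball, using $\psi_{2^{-k}}\le\gamma$ on $\overline K\cap\overline{B_{2^{-k-1}}(0)}$ and $\lambda\le\Lambda$,
\begin{equation*}
m_{k+1}\le\gamma\,m_{k}+\tfrac{\Lambda}{2N}\,4^{-k}.
\end{equation*}

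\emph{Iteration and conclusion.} Setting $\eta:=\max\{\gamma,\tfrac14\}$ (enlarged slightly, still below $1$, when $\gamma=\tfrac14$, to absorb a factor $k$) and iterating the recursion yields $m_{k}\le C(\theta,N)\,(\sup_{\Omega}u+\Lambda)\,\eta^{k}$. Given $0<h\le1$, choosing $k$ with $2^{-k-1}<h\le2^{-k}$ gives $\sup_{\Omega\cap B_{h}(0)}u\le m_{k}\le C(\sup_{\Omega}u+\Lambda)\,h^{\log_{2}(1/\eta)}$. Taking $\alpha:=\min\{\log_{2}(1/\eta),\tfrac12\}\in(0,1)$, which depends only on $\theta$ and $N$, and using $h^{\log_{2}(1/\eta)}\le h^{\alpha}$ for $h\le1$, I obtain the assertion for $h\le1$; for $h>1$ it is immediate since $\sup_{\Omega}u<\infty$. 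The constant produced also depends on $\sup_{\Omega}u$, but in the setting of this paper the relevant domains are uniformly bounded, so $\sup_{\Omega}u\le\tfrac{\Lambda}{2N}(\diam\Omega)^{2}$ by comparison with a paraboloid and the stated dependence is recovered.

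\emph{Main obstacle.} The one genuinely nontrivial ingredient is the strict contraction $\gamma<1$ of the model at the conical vertex; this is exactly where the exterior cone is used, and one must verify both its scale invariance and the strict positivity of $1-\gamma$ via the strong maximum principle on the fixed domain $K\cap B_{1}(0)$ (a flat boundary would give only a linear barrier, i.e.\ exponent $1$, and a cusp no uniform gap at all). The inhomogeneity is by contrast routine: handled through the quadratic corrector in each rescaled step, it contributes only the lower-order term $O(\Lambda\,4^{-k})$, which is summable against $\eta^{k}$ precisely because $\alpha$ can be kept strictly below $2$. One could instead build the barrier from the homogeneous harmonic function $|x|^{\alpha}g(x/|x|)$, with $g$ the first Dirichlet eigenfunction on the spherical cross-section of $K$, which pins down the sharp exponent $\alpha\in(0,1)$; but extracting a uniform $L^{\infty}$ bound that way needs a boundary Harnack inequality, which the dyadic scheme above sidesteps.
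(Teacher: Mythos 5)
The paper never proves this lemma: it is listed among the ``elementary facts about solutions of Poisson's equation at the boundary'' and is used as a black box (Appendix A later appeals to ``the standard construction of barriers for domains with the exterior cone property''), so there is no argument of the paper's to compare against; your dyadic iteration with the scale-invariant harmonic barrier $\psi$ on the complementary cone is precisely that standard construction, and it is essentially correct. Two points should be tightened. First, the comparison step asserts ``$\Phi_k=m_k\ge u$ on $\overline{\Omega}\cap\partial B_{2^{-k}}(0)$'', but $\psi_{2^{-k}}$ has discontinuous boundary data on the corner set $\partial B_{2^{-k}}(0)\cap\partial K$, so the boundary inequality must be phrased via $\limsup$: at points of $\partial(\Omega\cap B_{2^{-k}}(0))$ lying on $\partial B_{2^{-k}}(0)\cap K$ one has $\psi_{2^{-k}}\to 1$, while any point of $\overline{\Omega}\cap\partial K$ automatically lies on $\partial\Omega$ (because $\Omega\subset K$ is open), where $u\to 0$ and $\Phi_k\ge 0$; with the $\limsup$ form of the maximum principle for the bounded subharmonic function $u-\Phi_k$ the one-step estimate $m_{k+1}\le\gamma m_k+\tfrac{\Lambda}{2N}4^{-k}$ then goes through as you intend. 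Second, your constant depends on $\theta$ (through $\gamma$) and on $\sup_\Omega u$, hence on $\diam\,\Omega$; as you correctly observe, the stated dependence $C=C(\Lambda,N)$ cannot hold literally for unbounded domains, and in this paper the lemma is only ever applied to sets in $\mathcal{S}_{r,R}$, where the resulting constants are written $\lesssim_{r,R}$, so your version is exactly what is used. The alternative you mention, the homogeneous barrier $|x|^{\alpha}g(x/|x|)$ with $g$ the principal Dirichlet eigenfunction of the spherical cross-section, would identify the sharp exponent but is not needed for the qualitative H\"older bound required here.
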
 

 \begin{lem}\label{lem: bdry gradient est} \textup{(Boundary Gradient Estimate in $C^{1,1}$ Domains)}
 Let $\Omega \subset \real^N$ be a domain $x_0 \in \partial\Omega$ and suppose that there exists $r>0$, $y \in \real\setminus\overline{\Omega}$ such that
$$ B_r(y) \cap \overline{\Omega } = \{x\}.$$
In other words, $\Omega$ has an exterior ball at $x_0$ of radius $r$.  Suppose $\Lambda>\lambda>0$ and $u: \Omega \to [0,+\infty)$ satisfies $u(x_0) = 0$ and
  \begin{equation*}
        -\Delta u \leq \lambda \quad \text{ for} \quad x \in \Omega .
\end{equation*}
Then 
$$ \lim_{s \to 0}\sup_{z \in \Omega \cap B_s(x)} \frac{u(z)}{s} \leq C(\Lambda,N)\frac{\sup_{B_{2r}(x_0)} u}{r}. $$
 \end{lem}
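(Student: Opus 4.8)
The plan is to run the classical Hopf-type barrier argument at $x_0$, with two small adjustments: first shrink the exterior ball so that the barrier annulus is contained in $B_{2r}(x_0)$ (so only $\sup_{B_{2r}(x_0)}u$ enters the comparison), and then carry a Poisson correction to absorb the right-hand side $\lambda$. First I would replace $B_r(y)$ by the internally tangent exterior ball $B_{r/2}(y')$ with $y':=x_0+\tfrac r2\,\tfrac{y-x_0}{|y-x_0|}$: since $\overline{B_{r/2}(y')}\subseteq\overline{B_r(y)}$ and $x_0\in\partial B_{r/2}(y')$, it still meets $\overline\Omega$ only at $x_0$, and every $x\in\overline{B_{3r/2}(y')}$ obeys $|x-x_0|\le|x-y'|+|y'-x_0|\le\tfrac{3r}{2}+\tfrac r2=2r$, so the annulus $A:=B_{3r/2}(y')\setminus\overline{B_{r/2}(y')}$ sits inside $\overline{B_{2r}(x_0)}$. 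Put $M:=\sup_{B_{2r}(x_0)\cap\Omega}u\ \ (\ge 0)$ and $D:=A\cap\Omega$; this set is bounded and $\sup_D u\le M$.

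Next I would let $w$ be the radial solution of $-\Delta w=\lambda$ in $A$ with $w=0$ on $\partial B_{r/2}(y')$ and $w=M$ on $\partial B_{3r/2}(y')$. By superposition $w=w_{\mathrm{harm}}+p$, where $w_{\mathrm{harm}}$ is the harmonic function on $A$ with these same boundary values (explicitly a power of $|x-y'|$ for $N\ge 3$, a logarithm for $N=2$, affine for $N=1$) and $p$ solves $-\Delta p=\lambda$ in $A$ with zero data; both are $\ge 0$ on $\overline A$ by the maximum principle, so $w\ge 0$ on $\overline A$ and $w(x_0)=0$. Since $\Delta(u-w)=\Delta u-\Delta w\ge(-\lambda)-(-\lambda)=0$, the function $u-w$ is subharmonic in $D$, and it is $\le 0$ on $\partial D$: the only boundary point lying on $\partial B_{r/2}(y')$ is $x_0$, where both vanish; on $\partial B_{3r/2}(y')\cap\overline\Omega\subseteq\overline{B_{2r}(x_0)}$ one has $u\le M=w$; and on $\partial\Omega\cap\overline A$ one has $u=0\le w$ (using, as in Lemma~\ref{lem: holder in lip domain}, that $u$ vanishes on $\partial\Omega$). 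By the maximum principle, $u\le w$ in $D$.

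Finally I would differentiate the barrier at $x_0$. For $s<r$, every point of $\Omega\cap B_s(x_0)$ lies in $D$ (it is within $s<r$ of $x_0$, hence in $B_{3r/2}(y')$, and it lies outside $\overline{B_{r/2}(y')}$, which meets $\overline\Omega$ only at $x_0$), so $\sup_{z\in\Omega\cap B_s(x_0)}u(z)/s\le\sup_{z\in B_s(x_0)}w(z)/s$. Now $w_{\mathrm{harm}}$ is smooth off $y'$, and $p$ extends smoothly across the smooth sphere $\partial B_{r/2}(y')$ near $x_0$ because of its zero boundary data, so $w\in C^1$ in a neighbourhood of $x_0$ with $w(x_0)=0$; hence $\limsup_{s\to 0}\sup_{B_s(x_0)}w/s\le|Dw(x_0)|\le|Dw_{\mathrm{harm}}(x_0)|+|Dp(x_0)|$. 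Differentiating the explicit formula for $w_{\mathrm{harm}}$ gives $|Dw_{\mathrm{harm}}(x_0)|=C_1(N)\,M/r$, and the rescaling $p(x)=\lambda r^2\,\tilde p((x-x_0)/r)$ to a fixed unit-source Dirichlet problem on a fixed annulus gives $|Dp(x_0)|=C_2(N)\,\lambda r\le C_2(N)\,\Lambda r$. Altogether $\limsup_{s\to 0}\sup_{z\in\Omega\cap B_s(x_0)}u(z)/s\le C_1(N)\,r^{-1}\sup_{B_{2r}(x_0)\cap\Omega}u+C_2(N)\,\Lambda r$, which is the asserted estimate once the lower-order term $C_2(N)\Lambda r$ (coming solely from the inhomogeneity) is absorbed into $C(\Lambda,N)\,r^{-1}\sup_{B_{2r}(x_0)}u$.

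The hard part is really only the bookkeeping: one must simultaneously fit the barrier annulus inside $B_{2r}(x_0)$ — which is what forces the shrink to the ball of radius $r/2$ — and carry the Poisson correction $p$, whose inward normal derivative at $x_0$ accounts for the $O(\Lambda r)$ contribution; the harmonic barrier, the maximum-principle comparison, and the $C^1$-regularity of $w$ up to $x_0$ are all standard.
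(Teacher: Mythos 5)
The paper itself states this lemma without proof (it is invoked as one of the ``elementary facts'' about Poisson's equation near the boundary), so there is no argument of the authors to match yours against; judged on its own, your annulus barrier is the natural and essentially standard route: shrinking the exterior ball to $B_{r/2}(y')$ so the annulus fits in $\overline{B_{2r}(x_0)}$, comparing $u$ with the radial barrier $w=w_{\mathrm{harm}}+p$ on $A\cap\Omega$, and reading off the gradient of the explicit barrier at $x_0$ are all correct. One remark on hypotheses: your comparison on $\partial\Omega\cap\overline{A}$ uses that $u$ vanishes on $\partial\Omega\cap B_{2r}(x_0)$, which is not among the printed assumptions (only $u(x_0)=0$ is); you flag this, and it is clearly the intended reading -- without it the statement is false (e.g.\ the Poisson extension of nonnegative boundary data behaving like $|x_1|^{1/2}$ near $x_0$ in a half-space), and in the paper's application $u=V_h$ does vanish on its free boundary.

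The genuine gap is the final sentence: the term $C_2(N)\Lambda r$ cannot be ``absorbed'' into $C(\Lambda,N)\,r^{-1}\sup_{B_{2r}(x_0)}u$, because the two quantities are not comparable. Take $\Omega=B_\epsilon(x_1)$, $u=\tfrac{\lambda}{2N}(\epsilon^2-|x-x_1|^2)$, and an exterior ball of radius $r\gg\epsilon$ at a boundary point $x_0$: the left-hand side equals $\lambda\epsilon/N$ while $r^{-1}\sup_{B_{2r}(x_0)}u=\lambda\epsilon^2/(2Nr)$, so no constant $C(\Lambda,N)$ works as $\epsilon/r\to0$ (this also shows the lemma as literally printed is slightly too strong). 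What your barrier argument actually proves is the two-term bound $\limsup_{s\to0}\sup_{\Omega\cap B_s(x_0)}u/s\le C(N)\bigl(r^{-1}\sup_{B_{2r}(x_0)}u+\Lambda r\bigr)$, and this corrected form is all that the paper needs: in the proof of Theorem~\ref{thm: short time existence} one has $r\sim h\le 1$ and, by Lemma~\ref{lem: holder in lip domain}, $\sup_{|z|\le 2h}V_h\lesssim h^{\alpha}$, so the extra $\Lambda h$ is dominated by $h^{\alpha-1}$ and the conclusion $|DV_h|\lesssim h^{\alpha-1}$ is unaffected. So either prove (and use) the two-term estimate, or add a hypothesis such as $\sup_{B_{2r}(x_0)}u\ge c\,\Lambda r^2$ under which the absorption is legitimate; as written, your last step does not follow.
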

 
 We move on to the 'short time' existence theorem.  Actually this shows the existence of a global in time discontinuous viscosity solution without any need for Assumption \ref{hyp: F cond}.  The uniqueness and continuity however rely on the strong comparison result which requires Assumption \ref{hyp: F cond} and only holds for a short time.  This is to be expected as noted in Remark \ref{rem: continuity of solutions}.  In particular the limiting factor will be the strong comparison principle Lemma \ref{lem: strong comparison} which only holds for a short time barring any a priori knowledge about strong star-shapedness. 
\begin{thm}\label{thm: short time existence} \textup{(Short time existence and uniqueness)}
Let $r,R>0$ and $\Omega_0 \in \mathcal{S}_{r,R}$ and let $u_0$ solve
$$-\Delta u_0 (x) = \lambda(0) \quad \text{ and} \quad u_0(x)=0 \text{ for } x\in\Gamma_0.$$
Then there is a $t_0>0$ depending only on $r$ such that there exists a (unique) continuous viscosity solution $u$ of \eqref{eqn: CLML} on $Q = \mathbb{R}^N\times(0,t_0]$ with initial data $u_0$.
\end{thm}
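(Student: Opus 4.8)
The plan is to run Perron's method, using the comparison principle (Theorem~\ref{thm: strictly separated comparison}) as the engine. First I would fix the class of admissible functions: let $\mathcal{A}$ be the set of viscosity subsolutions $w$ of \eqref{eqn: CLML} on $Q$ whose initial positivity set is contained in $\Omega_0$ and which lie below a fixed supersolution barrier. To get the barriers, I would use the strong star-shapedness of $\Omega_0$ together with Lemma~\ref{lem: sstolip}: since $\Omega_0 \in \mathcal{S}_{r,R}$ it has an exterior cone of a definite opening angle at every boundary point and is sandwiched between $B_r(0)$ and $B_R(0)$. Explicit sub/supersolutions can then be built by slightly shrinking/dilating $\Omega_0$ in the inf/sup-convolution sense of Lemma~\ref{lem: spaceconvs} (equivalently Lemma~\ref{lem: sstolip}(iv)); the boundedness of $F$ near the relevant range of $|Du|$ — controlled via the boundary Hölder estimate Lemma~\ref{lem: holder in lip domain} and the boundary gradient estimate Lemma~\ref{lem: bdry gradient est} — shows that on a short time interval $[0,t_0]$, with $t_0$ depending only on $r$ (through the exterior-cone angle $\sin\theta_x = r/|x|$ and the dilation rate), these remain ordered barriers for \eqref{eqn: CLML}. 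This is where $t_0$ depending only on $r$ enters: the exterior/interior cone opening at a boundary point $x$ depends only on $r/|x|$, and $|x|$ ranges in $[r,R]$, but the $R$-dependence can be absorbed since $R$ is determined once $\Omega_0$ is fixed — more carefully, one uses that the dilated/shifted domains remain in some $\mathcal{S}_{r',R'}$ for a time depending on $r$ alone via Lemma~\ref{lem: strong comparison}.

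Next I would define $u(x,t) := \sup\{w(x,t) : w \in \mathcal{A}\}$ and take its upper semicontinuous envelope $u^*$ and lower semicontinuous envelope $u_*$. The standard Perron argument shows $u^*$ is a subsolution and $u_*$ is a supersolution of \eqref{eqn: CLML} on $Q$ (the usual "bump" construction, using that a maximal subsolution cannot be strictly below a supersolution at an interior point without being pushed up), and both attain the initial data $u_0$ by squeezing between the barriers. The sup/inf-convolution closure in Lemma~\ref{lem: spaceconvs} is exactly what makes the bump construction legitimate here despite the lack of translation/scaling invariance. At this stage one has a (possibly discontinuous) viscosity solution $u_* \le u \le u^*$ with the right initial data — this part needs no Assumption~\ref{hyp: F cond} and is in fact global in time.

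To upgrade to a genuine continuous solution and to get uniqueness, I would invoke the strong comparison principle Lemma~\ref{lem: strong comparison}: since $\Omega_0 \in \mathcal{S}_{r,R}$, the lemma gives $\Omega_t(u_*) \subseteq \Omega_t(u^*)$ for $t \le t_0 := r^2/(Rc)$ — wait, more precisely, one applies Lemma~\ref{lem: strong comparison} with $u^*$ as subsolution and $u_*$ as supersolution (they are initially ordered, $\Omega_0(u^*) = \Omega_0(u_*) = \Omega_0$, which is strongly star-shaped), concluding $\Omega_t(u^*) \subseteq \Omega_t(u_*)$ and hence $u^* \le u_*$ for $t \le t_0$. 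Combined with the trivial inequality $u_* \le u^*$ this forces $u_* = u^* = u$, so $u$ is continuous, and the same comparison argument applied to any two solutions with data $u_0$ gives uniqueness on $[0,t_0]$. I expect the main obstacle to be the careful bookkeeping in the barrier construction — verifying that the inf/sup-convolved dilates of $\Omega_0$ genuinely remain sub/supersolutions of \eqref{eqn: CLML} (not merely of the linearized or frozen-$\lambda$ problem) on a time interval whose length is controlled purely by $r$, which requires combining Lemma~\ref{lem: spaceconvs}, the cone geometry of Lemma~\ref{lem: sstolip}, and the boundary estimates Lemmas~\ref{lem: holder in lip domain}–\ref{lem: bdry gradient est} to bound $|Du|$ on the contact line uniformly, so that the velocity law $u_t = F(|Du|)|Du|$ stays in a regime where the convolution perturbation of the speed (by $\pm c\epsilon$) dominates.
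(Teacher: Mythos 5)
Your proposal follows essentially the same route as the paper: Perron's method sandwiched between a shrinking dilation of $u_0$ (subsolution) and an inf-convolution of the dilated domain $(1+h)\Omega_0$ (supersolution), with Lemmas~\ref{lem: holder in lip domain} and \ref{lem: bdry gradient est} bounding $|Du|$ on the contact line of the upper barrier so that monotonicity of $F$ gives the supersolution property, and then the strong comparison principle Lemma~\ref{lem: strong comparison} forcing $u^*\leq u_*$ on a short time interval to obtain continuity and uniqueness. The only caveat, which the paper shares, is that the comparison step actually yields $t_0=t_0(r,R)$ rather than a time depending on $r$ alone, and your handling of that point is consistent with the paper's own proof.
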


\begin{proof}
This proof can be found in \cite{GlasnerKim09}.  We construct a sub and supersolution which take the initial data continuously then apply Perron's method. 

1. First we construct a subsolution. Let $\gamma>0$ to be chosen later and define,
\begin{equation}\label{eqn: perron subsoln}
U(x,t) := \left \{ \begin{array}{lll}
        (1-\gamma t)^2u_0(\frac{x}{1-\gamma t}) & \text{ for} & t<\gamma^{-1} \\ \\
        0 & \text{ for} & t>\gamma^{-1}
         \end{array}\right.
\end{equation}
Then, calculating formally, if $x_0 \in \Gamma_0$ then $(1-\gamma t)x_0$ is in $\Gamma_t$ and
$$ \frac{U_t}{|DU|}( (1-\gamma t)x_0) = \gamma x_0 \cdot \frac{Du_0}{|Du_0|}\leq -\gamma |x_0| \sqrt{1-\left(\frac{r}{|x_0|}\right)^2} \leq \min F $$
if $\gamma = \gamma(r,R)$ is chosen small.  In particular,
$$ \gamma < \min F\left(R^2-r^2\right)^{-1/2}$$
will work.  This calculation shows that $U$ is a subsolution of \eqref{eqn: CLML} when $\partial\Omega_0$ is smooth enough that $Du_0$ is defined on $\partial\Omega_0$.  The calculation can be transferred to the test functions to show that $U$ is a subsolution in the general case.  

2. Next we construct a supersolution which takes the initial data continuously.  Let $\alpha=\alpha(r/2,R)\in(0,1)$ from Lemma~\ref{lem: holder in lip domain} combined with Lemma~\ref{lem: sstolip}.  Define, for any $h>0$,
\begin{equation}\label{eqn: perron supersoln}
 V_h(x,t) = u\left[ \bigcap _{|z| \leq \rho(t)}(1+h)\Omega_0+z\right]
 \end{equation}
 where 
$$\rho(t) = a\left(h-\dfrac{1}{a}\max\{F\left(Ch^{\alpha-1}\right),1\}t\right) \ \text{ for } \ 0 \leq t \leq \max\{F\left(Ch^{\alpha-1}\right),1\}^{-1}
$$
  Here $0<a<r$ so that $u_0 \prec V_h$ as in Lemma \ref{lem: sstolip}.  It follows from Lemma \ref{lem: spaceconvs} that $V_h$ satisfies
 $$ \frac{\partial_tV_h}{|DV_h|}(x,t) \geq \max\{F\left(Ch^{\alpha-1}\right),1\} \quad \text{ on } \quad \Gamma_t(V_h) $$
 in the viscosity sense. Let $x \in \Gamma_t(V_h)$, then there exists $y \in \partial B_{\rho(t)}(x) \cap (1+h)\Gamma_0$ and $\Omega_t(V_h)$ has the exterior ball at $x$
 $$ B_{\rho(t)}(y) \subset \real^N\setminus\Omega_t(V_h).$$
Then, for $h$ sufficiently small, $\Omega_t(V_h)$ is an intersection of domains star-shaped with respect to $B_{r/2}(0)$ and we apply Lemma \ref{lem: bdry gradient est} and then Lemma \ref{lem: holder in lip domain} to get,
$$ |DV_h|(x,t) \leq Ch^{-1}\sup_{|z|\leq 2h}V_h(x+z,t) \leq  C h^{\alpha-1} $$
and it follows from the monotonicity of $F$ that $V_h$ is a supersolution.
 
 3. Now we apply Perron's method.  Let us define
 $$ u(x,t) := \sup \{ v: \real^N\times [0,+\infty) \to [0,+\infty): v \text{ is a subsolution of \eqref{eqn: CLML} with initial data } u_0 \}. $$ 
 From its definition and Theorem \ref{thm: strictly separated comparison} we get the ordering for all $h>0$ 
 \begin{equation}\label{initial_order}
 \begin{array}{lll}
  U(x,t) \leq u(x,t) \leq V_h(x,t) & \text{for} & 0 \leq t \leq \max\{F\left(Ch^{\alpha-1}\right),1\}^{-1}. 
  \end{array}
  \end{equation}
  Let us now define the upper and lower semi-continuous envelopes of $u$, respectively
 $$ u^*(x,t) = \limsup_{(y,s) \to (x,t)} u(y,s) \quad \text{ and } \quad u_*(x,t) = \liminf_{(y,s) \to (x,t)} u(y,s).$$
 It is standard in viscosity solution theory (see \cite{user}) to show that $u^*$ is a subsolution and $u_*$ is a supersolution.  Due to \eqref{initial_order} one can check that $u(\cdot,t) \to u_0(\cdot)$ uniformly as $t \to 0$, i.e. $u^*=u_*$ at $t=0$.  Thus, letting $t_0=t_0(r,R)>0$ as in  Lemma \ref{lem: strong comparison}, we have $u^* \leq u_*$ on $[0,t_0]$. Combining this with the fact $u_*\leq u^*$, we obtain the existence of a unique continuous viscosity solution of \eqref{eqn: CLML} on $[0,t_0]$. 
 
\end{proof}
We point out that the above proof yields a modulus of continuity in time of the contact line at $t=0$.  For solutions which are uniformly strongly star-shaped on a time interval $[0,T]$ this extends to give a uniform modulus of continuity in time for the contact line on $[0,T]$.  We make this explicit in the case when $F$ has polynomial growth below.  For the rest of this section we will make the following assumption:
\begin{hyp}\label{hyp: polynomial growth}
There exists $p>0$ such that:
\begin{equation}\label{eqn: polynomial growth}
\limsup_{s\to\infty}\frac{F(s)}{s^p}<+\infty 
\end{equation}
\end{hyp}
We prove a modulus continuity for strongly star-shaped solutions of \eqref{eqn: CLML} depending only on $r$, $R$, $p$, $\sup \lambda$ and $\sup F(s)/s^p$:
 \begin{cor}\label{cor: equicontinuity}
  Suppose $u:\mathbb{R}^N\times[0,T]\to [0,+\infty)$ is a viscosity solution of \eqref{eqn: CLML} with $0<\lambda(t)\leq \Lambda$ bounded and $F$ satisfying Assumptions \ref{hyp: F incr cond}, \ref{hyp: F cond} and \ref{hyp: polynomial growth}.  Moreover suppose that $\Omega_t(v)\in \mathcal{S}_{r,R}$ for some $r,R>0$ and all $t\in[0,T]$. Let $\alpha$ from Lemma \ref{lem: bdry gradient est} and 
 $$\beta:=\tfrac{1}{1+p(1-\alpha)},$$ 
 then $\Gamma_t(u)$ is $C^\beta$ in time in the Hausdorff metric, i.e.,
  $$ d_H(\Gamma_{t_1}(u),\Gamma_{t_2}(u))\leq C_2|t_1-t_2|^{\beta} \hbox { for all } t_1, t_2 \in [0,T].
  $$
  Here $C_2$ depends only on $\sup F(s)/s^p$, $p$, $\Lambda$ and $N$.
 \end{cor}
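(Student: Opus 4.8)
The plan is to reduce the time-continuity of $\Gamma_t(u)$ in the Hausdorff metric to a two-sided comparison of $\Omega_{t_1}(u)$ with small dilations of itself, and then to realize the outward comparison by running the supersolution barrier \eqref{eqn: perron supersoln} from the proof of Theorem~\ref{thm: short time existence} with initial data $u(\cdot,t_1)$, using Assumption~\ref{hyp: polynomial growth} to turn the resulting gradient blow-up into a time scale and optimizing in the dilation parameter. We may assume $t_1<t_2$ and set $\delta:=t_2-t_1$. Since every $\Omega_s(u)\in\mathcal{S}_{r,R}$ is in particular a Lipschitz radial graph over $B_r(0)$, one has $d_H(\Gamma_s(u),\partial((1\pm\epsilon)\Omega_s(u)))\le C(r,R)\epsilon$ for $\epsilon$ small, so it suffices to produce $\epsilon=C\delta^{\beta}$ with
$$(1-\epsilon)\,\Omega_{t_1}(u)\ \subseteq\ \Omega_{t_2}(u)\ \subseteq\ (1+\epsilon)\,\Omega_{t_1}(u).$$

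For the outward inclusion, freeze the solution at time $t_1$ and run the barrier \eqref{eqn: perron supersoln} with $\Omega_0$ replaced by $\Omega_{t_1}(u)$; this is legitimate because $\Omega_{t_1}(u)\in\mathcal{S}_{r,R}$ and $u(\cdot,t_1)$ is the $(-\Delta\,\cdot\,=\lambda(t_1))$-profile over it. By Lemma~\ref{lem: sstolip}(iv) applied with $\epsilon=h$, together with the maximum principle on the strictly larger domain, we get the strict separation $u(\cdot,t_1)\prec V_h(\cdot,t_1)$, so Theorem~\ref{thm: strictly separated comparison} gives $\Omega_t(u)\subseteq\Omega_t(V_h)$ as long as the eroding collar in \eqref{eqn: perron supersoln} is nonempty, i.e.\ for $t\in[t_1,t_1+\tau_h]$ with $\tau_h:=a h/\max\{F(Ch^{\alpha-1}),1\}$, at which endpoint $\Omega_{t_1+\tau_h}(V_h)=(1+h)\Omega_{t_1}(u)$. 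The speed bound feeding $\tau_h$ is exactly the one in Theorem~\ref{thm: short time existence}: the exterior ball of radius $\rho(t)$ at every point of $\Gamma_t(V_h)$ (the set is an erosion of $(1+h)\Omega_{t_1}(u)$), together with Lemma~\ref{lem: bdry gradient est} and the exterior-cone Hölder estimate of Lemma~\ref{lem: holder in lip domain} combined with Lemma~\ref{lem: sstolip}, force $|DV_h|\le Ch^{\alpha-1}$ and hence $\partial_tV_h/|DV_h|\ge\max\{F(Ch^{\alpha-1}),1\}$ on $\Gamma_t(V_h)$. Now Assumption~\ref{hyp: polynomial growth} gives, for $h$ small, $F(Ch^{\alpha-1})\le C'h^{-p(1-\alpha)}$, so $\tau_h\ge c\,h^{1+p(1-\alpha)}=c\,h^{1/\beta}$. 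Choosing $h:=(\delta/c)^{\beta}$ (admissible once $\delta$ is below a fixed threshold) makes $\tau_h\ge\delta$, whence $\Omega_{t_2}(u)\subseteq(1+C\delta^{\beta})\Omega_{t_1}(u)$; for $\delta$ above the threshold the bound is trivial since all $\Gamma_s(u)\subset B_R(0)$, and this is absorbed into the final constant.

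The inward inclusion is easier, and in fact linear in time. Since $F$ is strictly increasing with $F(1)=0$ we have $F(0)<0$, so the inward normal velocity of $\Gamma_t(u)$ is bounded. Concretely, build the shrinking profile \eqref{eqn: perron subsoln} on $\Omega_{t_1}(u)$ with contraction rate $\gamma:=2|F(0)|/r$; the star-shapedness inequality $x_0\cdot\nu(x_0)\ge r$ on $\partial\Omega_{t_1}(u)$ shows (as in the proof of Theorem~\ref{thm: short time existence}) that this is a subsolution of \eqref{eqn: CLML} agreeing with $u(\cdot,t_1)$ at $t_1$, and after the usual infinitesimal inward dilation — admissible because the $\Omega_t(u)$ are uniformly strongly star-shaped, as in Lemma~\ref{lem: strong comparison} — Theorem~\ref{thm: strictly separated comparison} yields $\Omega_{t_2}(u)\supseteq(1-\gamma\delta)\Omega_{t_1}(u)$. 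Combining the two inclusions with $\epsilon=C\max\{\delta^{\beta},\delta\}=C\delta^{\beta}$ for $\delta$ small, and converting back to Hausdorff distance via the star-shapedness of $\Omega_{t_1}(u)$, gives the claimed $C^{\beta}$-in-time bound, the constant depending only on $N$, $p$, $\sup\lambda$, $\sup_sF(s)/s^p$ and — through $\alpha$ and the dilation-to-Hausdorff conversion — on $r$ and $R$.

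The main obstacle, and the reason the exponent drops from $1$ to $\beta=1/(1+p(1-\alpha))$, is that the free boundary is only known to be Lipschitz, not $C^{1,\alpha}$, so $|Du(\cdot,t)|$ may blow up like $h^{\alpha-1}$ as one approaches $\Gamma_t(u)$ from the positive phase near a corner; this is precisely what Lemmas~\ref{lem: holder in lip domain}–\ref{lem: bdry gradient est} quantify once the uniform exterior cone from $\Omega_t(u)\in\mathcal{S}_{r,R}$ is available. Turning this gradient blow-up together with the at-most-polynomial growth of $F$ into the collar-closing time $\tau_h$, and then optimizing in $h$, is the substantive step; the barriers themselves are those of Theorem~\ref{thm: short time existence}, and the only care needed is to apply the comparison principle within the strongly star-shaped regime so that the infinitesimal dilations and perturbations are legitimate.
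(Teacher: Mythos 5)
Your proposal is correct and follows essentially the same route as the paper: restart the barriers of Theorem~\ref{thm: short time existence} at time $t_1$ (the linearly shrinking subsolution $U$ for the inward containment and the eroding-collar supersolution $V_h$ for the outward one), choose $h\sim(t_2-t_1)^{\beta}$ so that the collar's lifetime $\sim h/\max\{F(Ch^{\alpha-1}),1\}\sim h^{1/\beta}$ covers $t_2-t_1$, and convert the two-sided dilation containment to a Hausdorff bound via star-shapedness. The only difference is presentational: you spell out the lifetime computation, the optimization in $h$, and the strict-separation/dilation step that the paper leaves implicit.
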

     \begin{rem}
   An analogous equicontinuity result is true for general $F$ satisfying Assumptions~\ref{hyp: F incr cond} and \ref{hyp: F cond}.   In this case the H\"{o}lder modulus of continuity will have to be replaced by a more general modulus of continuity depending on the growth of $F$ at infinity.  We restrict to the polynomial growth case for simplicity of presentation.
   \end{rem}
  
 \begin{proof}
Let $T>t_2>t_1>0$ and $\gamma$ from the proof of Theorem \ref{thm: short time existence}.  Using the barriers from the previous theorem,
\begin{equation}
u(x,t_2) \geq U(x,t_2) := \left \{ \begin{array}{lll}
        (1-\gamma t)^2u(\frac{x}{1-\gamma t},t_1) & \text{ for} & t-t_1<\gamma^{-1} \\ \\
        0 & \text{ for} & t-t_1>\gamma^{-1}.
         \end{array}\right.
\end{equation}
Also for
$h=C(t_2-t_1)^{\beta}$ we have that
\begin{equation}
u(x,t_2) \leq V_h(x,t_2) \leq (1+h)^2 u\left(\frac{y}{1+h},t_1\right).
\end{equation}
Then the following containments hold:
$$ [(1-\gamma(t_2-t_1))\vee0]\Omega_{t_1}(u)\subseteq\Omega_{t_2}(u)\subseteq (1+C(t_2-t_1)^{\beta})\Omega_{t_1}(u). $$
So if $x \in \Gamma_{t_1}(u)$ and $(t_2-t_1)$ small, 
  $$ \{y: |y-x| \leq CR(t_2-t_1)^{\beta}\}\cap \Gamma_{t_2}(u) \neq \emptyset $$
  and 
   \begin{equation}\label{eqn: bdry holder in time}
    d_H(\Gamma_{t_1}(u),\Gamma_{t_2}(u))\leq C|t_1-t_2|^{\beta}. 
    \end{equation}
  
   \end{proof}

   \begin{rem}
   See Lemma \ref {lem: hausdorff estimates} for some implications of \eqref{eqn: bdry holder in time}.
   \end{rem}
 
We will be able to use this Corollary to show existence of a continuous viscosity solution for \eqref{eqn: CLML} when $F$ has polynomial growth.  The idea is to take the unique solution $u_M$ for the problem with the free boundary velocity $\max\{F,M\}$ and let $M \to \infty$.  Corollary \ref{cor: equicontinuity} will give equicontinuity to take a convergent subsequence of the $u_M$ as long as the $\Omega_t(u_M)\in\mathcal{S}_{r,R}$ with $r,R$ uniform in $M$ and $t$.  In order to show this we will need some kind of preservation of the strongly star-shaped property which does not depend on $M$.  

\begin{lem}\label{lem: sending M to infty}
Let $T>0$ and $u_n:\mathbb{R}^N\times[0,T]\to [0,+\infty)$ be viscosity solutions of:
\begin{equation}\label{eqn: approx problem}
\left\{
\begin{array}{lll}
-\Delta u_n(x,t) = \lambda_n(t) & \text{ in } & \Omega_t(u_n), \\ \\
 \partial_tu_n = F_n(|Du_n|)|Du_n| & \text{ on } & \Gamma_t(u_n). \\ \\
 \Omega_0(u_n) = \Omega_0.
\end{array}\right.
\end{equation}
Here $\lambda_n \to \lambda>0$ uniformly so that in particular there exists $\Lambda>0$ so that:
$$\sup_n \sup_t \lambda_n(t) \leq \Lambda<+\infty. $$
We suppose that $F_n \leq F$ for all $n$ and $F_n \to F$ uniformly on compact sets.  The limiting free boundary speed $F$ has polynomial growth of order $p>1$,
$$ 0<\limsup_{s\to \infty}  \frac{F(s)}{s^p}<+\infty. $$
The $F_n$ are assumed to all satisfy Assumptions \ref{hyp: F incr cond}, \ref{hyp: F cond} and \ref{hyp: polynomial growth}, with Assumption~\ref{hyp: polynomial growth} satisfied uniformly :
\begin{equation}
\begin{array}{lll}
 \sup_n \sup_s \frac{F_n(s)}{s^p}<+\infty.
 \end{array}
 \end{equation}
 Moreover, suppose that there exist $r,R>0$ such that for all $n$ and all $t \in [0,T]$ we have $\Omega_t(u_n) \in \mathcal{S}_{r,R}$.  However note that we do NOT assume that the constants $c_n$ from Assumption \ref{hyp: F cond} are uniformly bounded.  Then the $u_n$ converge uniformly on $\real^N \times [0,T]$ to a viscosity solution of:  
 \begin{equation}\label{eqn: limit pde}
\left\{
\begin{array}{lll}
-\Delta u(x,t) = \lambda(t) & \text{ in } & \Omega_t(u), \\ \\
 \partial_tu = F(|Du|)|Du| & \text{ on } & \Gamma_t(u). \\ \\
 \Omega_0(u) = \Omega_0.
\end{array}\right.
\end{equation}
If additionally $\lambda_n(t) \leq \lambda(t)$ for all $n$ then $u$ is the minimal viscosity solution \eqref{eqn: limit pde}.  In particular, in this case, the limit does not depend on the approximating sequence $F_n$.
\end{lem}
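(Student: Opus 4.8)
The plan is to run a compactness argument: establish uniform bounds and equicontinuity for the family $\{u_n\}$, extract a uniformly convergent subsequence, invoke the stability of viscosity solutions under uniform limits to identify the limit, and finally --- in the case $\lambda_n\leq\lambda$ --- identify this limit as the minimal viscosity solution via a comparison argument carried out \emph{at the level of the approximate problems}. The main obstacle will be obtaining equicontinuity in time that is \emph{uniform in $n$}: the usual strong comparison estimate (Lemma~\ref{lem: strong comparison}) degenerates because the constants $c_n$ from Assumption~\ref{hyp: F cond} are not assumed bounded, so the uniform temporal modulus must instead be read off from Corollary~\ref{cor: equicontinuity}, whose constants were arranged to be independent of $c$.

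First I would record the uniform bounds. Since $-\Delta u_n(\cdot,t)=\lambda_n(t)\leq\Lambda$ on $\Omega_t(u_n)\subseteq B_R$ with zero boundary data, comparison with $\tfrac{\Lambda}{2N}(R^2-|x|^2)$ gives $0\leq u_n\leq\Lambda R^2/(2N)$ for all $n$. For spatial equicontinuity, Lemma~\ref{lem: sstolip} shows that every $\Omega_t(u_n)\in\mathcal{S}_{r,R}$ has an exterior cone of opening angle at least $\theta_0=\arcsin(r/R)$ at each boundary point, so Lemma~\ref{lem: holder in lip domain} yields $\sup_{|z|\leq h}u_n(x_0+z)\leq Ch^{\alpha}$ with $\alpha=\alpha(\theta_0,N)$ and $C=C(\Lambda,N)$ at every $x_0\in\Gamma_t(u_n)$; combined with interior gradient estimates this gives a spatial modulus of continuity for $u_n(\cdot,t)$ depending only on $r,R,\Lambda,N$. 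For temporal equicontinuity I would apply Corollary~\ref{cor: equicontinuity} to each $u_n$: as $u_n$ is a viscosity solution of \eqref{eqn: CLML} with $0<\lambda_n\leq\Lambda$ (for $n$ large), $\Omega_t(u_n)\in\mathcal{S}_{r,R}$, and $F_n$ satisfying Assumptions~\ref{hyp: F incr cond}, \ref{hyp: F cond}, \ref{hyp: polynomial growth}, we get $d_H(\Gamma_{t_1}(u_n),\Gamma_{t_2}(u_n))\leq C_2|t_1-t_2|^{\beta}$ with $\beta=1/(1+p(1-\alpha))$ and $C_2$ depending only on $\sup_n\sup_s F_n(s)/s^p$, $p$, $\Lambda$, $N$ --- hence uniform in $n$ by the uniform polynomial growth hypothesis. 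Feeding this into the dilation barriers from the proof of Corollary~\ref{cor: equicontinuity} (whose parameters involve only $r$, $R$, and $\sup_n(-F_n(0))<\infty$) produces $(1-\gamma(t_2-t_1))_+\Omega_{t_1}(u_n)\subseteq\Omega_{t_2}(u_n)\subseteq(1+C_2(t_2-t_1)^{\beta})\Omega_{t_1}(u_n)$ together with the matching pointwise bounds on $u_n$, i.e. a temporal modulus of continuity for $u_n$ uniform in $n$.

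With uniform boundedness and equicontinuity in hand, and since every $u_n$ is supported in $\overline{B_R}$, Arzel\`a--Ascoli produces along a subsequence a uniform limit $u$ on $\real^N\times[0,T]$, and Lemma~\ref{lem: uniform stability} (using $F_n\to F$ and $\lambda_n\to\lambda$ uniformly on compacts) shows $u$ is a viscosity solution of \eqref{eqn: limit pde}. For the initial data I would note that $u_n(\cdot,0)=\tfrac{\lambda_n(0)}{\lambda(0)}u_0\to u_0$ uniformly, so $\Omega_0(u)=\Omega_0$; and the uniform moduli give $\Omega_t(u)\in\mathcal{S}_{r,R}$ for all $t\in[0,T]$.

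Finally, under the extra hypothesis $\lambda_n\leq\lambda$, I would show $u$ is the minimal viscosity solution of \eqref{eqn: limit pde}. Let $w$ be any viscosity solution of \eqref{eqn: limit pde} with $\Omega_0(w)=\Omega_0$. Since $F_n\leq F$ and $\lambda_n\leq\lambda$, classical supersolutions of \eqref{eqn: limit pde} are classical supersolutions of the $(\lambda_n,F_n)$-problem and classical subsolutions of the $(\lambda_n,F_n)$-problem are classical subsolutions of \eqref{eqn: limit pde}; hence $u_n$ is a subsolution of \eqref{eqn: limit pde} while $w$ is a supersolution of the $(\lambda_n,F_n)$-problem. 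One cannot compare $u_n$ and $w$ directly in \eqref{eqn: limit pde} because $F$ need not satisfy Assumption~\ref{hyp: F cond}; instead I would apply Lemma~\ref{lem: strong comparison} to $u_n$ (a solution of the $(\lambda_n,F_n)$-problem with $\Omega_t(u_n)\in\mathcal{S}_{r,R}$ on $[0,T]$) and $w$ (a supersolution of that same problem), obtaining $\Omega_t(u_n)\subseteq\Omega_t(w)$ for all $t\in[0,T]$ --- the extra interval $r^2/(Rc_n)$ in that lemma is immaterial here since the star-shapedness of $\Omega_t(u_n)$ holds throughout $[0,T]$. On $\Omega_t(u_n)$ one then has $-\Delta(w-u_n)=\lambda-\lambda_n\geq0$ and $w-u_n=w\geq0$ on $\Gamma_t(u_n)$, so the minimum principle gives $u_n\leq w$ on $\Omega_t(u_n)$ and hence $u_n\leq w$ everywhere on $\real^N\times[0,T]$. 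Letting $n\to\infty$ along the subsequence yields $u\leq w$. Thus any subsequential limit is a viscosity solution of \eqref{eqn: limit pde} lying below every viscosity solution, so the minimal solution exists and equals this limit; in particular the limit is independent of the subsequence (so the full sequence converges) and of the approximating family $\{F_n\}$.
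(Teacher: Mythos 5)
Your proposal is correct and follows essentially the same route as the paper: uniform-in-$n$ space-time equicontinuity drawn from Corollary \ref{cor: equicontinuity} (whose constants do not involve the $c_n$), compactness, stability of viscosity solutions under uniform convergence (Lemma \ref{lem: uniform stability}), and, for minimality, the strong comparison principle Lemma \ref{lem: strong comparison} applied to each approximate problem with the competing solution viewed as a supersolution of that problem. The only cosmetic differences are that the paper first extracts compactness for the paths $\Omega_t(u_n)$ in $(\mathcal{S}_{r,R},d_H)$ and then recovers the functions through the elliptic estimate $|u_n-u|\lesssim_{r,R} d_H(\Omega_t(u_n),\Omega_t(u))^\alpha+|\lambda_n-\lambda|$, whereas you apply Arzel\`a--Ascoli to the $u_n$ directly, and that you make explicit the maximum-principle step upgrading the set containment $\Omega_t(u_n)\subseteq\Omega_t(w)$ to the ordering $u_n\leq w$, which the paper leaves implicit.
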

\begin{proof}
1. First we show that $\Omega_t(u_n)$ is an equicontinuous sequence of paths in $\mathcal{S}_{r,R}$.  From Lemma \ref{lem: hausdorff estimates} and Corollary \ref{cor: equicontinuity} we derive for $t,s>0$:
$$\sup_nd_H(\Omega_t(u_n),\Omega_s(u_n))\leq\sup_nd_H(\Gamma_t(u_n),\Gamma_s(u_n)) \leq C|t-s|^\beta. $$
Then from the compactness lemma \ref{lem: compactness} for paths in $\mathcal{S}_{r,R}$ up to a subsequence $\Omega_t(u_n)$ converge uniformly to a continuous path $\Omega_t$ in $\mathcal{S}_{r,R}$ with free boundary $\Gamma_t := \partial\Omega_t$.  

\medskip

Let $u(x,t)$ be the solution of 
\begin{equation}
\begin{array}{lll}
 -\Delta u(x,t) = \lambda(t) & \text{ for } & x \in \Omega_t \\ \\
 u(x,t) = 0 & \text{ for } & x \in \Gamma_t.
 \end{array}
 \end{equation}
 We claim that $u_n$ converges uniformly to $u$ in $\mathbb{R}^N\times[0,T]$, we will demonstrate this by showing the following estimate:
 $$ |u_n-u|(x,t) \lesssim_{r,R}  d_H(\Omega_t(u_n),\Omega_t(u))^\alpha+|\lambda_n(t)-\lambda(t)|. $$
   Let $x \in \Omega_t(u) \Delta \Omega_t(u_n)$, then:
 $$d(x,\partial(\Omega_t (u)\cup \Omega_t(u_n))) \leq d_H(\Gamma_t(u_n),\Gamma_t(u)) \lesssim_{r,R} d_H(\Omega_t(u_n),\Omega_t(u)). $$
 Since $u_n-u=0$ on $\partial(\Omega_t(u) \cup \Omega_t(u_n))$ we get from Lemma \ref{lem: holder in lip domain},
 $$|u_n-u|(x,t) \leq C d(x,\partial(\Omega_t(u) \cup \Omega_t(u_n)))^\alpha \lesssim_{r,R}d_H(\Omega_t(u_n),\Omega_t(u))^\alpha. $$
 Meanwhile for $x \in \Omega_t(u) \cap \Omega_t(u_n)$ we combine the above inequality which holds on the boundary of $\Omega_t(u) \cap \Omega_t(u_n)$ with the fact that $\text{diam}(\Omega_t(u) \cap \Omega_t(u_n)) \lesssim_R 1$ to get,
 $$ |u_n-u|(x,t) \lesssim d_H(\Omega_t(u_n),\Omega_t(u))^\alpha+ |\lambda_n(t)-\lambda(t)|.  $$
 We apply the stability of viscosity solutions under uniform convergence, Lemma \ref{lem: uniform stability}, to see that $u$ is a viscosity solution of the PDE \eqref{eqn: limit pde} as claimed. 
 
 \medskip
 
 2.   Now we show that if $\lambda_n \nearrow \lambda$ then $u$ must be the smallest viscosity solution of \eqref{eqn: limit pde}.  Let $v$ be another viscosity solution of \eqref{eqn: limit pde}.  Note that due to the orderings $F_n \leq F$ and $\lambda_n \leq \lambda$ we have that $v$ is a supersolution of each of the approximating problems \eqref{eqn: approx problem}.  Using the strong comparison principle Lemma~\ref{lem: strong comparison} which holds for each problem \eqref{eqn: approx problem} we get
 $$ u_n \leq v \hbox{ on } \real^N\times[0,T]. $$

\end{proof}

\subsection{Preservation of the Strongly Star-shaped Property}\label{sec: Preservation of the Strongly Star-shaped Property}

Here we describe some of the properties of the viscosity solutions of \eqref{eqn: CLML} and \eqref{eqn: CLMV}.  First, we will show that, if the droplet initially is in $\mathcal{S}_0$, then this property persists for a short time with the radius of the strong star-shaped property going to zero in some finite amount of time. This short-time regularity will not be sufficient to prove any kind of long-time behavior. However, in Proposition \ref{prop: reflection comparison} we have shown that a reflection comparison principle holds as long as the positivity set is in $\mathcal{S}_{r,R}$ for any $r,R>0$. The reflection comparison principle in turn will allow us to show, in some situations, that actually there was no loss of star-shapedness.  In particular for \eqref{eqn: CLMV} we will show that initial data which is in $\mathcal{S}_{r}$ for a sufficiently large $r$ along with a condition on the smallness of 
$$\sup_{x\in \Omega_0} |x|-\inf_{x\in \Omega_0} |x|$$
 will maintain some regularity globally in time.  In fact, it will be strongly star-shaped for all time with a possibly smaller radius.  
 
 \medskip
 
 We show that strong star-shapedness cannot disappear immediately.  The below Lemma is essentially contained in \cite{GlasnerKim09}, we use a different proof. 

 \begin{lem}\label{lem: GK short time ss} 
\textup{(Short time strong star-shapedness)} Let $u:\real^N\times[0,+\infty)\to[0,+\infty)$ be the solution of \eqref{eqn: CLML} with initial positive phase $\Omega_0\in \mathcal{S}_{r}$ for some $r>0$. Then
$$ \Omega_t \in \mathcal{S}_{r-ct} \  \hbox{ for }  \ 0 \leq t < \frac{r}{c} $$
 where $c$ is from Assumption \ref{hyp: F cond}.
\end{lem}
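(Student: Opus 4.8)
The plan is to use the reflection comparison principle (Proposition \ref{prop: reflection comparison}) together with the sup-convolution in space (Lemma \ref{lem: spaceconvs}) and the characterization of strong star-shapedness in terms of exterior cones (Lemma \ref{lem: sstolip}), exactly in the spirit of the proof of Proposition \ref{prop: reflection comparison}. The key observation is that $\Omega_0 \in \mathcal{S}_r$ is equivalent to the family of reflection orderings: for every hyperplane $H$ with $H \cap B_r(0) = \emptyset$, one has $u_0(\phi_H(x)) \leq u_0(x)$ for $x \in H_+$, where $H_+$ is the half-space containing $B_r(0)$. So I would first reformulate $\mathcal{S}_{r-ct}$ in the same way: $\Omega_t \in \mathcal{S}_{r-ct}$ is equivalent to $u(\phi_H(x),t) \leq u(x,t)$ for all hyperplanes $H$ with $H \cap B_{r-ct}(0) = \emptyset$ and all $x \in H_+$.

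Then the plan is: fix such a hyperplane $H$ (with $d(H,0) \geq r - ct$, but note that then $d(H,0)$ may be as small as $r-ct$, which is where the time-dependence of the radius comes from — a hyperplane at distance $r - ct$ from the origin was at distance $\geq r - ct > r - c\cdot 0$... wait, at time $0$ the relevant constraint is $H \cap B_r(0) = \emptyset$). More carefully: given $H$ with $H \cap B_{r-ct_0}(0) = \emptyset$ at some time $t_0 < r/c$, I want to compare $u(\cdot,t)$ with its reflection $u(\phi_H(\cdot),t)$ on a time interval up to $t_0$. The difficulty is that Proposition \ref{prop: reflection comparison} requires $\Omega_t(u) \in \mathcal{S}_\rho$ for some $\rho > 0$ on the whole interval, which is circular — it presupposes the conclusion. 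So instead I would mimic the proof of Proposition \ref{prop: reflection comparison} directly at the level of the barriers: perturb $u$ by a sup/inf convolution and a dilation to get strictly ordered sub/supersolutions, apply the strictly-separated comparison Theorem \ref{thm: strictly separated comparison} (which needs no star-shapedness), and pass to the limit. Concretely, since $\Omega_0 \in \mathcal{S}_r$, Lemma \ref{lem: sstolip}(iv) gives $\Omega_0 \subset\subset \bigcap_{|z|\leq a\epsilon}[(1+\epsilon)\Omega_0 + z]$ for $0 < a < r$; the inf-convolution $u_\epsilon(x,t) = \inf_{y \in B_{a\epsilon - c\epsilon t}(x)} (1+\epsilon)^2 u((1+\epsilon)^{-1}y, t)$ is then a supersolution lying strictly above $u$ initially, and the reflection argument applied to $u$ (subsolution) against the reflected $u_\epsilon$ (supersolution) through $H$ yields $u(\phi_H(x),t) \leq u_\epsilon(x,t)$ on $H_+ \times [0, a/c)$, hence $u(\phi_H(x),t) \leq u(x,t)$ after $\epsilon \to 0$.

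The main obstacle — and the reason the radius shrinks at rate $c$ — is keeping track of which hyperplanes the argument covers at each time. At time $t$, the sup-convolution moves the free boundary inward by an extra $ct$ relative to the unconvoluted solution, and correspondingly the reflection ordering is only propagated for hyperplanes staying clear of $B_{r-ct}(0)$: a hyperplane tangent to $B_{r-ct}(0)$ is, at time $0$, at distance $r - ct < r$ from the origin only if... no — at distance $r-ct$, and we need it clear of $B_r(0)$ at time $0$, which fails. The resolution is that the inf-convolution radius $a\epsilon - c\epsilon t$ built into $u_\epsilon$ effectively widens the ball of "bad" hyperplanes over time at rate $c$, so that a hyperplane clear of $B_{r-ct}(0)$ at time $t$ corresponds, through the convolution geometry, to one we can handle. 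I would make this precise by checking that the perturbed solution $u_\epsilon$ at time $t$ has positive phase in $\mathcal{S}_{r-ct-O(\epsilon)}$ by construction (from Lemma \ref{lem: sstolip}(iv) with the shrinking radius $a\epsilon - c\epsilon t$), so the reflection comparison through any $H$ with $H \cap B_{r-ct}(0) = \emptyset$ applies to the pair $(u, u_\epsilon)$, and then let $\epsilon \to 0$. Finally, since this holds for every such $H$ and every $t < r/c$, the exterior-cone characterization Lemma \ref{lem: sstolip}(ii)–(iii) gives $\Omega_t \in \mathcal{S}_{r-ct}$. An alternative, cleaner route would be to iterate the short-time reflection argument on small time steps, using Lemma \ref{lem: strong comparison} / Lemma \ref{lem: GK short time ss} (the short-time existence package) to guarantee that on each small step the solution is strongly star-shaped with a definite radius, and tracking the radius loss $ct$ additively across steps; I expect the bookkeeping of the radius across the iteration to be the delicate point either way.
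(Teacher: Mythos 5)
Your proposal contains the right perturbation but routes the argument through a claim that is false, and this is a genuine gap. You assert that $\Omega_0\in\mathcal{S}_r$ is \emph{equivalent} to the family of reflection orderings $u_0(\phi_H(x))\leq u_0(x)$ on $H_+$ for all hyperplanes $H$ with $H\cap B_r(0)=\emptyset$, and you then try to prove $\Omega_t\in\mathcal{S}_{r-ct}$ by propagating such orderings. Only one implication holds: the reflection property implies strong star-shapedness (this is the content of Lemma \ref{lem: almost radiality}, and even there the radius is $(\inf_{\partial\Omega}|x|^2-\rho^2)^{1/2}$, not $\rho$). The converse fails: a set star-shaped with respect to $B_r(0)$ but with a long thin spike on one side violates the reflection ordering for hyperplanes perpendicular to the spike and close to $B_r(0)$, since the reflected spike leaves the domain. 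So the reflection orderings you need are simply not available at $t=0$, which you yourself notice for the hyperplanes at distance between $r-ct$ and $r$ from the origin; the subsequent appeal to ``the convolution geometry'' does not repair this, and the claim that $\Omega_t(u_\epsilon)$ lies in some $\mathcal{S}_{r-ct-O(\epsilon)}$ ``by construction'' is circular, because $\Omega_t(u_\epsilon)$ is built from the unknown set $\Omega_t(u)$. The closing suggestion to iterate using ``Lemma \ref{lem: strong comparison} / Lemma \ref{lem: GK short time ss}'' is likewise circular, as the latter is the statement being proved.

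The missing idea is that no reflections are needed at all: compare $u$ directly with the perturbation you already wrote down. Since $\Omega_0\in\mathcal{S}_r$, Lemma \ref{lem: sstolip}(iv) together with the strong maximum principle gives $u(\cdot,0)\prec u_\epsilon(\cdot,0)$ where $u_\epsilon(x,t):=\inf_{|z|\leq a\epsilon-c\epsilon t}(1+\epsilon)^2u((1+\epsilon)^{-1}(x+z),t)$, $0<a<r$; by Lemma \ref{lem: spaceconvs} and Assumption \ref{hyp: F cond} this $u_\epsilon$ is a supersolution of \eqref{eqn: CLML}, so the strictly separated comparison Theorem \ref{thm: strictly separated comparison} (which indeed requires no star-shapedness) yields $u\leq u_\epsilon$ for $t<a/c$. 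The resulting inclusion $\Omega_t(u)\subset\Omega_t(u_\epsilon)=\bigcap_{|z|\leq(a-ct)\epsilon}\left((1+\epsilon)\Omega_t(u)+z\right)$, valid for all small $\epsilon$ and all $0<a<r$, is \emph{exactly} the characterization in Lemma \ref{lem: sstolip}(iv) of $\Omega_t(u)\in\mathcal{S}_{r-ct}$, which is the paper's proof and explains the rate-$c$ loss of radius without any bookkeeping of hyperplanes. Reflections enter the paper only later, for the genuinely reflection-based property ($\rho$-reflection, Lemma \ref{lem: preservation}), not for this lemma.
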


 \begin{proof}  
 Let $\epsilon_0>\epsilon>0$ and $0<a<r$, then from Lemma \ref{lem: sstolip} and Assumption \ref{hyp: F cond} we know that
 \begin{equation*}
 \begin{array}{lll}
  u_\epsilon(x,t) := \inf_{|z|\leq a\epsilon-c\epsilon t} (1+\epsilon)^2u((1+\epsilon)^{-1}(x+z),t) & \text{ for } & t \leq \frac{a}{c}
  \end{array}
  \end{equation*}
  is a supersolution of \eqref{eqn: CLML} which has $u(\cdot,0) \prec u_{\epsilon}(\cdot,0)$.  Therefore from the strict comparison result Theorem~\ref{thm: strictly separated comparison} $u \leq u_\epsilon$ for $t < a/c$ and in particular,
  $$ \Omega_t(u) \subset \Omega_t(u_\epsilon) = \bigcap_{|z|\leq(a-ct)\epsilon} ((1+\epsilon)\Omega_t(u)+z) \hbox{ for every } \epsilon_0>\epsilon>0. $$
  Since $\epsilon$ and $0<a<r$ were arbitrary the converse direction of Lemma \ref{lem: sstolip} part (iv) implies that $\Omega_t(u)$ has the claimed star-shapedness.
 
 \end{proof}

 We would like to show $\Omega_t(u) \in \mathcal{S}_{r}$ as long as $B_r(0)$ is contained in the positive phase $\Omega_t(u)$.  This kind of preservation of the star-shapedness property would be very useful because we could get regularity results by simply showing that the positivity set must always contain some small ball around the origin.  However, the arguments of Lemma \ref{lem: GK short time ss} are insufficient to prove such a result.  This is where the reflection comparison principle Proposition \ref{prop: reflection comparison} is useful.  The key fact is that reflection comparison holds as long as $\Omega_t(u) \in \mathcal{S}_{r}$ for any $r>0$. 
 
 \medskip
 
 We begin by defining a slightly stronger property than strong star-shapedness which is defined in terms of reflections.  In order to do this we will need some notations.  Let $\Omega$ be an open bounded domain in $\mathbb{R}^N$.  For $\nu \in S^{N-1}$, let $H=H(\nu)$ be the hyperplane through the origin orthogonal to $\nu$ with the half spaces it defines:
 $$H_+=H_+(\nu) = \{x: x\cdot \nu > 0\}, \quad H_- =H_-(\nu)= \{x: x\cdot \nu < 0\} .
 $$ 
    Now for $s>0$ define the translates,
$$ H(s) = H+s\nu, \quad H_+(s) = H_++s\nu, \quad H_-(s) = H_-+s\nu.$$
For $s$ sufficiently large
$$\Omega \subset H_-(s)  $$
and so trivially 
\begin{equation}\label{eqn: sliding plane}
\phi_{H(s)}(\Omega\cap H_+(s)) \subset \Omega\cap H_-(s).  
\end{equation}
See Figure~\ref{fig: reflections} for a depiction of the situation.  Now let us slide the hyperplane inwards towards the origin by decreasing $s$ until \eqref{eqn: sliding plane} no longer holds.  We will call $s_{\text{min}}(\nu,\Omega)$ the closest to the origin that we can move this plane with \eqref{eqn: sliding plane} always holding. More precisely, we define
\begin{equation}
s_{\text{min}}(\nu,\Omega) := \inf \{ s>0: \phi_{H(t)}(\Omega\cap H_+(t)) \subset \Omega\cap H_-(t) \text{ for all } t>s\}.
\end{equation}
In the following we omit the dependence of $s_{\text{min}}$ on $\Omega$ when it will not present any confusion.  If $s_{\text{min}}(\nu) = 0$ for every direction $\nu$ then $\Omega$ must be a ball.  The basic approach of Serrin in \cite{Serrin71} to showing the symmetry of solutions of \eqref{eqn: EQ} is to show that $s_{\text{min}}(\nu) = 0$ for all $\nu$.  In our case it is useful to use this same idea of symmetry but in a weaker form.

\begin{figure}[t]
\begin{center}
\includegraphics[scale=0.5]{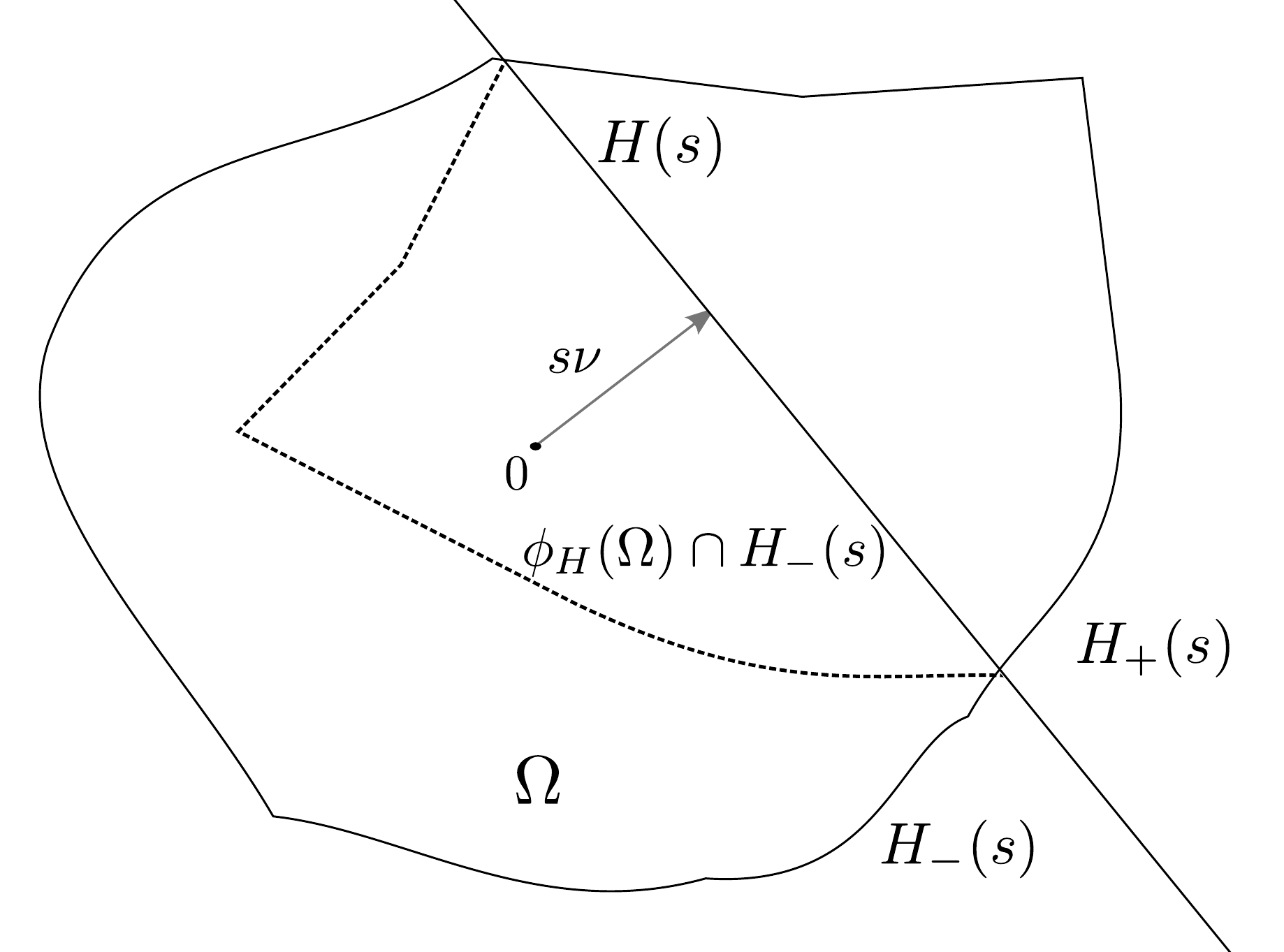}
\end{center}
\caption{Reflection comparison will hold.}
\label{fig: reflections}
 \end{figure}

\begin{DEF}\label{rho_reflection}
We say that a bounded, open set $\Omega$ has \textit{$\rho$-reflection} if $B_\rho(0) \subseteq \Omega$ and
\begin{equation}\label{eqn: condition}
 \sup_{\nu \in S^{n-1}} s_{\text{min}}(\nu) \leq \rho. 
 \end{equation}
\end{DEF}
\begin{rem}
As with strong star-shapedness, given some initial data $\Omega_0$, we will in general assume that 
$$ \sup_{\nu \in S^{N-1}} s_{\text{min}}(\nu,\Omega_0) = \inf_{z \in \real^N} \sup_{\nu \in S^{N-1}} s_{\text{min}}(\nu,\Omega_0+z).$$
Of course if this is not the case it can be corrected by a spatial translation.  
\end{rem}
\begin{rem}
One can show that (see Lemma \ref{lem: lip norm closeness}), upon fixing a maximal diameter $R$, a set has $\rho$-reflection if its boundary normals deviate from the radial direction by $O(\rho/R)$ and additionally the oscillation of the boundary is $O(\rho^2/R)$. Although we would like to know whether the flow preserves convexity, the $\rho$-reflection property does not seem to be very useful in this direction.  In particular $\rho$-reflection does not imply convexity, and neither does the converse hold. 
\end{rem}

 Note that the property \eqref{eqn: condition} is preserved  over time due to the reflection comparison, as long as the positivity set $\Omega_t(u) \in \mathcal{S}_{r}$ for any $r>0$.  We list some basic facts about sets which have $\rho$-reflection.  The proofs are postponed to the appendix.  The first says that the condition $\rho$-reflection imposes a condition on the spatial location of the boundary of the set.

\begin{lem}\label{lem: in an annulus}
Suppose $\Omega$ has $\rho$-reflection, then 
\begin{equation}\label{condition}
 \sup_{x \in \partial\Omega} |x|-\inf_{x\in \partial\Omega}|x| \leq 4\rho. 
 \end{equation}
\end{lem}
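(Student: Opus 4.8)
The plan is to exploit the definition of $\rho$-reflection directly: for \emph{every} direction $\nu \in S^{N-1}$, the sliding plane $H(s)$ with normal $\nu$ can be pushed inward all the way to $s = \rho$ while the reflection inclusion $\phi_{H(s)}(\Omega \cap H_+(s)) \subset \Omega \cap H_-(s)$ continues to hold. I want to extract from this the geometric consequence that no boundary point can be much farther from the origin than any other. Fix two boundary points $x_1, x_2 \in \partial\Omega$; without loss of generality suppose $|x_1| \geq |x_2|$, and write $r_{\max} = \sup_{x\in\partial\Omega}|x|$ and $r_{\min}=\inf_{x\in\partial\Omega}|x|$, so it suffices to take $x_1$ nearly realizing $r_{\max}$ and $x_2$ nearly realizing $r_{\min}$.

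The key step is the following observation. Pick $\nu = x_1/|x_1|$, so that $x_1$ lies in the open half space $H_+(s)$ for every $s < |x_1|$. Apply the reflection inclusion at the level $s = \rho$: since $x_1 \in \Omega \cap H_+(\rho)$ (note $|x_1| \geq r_{\min} \geq \rho$ because $B_\rho(0)\subseteq\Omega$ forces $r_{\min}\geq\rho$, and we may assume $|x_1|>\rho$ strictly — if $r_{\max}\leq\rho$ there is nothing to prove since then $r_{\max}-r_{\min}\leq\rho\leq 4\rho$), its reflection $\phi_{H(\rho)}(x_1)$ must lie in $\overline{\Omega}$. But $\phi_{H(\rho)}$ reflects across the plane $\{x\cdot\nu = \rho\}$, so it sends the point at signed height $|x_1|$ along $\nu$ to the point at signed height $2\rho - |x_1|$ along $\nu$; that is, $\phi_{H(\rho)}(x_1) = (2\rho - |x_1|)\nu$, a point on the ray $-\nu$ at distance $|x_1| - 2\rho$ from the origin (assuming $|x_1| > 2\rho$; the case $|x_1|\le 2\rho$ again makes the conclusion trivial). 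Since this point lies in $\overline\Omega$ and $\Omega$ is star-shaped with respect to the origin (which follows from $\rho$-reflection — every $s_{\min}(\nu)\leq\rho$ and $B_\rho(0)\subseteq\Omega$ imply star-shapedness with respect to $B_\rho(0)$, hence with respect to $0$), the entire segment from $0$ to $(2\rho-|x_1|)\nu$ lies in $\overline\Omega$, and in particular $\overline\Omega$ contains points on the ray $-\nu$ out to distance $|x_1| - 2\rho$. Therefore the boundary point on the ray $-\nu$ sits at distance at least $|x_1| - 2\rho$ from the origin, giving $r_{\min} \geq |x_1| - 2\rho$. Letting $|x_1| \to r_{\max}$ yields $r_{\min} \geq r_{\max} - 2\rho$, which is even stronger than the claimed bound $r_{\max} - r_{\min} \leq 4\rho$.

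The main obstacle I anticipate is purely bookkeeping: making sure the reflection inclusion can genuinely be invoked at the critical level $s = \rho$ rather than only for $s > \rho$ (the definition of $s_{\min}$ is an infimum, so one takes $s = \rho + \delta$ and lets $\delta \downarrow 0$, using that $\overline\Omega$ is closed to pass to the limit), and handling the degenerate small-radius cases ($|x_1| \leq 2\rho$) where the reflected point falls inside $B_\rho(0)$ or on the wrong side. The factor $4$ in the statement — versus the $2$ my argument produces — presumably leaves slack precisely to absorb these edge cases and the possibility that one does not assume the center has been chosen optimally; if a cruder argument reflecting \emph{both} $x_1$ and $x_2$ across perpendicular planes is used instead of the sharp one above, the two reflections each cost $2\rho$ and one lands naturally at the bound $4\rho$. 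I would present the sharp version and remark that the weaker constant $4\rho$ is all that is needed later.
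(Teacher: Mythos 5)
There is a genuine gap at the final inference. Your reflection across the single hyperplane perpendicular to $\nu = x_1/|x_1|$ at distance $\rho$ shows only that $\overline{\Omega}$ contains the point $(2\rho-|x_1|)\nu$, i.e.\ that the boundary reaches out to distance at least $|x_1|-2\rho$ \emph{in the one antipodal direction} $-\nu$. But $r_{\min}=\inf_{x\in\partial\Omega}|x|$ is an infimum over all of $\partial\Omega$: nothing in your argument prevents the infimum from being (nearly) attained at a boundary point in a direction orthogonal to $x_1$, say, about which you have proved nothing. The point $x_2$ you introduced at the start never reappears, so the step ``therefore $r_{\min}\geq |x_1|-2\rho$'' is a non sequitur. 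To close the gap you must control the radial extent of $\partial\Omega$ in \emph{every} direction, not just one. That is exactly what the paper's proof does: it fixes $x_0\in\partial\Omega$ realizing the infimum, proves a covering claim (a continuity/intermediate-value argument) that the reflections $\phi_H(x_0)$ over all hyperplanes $H$ tangent to $B_\rho(0)$ with $x_0$ on the same side as the ball sweep out every direction of $S^{N-1}$, and then for an arbitrary direction $\omega$ uses the $\rho$-reflection inclusion together with star-shapedness to bound the radial boundary distance in direction $\omega$ by $|\phi_H(x_0)|$, which it estimates by $|x_0|+4\rho$. Your single perpendicular reflection is the special case $\omega=-\nu$ of that covering family; an analogous covering step (reflecting across all tangent planes, not just the one perpendicular to $x_1$) is the missing ingredient, and with it one indeed gets the bound with constant $2$ rather than $4$, since $|\phi_H(x_0)|^2=|x_0|^2+4\rho^2+4\rho\langle x_0,\nu_H\rangle\leq(|x_0|+2\rho)^2$.

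Two smaller points. First, be careful with the claim that $\rho$-reflection implies star-shapedness with respect to $B_\rho(0)$: Lemma \ref{lem: almost radiality} only yields $\Omega\in\mathcal{S}_r$ with $r=(\inf_{x\in\partial\Omega}|x|^2-\rho^2)^{1/2}$, which can be much smaller than $\rho$; fortunately star-shapedness with respect to the origin is all you use, and that does follow. Second, your limiting argument ($s=\rho+\delta$, $\delta\downarrow 0$, and approximating the boundary point $x_1$ by interior points of $\Omega\cap H_+(s)$) is fine as bookkeeping; the defect is not there but in the directional coverage described above.
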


We mention that \eqref{eqn: condition} imposes a bound on what kind of normal vectors $\partial\Omega$ can have.  In fact $\Omega$ will be strongly star-shaped with radius depending only on $\text{dist}(\partial\Omega,B_\rho(0))$.   

\begin{lem}\label{lem: almost radiality}
Suppose $\Omega$ has $\rho$-reflection. Then $\Omega$ satisfies the following:

\begin{itemize}
\item[(a)] for all $x\in \partial\Omega$ there is an exterior cone to $\Omega$ at $x$,
\begin{equation}
\begin{array}{lll}
 x+C\left(x,\phi_x\right) \subset \real^N\setminus\Omega & \text{ where } & \cos\phi_x = \frac{\rho}{|x|}, \ \phi_x \in (0,\pi/2),
 \end{array}
 \end{equation}
 and $C(x,\phi_x)$ is the cone in direction $x$ of opening angle $\phi_x$ as defined in \eqref{eqn: cone def};
\item[(b)] $\Omega\in \mathcal{S}_{r}$ where
$$ r = r(\rho,\inf_{x\in \partial\Omega}|x|) = (\inf_{x\in \partial\Omega}|x|^2-\rho^2)^{1/2}.$$
\end{itemize}
\end{lem}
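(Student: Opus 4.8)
The plan is to derive both statements directly from the geometric hypothesis of $\rho$-reflection (Definition~\ref{rho_reflection}): for every $\nu\in S^{N-1}$ and every level $t>\rho$,
\begin{equation*}
\phi_{H(t)}\bigl(\Omega\cap H_+(t)\bigr)\subseteq\Omega\cap H_-(t),
\end{equation*}
where $H(t)=\{x:\langle x,\nu\rangle=t\}$ and $H_\pm(t)$ are its open sides. No PDE input is needed; this is pure geometry.

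\emph{Part (a).} Fix $x_0\in\partial\Omega$; since $B_\rho(0)\subseteq\Omega$ we have $|x_0|\ge\rho$. Pick $v\in C(x_0,\phi_{x_0})\setminus\{0\}$, set $e=v/|v|$, $t=|v|>0$, and $p=x_0+v$; I claim $p\notin\Omega$. By the definition \eqref{eqn: cone def} of $C(\cdot,\cdot)$ and $\cos\phi_{x_0}=\rho/|x_0|$, the condition $e\in C(x_0,\phi_{x_0})$ is exactly $\langle x_0,e\rangle\ge\rho$. Let $H$ be the perpendicular bisector of $[x_0,p]$, i.e. the hyperplane through $x_0+\tfrac t2 e$ with unit normal $e$, so $\phi_H(p)=x_0$. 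Writing $H=\{x:\langle x,e\rangle=s\}$, we get $s=\langle x_0,e\rangle+\tfrac t2\ge\rho+\tfrac t2>\rho$, so the reflection property applies with $\nu=e$ at the level $s$; also $\langle p,e\rangle=\langle x_0,e\rangle+t>s$, so $p$ lies on the far side $H_+(s)$. If $p\in\Omega$, then $p\in\Omega\cap H_+(s)$, hence $x_0=\phi_H(p)\in\Omega\cap H_-(s)\subseteq\Omega$, contradicting $x_0\in\partial\Omega$ and $\Omega$ open. Thus $x_0+C(x_0,\phi_{x_0})\subseteq\mathbb{R}^N\setminus\Omega$. (When $|x_0|=\rho$, so $\phi_{x_0}=0$, the same argument with $e=x_0/|x_0|$ applies.)

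\emph{Part (b).} Put $d:=\inf_{x\in\partial\Omega}|x|=\operatorname{dist}(0,\partial\Omega)$; since $B_d(0)$ is connected, contains $0\in\Omega$, and is disjoint from $\partial\Omega$, we get $B_d(0)\subseteq\Omega$, hence $B_r(0)\subseteq\Omega$ with $r:=(d^2-\rho^2)^{1/2}<d$. Fix $z\in B_r(0)$ and $y\in\partial\Omega$. Since $\langle y,y-z\rangle\ge|y|(|y|-|z|)>0$ and $\rho|y-z|\ge0$, the inequality $\langle y,y-z\rangle>\rho|y-z|$ is equivalent to $(|y|^2-\langle y,z\rangle)^2>\rho^2|y-z|^2$; regarding the difference of the two sides as an upward-opening quadratic in $P:=\langle y,z\rangle$, its minimum over $P\in\mathbb{R}$ is $\rho^2\bigl(|y|^2-\rho^2-|z|^2\bigr)$, which is $>\rho^2(d^2-\rho^2-r^2)=0$ because $|y|\ge d$ and $|z|<r$. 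Thus $y-z$ lies strictly inside the cone $C(y,\phi_y)$, so by part (a) the open ray $\{y+s(y-z):s>0\}$ lies in $\mathbb{R}^N\setminus\overline\Omega$. In particular no ray from $z$ can meet $\partial\Omega$ in two distinct points, which together with $z\in\Omega$ and boundedness of $\Omega$ shows that $\Omega$ is star-shaped with respect to $z$. As $z\in B_r(0)$ was arbitrary, $\Omega\in\mathcal{S}_{r,R}\subseteq\mathcal{S}_r$ for any $R$ with $\Omega\subseteq B_R(0)$. (Alternatively, since $\sin\phi_x=(|x|^2-\rho^2)^{1/2}/|x|\ge (d^2-\rho^2)^{1/2}/|x|$ on $\partial\Omega$, part (a) already yields the exterior-cone characterization of $\mathcal{S}_r$ in Lemma~\ref{lem: sstolip}(ii).)

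I do not expect a serious obstacle; the computations are short. The two points needing care are: in part (a), the bookkeeping by which the single inequality $\langle x_0,e\rangle\ge\rho$ simultaneously encodes ``$e$ lies in the cone of half-angle $\phi_{x_0}$'' and ``the bisector hyperplane sits at distance $>\rho$ from the origin, hence is an admissible reflection plane''; and, in part (b), the computation of the quadratic's minimum as $\rho^2(|y|^2-\rho^2-|z|^2)$, whose sign is pinned down precisely by the choice $r^2=d^2-\rho^2$, together with the (routine) passage from ``outward rays from boundary points avoid $\overline\Omega$'' to star-shapedness with respect to the whole ball $B_r(0)$.
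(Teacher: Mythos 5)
Your proof is correct and follows essentially the same route as the paper: for part (a) you reflect across the perpendicular bisector of the segment from the boundary point to the cone point, checking that this plane is admissible because its distance to the origin exceeds $\rho$, which is exactly the paper's argument (stated in contrapositive form). For part (b) the paper simply cites the exterior-cone characterization of $\mathcal{S}_r$ in Lemma~\ref{lem: sstolip}, whereas you verify star-shapedness with respect to each $z\in B_r(0)$ directly via the quadratic estimate; this is a harmless, self-contained substitute for the same geometric fact, and you even note the Lemma~\ref{lem: sstolip} route as an alternative.
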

 Now, combining the strong star-shapedness from Lemma \ref{lem: almost radiality} with the reflection comparison principle Proposition \ref{prop: reflection comparison}, we obtain that $\rho$-reflection is preserved as long as the $B_\rho(0)$ is contained in evolving positive phase.
\begin{lem}\label{lem: preservation}
Suppose $u:\mathbb{R}^N\times[0,+\infty)\to[0,+\infty)$ is a solution of \eqref{eqn: CLML} and $\Omega_0$ has $\rho$-reflection for some $\rho>0$. Let $I = [0,T)$ be the maximal time interval containing $0$ on which $\overline{B_\rho(0)}\subseteq \Omega_t(u)$.  Then $\Omega_t(u)$ has $\rho$-reflection and in particular $\Omega_t(u) \in \mathcal{S}_{r}$ for some $r>0$ for every $t \in I$.
\end{lem}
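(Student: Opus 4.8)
The plan is to show that the $\rho$-reflection property, which is a statement about the one-parameter family of hyperplane reflections $\phi_{H(s)}$ for all directions $\nu$ and all $s>\rho$, is propagated by the flow on the interval $I$ where $\overline{B_\rho(0)}\subseteq\Omega_t(u)$. The mechanism is the reflection comparison principle, Proposition~\ref{prop: reflection comparison}, but that proposition requires a priori knowledge that $\Omega_t(u)\in\mathcal{S}_r$ for some $r>0$ on the time interval under consideration, so one cannot apply it naively at all times at once; instead I would run a continuity/bootstrap argument in $t$.

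\medskip

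\textbf{Step 1 (set-up of the continuity argument).} Fix an arbitrary direction $\nu\in S^{N-1}$ and an arbitrary level $s>\rho$, and consider the hyperplane $H(s)=H(\nu)+s\nu$, with half-spaces $H_\pm(s)$ chosen so that $B_\rho(0)\subset H_-(s)$ (note $H(s)\cap B_\rho(0)=\emptyset$ since $s>\rho$). Define
$$ \mathcal{T} := \{ t\in I : \phi_{H(s)}(\Omega_\tau(u)\cap H_+(s)) \subseteq \Omega_\tau(u)\cap H_-(s) \text{ for all } \tau\le t \}.$$
By hypothesis $0\in\mathcal{T}$, since $\Omega_0$ has $\rho$-reflection and $s>\rho\ge s_{\text{min}}(\nu,\Omega_0)$. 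I claim $\mathcal{T}=I$; taking the infimum over $s>\rho$ and the supremum over $\nu$ then gives $\sup_\nu s_{\text{min}}(\nu,\Omega_t(u))\le\rho$, i.e. $\rho$-reflection of $\Omega_t(u)$, for every $t\in I$. Closedness of $\mathcal{T}$ in $I$ follows from continuity of the flow $t\mapsto\Omega_t(u)$ in the Hausdorff metric (Corollary~\ref{cor: equicontinuity}, applicable because on a closed subinterval of $\mathcal{T}$ the sets are uniformly strongly star-shaped by the next step), so the crux is openness.

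\medskip

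\textbf{Step 2 (from $\rho$-reflection to strong star-shapedness, uniformly).} Suppose $t_0\in\mathcal{T}$. Then $\Omega_{t_0}(u)$ has $\rho$-reflection, so by Lemma~\ref{lem: in an annulus} its boundary lies in an annulus of width $4\rho$, and since $\overline{B_\rho(0)}\subseteq\Omega_{t_0}(u)$ we get $\inf_{x\in\partial\Omega_{t_0}}|x|\ge\rho$, hence by Lemma~\ref{lem: almost radiality}(b) $\Omega_{t_0}(u)\in\mathcal{S}_{r_0}$ with $r_0=r(\rho,\inf|x|)>0$; moreover this radius is bounded below uniformly in $t_0$ as long as $\inf_{x\in\partial\Omega_t}|x|$ stays bounded away from $\rho$, which it does on a neighborhood of $t_0$ in $I$ by Hausdorff continuity. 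Now I invoke Lemma~\ref{lem: GK short time ss}, or more directly Proposition~\ref{prop: reflection comparison} itself: on a small time interval $[t_0,t_0+\delta]\subseteq I$ the set $\Omega_t(u)$ remains in $\mathcal{S}_{r_0/2}$ (short-time strong star-shapedness loses the radius at a controlled rate $c$, so $\delta\sim r_0/(2c)$ suffices).

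\medskip

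\textbf{Step 3 (apply reflection comparison on the short interval).} On $[t_0,t_0+\delta]$ we now have $\Omega_t(u)\in\mathcal{S}_{r_0/2}$ for all $t$, and at time $t_0$ the reflection ordering $u(\phi_{H(s)}(x),t_0)\le u(x,t_0)$ holds on $H_-(s)$ (equivalently $\phi_{H(s)}(\Omega_{t_0}\cap H_+(s))\subseteq\Omega_{t_0}\cap H_-(s)$, with the height-profile ordering following since both sides solve the same Dirichlet problem with the same $\lambda(\cdot)$ and the comparison/maximum principle for $-\Delta$ with ordered domains). Also $H(s)\cap B_{r_0/2}(0)=\emptyset$ because $s>\rho>r_0/2$. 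Hence Proposition~\ref{prop: reflection comparison}, applied on the time interval $[t_0,t_0+\delta]$ with $r=r_0/2$, gives $u(\phi_{H(s)}(x),t)\le u(x,t)$ on $H_-(s)$ for all $t\in[t_0,t_0+\delta]$, i.e. $[t_0,t_0+\delta]\subseteq\mathcal{T}$. This proves $\mathcal{T}$ is open in $I$, hence $\mathcal{T}=I$, and since $\nu$ and $s>\rho$ were arbitrary the proof is complete: $\Omega_t(u)$ has $\rho$-reflection for all $t\in I$, and then $\Omega_t(u)\in\mathcal{S}_r$ for some $r>0$ by Lemma~\ref{lem: almost radiality}(b).

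\medskip

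\textbf{Main obstacle.} The delicate point is the circularity between ``reflection comparison needs $\mathcal{S}_r$'' and ``$\mathcal{S}_r$ (via $\rho$-reflection) is what we are trying to propagate.'' This is resolved precisely by the bootstrap: Lemma~\ref{lem: almost radiality} converts the \emph{already-established} reflection property at time $t_0$ into a \emph{quantitative, uniform} strong-star-shapedness radius $r_0$, which then persists for a definite time $\delta=\delta(r_0,c)>0$ by the short-time result, and that fixed window is long enough to run Proposition~\ref{prop: reflection comparison} and re-establish the reflection property up to $t_0+\delta$ — with the key observation that the recovered reflection property at $t_0+\delta$ gives a radius bounded below uniformly (since $\inf_{x\in\partial\Omega_t}|x|$ varies continuously and stays $\ge\rho$ throughout $I$ by definition of $I$ together with Lemma~\ref{lem: in an annulus}), so $\delta$ does not shrink to zero and one reaches the endpoint of $I$ in finitely many steps. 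One should also double-check the borderline case where $\inf_{x\in\partial\Omega_t}|x|\downarrow\rho$ as $t\uparrow T$: there $r_0\to0$, but this is exactly the endpoint of the maximal interval $I$, so it causes no problem for $t\in I$.
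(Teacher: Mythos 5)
Your strategy is essentially the paper's: convert the reflection property at a given time into quantitative strong star-shapedness via Lemma~\ref{lem: almost radiality} (using that $\overline{B_\rho(0)}$ stays inside the positive phase on $I$), persist that star-shapedness for a definite time window via Lemma~\ref{lem: GK short time ss}, and then run the reflection comparison of Proposition~\ref{prop: reflection comparison} on that window to recover the reflection property. The paper packages this as a contradiction at the first time $t_*$ at which $\rho$-reflection fails, while you package it as an open-and-closed argument in $t$; these are the same bootstrap.

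The one step that does not work as written is the quantifier structure of Step 1: you define $\mathcal{T}$ for a single fixed direction $\nu$ and level $s>\rho$, but in Step 2 you deduce from $t_0\in\mathcal{T}$ that $\Omega_{t_0}(u)$ has $\rho$-reflection and feed this into Lemma~\ref{lem: almost radiality}. Knowing the reflection inclusion for one hyperplane says nothing about the other directions, and Lemma~\ref{lem: almost radiality} genuinely needs reflection across all admissible hyperplanes to produce the exterior cones and hence the star-shapedness radius $r_0$ that drives Steps 2--3. The repair is immediate and brings you back to the paper's argument: define $\mathcal{T}$ (equivalently, the first failure time) using the full $\rho$-reflection property, i.e. all $\nu\in S^{N-1}$ and all $s>\rho$ at once; Proposition~\ref{prop: reflection comparison} then applies on the short window to every admissible hyperplane simultaneously, since its hypotheses involve only the star-shapedness of $\Omega_t(u)$, and the rest of your Steps 2--3 goes through unchanged. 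A further small simplification: for closedness of $\mathcal{T}$ you do not need Corollary~\ref{cor: equicontinuity} (and its extra growth assumption); since $u$ is continuous by the definition of viscosity solution, the pointwise inequalities $u(\phi_{H(s)}(x),t)\le u(x,t)$ pass to the limit as $t$ increases to the endpoint, which also removes the mild circularity you flag in invoking equicontinuity before uniform star-shapedness is known there.
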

\begin{proof}
Note that by comparing with a radial subsolution placed below $u(\cdot,0)$ one can see that $T>0$.  Suppose towards contradiction that the lemma fails, that is
$$ t_* := \inf \{ t \in [0,T): \Omega_t(u) \text{ does not have $\rho$-reflection }\}<T. $$
  Since $\overline{B_\rho(0)}\subseteq \Omega_{t_*}(u)$ we have that 
$$ h:=\text{dist}(\Omega_{t_*}(u)^C, B_\rho(0))>0$$
so that by Lemma \ref{lem: almost radiality}, $\Omega_{t_*}(u)$ is star-shaped with respect to $B_{(h^2+2\rho h)^{1/2}}(0)$. Then by Lemma \ref{lem: GK short time ss} 
\begin{equation}
\begin{array}{lll}
\Omega_t(u) \in \mathcal{S}_{r} & \text{ for } & r = \frac{1}{2}(h^2+2\rho h)^{1/2}
\end{array}
\end{equation}
on some slightly larger interval $[t_*,t_*+r/c)$. Now let $H$ be a hyperplane which does not intersect $B_\rho(0)$.  We can apply the reflection comparison Proposition \ref{prop: reflection comparison} to $u(x,t)$ on $[t_*,t_*+r/c)$ to see that,
$$ u(\phi_H(x),t) \leq u(x,t) \text{ for } (x,t) \in H_+ \cap [t_*,t_*+r/c)$$
and therefore
$$\phi_H(\Omega_t(u))\cap H_+ \subseteq  \Omega_t(u)\cap H_+. $$
This holds for all admissible $H$, so $\Omega_t(u)$ has $\rho$-reflection on $[t_*,t_*+r/c)$, contradicting the definition of $t_*$.
\end{proof}
Notice that an immediate consequence of Lemma~\ref{lem: preservation} is that for any initial data $\Omega_0$ which has $\rho$-reflection the evolution $\Omega_t$ has $\rho$-reflection for at least as long as,
$$  t \leq \frac{\min_{x\in\Gamma_0}|x|-\rho}{\min F}. $$
The time on the right hand side above is independent of $c$ from Assumption \ref{hyp: F cond} and of $\lambda(t)$.

\medskip

Now we present an application of this idea to the volume preserving problem, showing that for certain initial data any solution of the volume preserving problem must have $\rho$-reflection for all time.  One can think of this as an a priori estimate for solutions of \eqref{eqn: CLMV}. If the initial data $\Omega_0$ has $\rho$-reflection then Lemma \ref{lem: preservation} says that the solution $\Omega_t$ of the volume preserving flow also has $\rho$-reflection until such a time as $\Omega_t$ touches $B_\rho(0)$ from the exterior.  For a domain $\Omega \subset \real^N$ let us define the associated Lagrange multiplier,
$$ \lambda[\Omega] = \inf \{ \int |Dv|^2 : v \in H^1_0(\Omega) \ \hbox{ and } \ \int v = V\}.$$
Then from Lemma~\ref{lem: in an annulus} at the touching time $t_*$ we know that $\Omega_{t_*} \subset B_{5\rho}(0)$.  This allows us to get a lower bound on the Lagrange multiplier $\lambda[\Omega_{t_*}]$ and show that $B_{\rho}(0)$ is a strict subsolution near the touching time in the sense that
$$ \frac{\lambda[\Omega(t)]}{2N}(\rho^2-|x|^2) $$
is a strict classical subsolution for $|t-t_*|$ small.  Of course, this will be a contradiction of the supersolution property of $\Omega_t$.  We make this precise in the following Lemma. 

\begin{lem}\label{lem: reflection for all time}
Suppose $u:\mathbb{R}^N\times[0,T]\to[0,+\infty)$ is a solution of \eqref{eqn: CLMV}  with initial data $\Omega_0$ that has $\rho$-reflection with $B_{\rho} \subset\subset \Omega_0$. Then there is a dimension constant $C_N$  such that if
\begin{equation}\label{eqn: rho small}
 0<\rho < C_NV^{\frac{1}{N+1}}
 \end{equation}
then there exists $a>0$ such that $B_{(1+a)\rho}(0)\subseteq\Omega_t(u)$ for all $t>0$.  In particular, we establish that
$$ C_N \geq \frac{1}{5^{\frac{N+2}{N+1}}}\left(\frac{N(N+2)}{|S^{N-1}|}\right)^{\frac{1}{N+1}}. $$
\end{lem}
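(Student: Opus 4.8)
The plan is to argue by contradiction using a first--touching--time. Since $u$ solves \eqref{eqn: CLMV}, it is a viscosity solution of \eqref{eqn: CLML} for the induced multiplier $\lambda(t)$: writing $w_\Omega$ for the torsion function of a bounded Lipschitz domain $\Omega$ (so $-\Delta w_\Omega=1$ in $\Omega$, $w_\Omega=0$ on $\partial\Omega$), we have $u(\cdot,t)=\lambda(t)\,w_{\Omega_t(u)}$, and the volume constraint forces $\lambda(t)\int_{\Omega_t(u)}w_{\Omega_t(u)}=V$. A direct computation gives $\int_{B_R}w_{B_R}=|S^{N-1}|R^{N+2}/(N^2(N+2))$, and since $\Omega\subseteq\Omega'$ implies $w_\Omega\le w_{\Omega'}$ on $\Omega$ by the maximum principle, the inclusion $\Omega_t(u)\subseteq B_R$ yields $\lambda(t)\ge VN^2(N+2)/(|S^{N-1}|R^{N+2})$. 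I will also use that $t\mapsto\Omega_t(u)$ is continuous in the Hausdorff distance (valid for the solutions under consideration; for strongly star-shaped solutions it follows from Corollary~\ref{cor: equicontinuity}), and hence that $\lambda(\cdot)$ is continuous.

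Put $C_N:=5^{-(N+2)/(N+1)}\big(N(N+2)/|S^{N-1}|\big)^{1/(N+1)}$, so that \eqref{eqn: rho small} is equivalent to the strict inequality $VN(N+2)/(|S^{N-1}|5^{N+2}\rho^{N+1})>1$. By continuity of the left side in $a$, fix $a>0$ so small that both (A) $\overline{B_{(1+a)\rho}(0)}\subset\subset\Omega_0$ (possible since $B_\rho\subset\subset\Omega_0$) and (B) $VN(N+2)(1+a)/(|S^{N-1}|(5+a)^{N+2}\rho^{N+1})>1$ hold. If $B_{(1+a)\rho}(0)\subseteq\Omega_t(u)$ for every $t\in(0,T]$ there is nothing to prove; otherwise set $t_*:=\inf\{t\in[0,T]:\ \text{dist}(0,\partial\Omega_t(u))\le(1+a)\rho\}$. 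By (A) and continuity, $t_*>0$, $\text{dist}(0,\partial\Omega_{t_*}(u))=(1+a)\rho$, and $\overline{B_{(1+a)\rho}(0)}\subset\subset\Omega_t(u)$ for $t<t_*$; in particular $\overline{B_\rho(0)}\subseteq\Omega_t(u)$ for all $t\le t_*$.

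By Lemma~\ref{lem: preservation}, $\Omega_t(u)$ has $\rho$-reflection for every $t\le t_*$; hence, by Lemma~\ref{lem: almost radiality}, it is star-shaped about the origin, and by Lemma~\ref{lem: in an annulus}, $\sup_{\partial\Omega_{t_*}(u)}|x|\le\inf_{\partial\Omega_{t_*}(u)}|x|+4\rho=(5+a)\rho$, so $\Omega_{t_*}(u)\subseteq B_{(5+a)\rho}(0)$. Combining this with the torsion estimate above, $\lambda(t_*)\ge VN^2(N+2)/(|S^{N-1}|((5+a)\rho)^{N+2})$, which by (B) satisfies $\lambda(t_*)(1+a)\rho/N>1$. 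Since $\rho$-reflection keeps $\Omega_t(u)$ in a class $\mathcal{S}_{r,R}$ near $t_*$ with $r,R$ depending only on $a,\rho$, Corollary~\ref{cor: equicontinuity} applies and $\lambda(\cdot)$ is continuous near $t_*$; thus there are $\delta>0$ and $\lambda_0$ with $N/((1+a)\rho)<\lambda_0\le\inf_{[t_*-\delta,t_*]}\lambda(t)$.

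Finally I would compare $u$ against the stationary barrier $\psi(x):=\tfrac{\lambda_0}{2N}\big((1+a)^2\rho^2-|x|^2\big)_+$, whose positive phase is the fixed ball $B_{(1+a)\rho}(0)$. Since $-\Delta\psi=\lambda_0\le\lambda(t)$ there, and on $\partial B_{(1+a)\rho}(0)$ the zero normal velocity satisfies $0<F(|D\psi|)=F\big(\lambda_0(1+a)\rho/N\big)$ because $\lambda_0(1+a)\rho/N>1$ (using that $F$ is increasing with $F(1)=0$), $\psi$ is a classical, hence viscosity, subsolution of \eqref{eqn: CLML} on $\mathbb{R}^N\times(t_*-\delta,t_*]$. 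At $t=t_*-\delta$ we have $\overline{B_{(1+a)\rho}(0)}\subset\subset\Omega_{t_*-\delta}(u)$, so $u(\cdot,t_*-\delta)>0$ on $\partial B_{(1+a)\rho}(0)$; as $u(\cdot,t_*-\delta)-\psi$ is superharmonic in $B_{(1+a)\rho}(0)$ and equals $u(\cdot,t_*-\delta)>0$ on its boundary, it is positive inside, whence $\psi\prec u$ on the parabolic boundary. Theorem~\ref{thm: strictly separated comparison} then forbids the fixed sphere $\partial B_{(1+a)\rho}(0)=\Gamma(\psi)$ from being reached by $\Gamma_t(u)$ from the interior for $t\in(t_*-\delta,t_*]$, contradicting $\text{dist}(0,\partial\Omega_{t_*}(u))=(1+a)\rho$. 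Hence $B_{(1+a)\rho}(0)\subseteq\Omega_t(u)$ for all $t>0$, and the asserted lower bound on $C_N$ is exactly the $a\to0^+$ threshold in (B). The one genuinely delicate point is that the comparison and equicontinuity tools require the evolving drop to remain strongly star-shaped with controlled constants up to time $t_*$ — which is precisely the content of Lemma~\ref{lem: preservation} — after which the argument is bookkeeping of constants.
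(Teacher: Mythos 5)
Your proof is correct and follows essentially the same route as the paper: a first-touching-time contradiction combined with Lemma~\ref{lem: preservation}, Lemma~\ref{lem: in an annulus} to trap $\Omega_{t_*}(u)$ in $B_{(5+a)\rho}$, the resulting lower bound on $\lambda(t_*)$, and a radial paraboloid barrier with positive phase $B_{(1+a)\rho}(0)$ whose contact angle exceeds $1$. The only (harmless) difference is that you close the argument by running Theorem~\ref{thm: strictly separated comparison} on a short interval $[t_*-\delta,t_*]$ with a uniform $\lambda_0$, whereas the paper contradicts the viscosity supersolution property directly at the touching point $(x_*,t_*)$; your version just makes the strict-separation step more explicit.
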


\begin{rem}
The scaling $ \rho \lesssim V^{\frac{1}{N+1}}$ is the natural one for such an inequality.  In particular, if $\rho$ is larger than the unique radial stationary solution,
$$\rho >r_* = \left(\frac{N}{N+2}\right)^{\frac{1}{N+1}}V^{\frac{1}{N+1}} $$
then we can construct a counter-example using radially symmetric solutions of \eqref{eqn: CLMV}.  The solution of \eqref{eqn: CLMV} with initial data $B_{r}(0)$ for any $r>\rho$ has $\rho$-reflection and converges as $t \to \infty$ to $B_{r_*}(0)$ so, in particular, there is some time after which $B_\rho(0)$ is no longer contained in the evolving positivity set.  Of course in this case the initial data has $\rho$-reflection for every $\rho>0$.
\end{rem}

\begin{proof}
From the compact containment $B_\rho \subset\subset \Omega_0$ along with the strict inequality \eqref{eqn: rho small} let $a>0$ small so that:
\begin{equation*}
\begin{array}{lll}
B_{(1+a)\rho}(0) \subset\subset \Omega_0 & \text{ and } & \rho < \left(\frac{N(N+2)}{(1+a+4)^{N+2}|S^{N-1}|}\right)^{\frac{1}{N+1}}.
\end{array}
\end{equation*}
 Towards a contradiction assume that $B_{(1+a)\rho}(0)$ touches $\Omega_t(u)$ from the inside for the first time at $0<t_*<T$.  More precisely let
$$t_* = \inf \{T>t >0: B_{(1+a)\rho(0)}\cap \Omega_t(u)^C \neq \emptyset\}$$
where we have assumed that the set being infimized over is non-empty.  Note that $t_*>0$ by comparing $u$ with a radial subsolution starting on some ball slightly larger than $B_{(1+a)\rho}$ but still contained in $\Omega_0$ and by continuity of the free boundary (Corollary \ref{cor: equicontinuity} for example applies)
$$ B_{(1+a)\rho}(0) \subseteq  \Omega_{t_*}(u)  \text{ and } \exists \ x_*\in \partial B_{(1+a)\rho}(0) \cap \Gamma_{t_*}(u).$$
Now by Lemma \ref{lem: preservation} and 
$$ B_\rho(0) \subset\subset \Omega_t(u) \text{ for } t\in [0,t_*+]$$
we have that $\Omega_t(u)$ has $\rho$-reflection and therefore by Lemma \ref{lem: almost radiality}
$$ B_{(1+a)\rho}(0) \subseteq \Omega_t(u) \subseteq B_{(1+a+4)\rho}(0).$$
Therefore we have 
$$ \lambda(t_*) > \lambda[B_{(1+a+4)\rho}] = \frac{N^2(N+2)V}{|S^{N-1}|((1+a+4)\rho)^{N+2}}.$$
Now, define
$$h(x) := \frac{N(N+2)V}{2|S^{N-1}|((1+a+4)\rho)^{N+2}}(\rho^2-|x|^2), $$
which satisfies $-\Delta h(x) < \lambda(t_*)$ where $h>0$ and $h$ touches $u(x,t)$ from below at $(x_0,t_*)$.  On its free boundary, due to the assumption on $\rho$ and $a$, $h$ satisfies
$$ F(|Dh|(x)) = F\left(\frac{N(N+2)V(1+a)\rho}{|S^{N-1}|((1+a+4)\rho)^{N+2}}\right)> 0 .
$$ 
The barrier $h$ is a strict classical subsolution which touches $u(x,t)$ from below at $(x_*,t_*)$, a contradiction of $u$ being a viscosity supersolution. 
\end{proof}

The following holds now due to Lemma~\ref{lem: preservation} and Lemma~\ref{lem: reflection for all time}.

\begin{cor}\label{reflectiontime}
Let $u$ and $\Omega_0$ be given as above. Then $\Omega_t(u)\in\mathcal{S}_{r,R}$ for some $r,R>0$ for all $t>0$.
\end{cor}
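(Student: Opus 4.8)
The plan is to read off the conclusion by concatenating the two quoted lemmas and then making the resulting constants $t$-independent. First I would invoke Lemma~\ref{lem: reflection for all time}: the hypothesis $0<\rho\le\tfrac1{10}V^{1/(N+1)}$ in particular forces $\rho<C_NV^{1/(N+1)}$, so that lemma supplies an $a>0$ with $B_{(1+a)\rho}(0)\subseteq\Omega_t(u)$ for every $t>0$, and combined with $B_\rho(0)\subset\subset\Omega_0$ this gives $\overline{B_\rho(0)}\subseteq\Omega_t(u)$ for all $t\ge0$. Hence the maximal interval $I=[0,T)$ appearing in Lemma~\ref{lem: preservation}, on which $\overline{B_\rho(0)}\subseteq\Omega_t(u)$, is all of $[0,\infty)$, and Lemma~\ref{lem: preservation} then yields that $\Omega_t(u)$ has $\rho$-reflection for every $t\ge0$. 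This is really the substance of the corollary; the remainder is extracting a uniform $r$ and a uniform $R$.

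For the uniform inner radius: by Lemma~\ref{lem: almost radiality}(b), $\rho$-reflection gives $\Omega_t(u)\in\mathcal S_{r(t)}$ with $r(t)=\big((\inf_{x\in\partial\Omega_t(u)}|x|)^2-\rho^2\big)^{1/2}$, and since $B_{(1+a)\rho}(0)\subseteq\Omega_t(u)$ forces $\inf_{x\in\partial\Omega_t(u)}|x|\ge(1+a)\rho$, we obtain the $t$-independent bound $r(t)\ge\rho\big((1+a)^2-1\big)^{1/2}=:r>0$. For the bounding ball: Lemma~\ref{lem: in an annulus} gives $\sup_{x\in\partial\Omega_t(u)}|x|\le\inf_{x\in\partial\Omega_t(u)}|x|+4\rho$, so it suffices to bound $\inf_{x\in\partial\Omega_t(u)}|x|$ uniformly in $t$. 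This I would get from the energy: along the (formal) gradient flow the energy $\mathcal J(\Omega_t(u))=\int_{\Omega_t(u)}|Du|^2+|\Omega_t(u)|$ is non-increasing, so $|\Omega_t(u)|\le\mathcal J(\Omega_t(u))\le\mathcal J(\Omega_0)$; since $B_{\inf_{\partial\Omega_t(u)}|x|}(0)\subseteq\Omega_t(u)$ this yields $\inf_{x\in\partial\Omega_t(u)}|x|\le\big(\mathcal J(\Omega_0)/|B_1(0)|\big)^{1/N}$, and then $R:=\big(\mathcal J(\Omega_0)/|B_1(0)|\big)^{1/N}+4\rho$ works, giving $\Omega_t(u)\in\mathcal S_{r,R}$ for all $t>0$.

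The one step that is not pure bookkeeping is the uniform outer radius $R$. It cannot come from $\rho$-reflection and the volume constraint alone, since a ball of arbitrary radius supports a height profile of volume $V$ and trivially has $\rho$-reflection, so one must use the dynamics. If one wishes to avoid invoking the energy dissipation here — which at this stage of the paper is only rigorously available for the energy solutions constructed in Section~4 — the substitute is a barrier argument parallel to the proof of Lemma~\ref{lem: reflection for all time}, applied at the outer boundary. Fix $R>\sup_{x\in\partial\Omega_0}|x|$ and suppose, towards a contradiction, that there is a first time $t_*$ at which $\partial\Omega_{t_*}(u)$ reaches $\partial B_R(0)$ from inside, at a point $x_*$. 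Then $\rho$-reflection forces $B_{R-4\rho}(0)\subseteq\Omega_{t_*}(u)$, whence $\lambda(t_*)=\lambda[\Omega_{t_*}(u)]\le\lambda[B_{R-4\rho}(0)]=\tfrac{N^2(N+2)V}{|S^{N-1}|(R-4\rho)^{N+2}}$; comparing $u(\cdot,t_*)$ from above with $\tfrac{\lambda(t_*)}{2N}(R^2-|x|^2)$ on $\Omega_{t_*}(u)\subseteq B_R(0)$ gives $|Du|(x_*,t_*)\le\lambda(t_*)R/N$, which is $<1$ once $R$ is taken large. Then $F(|Du|(x_*,t_*))<F(1)=0$, so the contact line is receding at $x_*$, contradicting that it has just touched $\partial B_R(0)$; hence $\Omega_t(u)\subseteq B_R(0)$ for all $t$. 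I expect this last point — obtaining the $t$-uniform ambient ball — to be the only genuine obstacle; everything preceding it is a direct combination of Lemmas~\ref{lem: reflection for all time}, \ref{lem: preservation}, \ref{lem: in an annulus} and \ref{lem: almost radiality}.
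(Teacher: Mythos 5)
Your argument is correct, and its core is exactly the paper's (one-sentence) proof: Lemma \ref{lem: reflection for all time} keeps $B_{(1+a)\rho}(0)$ inside $\Omega_t(u)$ for all time, Lemma \ref{lem: preservation} then propagates $\rho$-reflection, and Lemma \ref{lem: almost radiality}(b) converts this into a $t$-independent inner radius $r$. Where you go beyond the paper is the uniform outer radius $R$. You rightly observe that $\rho$-reflection plus the volume constraint cannot by themselves bound the diameter (Lemma \ref{lem: in an annulus} only traps $\partial\Omega_t(u)$ in an annulus of width $4\rho$ around an a priori unbounded radius), and the corollary's stated justification via Lemmas \ref{lem: preservation} and \ref{lem: reflection for all time} is silent on this point; in the paper the uniform ambient ball is only secured later, in the proof of Theorem \ref{thm: convergence thm}, and there precisely by your first route — monotonicity of $\mathcal{J}$ along the restricted gradient-flow construction, via Lemma \ref{lem: htozero}(i) — which, as you correctly flag, is rigorous for energy solutions but not for an arbitrary viscosity solution of \eqref{eqn: CLMV}. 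Your second route, an outer barrier at a putative first touching of $\partial B_R(0)$ mirroring Lemma \ref{lem: reflection for all time} (reflection gives $B_{R-4\rho}(0)\subseteq\Omega_{t_*}(u)$, hence $\lambda(t_*)\le \tfrac{N^2(N+2)V}{|S^{N-1}|(R-4\rho)^{N+2}}$, hence a contact angle $\le \lambda(t_*)R/N<1$ at the touching point), is a genuine addition: it gives the bound for any viscosity solution with $\rho$-reflection, independently of the energy structure, and is the natural complement to the paper's inner-touching argument. The one step to tighten is the final contradiction: a receding pointwise velocity is not, in the viscosity framework, immediately incompatible with having just touched $\partial B_R(0)$, so the gradient bound should be packaged into a strict classical radial supersolution with slightly shrinking support which $u$ crosses from below at $(x_*,t_*)$, contradicting the subsolution property — the exact mirror of the strict subsolution $h$ used in the proof of Lemma \ref{lem: reflection for all time}.
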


\section{Gradient Flow}\label{sec: Gradient Flow}

Now we consider the capillary droplet problem, contact line motion with volume preservation.  We recall the PDE here for convenience,
\begin{equation}\tag{\ref{eqn: CLMV}}
\left \{ \begin{array}{lll}
         -\Delta u(x,t)  = \lambda(t) & \text{ in }&  \Omega(u)\\ \\
         u_t = F(|Du|)|Du| & \text{ on } & \Gamma(u).
         \end{array}\right.
\end{equation}
We will consider the problem when
\begin{equation}\label{eqn: FBV unbdd}
F(|Du|) = |Du|^2-1.
\end{equation}
We restrict ourselves to the above choice of $F$ because it gives the problem a simple gradient flow structure, although we expect similar results to hold for general $F$ which satisfy Assumption~\ref{hyp: F incr cond}.  The viscosity solution theory developed in the previous section does not apply directly to this choice of free boundary velocity.  Therefore we will instead deal with
\begin{equation}\label{eqn: FBV bdd}
F(|Du|) = \max\{|Du|^2-1,M\}.
\end{equation}
for $M>0$ large and send $M \to \infty$ to get results for \eqref{eqn: FBV bdd}.  As mentioned in the introduction, the problem \eqref{eqn: CLMV} with free boundary velocity \eqref{eqn: FBV unbdd} is formally a gradient flow of the energy,
\begin{equation}\tag{\ref{eq: energy}}
\mathcal{J}(\Omega) := \int_\Omega |Du|^2 +|\Omega|.
\end{equation}
in the space of compact subsets of $\real^N$ (we leave the metric unspecified for now).  Above we call
$$ u = u[\Omega] : = \argmin \{\int |Dv|^2: v \in H^1_0(\Omega), \ \ \int_{\mathbb{R}^N} v = V \}, $$
Recall that $u_0$ solves,
\begin{equation}
\left \{ \begin{array}{lll}
         -\Delta u(x)  = \lambda[\Omega] & \text{ in }&  \Omega\\ \\
         u(x)=0 & \text{ on } & \partial\Omega,
         \end{array}\right.
\end{equation}
where the Lagrange multiplier $\lambda[\Omega]$ is given by
$$\lambda[\Omega]:= \min \{\int |Dv|^2: v \in H^1_0(\Omega), \ \ \int_{\mathbb{R}^N} v = V \}.$$

\medskip

Now we give the formal derivation of the gradient flow structure as motivation.  One can think of the Cacciopoli subsets of $\real^N$ as an infinite dimensional manifold such that the tangent space at $\Omega$ is
$$T_\Omega =  L^{\infty}(\partial\Omega, |D\chi_\Omega|), $$
and the metric is specified as
$$g_{\Omega} (f,g) := \int_{\partial\Omega} fg |D\chi_\Omega|. $$
Then suppose that $\Omega_t$ is a smooth set valued solution of \eqref{eqn: CLMV}, i.e. $u[\Omega_t]$ is a smooth viscosity solution.  In the geometric framework $\Omega_t$ is a path with velocity
$$ T_{\Omega_t} \ni v(t) = F(|Du|(t))=|Du|(t)^2-1 .$$
Then we calculate,
\begin{align*}
\frac{d}{dt}\mathcal{J}(\Omega_t) &= \int_{\Omega_t} 2Du\cdot Du_t+ \int_{\Gamma_t} (|Du|^2+1)F(|Du|) \\
&= -2\lambda(t)\int_{\Omega_t} u_t +\int_{\Gamma_t} 2u_t Du\cdot n+(|Du|^2+1)F(|Du|) \\
&= -2\lambda(t) \frac{d}{dt}\left(\int_{\Omega_t} u\right)+\int_{\Gamma_t} -2 F(|Du|)|Du|^2+(|Du|^2+1)F(|Du|) \\
&= \int_{\Gamma_t} (1-|Du|^2)F(|Du|) = -g_{\Omega_t}(v(t),v(t)).
\end{align*}
Of course, viscosity solutions of \eqref{eqn: CLMV} lack the required smoothness to make the above framework anything but formal.  Additionally, as far as we are aware, the distance associated with the metric $g$ has no nice form even in the smooth case.  Instead we use the pseudo-distance
\begin{equation}\label{eqn: dist tilde}
 \widetilde{\text{dist}}^2(\Omega_0,\Omega_1) = \int_{\Omega_0\Delta\Omega_1}d(x,\partial\Omega_0)dx.
 \end{equation}
 This pseudo-distance was used previously for the same problem in \cite{GrunewaldKim11} and for the mean curvature flow in \cite{Chambolle04} and is motivated by the $L^2$-based  Riemannian structure formally given to the space of sets with finite perimeters (see \cite{GrunewaldKim11}).  Unfortunately, $\widetilde{\text{dist}}$ is not a distance. For example, it does not satisfy the triangle inequality. We will address this deficiency in Lemma \ref{lem: SQTineq}.  
 
 \medskip
 
 Our plan is to show the existence of viscosity solutions for \eqref{eqn: CLMV} by constructing a gradient flow of the energy $\mathcal{J}$.   There is a standard approach to defining curves of maximal slope for energies in metric spaces through a discrete time approximation.  Some early works on this subject are \cite{JKO98 , LuckhausSturzenhecker95} and the recent book \cite{AGS08} contains a quite general treatment of the method.  As noted previously our problem does not live in a nice metric space, however the essential idea of the construction is the same.  In \cite{GrunewaldKim11} it is shown that the curves of maximal slope $\mathcal{J}$ with $\widetilde{\text{dist}}$ satisfy a barrier property with respect to strict smooth sub and supersolutions of \eqref{eqn: CLML}.  From the a priori estimate Lemma \eqref{lem: preservation} any solution with sufficiently round initial data of \eqref{eqn: CLMV} will be in $\mathcal{S}_r$ for all time.  With this in mind, it is reasonable to restrict the class of admissible sets in the gradient flow to only include sets which are uniformly strongly star-shaped with respect to the origin.  With this additional regularity it becomes possible to show that the solutions of the discrete gradient flow scheme will converge to a viscosity solution of \eqref{eqn: CLMV}.

\subsection{Discrete Gradient Flow Scheme}\label{sec: Discrete Gradient Flow Scheme}
We will now define the discrete JKO type scheme to construct approximate solutions of the gradient flow \eqref{eqn: CLMV}.  We make a notational choice to denote these solutions with $\omega$. This is to distinguish between solutions of the gradient flow and $\Omega(\cdot)$ which we use to denote the positive phase of a height profile.  Let $\omega_0$ be the initial positive phase and let $u_0 = u[\omega_0]$ be the corresponding initial height profile.  Under the assumption that $\omega_0\in \mathcal{S}_{r_0}$ for some $r_0>0$,  let us define a discrete scheme approximating the gradient flow of the droplet energy $\mathcal{J}$ as follows:  
\begin{DEF}\label{def: JKO scheme}
 Let $M,r,h>0$ we define $\omega_{0,h}=\omega_0$, and define iteratively, 
\begin{equation}\label{eqn: JKOscheme}
 \omega_{k+1,h} = \argmin\left\{\mathcal{J}(\omega)+h^{-1}\widetilde{\text{dist}}^2(\omega_{k,h},\omega): \omega \in \mathcal{A}^{h}(\omega_{k,h})\right\}.
 \end{equation}
 Here $\mathcal{A}^h(\cdot)$ is the class of admissible sets which we will take to be
\begin{equation}\label{eqn: admissible sets}
\mathcal{A}^{h}(\omega)  := \{U\in \mathcal{S}_{r_0} \text{ and } d_H(U,\omega)\leq Mh\}.
\end{equation}
\end{DEF}

We suppress the dependence $M$, $r_0$ and $h$ in the above notations when there will be no confusion.  In order to show that to show that the iteration defined in \eqref{eqn: JKOscheme} is well defined Now we apply the Compactness Lemma \ref{lem: compactness}:
\begin{lem}\label{lem: JKOdef}
The iteration \eqref{eqn: JKOscheme} is well defined.
\end{lem}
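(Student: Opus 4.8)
The plan is to verify the two standard ingredients required for the existence of a minimizer in \eqref{eqn: JKOscheme}: that the admissible class $\mathcal{A}^h(\omega_{k,h})$ is non-empty, and that the functional $\omega\mapsto\mathcal{J}(\omega)+h^{-1}\widetilde{\text{dist}}^2(\omega_{k,h},\omega)$ attains its infimum over this class by the direct method, using the Compactness Lemma~\ref{lem: compactness} for paths in $\mathcal{S}_{r_0}$ (or rather its static version: any sequence in $\mathcal{S}_{r_0,R}$ with uniformly bounded diameter has a subsequence converging in Hausdorff distance to a limit in $\mathcal{S}_{r_0,R}$). Non-emptiness is immediate since $\omega_{k,h}\in\mathcal{A}^h(\omega_{k,h})$ by induction: $\omega_{k,h}\in\mathcal{S}_{r_0}$ and $d_H(\omega_{k,h},\omega_{k,h})=0\leq Mh$.

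First I would show the infimum is finite and that a minimizing sequence stays in a fixed bounded class $\mathcal{S}_{r_0,R}$. Finiteness of $\mathcal{J}$ follows because for any $\omega\in\mathcal{S}_{r_0}$ the minimizing profile $u[\omega]\in H^1_0(\omega)$ with $\int u[\omega]=V$ exists (the constraint set is non-empty as soon as $|\omega|>0$, which holds since $B_{r_0}(0)\subseteq\omega$), so $\mathcal{J}(\omega)<\infty$. For the uniform bound on the competitors: the Hausdorff constraint $d_H(\omega,\omega_{k,h})\leq Mh$ forces $\omega\subseteq\omega_{k,h}+\overline{B_{Mh}(0)}$, so every admissible $\omega$ lies in a fixed ball $B_R(0)$ with $R$ depending only on $\omega_{k,h}$ and $Mh$; hence the whole admissible class sits in $\mathcal{S}_{r_0,R}$. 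Therefore a minimizing sequence $\omega_j$ has, by the Compactness Lemma, a subsequence converging in $d_H$ to some $\omega_\infty\in\mathcal{S}_{r_0,R}$.

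Next I would check that $\omega_\infty$ is admissible and that it achieves the infimum, i.e. lower semicontinuity of the objective along the converging subsequence. Admissibility: $d_H$ is continuous under $d_H$-convergence, so $d_H(\omega_\infty,\omega_{k,h})\leq Mh$, and $\omega_\infty\in\mathcal{S}_{r_0}$ from the compactness lemma; thus $\omega_\infty\in\mathcal{A}^h(\omega_{k,h})$. For the functional, the term $h^{-1}\widetilde{\text{dist}}^2(\omega_{k,h},\cdot)$ is continuous along $d_H$-convergence within $\mathcal{S}_{r_0,R}$ because $\omega_{k,h}\Delta\omega_j\to\omega_{k,h}\Delta\omega_\infty$ in measure (uniform cone conditions from $\mathcal{S}_{r_0,R}$ prevent boundary mass from being lost in the limit, so $|\omega_j|\to|\omega_\infty|$ and symmetric differences behave well) and $d(x,\partial\omega_{k,h})$ is a fixed bounded continuous weight; one passes to the limit by dominated convergence. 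For $\mathcal{J}$, the volume term $|\omega|$ is continuous by the same measure-convergence fact, and the Dirichlet term $\int_\omega|Du[\omega]|^2$ is lower semicontinuous: the profiles $u[\omega_j]$ are uniformly bounded in $H^1(\mathbb{R}^N)$ (uniform bound on $\mathcal{J}(\omega_j)$ along a minimizing sequence), so up to a further subsequence $u[\omega_j]\rightharpoonup w$ weakly in $H^1$; the uniform exterior cone condition forces $w\in H^1_0(\omega_\infty)$, the volume constraint $\int w=V$ passes to the limit, and since $u[\omega_\infty]$ is the Dirichlet-energy minimizer over that constraint set, $\int|Du[\omega_\infty]|^2\leq\int|Dw|^2\leq\liminf\int|Du[\omega_j]|^2$. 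Combining, the objective is lower semicontinuous along the subsequence, so $\omega_\infty$ is a minimizer and \eqref{eqn: JKOscheme} is well defined; since $\omega_{k+1,h}\in\mathcal{S}_{r_0}$, the induction continues.

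The main obstacle is the continuity/lower-semicontinuity analysis of the profile-dependent terms under mere Hausdorff convergence of the domains — specifically, ruling out loss of capacity or volume in the limit (the domains could in principle develop thin spikes, and $H^1_0$ is not stable under arbitrary Hausdorff perturbations). This is exactly where the structural restriction to $\mathcal{S}_{r_0,R}$ is essential: the uniform interior and exterior cone conditions from Lemma~\ref{lem: sstolip} give uniform Lipschitz regularity of the boundaries, which yields both $|\omega_j|\to|\omega_\infty|$ and the stability of $H^1_0$ under the limit (no loss of capacity), so the direct method goes through cleanly.
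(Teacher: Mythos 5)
Your proposal is correct and follows essentially the same route as the paper: the direct method over the admissible class, using the Compactness Lemma~\ref{lem: compactness} to extract a Hausdorff-convergent infimizing subsequence whose limit remains in $\mathcal{S}_{r_0,R+Mh}$ and satisfies the constraint $d_H(\omega,\omega_{k,h})\leq Mh$. The only difference is at the final step, where the paper simply cites Lemma~\ref{lem: hausdorff estimates} for the continuity of $\mathcal{J}(\cdot)$ and $\widetilde{\text{dist}}^2(\omega_{k,h},\cdot)$ under $d_H$-convergence in $\mathcal{S}_{r,R}$, whereas you re-derive the (weaker but sufficient) lower semicontinuity by hand via weak $H^1$ compactness of the profiles and the uniform Lipschitz/cone regularity of the domains; both arguments are valid.
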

\begin{proof}
We show that for any $\omega_0 \in \mathcal{S}_{r,R}$ the infimum from \eqref{eqn: JKOscheme},
\begin{equation}\label{eqn: JKO inf}
 \inf \left\{\mathcal{J}(\omega)+h^{-1}\widetilde{\text{dist}}^2(\omega_{0},\omega): \omega \in \mathcal{A}^{h}(\omega_{0})\right\},
 \end{equation}
is achieved by some $\omega \in \mathcal{A}^{h}(\omega_{0})$.  Let $\omega_j\in \mathcal{A}^{h}(\omega_0)$ be an infimizing sequence for \eqref{eqn: JKO inf}.  In particular from the admissibility conditions \eqref{eqn: admissible sets},
$$ \omega_j \in \mathcal{S}_{r_0,R+Mh}. $$
Applying Lemma ~\ref{lem: compactness} up to a subsequence the $\omega_j$ converge in $d_H$ metric to some $\omega \in \mathcal{S}_{r_0,R+Mh}$ and we easily derive,
$$ d_H(\omega,\omega_0) \leq Mh, $$
so that $\omega \in \mathcal{A}^h(\omega_0)$.  Then, from Lemma~\ref{lem: hausdorff estimates}, $\mathcal{J}(\cdot)$ and $\widetilde{\text{dist}}^2(\omega_{0},\cdot)$ are continuous with respect to convergence in $(\mathcal{S}_{r_0,R+Mh},d_H)$ so $\omega$ achieves the infimum in \eqref{eqn: JKO inf}.

\end{proof}
Note that since $\omega_k$ is admissible at each stage we automatically get the discrete energy decay estimate,
\begin{equation}\label{eqn: discrete energy decay}
 \mathcal{J}(\omega_{k+1})+\frac{1}{h}\widetilde{\text{dist}}^2(\omega_{k+1},\omega_{k})\leq \mathcal{J}(\omega_{k}). 
 \end{equation}
We would like to use the triangle inequality (which we don't have) to get the energy decay inequality over time of the form:
$$\frac{1}{nh}\widetilde{\text{dist}}^2(\omega_{k+n},\omega_{k})\leq \mathcal{J}(\omega_{k})-\mathcal{J}(\omega_{n+k}).$$
It turns out that this is possible as long as we are restricting to strongly star-shaped sets.  

\begin{lem}\label{lem: SQTineq}
Let $\omega_0,...,\omega_n \in \mathcal{S}_{r,R}$, then we have the inequality,
\begin{equation}\label{eqn: triineq}
  \frac{C(r,R,N)}{n}\widetilde{\textup{dist}}^2(\omega_{n},\omega_1)
\leq \sum_{j=1}^n \widetilde{\textup{dist}}^2(\omega_{j+1},\omega_j)
\end{equation}
\end{lem}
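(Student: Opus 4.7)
The plan is to reduce the inequality to the ordinary triangle inequality in $L^2(S^{N-1})$. Each $\omega_j \in \mathcal{S}_{r,R}$ is star-shaped about the origin, so I would parameterize it by its radial profile $\rho_j : S^{N-1} \to [r,R]$, defined by $\partial\omega_j = \{\rho_j(\theta)\theta : \theta \in S^{N-1}\}$. By Lemma~\ref{lem: sstolip}, strong star-shapedness produces uniform interior and exterior cones, which forces each $\rho_j$ to be Lipschitz on $S^{N-1}$ with a Lipschitz constant $L = L(r,R)$.

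The key step is to establish the equivalence
\begin{equation*}
c_1(r,R,N)\,\|\rho_0-\rho_1\|_{L^2(S^{N-1})}^2 \leq \widetilde{\text{dist}}^2(\omega_0,\omega_1) \leq c_2(r,R,N)\,\|\rho_0-\rho_1\|_{L^2(S^{N-1})}^2 .
\end{equation*}
Passing to polar coordinates, since both sets are star-shaped one has $\omega_0 \Delta \omega_1 = \{s\theta : \min(\rho_0,\rho_1) < s < \max(\rho_0,\rho_1)\}$, so that
\begin{equation*}
\widetilde{\text{dist}}^2(\omega_0,\omega_1) = \int_{S^{N-1}} \int_{\min(\rho_0,\rho_1)}^{\max(\rho_0,\rho_1)} d(s\theta,\partial\omega_0)\, s^{N-1}\, ds\, d\theta ,
\end{equation*}
and the weight $s^{N-1}$ is uniformly comparable to $1$ on $[r,R]$. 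The upper estimate $d(s\theta,\partial\omega_0) \leq |s-\rho_0(\theta)|$ is immediate from $\rho_0(\theta)\theta \in \partial\omega_0$; integrating in $s$ yields the right half of the equivalence. The matching lower bound $d(s\theta,\partial\omega_0) \geq c(r,R)\,|s-\rho_0(\theta)|$ is the main content and uses the Lipschitz profile: for any competing boundary point $z = \rho_0(\theta')\theta'$ with angular separation $\phi = \angle(\theta,\theta')$, the law of cosines together with $|\rho_0(\theta)-\rho_0(\theta')| \leq L\phi$ and $s\rho_0(\theta') \geq r^2$ give $|s\theta-z|^2 \geq \bigl(|s-\rho_0(\theta)|-L\phi\bigr)^2 + c(r,R)\,\phi^2$, and minimizing in $\phi \geq 0$ yields the claimed linear lower bound.

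With the equivalence in hand, the $L^2(S^{N-1})$ triangle inequality yields $\|\rho_n-\rho_1\|_{L^2} \leq \sum_{j=1}^{n-1} \|\rho_{j+1}-\rho_j\|_{L^2}$, and Cauchy--Schwarz upgrades this to $\|\rho_n-\rho_1\|_{L^2}^2 \leq (n-1)\sum_{j=1}^{n-1} \|\rho_{j+1}-\rho_j\|_{L^2}^2$. Translating both sides back through the equivalence gives \eqref{eqn: triineq} with a constant depending only on $r,R,N$. The main obstacle is the distance lower bound $d(s\theta,\partial\omega_0) \gtrsim |s-\rho_0(\theta)|$ with a constant depending only on $r,R,N$; this is precisely what the cone characterization in Lemma~\ref{lem: sstolip} is designed to provide. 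Once that equivalence is secure, the $L^2$ triangle inequality and Cauchy--Schwarz do the rest without further input.
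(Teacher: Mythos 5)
Your proposal is correct and follows essentially the same route as the paper: the radial profiles $\rho_j(\theta)$ are exactly the paper's directional distances $d_\theta$, and the paper likewise proves $\widetilde{\text{dist}}^2(\omega_a,\omega_b)\sim_{r,R,N}\int_{S^{N-1}}|\rho_a-\rho_b|^2\,d\theta$ in polar coordinates and then concludes by the (pointwise-in-$\theta$) triangle inequality and Cauchy--Schwarz. The only difference is cosmetic: for the lower bound $d(s\theta,\partial\omega_0)\gtrsim_{r,R}|s-\rho_0(\theta)|$ you argue via the Lipschitz radial profile and a law-of-cosines minimization, whereas the paper reads it off directly from the exterior cone/ball containment of Lemma~\ref{lem: sstolip}, namely $B_{\frac{r}{R}|x-X_0(\hat{x})|}(x)\subset\real^N\setminus\omega_0$.
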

In the following it will be useful to introduce some notation.  We say that $A \lesssim B$ if there exists a constant $C>0$ such that $A \leq CB$.  We say $A \sim B$ or $A$ is equivalent up to constants to $B$ if $A \lesssim B \lesssim A$.  The dependence of the constant $C$ on various parameters $q_i$ is expressed by writing $\lesssim_{q_1,...,q_n}$ or $\sim_{q_1,...,q_n}$.  If no dependence is expressed, then the constant is assumed to depend at most on the dimension $N$.

\begin{proof}
Let $\omega_a, \omega_b \in \mathcal{S}_{r,R}$, then we define the distance between the sets $\omega_a$ and $\omega_b$ in the direction $\theta \in S^{N-1}$,
\begin{equation*}
\begin{array}{lll}
d_\theta(\omega_a,\omega_b) = |X_a(\theta)-X_b(\theta)| & \text{ where } & \{X_i(\theta)\} = \partial\omega_i\cap \{x: \frac{x}{|x|} = \theta\}, \ i \in \{a,b\}.
\end{array}
\end{equation*}
Notice, due to the strict star-shapedness of the $\omega_i$, there is only one point $X_i(\theta)$ along the direction $\theta$ in $\partial\omega_i$.  It is easy to check that $d_\theta$ has the triangle inequality:
$$ d_\theta(\omega_a,\omega_b)  \leq d_\theta(\omega_a,\omega_c)+d_\theta(\omega_c,\omega_b),$$
for any bounded strictly star-shaped sets $\omega_i$, $i \in \{a,b,c\}$.  
\begin{Claim}
The $\widetilde{\text{dist}}$ is equivalent up to constants to a true distance:
\begin{equation}\label{eqn: equiv distances}
\widetilde{\text{dist}}(\omega_a,\omega_b) \sim_{r,R,N} \left(\int_{S^{N-1}}d_\theta(\omega_a,\omega_b)^2d\theta\right)^{1/2}.
\end{equation}
\end{Claim}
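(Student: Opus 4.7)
The plan is to pass to polar coordinates and match the integrands pointwise. For every $\omega\in\mathcal{S}_{r,R}$ the boundary $\partial\omega$ meets each ray $\{\rho\theta:\rho>0\}$ at a unique point $X(\theta)=|X(\theta)|\theta$ with $|X(\theta)|\in[r,R]$, and the map $\theta\mapsto|X(\theta)|$ is Lipschitz on $S^{N-1}$ with constant depending only on $r,R$. Because $\omega_a,\omega_b$ are both star-shaped with respect to the origin, the symmetric difference decomposes in polar coordinates as
\begin{equation*}
\omega_a\Delta\omega_b \;=\; \bigl\{\rho\theta:\theta\in S^{N-1},\ m(\theta)<\rho<M(\theta)\bigr\},
\end{equation*}
where $m(\theta):=\min\{|X_a(\theta)|,|X_b(\theta)|\}$ and $M(\theta):=\max\{|X_a(\theta)|,|X_b(\theta)|\}$.

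The core step is the pointwise equivalence
\begin{equation*}
\tfrac{r}{R}\,\bigl||X_a(\theta)|-\rho\bigr|\;\leq\; d(\rho\theta,\partial\omega_a)\;\leq\; \bigl||X_a(\theta)|-\rho\bigr|,\qquad \rho\theta\in\omega_a\Delta\omega_b.
\end{equation*}
The upper bound is immediate because $X_a(\theta)\in\partial\omega_a$. For the lower bound, set $y:=X_a(\theta)$ and split on whether $\rho<|y|$ or $\rho>|y|$. If $\rho<|y|$ then $x:=\rho\theta$ lies in $\overline{\omega_a}$ on the segment from the origin to $y$, and the star-shapedness of $\omega_a$ with respect to $B_r(0)$ forces the solid cone $\mathrm{co}(\{y\}\cup B_r(0))$ to lie in $\overline{\omega_a}$; its cross-section through $x$ is the ball $B_{(1-\rho/|y|)r}(x)$, which yields $d(x,\partial\omega_a)\geq (r/|y|)(|y|-\rho)\geq (r/R)(|y|-\rho)$. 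If $\rho>|y|$ then $x$ lies outside $\omega_a$ on the axis of the exterior cone provided by Lemma~\ref{lem: sstolip}(ii), which has half-angle $\theta_y=\arcsin(r/|y|)$; elementary geometry of cones places $B_{(\rho-|y|)\sin\theta_y}(x)$ inside this cone, hence in $\omega_a^c$, giving the symmetric lower bound.

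Plugging the comparison into
\begin{equation*}
\widetilde{\textup{dist}}^2(\omega_a,\omega_b) \;=\; \int_{S^{N-1}}\int_{m(\theta)}^{M(\theta)} d(\rho\theta,\partial\omega_a)\,\rho^{N-1}\,d\rho\,d\theta,
\end{equation*}
and absorbing the bounded factor $\rho^{N-1}\in[r^{N-1},R^{N-1}]$ into the implicit constants, one obtains
\begin{equation*}
\widetilde{\textup{dist}}^2(\omega_a,\omega_b) \;\sim_{r,R,N}\; \int_{S^{N-1}}\!\int_{m(\theta)}^{M(\theta)} \bigl||X_a(\theta)|-\rho\bigr|\,d\rho\,d\theta \;=\; \tfrac12\int_{S^{N-1}} d_\theta(\omega_a,\omega_b)^2\,d\theta,
\end{equation*}
and taking square roots yields \eqref{eqn: equiv distances}. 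The main work is the pointwise distance estimate, which is precisely where the quantitative strong star-shapedness $\omega_i\in\mathcal{S}_{r,R}$ is essential: without a uniform lower bound on the cone angle the ratio between $d(x,\partial\omega_a)$ and the radial gap $||X_a(\theta)|-\rho|$ can degenerate and the two quantities cease to be comparable.
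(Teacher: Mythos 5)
Your proof is correct and follows essentially the same route as the paper: polar coordinates, the radial decomposition of $\omega_a\Delta\omega_b$, the pointwise equivalence $d(\rho\theta,\partial\omega_a)\sim_{r,R}\bigl|\rho-|X_a(\theta)|\bigr|$ from the strong star-shapedness cones, and absorption of the Jacobian factor $\rho^{N-1}$. The only difference is cosmetic: you spell out both the interior and exterior cone cases of the pointwise estimate, which the paper handles by citing Lemma~\ref{lem: sstolip} and its figure.
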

Using the claim along with Cauchy-Schwarz and the triangle inequality for $d_\theta$ we will get,
\begin{align*}
\sum_{j=1}^n \widetilde{\text{dist}}^2(\omega_{j+1},\omega_j)&\gtrsim_{r,R,N} \sum_{j=1}^n \int_{S^{N-1}}d_\theta(\omega_{j+1},\omega_{j})^2d\theta 
\geq \frac{1}{n} \int_{S^{N-1}}d_\theta(\omega_{n},\omega_{1})^2d\theta 
\gtrsim_{r,R,N} \widetilde{\text{dist}}^2(\omega_{n},\omega_1).
\end{align*}
So it suffices to show the claimed equivalence \eqref{eqn: equiv distances}.  We calculate:
\begin{align*}
\widetilde{\text{dist}}^2(\omega_a,\omega_b) &= \int_{\omega_a\Delta\omega_b}d(x,\partial\omega_a)dx 
=\int_{S^{N-1}}\int_0^R d(\rho\theta,\partial\omega_b){\bf1}_{\omega_a\Delta\omega_b}(\rho\theta)\rho^{N-1} \ d\rho d\theta \\
&\sim_{r,R,N} \int_{S^{N-1}}\int_0^R d(\rho\theta,\partial\omega_b){\bf1}_{\omega_a\Delta\omega_b}(\rho\theta) \ d\rho d\theta \\
& = \int_{S^{N-1}}\int_{\min\{|X_a(\theta)|,|X_b(\theta)|\}}^{\max\{|X_a(\theta)|,|X_b(\theta)|\}}d(\rho\theta,\partial\omega_b) \ d\rho d\theta \\
&\sim_{r,R} \int_{S^{N-1}}\int_{\min\{|X_a(\theta)|,|X_b(\theta)|\}}^{\max\{|X_a(\theta)|,|X_b(\theta)|\}}\left|\rho -|X_a(\theta)|\right| \  d\rho d\theta \\
& = \int_{S^{N-1}}\int_0^{d_\theta(\omega_a,\omega_b)}\rho \ d\rho  d\theta = \frac{1}{2}\int_{S^{N-1}}d_\theta(\omega_a,\omega_b)^2d\theta.
\end{align*}
Here we have used an easy consequence of Lemma \ref{lem: sstolip}, for $x \in \mathbb{R}^N$ the radial distance to $\partial\omega_i$ (i.e. $|X_i(\hat{x})-x|$) is equivalent to $d(x,\partial\omega)$ up to constants depending on $r$ and $R$.   In particular for $x \in \real^N\setminus\omega_i$ from Figure~\ref{fig: strong star shaped},
$$ B_{\tfrac{r}{R}|x-X_i(\hat{x})|} \subset \real^N\setminus\Omega $$
so that
$$ |x-X_i(\hat{x})|\geq d(x,\partial\omega_i) \geq \frac{r}{R}|x-X_i(\hat{x})|. $$
\end{proof}

As in the proof of Lemma~\ref{lem: JKOdef}, the restrictions on the admissible sets will allow us to take a continuum limit of the discrete time scheme.    In particular, the assumption of uniform strong star-shapedness gives equicontinuity of the contact lines $\partial\omega_k$ in space, and the Hausdorff distance movement restriction gives equicontinuity in time.  

\begin{lem}\label{lem: htozero} \textup{ (Continuum Limit)}
Let $\omega_{k,h}$ be given as in \eqref{eqn: JKOscheme}.  Then the following holds for the interpolants
\begin{equation}\label{eqn: interpolant}
\omega_h(t) = \omega_{k,h} \text{ for } t \in [kh,(k+1)h).
\end{equation}
\begin{enumerate}[(i)]
\item There exists $R=R(r_0,N)>0$ such that $\omega_h(t) \subset B_R(0)$ for all $h,t >0$ and therefore 
$$\text{Lip}(\partial\omega_h(t)) \leq C(r_0,N).$$
  \item  $\omega_h(t)$ (up to a subsequence) locally uniformly converges in $t \in [0,+\infty)$ to $\omega(t)$, which satisfies
$$
 d_H(\omega(t_1),\omega(t_2))\leq M|t_1-t_2| \hbox{ for all } t_1>t_2\geq 0.
$$
\item  For $C=C(r_0,R, N)$ given as in Lemma \ref{lem: SQTineq}, $\omega_h(t)$ satisfies the energy decay estimate
\begin{equation}\label{eqn: cts energy decay}
\frac{C}{t_2-t_1}\widetilde{\text{dist}}^2(\omega_h(t_1),\omega_h(t_2))\leq \mathcal{J}(\omega_h(t_1))-\mathcal{J}(\omega_h(t_2)).
\end{equation}

\end{enumerate}
\end{lem}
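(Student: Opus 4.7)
The plan is to deduce (i) from the built-in energy monotonicity of the scheme, (ii) from an Arzel\`a--Ascoli argument on paths in $(\mathcal{S}_{r_0,R},d_H)$, and (iii) by telescoping \eqref{eqn: discrete energy decay} and invoking Lemma \ref{lem: SQTineq} to remedy the failure of the triangle inequality for $\widetilde{\text{dist}}$.

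For (i), admissibility forces $\omega_{k,h}\in\mathcal{S}_{r_0}$, so by Lemma \ref{lem: sstolip} it suffices to bound the diameter. Iterating \eqref{eqn: discrete energy decay} gives $\mathcal{J}(\omega_{k,h})\leq \mathcal{J}(\omega_0)$ and hence $|\omega_{k,h}|\leq \mathcal{J}(\omega_0)$. Since $B_{r_0}(0)\subset\omega_{k,h}$ and $\omega_{k,h}$ is star-shaped with respect to this ball, any boundary point $x$ with $|x|=R_0$ forces the convex hull of $B_{r_0}(0)\cup\{x\}$ into $\omega_{k,h}$, contributing volume $\gtrsim r_0^{N-1}R_0$. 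This pins $R_0$ uniformly in $k,h$ (with implicit dependence on $\omega_0$), and the Lipschitz bound then follows from Lemma \ref{lem: sstolip}.

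For (ii), the admissibility restriction $d_H(\omega_{k+1,h},\omega_{k,h})\leq Mh$ telescopes to
\begin{equation*}
d_H(\omega_h(t_1),\omega_h(t_2))\leq M(|t_1-t_2|+h).
\end{equation*}
Combined with (i), the $\omega_h(\cdot)$ are equicontinuous maps into the compact metric space $(\mathcal{S}_{r_0,R},d_H)$, so Arzel\`a--Ascoli yields a subsequence converging locally uniformly in $t$ to some $\omega(\cdot)$; passing to the limit $h\to 0$ in the displayed bound gives the claimed $M$-Lipschitz estimate for $\omega(\cdot)$. For (iii), fix $0\leq t_1<t_2$ and set $k_i=\lfloor t_i/h\rfloor$, so $\omega_h(t_i)=\omega_{k_i,h}$. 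Summing \eqref{eqn: discrete energy decay} from $j=k_1$ to $k_2-1$ gives
\begin{equation*}
\sum_{j=k_1}^{k_2-1}\widetilde{\text{dist}}^2(\omega_{j+1,h},\omega_{j,h})\leq h\bigl(\mathcal{J}(\omega_{k_1,h})-\mathcal{J}(\omega_{k_2,h})\bigr).
\end{equation*}
By (i) each $\omega_{j,h}\in\mathcal{S}_{r_0,R}$, so Lemma \ref{lem: SQTineq} bounds the left side below by $\tfrac{C(r_0,R,N)}{k_2-k_1}\widetilde{\text{dist}}^2(\omega_{k_2,h},\omega_{k_1,h})$; using $(k_2-k_1)h\leq t_2-t_1$ yields (iii).

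The only substantive step is (iii), and it rests entirely on Lemma \ref{lem: SQTineq}: because $\widetilde{\text{dist}}$ lacks the triangle inequality, naive telescoping of the one-step dissipation gives no two-time estimate. The uniform strong-star-shapedness constraint built into $\mathcal{A}^h$ is precisely what allows Lemma \ref{lem: SQTineq} to apply throughout the iteration, and it is the same constraint that supplies the time compactness used in (ii); the remainder is routine bookkeeping.
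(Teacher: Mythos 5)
Your proposal is correct and takes essentially the same route as the paper's proof: (i) via energy monotonicity plus the star-shapedness cone/volume argument (with the radius indeed depending on $\mathcal{J}(\omega_0)$, as you note), (ii) via the $Mh$ Hausdorff step bound and compactness of $(\mathcal{S}_{r_0,R},d_H)$ (the paper diagonalizes over dyadic grid times rather than invoking an approximate Arzel\`a--Ascoli, but it is the same device), and (iii) by telescoping \eqref{eqn: discrete energy decay} and applying Lemma \ref{lem: SQTineq}, which is exactly the mechanism the paper sets up. The only caveat is cosmetic: because the interpolants are piecewise constant, $(k_2-k_1)h$ can exceed $t_2-t_1$ by up to $h$ and the paths are equicontinuous only up to an $Mh$ error, so your bounds carry an $O(h)$ slack that vanishes in the continuum limit, matching the paper's own level of precision.
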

\begin{proof}
We give a sketch of the proof for (i).  Let $x \in \partial\omega_h(t)$ and recall that
$$|\omega_h(t)| \leq \mathcal{J}(\omega_h(t)) \leq \mathcal{J}(\omega_0), $$
and $\omega_h(t)$ has the interior cone at $x$,
$$
\{y: y\cdot x\geq 0 \text{ and } (x-y)\cdot x \geq |x-y|\sqrt{|x|^2-r_0^2} \}\cup B_r(0) \subset \omega_h(t).
$$
This cone contains $\sim |x|/r_0$ balls of radius $r_0/2$ centered along it's axis, so that
$$ |x| \lesssim R(r_0,N):= r_0^{N-1}\mathcal{J}(\omega_0). $$

Now we prove (ii) and (iii).  For simplicity we will take $h= 2^{-j}$ and then call the interpolant \eqref{eqn: interpolant} $\omega_{j}(t)$.  From (i) and Lemma \ref{lem: compactness} the sequence $\omega_j(t)$ for $t\in h\integer^+$ has a subsequence in $j$ which converges in Hausdorff distance.  Diagonalizing, we get a subsequence which we continue to call $j$ along which $\omega_j(t)$ converges in Hausdorff distance to a limit we call $\omega(t)$. From \eqref{eqn: admissible sets} and the triangle inequality we get:
$$d_H(\omega_j(t_1),\omega_j(t_2))\leq M|t_1-t_2| \hbox{ for } t_1, t_2\in h\integer^+ \hbox{ and for sufficiently large } j ,$$
and $\omega(t)$ inherits the same estimate. $\omega(t)$ is densely defined and uniformly continuous and thus has a unique $M$-Lipschitz extension to $[0,+\infty)$.  The local uniform convergence then directly follows.

 Estimate \eqref{eqn: cts energy decay} follows from the continuity of $\mathcal{J}(\cdot)$ and $\widetilde{\text{dist}}^2(\cdot,\cdot)$ with respect to convergence in Hausdorff distance: see Lemma \ref{lem: hausdorff estimates}.  
\end{proof}

\subsection{Barrier Properties for the Gradient Flow}
In \cite{GrunewaldKim11} a discrete gradient flow is constructed for \eqref{eqn: CLMV} without restrictions on the star-shapedness of the domain or the maximum speed of the free boundary.  There it is proven that the discrete solutions satisfy a barrier property with respect to smooth strict sub and super-solutions of \eqref{eqn: CLML}.  An analogous result can be proven for the solutions of the restricted iteration scheme defined in \eqref{eqn: JKOscheme}. The cost of the restriction to nicer sets in \eqref{eqn: JKOscheme} is that the class of barriers for which the sub and super-solution properties hold is reduced. The necessary conditions on admissible barriers can be deduced by inspecting the proofs in Section 3 of \cite{GrunewaldKim11}.  We give modified statement of the super-solution barrier property applicable to our situation below.
\begin{thm}\label{lem: barriers for restricted JKO} \textup{(Grunewald, Kim) (Super-solution barrier property) } Let $u_h = u[\omega_h]$ be the profile after one iteration of the scheme with time step size $h>0$,  and let 
$$ \lambda = \min\{\lambda[\omega_0],\lambda[\omega_h]\}.$$
  Let $\rho>0$, $x_0 \in \real^N$ and suppose there exists $\phi$ smooth with $|D\phi|\neq 0$ in $B_\rho(x_0) \times [0,h]$ with a $\delta>0$ such that
\begin{equation*}
\begin{array}{lll}
-\Delta \phi(\cdot,t) < \lambda-\delta & \textup{ in } & B_\rho(x_0) \times [0,h] \\ \\
\dfrac{\phi_t}{|D\phi|}-(|D\phi|^2-1)< -\delta & \textup{ on } & \Gamma(\phi)\cap(B_\rho(x_0) \times [0,h]).
\end{array}
\end{equation*}
Moreover, we require that there exists $C>0$ which does not depend on $h$ such that the sets
\begin{align}\label{eqn: barrierrestrict}
\omega_h\cup(\{\phi(\cdot,h)>\eta\}\cap B_\rho(x_0)), \ -h^2\leq\eta\leq Ch^{1/2}
\end{align}
are in the admissible class $\mathcal{A}^h_{\omega_0}$. Then, for $0<h<h_0 = h_0(\rho, \phi,\delta)$,  the following holds:
$$
\hbox{ If }\phi(\cdot,0)\leq u_0(\cdot), \hbox{ then } 
\phi(\cdot,h) \leq u_h(\cdot).
$$
\end{thm}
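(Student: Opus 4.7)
The argument is by contradiction, closely following the template of \cite{GrunewaldKim11}, with the admissibility hypothesis \eqref{eqn: barrierrestrict} taking the place of the less restrictive admissibility used there. Suppose there exists $x_* \in B_\rho(x_0)$ with $u_h(x_*) < \phi(x_*,h)$. The strategy is to construct a competitor $\tilde\omega \in \mathcal{A}^h(\omega_0)$ for which the discrete functional
$$ F(\omega) := \mathcal{J}(\omega) + h^{-1}\widetilde{\text{dist}}^2(\omega_0,\omega) $$
satisfies $F(\tilde\omega) < F(\omega_h)$, contradicting the minimality in \eqref{eqn: JKOscheme}.

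\textbf{Competitor.} For a small parameter $\eta \in (0, Ch^{1/2}]$ to be chosen set
$$ \tilde\omega := \omega_h \cup \big(\{\phi(\cdot,h) > \eta\}\cap B_\rho(x_0)\big). $$
The hypothesis \eqref{eqn: barrierrestrict} guarantees $\tilde\omega \in \mathcal{A}^h(\omega_0)$ for every $\eta$ in this range, and this is the central role played by the restriction on admissible barriers. Because $\phi(x_*,h) > u_h(x_*) \geq 0$, the sliver $\tilde\omega \setminus \omega_h$ has positive measure for all sufficiently small $\eta > 0$, and shrinks continuously as $\eta \downarrow 0$.

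\textbf{Energy comparison.} Let $\tilde u := u[\tilde\omega]$. To bound $\mathcal{J}(\tilde\omega) - \mathcal{J}(\omega_h)$ one tests the variational problem defining $\tilde u$ against a volume-preserving rearrangement of $\max\{u_h, \phi(\cdot,h)-\eta\}$. The strict interior inequality $-\Delta\phi < \lambda - \delta$ combined with integration by parts in the sliver then yields, in the limit $\eta \downarrow 0$,
$$ \mathcal{J}(\tilde\omega) - \mathcal{J}(\omega_h) \;\leq\; \int_{\tilde\omega\setminus\omega_h}\bigl(1 - |D\phi|^2\bigr)\,dx + o\bigl(|\tilde\omega\setminus\omega_h|\bigr). $$
Similarly, tracking the parametrization of the new sliver by the flow of $\phi$, a direct computation of the distance term (analogous to Lemma~3.1 of \cite{GrunewaldKim11}) produces
$$ h^{-1}\!\left[\widetilde{\text{dist}}^2(\omega_0,\tilde\omega) - \widetilde{\text{dist}}^2(\omega_0,\omega_h)\right] \;\leq\; \int_{\tilde\omega\setminus\omega_h} \frac{2\phi_t}{|D\phi|}\,dx + o\bigl(|\tilde\omega\setminus\omega_h|\bigr). $$
Adding the two bounds and invoking the free-boundary strictness $\phi_t/|D\phi| - (|D\phi|^2-1) < -\delta$ gives
$$ F(\tilde\omega) - F(\omega_h) \;\leq\; -\delta\,|\tilde\omega\setminus\omega_h| + o\bigl(|\tilde\omega\setminus\omega_h|\bigr), $$
which is strictly negative once $h < h_0(\rho,\phi,\delta)$ and $\eta$ is chosen small, contradicting the minimality of $\omega_h$.

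\textbf{Main obstacle.} The delicate step is producing the clean interior and distance estimates displayed above: $\widetilde{\text{dist}}$ is not a true metric and $\mathcal{J}$ is nonsmooth on the space of sets, so one must carefully quantify how much energy is gained by passing from $\omega_h$ to $\tilde\omega$ in terms of the geometry of the sliver. These computations are essentially carried out in \cite{GrunewaldKim11} in a broader setting. The technical novelty here is that the admissibility class has been narrowed to $\mathcal{A}^h(\omega_0)\subset\mathcal{S}_{r_0}$, so the outward perturbation $\tilde\omega$ need not lie in the admissible class in general; the hypothesis \eqref{eqn: barrierrestrict} is precisely the geometric assumption that removes this obstruction, and checking it in applications (e.g.\ radial barriers centered well inside $\omega_0$) is nontrivial but straightforward.
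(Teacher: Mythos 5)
Your overall route is the one the paper itself relies on: the paper does not reprove this statement but quotes it from \cite{GrunewaldKim11}, noting only that hypothesis \eqref{eqn: barrierrestrict} is exactly what keeps the Grunewald--Kim competitors inside the restricted class $\mathcal{A}^h(\omega_0)$. Your competitor $\tilde\omega=\omega_h\cup(\{\phi(\cdot,h)>\eta\}\cap B_\rho(x_0))$ and your reading of the role of \eqref{eqn: barrierrestrict} match that intention. As written, however, the sketch has two genuine gaps.

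First, the conclusion (and hence your contradiction hypothesis) is a \emph{function} inequality: $u_h(x_*)<\phi(x_*,h)$ can occur at a point where $u_h(x_*)>0$, i.e.\ inside $\omega_h$. In that case the sliver $\tilde\omega\setminus\omega_h$ may be empty, so your claim that it has positive measure for small $\eta>0$ is unjustified and the set competitor yields no contradiction. The argument must be split: the variational comparison only controls positivity sets (giving $\{\phi(\cdot,h)>0\}\cap B_\rho(x_0)\subseteq\omega_h$), after which the pointwise bound $\phi(\cdot,h)\le u_h$ is recovered from elliptic comparison, using $-\Delta\phi<\lambda-\delta\le\lambda[\omega_h]=-\Delta u_h$ on $\{\phi(\cdot,h)>0\}$ together with the boundary ordering; this second step is missing. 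Second, your displayed constants do not close: adding your two estimates and using $\phi_t/|D\phi|<(|D\phi|^2-1)-\delta$ gives a per-unit-volume bound of $(1-|D\phi|^2)+2\phi_t/|D\phi|<(|D\phi|^2-1)-2\delta$, which is not $\le-\delta$ unless $|D\phi|^2-1\le\delta$. The correct bookkeeping has the distance term entering with coefficient one: for $x$ in the sliver, $d(x,\partial\omega_0)\lesssim h\sup(\phi_t/|D\phi|)$, so $h^{-1}$ times the change in $\widetilde{\text{dist}}^2$ is controlled by $\int_{\tilde\omega\setminus\omega_h}\phi_t/|D\phi|\,dx$ (no factor $2$), and then the strict free-boundary inequality does produce the desired $-\delta|\tilde\omega\setminus\omega_h|$ term. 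Finally, the stated range $-h^2\le\eta\le Ch^{1/2}$ in \eqref{eqn: barrierrestrict} indicates that the level $\eta$ (including slightly negative levels) is selected by an averaging/co-area type argument in \cite{GrunewaldKim11}; ``to be chosen'' hides this, and any complete write-up must make that selection so that the error terms are genuinely $o(|\tilde\omega\setminus\omega_h|)$.
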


Essentially, \eqref{eqn: barrierrestrict} requires that the perturbed sets  of $\omega_h$ by the barrier $\phi$ which crosses $u_h$ from below stays in the admissible class.  An analogous barrier property with respect to supersolutions holds as well.  

\medskip

Now we show that the barrier property carries over to the continuum limit $\Omega(t)$.

\begin{lem}\label{lem: barrier property for grad flow}
\textup{(Restricted super-solution property for the gradient flow)} Let $\omega(t)$ be a continuum limit of the discrete gradient flow described in Lemma \ref{lem: htozero}.  Let $\phi \in C^{2,1}(\overline{\Omega(\phi)}\cap\{0\leq t\leq T\})$ such that $|D\phi|\neq 0$ on $\Gamma(\phi)\cap\{0\leq t\leq T\}$, the initial strict separation $\Omega_0(\phi) \subset\subset \omega(0)$ holds and $\phi$ is a strict subsolution,
\begin{equation}\label{eqn: strict subsoln}
\begin{array}{lll}
-\Delta \phi(\cdot,t) < \lambda[\omega(t)] & \textup{ in } & \Omega(\phi)\cap\{0\leq t\leq T\} \\ \\
\phi_t-\max\{|D\phi|^2-1,M\}|D\phi|< 0 & \textup{ on } & \Gamma(\phi)\cap\{0\leq t\leq T\}.
\end{array}
\end{equation}
Then $\phi$ cannot touch $u[\Omega(t)]$ from below at any free boundary point $(x_0,t_0) \in \Gamma(u)$ such that
\begin{equation} \label{eqn: normal condition}
\begin{array}{lll}
D\phi (x_0,t_0) \cdot x_0 < -r_0 |D\phi| (x_0,t_0). 
\end{array}
\end{equation}
\end{lem}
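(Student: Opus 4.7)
The plan is to reduce to the one-step discrete barrier statement Theorem \ref{lem: barriers for restricted JKO} using the uniform convergence $\omega_h \to \omega$ from Lemma \ref{lem: htozero}. Suppose toward contradiction that $\phi$ touches $u[\omega(\cdot)]$ from below at some free boundary point $(x_0,t_0)\in \Gamma(u)$ satisfying the normal condition \eqref{eqn: normal condition}. By a standard strictification (e.g.\ a small spatial dilation of $\phi$ about the origin, exploiting the strict inequalities in \eqref{eqn: strict subsoln}), I obtain $\phi_\varepsilon$ which is still a strict classical subsolution of \eqref{eqn: strict subsoln} on a small parabolic cylinder $Q':=B_\rho(x_0)\times[t_0-\tau,t_0+\tau]$, still has $\Omega_0(\phi_\varepsilon)\subset\subset\omega(0)$, still satisfies the strict angular inequality \eqref{eqn: normal condition} near its touching point with $u$, is strictly separated from $u$ on $\partial_pQ'$, yet strictly crosses $u$ in the interior of $Q'$.

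By Lemma \ref{lem: htozero}(ii) combined with the continuity of the Dirichlet map with respect to Hausdorff convergence in $\mathcal{S}_{r,R}$ (a consequence of the boundary H\"older estimate Lemma \ref{lem: holder in lip domain}), the discrete height profiles $u_h = u[\omega_h]$ converge uniformly to $u$ on $Q'$. Hence for all sufficiently small $h$ along the convergent subsequence, $\phi_\varepsilon$ strictly crosses $u_h$ across a pair of consecutive iteration steps $k_hh$ and $(k_h+1)h$ with $k_hh\to t_0$. Translating time to $[0,h]$ and applying Theorem \ref{lem: barriers for restricted JKO} on that step will yield the desired contradiction, provided its admissibility hypothesis \eqref{eqn: barrierrestrict} holds for the perturbed sets
\begin{equation*}
\omega_{k_hh}\cup\bigl(\{\phi_\varepsilon(\cdot,(k_h+1)h)>\eta\}\cap B_\rho(x_0)\bigr),\qquad -h^2\leq\eta\leq Ch^{1/2}.
\end{equation*}

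The \emph{main obstacle} lies in verifying that these perturbed sets belong to the admissible class $\mathcal{A}^h(\omega_{k_hh})$. The Hausdorff movement bound $d_H\leq Mh$ follows for $h$ small by taking $\rho$ small and $M$ large in terms of $\phi$, since the added mass lies in $B_\rho(x_0)$ near the common boundary point $x_0\in\partial\omega_{k_hh}$. The core issue is preservation of strong star-shapedness with respect to $B_{r_0}(0)$, and this is precisely where the normal condition \eqref{eqn: normal condition} is indispensable. Setting $\nu_\phi=D\phi/|D\phi|$, the condition rewrites as $-\nu_\phi(x_0,t_0)\cdot\hat{x}_0 > r_0/|x_0|$, which by the angular criterion in Lemma \ref{lem: sstolip}(iii) is exactly the condition needed for a domain with outward normal $\nu_\phi(x_0,t_0)$ at $x_0$ to admit an interior cone toward $B_{r_0}(0)$ at $x_0$. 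Since this strict inequality is open in $(x,t)$, it persists uniformly on $Q'$ after possibly shrinking $\rho$ and $\tau$, so every level set $\{\phi_\varepsilon(\cdot,t)>\eta\}\cap B_\rho(x_0)$ appearing in \eqref{eqn: barrierrestrict} possesses the required interior cone toward $B_{r_0}(0)$ at each of its boundary points in $B_\rho(x_0)$. Combined with $\omega_{k_hh}\in\mathcal{S}_{r_0}$ and the cone characterization in Lemma \ref{lem: sstolip}, the enlarged set is star-shaped with respect to every point of $B_{r_0}(0)$. Once admissibility is established, the discrete barrier inequality contradicts the strict crossing of $\phi_\varepsilon$ past $u_h$; letting $h\to 0$ and $\varepsilon\to 0$ completes the proof.
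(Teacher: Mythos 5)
You follow the paper's overall strategy (contradiction, strictify to get a genuine crossing, pass to the discrete iterates by uniform convergence, invoke Theorem \ref{lem: barriers for restricted JKO}, and verify the admissibility hypothesis \eqref{eqn: barrierrestrict} via the normal condition), but the step you flag as the core issue is precisely where your argument has a genuine gap. The pointwise inequality \eqref{eqn: normal condition}, even after being propagated to all of a small cylinder $Q'$ by continuity, only says that a segment from a level-set boundary point $x\in\partial\{\phi_\varepsilon(\cdot,t)>\eta\}\cap B_\rho(x_0)$ toward any $y\in B_{r_0}(0)$ \emph{initially enters} the level set (since $D\varphi(x)\cdot(y-x)>0$). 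It does not give an ``interior cone toward $B_{r_0}(0)$'' contained in the level-set patch: such a cone reaches far outside $B_\rho(x_0)$, where the patch does not exist and containment can only come from $\omega$. Nor can Lemma \ref{lem: sstolip} be invoked as you do: its cone characterizations concern a single domain and require cones exterior/interior to that whole domain at all of its boundary (or complement) points, whereas $\partial U_\eta$ mixes pieces of $\partial\omega_{k+1}$ and of the level set, and a segment from $B_{r_0}(0)$ to a level-set boundary point may leave the patch before it reaches $\omega_{k+1}$. Ruling this out is exactly the content of the lemma, and the paper does it by a first-crossing argument: track $z(s)=x+s(y-x)$, note it enters $U_\eta$ for small $s$ by the angular condition at $x$, and at the first exit time $s_0$ either $z(s_0)\in\partial\omega_{k+1}$ (then the rest of the segment lies in $\overline{\omega_{k+1}}$ by its star-shapedness) or $z(s_0)$ lies on the level set inside $B_\rho(x_0)$, in which case the one-sided derivative $\frac{d}{ds}\varphi(z(s))\big|_{s=s_0^-}\le 0$ forces $D\varphi(z(s_0))\cdot z(s_0)\ge D\varphi(z(s_0))\cdot y>-r_0|D\varphi(z(s_0))|$, contradicting the normal condition at $z(s_0)$. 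Your ``local cones plus Lemma \ref{lem: sstolip} imply star-shapedness of the enlarged set'' is not a valid inference, and filling it requires this segment argument (plus handling exits through $\partial B_\rho(x_0)$, which is where the strict separation of $\varphi$ from $u_h$ on the lateral boundary is used).

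Two smaller inaccuracies. First, the sets in \eqref{eqn: barrierrestrict} are built from the \emph{new} iterate, $U_\eta=\omega_{k+1}\cup(\{\varphi(\cdot,t_{k+1})>\eta\}\cap B_\rho(x_0))$, required to lie in $\mathcal{A}^{h}(\omega_{k})$; you took the union with $\omega_{k_hh}$ instead. Second, you cannot ``take $M$ large in terms of $\phi$'': $M$ is fixed by the scheme (it is both the velocity cap in the equation and the movement constraint in \eqref{eqn: admissible sets}). The bound $d_H(U_\eta,\omega_k)\le Mh$ comes instead from the containment $\Omega_{t_k}(\varphi)\cap B_\rho(x_0)\subset\omega_k$ at the preceding step together with the capped speed $\varphi_t<M|D\varphi|$ on the barrier, and for the slightly negative levels $-h^2\le\eta<0$ one also needs a gradient lower bound $|D\varphi|>\delta$ near the touching point together with $h\le\delta^2$, as in the paper's proof.
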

\begin{rem}
Test functions $\phi \in C^{2,1}(\overline{\Omega(\phi)}\cap\{0\leq t\leq T\})$ with $|D\phi|\neq 0$ on $\Gamma(\phi)$ can be extended to be $C^{2,1}(\real^N\times[0,T])$.
\end{rem}
\begin{proof}
Suppose not. Then there exists $\phi\in C^{2,1}(\real^N\times[0,T])$ which touches $u = u[\omega(t)]$ from below in $\mathbb{R}^N\times(0,T)$ without satisfying  \eqref{eqn: normal condition}.  From the continuity of the free boundaries of $\phi$ and $u$, this occurs for the first time at some $(x_0,t_0) \in \Gamma(u)\cap\Gamma(\phi)$, with $0<t_0\leq T$.  That is, there exists $\rho>0$ such that
\begin{equation*}
\begin{array}{lll}
\phi(x,t) \leq u(x,t) & \text{ for } & (x,t) \in Q:=B_{\rho}(x_0)\times[t_0-\rho,t_0].
\end{array}
\end{equation*}
We may assume that $\phi$ touches $u$ strictly from below by making the replacement,
 $$\phi \to (\phi -\epsilon|x-x_0|^2-\epsilon(t_0-t))_+. $$
  In particular, we will have,
   \begin{equation*}
\begin{array}{lll}
\phi(x,t) < u(x,t) & \text{ for } & (x,t) \in Q\cap \overline{\Omega(\phi)}\setminus \{(x_0,t_0)\},
\end{array}
\end{equation*}
 and for $\epsilon>0$ sufficiently small all the strict subsolution conditions for $\phi$ along with \eqref{eqn: normal condition} still hold.  Furthermore, we can make a subsolution which crosses $u$ from below in $Q$ by making a small translation in the normal direction to $\Gamma_{t_0}(\phi)$ at $x_0$,
 \begin{equation*}
\begin{array}{lll}
\varphi(x,t):= \phi(x+\epsilon \nu,t) & \text{ where } & \nu = \frac{D\phi}{|D\phi|}(x_0,t_0).
\end{array}
\end{equation*}
 Here $\epsilon>0$ is chosen sufficiently small based on the $C^2$ (in space) norm of $\phi$ so that \eqref{eqn: normal condition 2} holds for $\varphi$, $\varphi(x_0,t_0)>0 = u(x_0,t_0)$ and also
    \begin{equation*}
\begin{array}{lll}
\varphi(x,t) \prec u(x,t) & \text{ for } & (x,t) \in \partial_pQ.
\end{array}
\end{equation*}
Then we take $\rho$ small depending on the modulus of continuity of $D\phi$ such that there exists $\delta>0$ with: 
\begin{equation*}
\begin{array}{lll}
|D\varphi|>\delta,& \varphi_t <(M-\delta)|D\varphi|,& u-\varphi >\delta \text{ on } \partial_pQ\cap \Omega(\varphi),
\end{array}
\end{equation*}
 and \eqref{eqn: normal condition} hold on all of $Q$, that is
\begin{equation}\label{eqn: normal condition 2}
\begin{array}{lll}
D\varphi (x,t) \cdot x < -r |D\varphi| (x,t) & \text{ for } & (x,t) \in Q\cap \overline{\Omega(\varphi)}.
\end{array}
\end{equation}
  In order to make use of Lemma \ref{lem: barriers for restricted JKO} we need to take this information back to the approximating sequence of discrete solutions. Let $h_j \to 0$ be the sequence along which the discrete gradient flow scheme
$$ d_H(\omega_{h_j}(t), \omega(t))\to 0$$
uniformly for $t_0-\rho\leq t \leq t_0$.  From Lemma \ref{lem: hausdorff estimates} $u_j(\cdot,t) := u[\omega_{h_j}(t)]$ converge uniformly in $(x,t)$ to $u$.  Let $j$ be sufficiently large ($h_j$ small) so that Lemma \ref{lem: barriers for restricted JKO} will apply and also $h_j \leq \delta^2$, $\varphi(x_0,t_0)> u_j(x_0,t_0)$ and
\begin{equation}\label{eqn: strict separation for discrete}
\begin{array}{lll}
\varphi(x,t) \prec u_j(x,t) & \text{ for } & (x,t) \in \partial_pQ.
\end{array}
\end{equation}
Now there exists $t_0-\rho-h_j< t_k:=kh_j <t_0$ such that
\begin{equation*}
\begin{array}{lll}
\varphi(x,t_k) \leq u_j(x,t_k) & \text{ for all } & x \in B_\rho(x_0) \\ \\
\varphi(x_1,t_{k+1}) > 0=u_j(x_1,t_{k+1}) & \text{ for some } & x_1 \in B_\rho(x_0).
\end{array}
\end{equation*}
To apply Lemma \ref{lem: barriers for restricted JKO} and get a contradiction it is sufficient to show the following:
\begin{Claim}
Let $\eta\geq -h_j^2$ then, calling $\omega_{h_j,k} = \omega_k$:
$$
 U_\eta := (\{\phi(\cdot,t_{k+1})>\eta\}\cap B_{\rho}(x_0))\cup \omega_{k+1} \in \mathcal{A}^{h_j}({\omega_{k}}).
 $$
\end{Claim}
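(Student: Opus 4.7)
The plan is to verify the two defining conditions of $\mathcal{A}^{h_j}(\omega_k)$: the strong star-shapedness $U_\eta \in \mathcal{S}_{r_0}$ and the Hausdorff bound $d_H(U_\eta,\omega_k) \leq Mh_j$. The containment $B_{r_0}(0) \subseteq \omega_{k+1} \subseteq U_\eta$ is immediate from $\omega_{k+1} \in \mathcal{A}^{h_j}(\omega_k)$.

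For star-shapedness I would use the exterior cone characterization of Lemma~\ref{lem: sstolip}(ii) and show that at each $x \in \partial U_\eta$ there is a local exterior cone $(x + C(x,\theta_x)) \cap B_{\epsilon_x}(x) \subseteq U_\eta^c$ with $\sin\theta_x = r_0/|x|$. Since $\partial U_\eta \subseteq \partial\omega_{k+1} \cup \partial\{\varphi(\cdot,t_{k+1}) > \eta\}$, at any $x \in \partial\omega_{k+1}$ the admissibility of $\omega_{k+1}$ supplies such a cone inside $\omega_{k+1}^c$, while at any $x$ on the level set $\{\varphi(\cdot,t_{k+1}) = \eta\} \cap B_\rho(x_0)$ the hypothesis \eqref{eqn: normal condition 2} rewrites as $(-D\varphi/|D\varphi|) \cdot x > r_0$, placing the outward normal to $\{\varphi > \eta\}$ within angle $\pi/2 - \theta_x$ of the radial direction; a short angle computation then shows the same geometric cone lies inside $\{\varphi > \eta\}^c$. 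When both alternatives hold the two cones coincide; when only one holds, $x$ lies outside the other (open) set, so for $\epsilon_x$ small the cone avoids it as well. Hence $U_\eta \in \mathcal{S}_{r_0}$.

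For the Hausdorff bound, the inclusions $\omega_{k+1} \subseteq U_\eta$ and $d_H(\omega_{k+1},\omega_k) \leq Mh_j$ already guarantee that every point of $\omega_{k+1}$ lies within $Mh_j$ of $\omega_k$ and every point of $\omega_k$ lies within $Mh_j$ of $U_\eta$, so the claim reduces to showing $\{\varphi(\cdot,t_{k+1}) > \eta\} \cap B_\rho(x_0) \subseteq \{y : d(y,\omega_k) \leq Mh_j\}$. Given such an $x$, a first-order time-Taylor expansion combined with the $C^{2,1}$ smoothness of $\varphi$ gives $\varphi(x,t_k) > \eta - h_j \sup_Q \varphi_t - Ch_j^2$, so flowing $x$ along $D\varphi/|D\varphi|$ (well-defined by $|D\varphi| > \delta$ in $Q$) reaches $\{\varphi(\cdot,t_k) > 0\} \subseteq \omega_k$ after a spatial displacement of at most $h_j\sup_Q(\varphi_t/|D\varphi|) + |\eta|/\delta + O(h_j^2)$. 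Using the strict-subsolution slack $\varphi_t < (M-\delta)|D\varphi|$ (arranged in the proof by taking $\rho$ small) together with $|\eta| \leq h_j^2$, this is bounded by $(M-\delta)h_j + h_j^2/\delta + O(h_j^2) \leq Mh_j$ for $h_j$ sufficiently small. The main obstacle is precisely this accounting: the threshold $\eta \geq -h_j^2$ is calibrated exactly so that the second-order contribution $|\eta|/\delta$ from lowering the level, together with the Taylor remainder, is absorbed by the first-order slack $\delta h_j$ of the strict inequality; a looser bound on $|\eta|$ would exceed the Hausdorff budget and pull $U_\eta$ out of the admissible class.
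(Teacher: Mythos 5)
Your Hausdorff-distance half is essentially the paper's argument: the same reduction to showing that the level-set piece lies within $Mh_j$ of $\omega_k$, with a contribution $(M-\delta)h_j$ from the strict speed bound and a contribution $|\eta|/\delta\leq h_j^2/\delta$ from the nondegeneracy $|D\varphi|>\delta$, absorbed into the $\delta$-slack exactly as in the paper. One accounting slip, though: from the fixed-$x$ expansion $\varphi(x,t_k)>\eta-h_j\sup_Q\varphi_t-Ch_j^2$ and a spatial climb at rate at least $\delta$, what you actually get is a displacement of order $h_j\sup_Q\varphi_t/\delta$, not $h_j\sup_Q(\varphi_t/|D\varphi|)$; since $|D\varphi|$ may be much larger than $\delta$, the former can exceed $Mh_j$ by a large factor, so the "so" in that sentence is a non sequitur. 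The correct quantity is obtained the way the paper does it: bound the motion of the positivity set itself, $d_H(\Omega_{t_{k+1}}(\varphi),\Omega_{t_k}(\varphi))\leq(M-\delta)h_j$ (the free boundary of $\varphi$ moves with normal speed $\varphi_t/|D\varphi|<M-\delta$, e.g.\ via space-time characteristics), and separately $d_H(\Omega_{t_{k+1}}(\varphi-\eta),\Omega_{t_{k+1}}(\varphi))\leq|\eta|/\delta$; a time-Taylor expansion at a fixed point does not deliver the $\sup(\varphi_t/|D\varphi|)$ bound you quote.

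For the star-shapedness half your exterior-cone route genuinely differs from the paper, which fixes $y\in B_{r_0}(0)$ and $x\in\partial U_\eta$ on the level-set part and runs a first-crossing argument along the segment $z(s)=x+s(y-x)$, using \eqref{eqn: normal condition 2} both to enter $U_\eta$ for small $s$ and to exclude a crossing of $\{\varphi=\eta\}$. As written your version has two concrete holes. First, $\partial U_\eta$ is \emph{not} contained in $\partial\omega_{k+1}\cup\partial\{\varphi(\cdot,t_{k+1})>\eta\}$: the level-set piece is truncated by $B_\rho(x_0)$, so $\partial U_\eta$ may a priori contain points of $\partial B_\rho(x_0)$ where $\varphi>\eta$, and at such points neither the cone from $\omega_{k+1}$ nor the normal condition on $\{\varphi=\eta\}$ applies (the sphere is centered at $x_0$, not at the origin). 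These points must be excluded by showing that near the lateral boundary of $Q$ the strict separation $\varphi\prec u_j$ (with margin $\delta$, and $\eta\geq-h_j^2$) forces $\{\varphi(\cdot,t_{k+1})>\eta\}$ into $\omega_{k+1}$. Second, Lemma \ref{lem: sstolip}(ii) characterizes $\mathcal{S}_{r_0}$ by exterior cones at \emph{every} point of the complement with a \emph{uniform} $\epsilon_0$; producing local cones only at boundary points, with radii $\epsilon_x$ that may degenerate, does not match that hypothesis, so you still owe a local-to-global step (e.g.\ follow a ray $p+s(p-y)$, $|y|<r_0$, to its first contact with $\overline{U_\eta}$ and apply the local cone there), which is in effect the paper's segment argument. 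So the cone picture is a reasonable heuristic, but the star-shapedness part of the claim is not yet proved as written.
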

  Let $x \in U_\eta$, we claim that:
$$ d(x,\omega_k) \leq Mh_j. $$
 If $x \in \omega_{k+1}$ then $d(x,\omega_{k})\leq Mh_j$ from the admissibility condition \eqref{eqn: admissible sets} for $\omega_{k+1}$.  If $x \in \Omega_{t_{k+1}}(\varphi-\eta) \cap B_\rho(x_0)$ then in the case $\eta\geq0$ from the containment $\Omega_{t_k}(\varphi)\cap B_\rho(x_0) \subset \omega_{k} $ and $\phi_t <M|D\phi|$ we get,
 $$ d(x,\omega_{k}) \leq d(x,\Omega_{t_k}(\varphi)) < Mh_j. $$
 In the case $-\delta^2h_j\leq-h_j^2\leq\eta<0$:
\begin{align*}
 d(x,\omega_{k}) \leq d(x,\Omega_{t_k}(\varphi)\cap B_\rho(x_0)) &\leq d_H(\Omega_{t_{k+1}}(\varphi-\eta),\Omega_{t_{k+1}}(\varphi))+(M-\delta)h_j \\
 & \leq \frac{-\eta}{\delta}+(M-\delta)h_j \\
 & \leq  Mh_j.
 \end{align*}
Let $x \in  \omega_{k}$ then $d(x,U_\eta)\leq d(x,\omega_{k+1})\leq Mh_j$. Combining with the above arguments,
$$ d_H(U_\eta,\omega_{k+1}) \leq Mh_j.$$

\medskip

That $U_\eta\in \mathcal{S}_{r_0}$ is more subtle and this is where the condition \eqref{eqn: normal condition} comes in.  For a given $x \in \partial U_\eta$, we need to show that for every $y \in B_{r_0}(0)$, 
\begin{equation}\label{eqn: containment}
 L_{x,y}:=\{z \in \mathbb{R}^N: z = y + s(x-y), \ s \in [0,1]\}\subset \overline{U_\eta}.
 \end{equation}
This is trivially true for all $x \in \partial U_\eta \cap \partial \omega_{k+1}$ from the strong star-shapedness of $\omega_{k+1}$.  So we consider 
$$
x \in  \partial U_\eta \cap \partial (\Omega_{t_{k+1}}(\varphi-\eta)\cap B_\rho(x_0)) \subset B_{\rho}(x_0)\cap\partial \Omega_{t_{k+1}}(\varphi-\eta).
$$  
 Let $y \in B_{r_0}(0)$ and we consider the line segment from $x$ to $y$ parametrized by
$$ z(s) = x + s(y-x).$$
First we show that -- for $s>0$ small depending on the $C^2$ norm of $\phi$ -- $z(s) \in U_\eta$.  Using $|y|<r$ and \eqref{eqn: normal condition 2}, 
$$ \phi(z(s)) = \eta+sD\varphi(x) \cdot (y-x)+o(s) > s(-|y|+r_0)|D\varphi|(x)+o(s)>0. $$
Suppose towards a contradiction that $z(s)$ exits $\overline{U_\eta}$ before $s=1$, then by continuity it must pass through $\partial U_\eta$ for the first time at $0<s_0<1$ defined by,
$$ 1>s_0 = \inf \{s>0:  z(s) \in \partial U_\eta\}>0. $$
 If $z(s_0) \in \partial \omega_{k+1}$ then we are done due to $\omega_{k+1} \in \mathcal{S}_r$: 
$$L_{z(s_0),y} \subset \overline{\omega_{k+1}} \subset \overline{U_\eta}.$$
So it remains to consider the case $z(s_0) \in B_{\rho}(x_0)\cap\partial \Omega_{t_{k+1}}(\varphi)$ and in this case:
$$ 0 \geq \left.\frac{d}{ds}\varphi(z(s))\right|_{s=s_0^{-}} = D\varphi(z(s_0)) \cdot (y-x)  = (1-s_0)D\varphi(z(s_0)) \cdot (y-z(s_0)),$$
rearranging this and using $|y| < r_0$ we get
$$D\varphi(z(s_0))\cdot z(s_0) \geq D\varphi(z(s_0))\cdot y > -r_0|D\varphi(z(s_0))|$$
which contradicts \eqref{eqn: normal condition 2}.  
\end{proof}
An analogous statement about touching from above by strict classical super-solutions holds as well.  Lemma \ref{lem: barrier property for grad flow} almost says that $\omega(t)$ is a viscosity super-solution of \eqref{eqn: CLMV} except for the restriction \eqref{eqn: normal condition} on the barriers.  In fact we can make the following statement:
\begin{prop}\label{prop: condition to be vs}
If there exists $r'>r_0$ such that $\omega(t)\in\mathcal{S}_{r'}$ for all $t \in [0,T]$ then $u[\omega(t)]$ cannot be touched from below (above) by any strict classical sub-solution (super-solution) of (P-$\lambda[\omega(t)]$) and is therefore a viscosity solution of \eqref{eqn: CLMV}.
\end{prop}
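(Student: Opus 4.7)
The plan is to remove the auxiliary normal-direction restriction \eqref{eqn: normal condition} from Lemma~\ref{lem: barrier property for grad flow} by arguing that the extra room $r' > r_0$ forces that inequality automatically at any candidate touching point. Once this is done, Lemma~\ref{lem: barrier property for grad flow} together with its analogue for strict classical supersolutions (mentioned right after its proof) rules out both types of touchings, which is exactly the definition of viscosity solution for (P-$\lambda[\omega(t)]$); volume preservation is inherited from the discrete scheme, giving the viscosity solution of \eqref{eqn: CLMV}.

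Let $\phi$ be a strict classical subsolution touching $u[\omega(t)]$ from below at some $(x_0,t_0)$. Interior touchings, where $u(x_0,t_0)>0$, are excluded at once by the strong maximum principle applied to the non-negative difference $u(\cdot,t_0)-\phi(\cdot,t_0)$ in a small ball around $x_0$, using the strict inequality $-\Delta(u-\phi)(\cdot,t_0)>0$. Hence we may assume $x_0\in\Gamma_{t_0}(u)\cap\Gamma_{t_0}(\phi)$.

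The key step is geometric. Since $\omega(t_0)\in\mathcal{S}_{r'}$, Lemma~\ref{lem: sstolip}(ii) gives the exterior cone
\[
x_0+\bigl(C(x_0,\theta_{x_0})\cap B_{\epsilon_0}(x_0)\bigr)\subset\omega(t_0)^c,\qquad \sin\theta_{x_0}=\tfrac{r'}{|x_0|}.
\]
Because $\{\phi(\cdot,t_0)>0\}\subset\omega(t_0)$ near $x_0$, this cone also misses $\{\phi(\cdot,t_0)>0\}$. Since $\phi$ is $C^{2,1}$ with $|D\phi|(x_0,t_0)\neq 0$, the free boundary of $\phi$ is a smooth hypersurface at $x_0$ with unit outward normal $n_\phi = -D\phi(x_0,t_0)/|D\phi(x_0,t_0)|$, and a first-order tangency argument with Taylor expansion forces the entire exterior cone of $\omega(t_0)$ into the closed half-space $\{y : n_\phi\cdot(y-x_0)\geq 0\}$. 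An elementary angle computation yields $n_\phi\cdot(x_0/|x_0|)\geq \sin\theta_{x_0}=r'/|x_0|$, equivalently
\[
D\phi(x_0,t_0)\cdot x_0 \leq -r'\,|D\phi(x_0,t_0)| < -r_0\,|D\phi(x_0,t_0)|,
\]
the strict inequality using $r'>r_0$. This is exactly \eqref{eqn: normal condition}, and Lemma~\ref{lem: barrier property for grad flow} produces the contradiction.

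The supersolution case is symmetric: if $\psi$ touches $u$ from above, then $\omega(t_0)\subset\{\psi(\cdot,t_0)>0\}$ locally, and the same angular argument using Lemma~\ref{lem: sstolip}(iii) (interior cone of $\omega(t_0)$) in place of (ii) yields the analogue of \eqref{eqn: normal condition}; one invokes the subsolution-barrier statement alluded to after Lemma~\ref{lem: barrier property for grad flow}. The main obstacle is really just the first-order tangency step, namely verifying that the smooth boundary of $\phi$ and the positive opening angle $\theta_{x_0}$ (bounded away from $\pi/2$ uniformly in $x_0$ through $r'/R$) actually produce strict half-space containment rather than only an infinitesimal statement. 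Since the quadratic error in the Taylor expansion of $\phi$ near its free boundary is $O(|y-x_0|^2)$ whereas a half-space violation would be of first order, shrinking the neighborhood closes the argument.
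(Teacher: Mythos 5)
Your proof is correct and follows essentially the same route as the paper: the whole content is that strong star-shapedness with radius $r'>r_0$ forces any smooth test function touching $u[\omega(t)]$ at a free boundary point to satisfy \eqref{eqn: normal condition}, after which Lemma~\ref{lem: barrier property for grad flow} (and its supersolution analogue) applies. The paper obtains the same inequality slightly more directly, by differentiating $\phi$ along the rays $x_0+s(x_0-y)$, $y\in B_{r'}(0)$, which lie outside $\omega(t)$ and hence outside $\Omega_t(\phi)$, rather than via your cone-in-half-space and angle computation, but this is only a cosmetic difference.
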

  \begin{proof}
  This is simply because any $\phi\in C^{2,1}(\real^N\times[0,T])$ which touches $u[\omega(t)]$ from below (above) at a free boundary point must necessarily satisfy \eqref{eqn: normal condition}.  To see this fact, let $x \in \partial\omega(t)$, now from the strong star-shapedness of $\omega(t)$,
$$\{x+s(x-y): s\geq 0 \text{ and } y \in B_{r'}(0)\}\subset\real^N\setminus\omega(t) \subset\real^N\setminus\Omega_t(\phi)$$
and so for any $y \in B_{r'}(0)$
$$ 0\geq \left.\frac{d}{ds}\phi(x+s(x-y))\right|_{s=0} = D\phi(x) \cdot (x-y). $$
Rearranging above inequality, we obtain \eqref{eqn: normal condition}:
$$D\phi(x) \cdot x \leq \inf_{y \in B_{r'}(0)}D\phi(x)\cdot y = -r'|D\phi|(x)<-r_0|D\phi|(x).$$
\end{proof}
Unfortunately this statement is not that useful to us because we would still need to show that the gradient flow preserves strong star-shapedness.  That is if we had a Lemma for $\omega(t)$ like Lemma \ref{lem: GK short time ss} then from the above Proposition we would know that $\omega(t)$ was a viscosity solution of \eqref{eqn: CLMV} at least for a short time, and then we could apply a result about preservation of strong star-shapedness for viscosity solutions like Lemma \ref{lem: reflection for all time} to show that $\omega(t)$ is a viscosity solution globally in time.  Instead of attempting to prove Lemma \ref{lem: GK short time ss} for solutions of the gradient flow we will show directly that $\omega(t)$ and the viscosity solution of (P-$\lambda[\omega(t)]$) are the same.  The idea comes from the following corollary of the proof of Proposition \ref{prop: condition to be vs}.  As usual the corresponding result for supersolutions is also true.
\begin{cor}\label{cor: comparison for smooth}
(Comparison with star-shaped classical subsolutions) Suppose $\phi\in C^{2,1}(\real^N\times[0,T])$ is a strict classical sub-solution of (P-$\lambda[\omega(t)]$) and the initial strict separation
$$ \phi(\cdot,0) \prec u[\omega(0)] $$
holds.  Moreover for some $r'>r_0$ the positive phase of $\phi$, $\Omega_t(\phi)$, is in $\mathcal{S}_{r'}$ for all $t \in [0,T]$.   Then 
$$ \phi(\cdot,T) \leq u[\omega(T)]. $$
\end{cor}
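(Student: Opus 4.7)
The plan is to argue by contradiction using the super-solution barrier property Lemma~\ref{lem: barrier property for grad flow}. Suppose $\phi(\cdot,T) \not\leq u[\omega(T)]$. Since $t \mapsto \omega(t)$ is Lipschitz in Hausdorff distance with values in $\mathcal{S}_{r_0,R}$ uniformly (Lemma~\ref{lem: htozero}), the same kind of boundary Hölder estimate used in Lemma~\ref{lem: sending M to infty} yields joint continuity of $(x,t) \mapsto u[\omega(t)](x)$. Combined with the strict initial separation $\phi(\cdot,0) \prec u[\omega(0)]$ and continuity of $\phi$, the first contact time
\[
t_0 := \inf\{t \in [0,T] : \phi(\cdot, t) \not\leq u[\omega(t)]\}
\]
satisfies $t_0 \in (0,T]$, and passing to the limit along times $t_n \downarrow t_0$ at which the order is violated yields a touching point $(x_0,t_0)$ with $\phi(x_0,t_0) = u[\omega(t_0)](x_0)$ and $\phi \leq u[\omega(\cdot)]$ on $\mathbb{R}^N \times [0,t_0]$.

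I then show that the touching must occur at a common free boundary point, i.e., $x_0 \in \Gamma_{t_0}(\phi) \cap \Gamma_{t_0}(u)$. The ordering $\phi \leq u[\omega(t)]$ on $[0,t_0]$ forces $\Omega_t(\phi) \subset \omega(t)$ for $t \in [0,t_0]$. If $\phi(x_0,t_0) > 0$, then $x_0$ lies in the common interior and the difference $w := \phi - u[\omega(t_0)]$ satisfies $\Delta w > 0$ strictly (since $-\Delta\phi < \lambda[\omega(t_0)] = -\Delta u[\omega(t_0)]$) with $w \leq 0$ in a neighborhood and $w(x_0)=0$; this violates the strong maximum principle for strictly subharmonic functions. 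Therefore $\phi(x_0,t_0)=0=u[\omega(t_0)](x_0)$, and together with $\Omega_{t_0}(\phi)\subset \omega(t_0)$ and the nondegeneracy $|D\phi|\neq 0$ on $\Gamma(\phi)$, this places $x_0$ on both free boundaries.

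Finally I verify the normal condition \eqref{eqn: normal condition} at $(x_0,t_0)$ from the strong star-shapedness hypothesis on $\Omega_{t_0}(\phi)$. For each $y \in B_{r'}(0)$ the segment $[x_0,y]$ lies in $\overline{\Omega_{t_0}(\phi)}$, so $s \mapsto \phi(x_0+s(y-x_0),t_0)$ vanishes at $s=0$ and is non-negative for small $s\geq 0$; differentiating at $s=0^+$ gives $D\phi(x_0,t_0)\cdot(y-x_0) \geq 0$, and taking the infimum over $y \in B_{r'}(0)$ produces
\[
D\phi(x_0,t_0)\cdot x_0 \leq -r'\,|D\phi(x_0,t_0)| < -r_0\,|D\phi(x_0,t_0)|,
\]
which is exactly \eqref{eqn: normal condition}. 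Thus $\phi$ is a strict classical subsolution of (P-$\lambda[\omega(t)]$) touching $u[\omega(\cdot)]$ from below at the free boundary point $(x_0,t_0)$ while satisfying \eqref{eqn: normal condition}, directly contradicting Lemma~\ref{lem: barrier property for grad flow}. The only role of the strict inequality $r' > r_0$ is to produce the strict version of the normal condition required to invoke the barrier lemma, and that is the main conceptual step: the geometric hypothesis on $\Omega_t(\phi)$ is precisely what brings the comparison into the regime where the restricted barrier property already applies.
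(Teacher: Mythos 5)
Your proof is correct and follows essentially the route the paper intends: the paper presents this corollary as an immediate consequence of the segment/normal-direction computation in the proof of Proposition \ref{prop: condition to be vs} combined with the restricted barrier property Lemma \ref{lem: barrier property for grad flow}, and your write-up simply supplies the details left implicit (the first-crossing time via joint continuity of $u[\omega(t)]$, the strong maximum principle step forcing the touching point onto $\Gamma_{t_0}(\phi)\cap\Gamma_{t_0}(u)$, and the derivation of \eqref{eqn: normal condition} from $\Omega_{t_0}(\phi)\in\mathcal{S}_{r'}$ with $r'>r_0$). The only difference is cosmetic: you obtain the normal condition from interior segments of $\Omega_{t_0}(\phi)$ where $\phi\geq 0$, while Proposition \ref{prop: condition to be vs} uses exterior rays where $\phi\leq 0$; both give $D\phi\cdot x_0\leq -r'|D\phi|<-r_0|D\phi|$.
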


\subsection{Comparison}\label{sec: JKO Comparison}
Let $\omega(t)$ be the continuum limit of the discrete gradient flow scheme defined above.   We want to show that $u = u[\omega(t)]$ is a viscosity solution of the problem,
\begin{equation}\label{eqn: fixedlambda}
\left\{\begin{array}{lll}
-\Delta v(x,t) = \lambda[\omega(t)] &\text{ in }& \omega(t) \\ \\
v_t = \min\{|Dv|^2-1,M\}|Dv|& \text{ on }& \partial \omega(t), 
\end{array}\right.
\end{equation}
at least as long as the unique viscosity solution of (P-$\lambda[\omega(t)]$) satisfies the appropriate strong star-shaped condition.

In fact $u$ satisfies a comparison principle with respect to sub and super-solutions of \eqref{eqn: fixedlambda} which are star-shaped with respect to a slightly larger ball than $B_{r_0}(0)$. This is the non-smooth version of the idea of Corollary \ref{cor: comparison for smooth}.  Various iterations of the proof can be found in the literature \cite{CaffarelliVazquez99, Kim03, BrandleVazquez05}, we will only sketch most of the details of the proof the essential idea is, as in Corollary \ref{cor: comparison for smooth}, to demonstrate that the restricted barrier property of the gradient flow solution $\omega(t)$ is sufficient to make the argument work.  
\begin{prop}\label{prop: comparison for grad flow} \textup{(Comparison with sub-solutions)}
Let $u(\cdot,t)=u[\omega(t)]$ as above from the gradient flow and $v$ a continuous sub-solution of 
\begin{equation*}
\left\{\begin{array}{lll}
-\Delta v(x,t) \leq \lambda[\omega(t)] &\text{ in }& \Omega_t(v) \\ \\
v_t \leq \min\{|Dv|^2-1,M\}|Dv|& \text{ on }& \Gamma_t(v),
\end{array}\right.
\end{equation*}
 with initial data ordered,
 $$\Omega_0(v) \subseteq \omega(0).$$ 
 Let $r > 0$ from the restriction on admissible sets in the discrete scheme \eqref{eqn: JKOscheme}. Suppose there exists $r'>r_0$ such that $\Omega_t(v)\in\mathcal{S}_{r'}$ for all $t \in [0,T]$. Then $$ v(\cdot,t)\leq u(\cdot,t) \ \text{ on }\ [0,T].$$
\end{prop}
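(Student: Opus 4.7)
The plan is to argue by contradiction: perturb $v$ to a strict classical subsolution $v_\epsilon$ with $\Omega_t(v_\epsilon) \subset\subset \Omega_t(v)$ whose positive phase remains star-shaped with respect to a ball of radius strictly greater than $r_0$. Strict initial separation from $u = u[\omega(t)]$ then holds by the elliptic maximum principle, and Lemma \ref{lem: barrier property for grad flow} rules out any later touching.

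Fix small $\epsilon > 0$, choose $a \in (r_0, r')$, and let $c$ be from Assumption \ref{hyp: F cond}. Following the perturbation in the proof of Proposition \ref{prop: reflection comparison}, I would define
$$v_\epsilon(x,t) := \sup_{y \in B_{(r'-a)\epsilon - c\epsilon t}(x)} (1+\epsilon)^{-2} v\bigl((1+\epsilon)y,\, t\bigr),\qquad 0\leq t < \tfrac{r'-a}{c}.$$
The $(1+\epsilon)^{-1}$-dilation shrinks the positive phase and modifies the free-boundary velocity; by Assumption \ref{hyp: F cond} the velocity increases by at most $c\epsilon$, which the space sup-convolution (Lemma \ref{lem: spaceconvs}(a)) then absorbs, leaving $v_\epsilon$ a strict classical subsolution of \eqref{eqn: fixedlambda} with a definite margin. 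The same operations only erode the star-shapedness radius by $O(\epsilon)$, so $\Omega_t(v_\epsilon)\in\mathcal{S}_{r''}$ for some fixed $r''>r_0$ and we have compact containment $\overline{\Omega_0(v_\epsilon)}\subset\omega(0)$. The elliptic maximum principle applied to $u-v_\epsilon$ on $\Omega_0(v_\epsilon)$ then gives the strict separation $v_\epsilon(\cdot,0) \prec u(\cdot,0)$.

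Set $t_* := \inf\{t\in[0,T]:\Omega_t(v_\epsilon)\not\subseteq\omega(t)\}$. Then $t_*>0$, and if $t_*\leq T$ take a contact point $x_0\in\partial\Omega_{t_*}(v_\epsilon)\cap\partial\omega(t_*)$. The maximum principle on nested Poisson problems shows $v_\epsilon$ touches $u$ from below at $(x_0,t_*)$, and star-shapedness $\Omega_{t_*}(v_\epsilon)\in\mathcal{S}_{r''}$ with $r''>r_0$ yields
$$Dv_\epsilon(x_0,t_*)\cdot x_0 \leq -r''\,|Dv_\epsilon|(x_0,t_*) < -r_0\,|Dv_\epsilon|(x_0,t_*),$$
so the normal condition \eqref{eqn: normal condition} holds with margin. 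Lemma \ref{lem: barrier property for grad flow} (applied after, if necessary, replacing $v_\epsilon$ near $(x_0,t_*)$ by a smooth strict classical subsolution obtained from a semi-convex supporting paraboloid to the sup-convolution) then yields a contradiction. Hence $\Omega_t(v_\epsilon)\subseteq\omega(t)$ on $[0,(r'-a)/c)$, and iterating in time together with sending $\epsilon\to 0$ (using elliptic convergence of the height profiles) gives $v\leq u$ on $[0,T]$. The main obstacle is the construction in the second paragraph: simultaneously producing a positive velocity margin and preserving star-shapedness with respect to a ball strictly larger than $B_{r_0}(0)$, which is precisely why the hypothesis $r'>r_0$ is required and why Assumption \ref{hyp: F cond} is the right quantitative tool.
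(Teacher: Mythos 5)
There is a genuine gap at the heart of your argument: you apply Lemma \ref{lem: barrier property for grad flow} to $v_\epsilon$, but that lemma only admits test functions $\phi \in C^{2,1}(\overline{\Omega(\phi)})$ with $|D\phi|\neq 0$ which are \emph{strict classical} subsolutions, whereas your $v_\epsilon$ --- a dilation plus spatial sup-convolution of a merely continuous viscosity subsolution --- is neither differentiable at its free boundary nor classical, so the displayed inequality $Dv_\epsilon(x_0,t_*)\cdot x_0<-r_0|Dv_\epsilon|(x_0,t_*)$ is not even well defined. The key point of the Proposition is precisely that $u=u[\omega(t)]$ is \emph{not} known to satisfy a comparison principle against non-smooth subsolutions; it only enjoys the restricted barrier property of Lemma \ref{lem: barrier property for grad flow}, so the entire burden of the proof is to manufacture, at the crossing, a genuinely smooth strict classical subsolution touching $u$ from below at a free boundary point in an inadmissible direction. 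Your parenthetical fix (``a semi-convex supporting paraboloid to the sup-convolution'') does not do this: a supporting paraboloid gives pointwise information about $v_\epsilon$ inside its positive phase, but it neither verifies the free-boundary velocity law $\phi_t<\min\{|D\phi|^2-1,M\}|D\phi|$ with a quantitative margin, nor guarantees that the resulting barrier stays below $u$ in a neighborhood of the contact point, which requires a lower bound on the growth of $u$ away from $\Gamma(u)$.

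This is exactly the content the paper supplies and your proposal omits: one must convolve \emph{both} functions over space-time balls (sup-convolution $V$ of $v$, and an inf-convolution $U$ of $u$ with shrinking radius $\epsilon-\delta t$, Lemma \ref{lem: spacetimeconvs}) so that at the first crossing there are an interior ball for $\Omega(V)$ and an exterior ellipsoid for $\Omega(U)$, hence well-defined advance speeds with a gap $m'+\delta\leq m$; then rule out a horizontal tangent plane, prove $-1<m\leq M$, and establish the nontangential estimate $\liminf_{d\to 0^+}V(x_0-d\nu,t_0)/d\geq\sqrt{m+1}$; only then can one build the radial barrier with $|D\phi|\approx\sqrt{m+1}$ moving at speed $m'\leq m-\delta$, which is simultaneously a strict classical subsolution and lies below $u$ near the contact point, and whose contact direction violates \eqref{eqn: normal condition} thanks to $\Omega(V)\in\mathcal{S}_{r'}$ with $r'>r_0$. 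Without the inf-convolution of $u$ there is no exterior ball at the contact point and no speed to compare against, so your contradiction with Lemma \ref{lem: barrier property for grad flow} is not actually reached. (A minor further point: your scaling $(1+\epsilon)^{-2}v((1+\epsilon)\cdot)$ gives $-\Delta v_\epsilon\leq\lambda$, not the strict interior inequality; the paper uses the cube factor $(1+\epsilon)^{-3}$ to obtain $-\Delta\widetilde v_\epsilon\leq(1-\epsilon')\lambda$, which is later needed for strictness.) Your first paragraph and the use of star-shapedness to check the restricted normal condition do follow the paper's plan; it is the regularization-at-the-crossing step that is missing and that constitutes most of the actual proof.
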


\begin{proof} 
1. First we show it suffices to treat the case when $u$ and $v$ are initially strictly separated, 
\begin{equation}\label{eqn: strict assumption 1}
\Omega_0(v) \subset\subset \omega(0),
\end{equation}
and there exists $\epsilon>0$ so that $v$ satisfies,
\begin{equation}\label{eqn: strict assumption 2}
\begin{array}{lll}
 -\Delta v(x,t) \leq (1-\epsilon)\lambda(t) & \text{ for } & (x,t) \in \Omega(v).
\end{array}
\end{equation}
Let us assume temporarily that result holds in this case and we show that it also holds under the non-strict conditions given in the statement of the proposition.  This is accomplished by a similar device to the one used in the proof of the strong comparison type results Lemma \ref{lem: strong comparison} and Proposition \ref{prop: reflection comparison}.  Let $\epsilon>0$ small enough that $(1+\epsilon)^{-1}r' >r_0$, $c$ from Assumption \ref{hyp: F cond}, and $0<a<r_0$ then define the perturbation,
\begin{equation*}
\begin{array}{lll}
\widetilde{v}_\epsilon(x,t) = \sup_{|z|\leq a\epsilon-c\epsilon t} (1+\epsilon)^{-3}v((1+\epsilon)(x+z),t) & \text{ for } & 0 \leq t \leq \frac{a}{c}.
\end{array}
\end{equation*}
Now the perturbed subsolution $\widetilde{v}_\epsilon$ satisfies \eqref{eqn: strict assumption 1} and \eqref{eqn: strict assumption 2} and so we get,
\begin{equation*}
\begin{array}{lll}
 v(x,t) = \sup _{\epsilon>0} \widetilde{v}_\epsilon(x,t) \leq u(x,t) & \text{ for } & 0 \leq t \leq a/c
\end{array}
\end{equation*}
Iterating this $\floor{Tc/a}$ times we get the desired result.

\medskip

2. Now we work in the case when \eqref{eqn: strict assumption 1} and \eqref{eqn: strict assumption 2} hold.  Suppose towards a contradiction that $v$ crosses $u$ from below on $\real^N\times[0,T]$.  In order to have some regularity at the touching point we use the space-time convolutions described in Appendix \ref{sec: convolutions}.  Let $\epsilon, \delta >0$ with $ \delta < \epsilon/T$ define the sup-convolution of $v$
\begin{equation}
V(x,t) = \sup_{|(y,s)-(x,t)|\leq \epsilon} v(y,s)
\end{equation}
and the inf-convolution of $u$
\begin{equation}
U(x,t) = \inf_{|(y,s)-(x,t)|\leq{\epsilon-\delta t}}Z(y,s),   \quad Z(y,s):=\inf_{|(w,\tau)-(y,s)| \leq \e} u(w,\tau). 
\end{equation}
defined in the domain
\begin{equation*}
\Sigma_\epsilon :=\mathbb{R}^N\times [\epsilon,T-\epsilon].
\end{equation*}
  We will show that $V$ still has the following properties:
\begin{enumerate}[(a)]
\item $\Omega_t(V)$ strongly star-shaped with respect to a ball of radius larger than $r$,
\item $V$ is a subsolution of (P-$\lambda[\omega(t)]$) and is initially strictly separated from $U$,
$$ \Omega_\epsilon(V) \subset\subset \Omega_\epsilon(U).$$
\end{enumerate}

First we show (a), let $\epsilon \leq (r'-r)/2$, then 
\begin{equation} 
 \Omega_t(V) = \bigcup_{|(y,s)|\leq \epsilon} y+\Omega_{t+s}(v).
\end{equation}
Noting that all of the sets in the above union are star-shaped with respect to $B_{(r+r')/2}(0)$ implies that $\Omega_t(V)$ is as well.  This proves (a).

Now we show (b).  Choosing $\epsilon$ smaller if necessary based on $\eta$ and the modulus of continuity of $\lambda(t)$ the subsolution property for $V$ is from Lemma \ref{lem: spacetimeconvs}.  Then simply from the initial strict separation of $u$ and $v$ along with the Hausdorff distance continuity of the free boundaries of $u$ and $v$
 we also have the strict separation of $U$ and $V$ at time $\epsilon$
\begin{equation*}
\Omega_\epsilon(V) \subset \subset \Omega_\epsilon(U).
\end{equation*}
    
    Now by our assumption we know $U$ crosses $V$ from below for the first time at some point $P_0 = (x_0,t_0) \in\Sigma_\epsilon$.  By the strong maximum principle for subharmonic functions 
    $$x_0\in \Gamma_{t_0}(U)\cap \Gamma_{t_0}(V).$$
     At the touching point $P_0$ the positivity set of $V$ has an interior space-time ball of radius $\epsilon$ centered at some point $P_1 = (x_1,t_1) \in \Gamma(v)$, i.e. $|P_1-P_0| = \epsilon$ and
$$ B_1 :=\{(y,s) \in \mathbb{R}^N\times[0,T]: |(y,s)-P_1| <\epsilon\}\subset \Omega_+(V).$$
This ball has the tangent hyperplane through $P_0$
$$ H_1 = \{(y,s) \in \mathbb{R}^N\times[0,T]: ((y,s)-P_0)\cdot(P_1-P_0)=0\}.$$
  Let $(\nu,m)$ be normal to $H_1$ in the direction inward to $\Omega(V)$ with $|\nu| = 1$, $m$ is the advance speed of the free boundary $\Gamma(V)$ at $P_0$. We will show below that $m$ is finite, but at this point let us include the possibilities that $m=\infty$ or $m=-\infty$.

 Similarly, there exists $P_2=(x_2,t_2) \in \Gamma(Z)$ with $|P_2-P_1| = \epsilon - \delta t_0$ and $\Omega(U)$ has an exterior space-time ellipse at $P_0$ of the form
$$ \widetilde{B}_2 := \{(y,s) \in \mathbb{R}^N\times[0,T]: |(y,s)-P_2| \leq \epsilon-\delta s\}$$
while there exists $(x_3,t_2)\in\Gamma(u)$ with $|x_2-x_3|=\e$ and $\Omega(u)$ contains the following set:
\begin{equation}\label{set}
 E = \{(y,s) \in \mathbb{R}^N\times[0,T]: |y-w| \leq \e, |(w,s)-P_2| <\epsilon-\delta t_0\} \subset \Omega(u), \quad (x_3,t_2) \in\bar{E}\cap \Gamma(u) .
 \end{equation} 
Let $\widetilde{H}_2$ be the tangent hyperplane to $\widetilde{B}_2$ at $P_0$ and $H_2$ be the tangent hyperplane to $B_2$ at $P_2$. 

\begin{lem}
$H_1$ is not horizontal.
\end{lem}
\begin{proof}
 We refer to the proof of Lemma 2.5 in \cite{Kim03}, which rules out the possibility that $m=+\infty$.  
 
 \medskip
  
  Next let us rule out the possibility that $m=-\infty$.  Let $x_3$ and $E$ be the set given in \eqref{set}. Due to the star-shapedness and H\"{o}lder continuity of $\Omega(u)$, it follows that $(x_3,t_2)$ lies in the spatial boundary of $E\cap \{t=t_2\}$. We consider the classical subsolution 
 
   Take $\tau>0$ sufficiently small that $D_0:=E\cap \{t=t_2-\tau\}$ is nonempty. Let $D_1$ be the top portion of the closure of $E$, i.e., $D_1:= \bar{E} \cap \{t=t_2\}$, and let us define the interpolation of $D_0$ and $D_1$, i.e., 
   $$
   D_t := (1-s(t)) D_0 +  s(t)D_1, \hbox{ where } s(t)\hbox{ is linear and satisfies } s(t_2-\tau)=0, s(t_2)=1.
   $$  
   We choose $\tau$ small that $s'(t) <-1$. 
   Now let us consider the space-time domain  
  $$
  \Sigma:= \bigcup_{\tau \leq t-t_2 \leq 0} [D(t) - \frac{1}{2}D(t)]
 $$ 
  
 Then, due to the fact that $\Sigma\subset E$, the positive set $\Omega(u)$ crosses $\Sigma$ for the first time at $(x_3,t_2)$. Let us  consider the classical subsolution $\psi$ in the domain 
  satisfying
\begin{equation}
\left\{\begin{array}{lll}
-\Delta_x \phi(x,t) = \lambda[w(t)] & \text{for} & (x,t) \in \Sigma \\ \\
\phi =0 & \text{for} & (x,t) \in \partial D(t) \\ \\
 \phi = \min_{\frac{1}{2}D(t)} u(x,t)>0 & \text{for} & (x,t)\in \frac{1}{2}\partial D(t) \\ \\
\end{array}\right.
\end{equation}
Then $\psi$ crosses $u$ from below at $(x_3,t_2)$. One can check from the definition of $u$ and the strong star-shapedness of $\Omega(V)$ that the outer normal $\nu$  of $D_1$ at $x_3$ satisfies $\nu\cdot x_2 < -r_0$ and thus it contradicts Lemma~\ref{lem: barrier property for grad flow}.
 \end{proof}

 Let $(\nu',m')$ be the normal to the hyperplane $H_2$ rescaled as before.  Then as in Lemma \ref{lem: spacetimeconvs} the normal to $\widetilde{H}_2$ is $(\nu',m'+\delta)$.  As a consequence of the ordering $V(x,t) \leq U(x,t)$ for all $ t\leq t_0$ we also get the ordering of the advance speeds,
$$ m'+\delta \leq m.$$
Since $U\leq V$ for $t \leq t_0$ we get the following inclusions
\begin{equation*}
B_1 \cap \{t \leq t_0 \} \subseteq \{U>0\}\cap\{V>0\} \quad \text{ and } \quad \widetilde{B}_2 \cap \{t \leq t_0 \} \subseteq \{U=0\}\cap\{V=0\}.
\end{equation*}
In particular $\Omega_{t_0}(U)$ and $\Omega_{t_0}(V)$ both have interior and exterior spatial balls at $x_0$,
$$ B_1 \cap \{t = t_0 \}  \ \ \text{and} \ \ \widetilde{B}_2 \cap \{t = t_0 \} $$
which must both have centers lying along the same axis, or equivalently both free boundaries have spatial inward normal $\nu$ at $x_0$.  From the strong star-shapedness of $\Omega_{t_0}(V)$, and this is a key point as we have seen in the proof of above lemma: we get that
\begin{equation}\label{eqn: key thing}
\nu \cdot x_0 \leq - r''<-r_0.
\end{equation}
Now the idea is to construct a smooth strict sub-solution $\phi$ which touches $\Gamma(V)$ from below at $(x_0,t_0)$.  Then a translation of $\phi$ will touch $u$ from below at $P_2$ from the inadmissible direction,
$$\nu = \frac{\phi_t}{|D\phi|}(x_0,t_0)$$
leading to a contradiction of Lemma~\ref{lem: barrier property for grad flow}.

 \begin{lem}\label{lem: advance speed bounds}
$$ -1< m \leq M. $$
\end{lem}
\begin{proof}
 The fact that $m\leq M$ follows from a relatively simple barrier argument, based on the condition $v_t\leq M|Dv|$ on $\Gamma_t(v)$ as well as  the fact that $\Omega(v)$ has an exterior space-time ball at $(x_1,t_1)$. 
 
 \medskip
 
   It remains to show that $ m\leq-1$. If it is, then 
  \begin{equation}\label{fact}
  m' \leq -1-\delta. 
  \end{equation}
  Take $\tau>0$ sufficiently small that $D_0:=B_2\cap \{t=t_2-\tau\}$ is nonempty.
    We can now  construct a classical subsolution $\psi$ in the domain 
  $$\tilde{\Sigma}:=\cup_{t_2-\tau \leq t \leq s_0} (B_2-\frac{1}{2}B_2), $$ 
 similarly as in the proof of $m >-\infty$ in the lemma above, and use \eqref{fact} to derive a contradiction.
\end{proof}

\begin{lem}\label{lem: nontangential estimate}
Near the point $P_0$ we have the nontangential estimate
\begin{equation}
\liminf_{d\to 0^+}\frac{V(x_0-d\nu,t_0)}{d} \geq \sqrt{m+1}.
\end{equation}
\end{lem}
\begin{proof} 
 
The proof is based on construction of the barrier for $v$ to yield a contradiction, in the event that the lemma holds false, and it is parallel to the proof of Lemma 2.6 in \cite{Kim03}.
 
 
\end{proof}
Now as in \cite{Kim03} for any $\eta>0$ we construct a smooth test function $\phi(x,t)=\phi(|x-x_0|,t)$ with the following properties:
\begin{equation}
\left\{\begin{array}{lll}
-\Delta_x \phi(x,t) < 0 & \text{for} & (x,t) \not\in \tfrac{1}{4}B_2 \\ \\
\phi > 0 & \text{for} & (x,t) \in B_2 \\ \\
 \phi <0 & \text{for} & (x,t)\not\in \overline{B_2} \\ \\
|D\phi|(x,t) = \sqrt{m+1}(1-\eta) & \text{ on } & \partial B_2 \cap \{ t = t_2 \}.
\end{array}\right.
\end{equation}
Now from the definition of $m'$ we have for $x \in \partial B_2 \cap \{t=t_2\}$,
$$ \tfrac{\phi_t}{|D\phi|}(x,t_2) = m' \leq m-\delta$$
and choosing $\tau$ sufficiently small depending on the $C^2$ norm of $\phi$ and $\eta$ small depending on $\delta$ and $m$ we get for $x \in \partial B_2 \cap \{t_2-\tau\leq t\leq t_2\}$,
$$ \tfrac{\phi_t}{|D\phi|}(x,t_2)<m-\delta/2 < |D\phi|^2-1.$$
Therefore $\phi$ is a strict subsolution in the region
$$ (B_2 \setminus \tfrac{1}{4}B_2 )\cap \{t_2-\tau \leq t \leq t_2\}.$$
Next we show that $\phi$ touches $u$ from below at $P_2 = (x_2,t_2)$.  For $d>0$ sufficiently small depending on the $C^2$ norm of $\phi$ we have
\begin{equation}\label{eqn: growth estimate I}
\begin{array}{lll}
\phi(x,t) \leq \sqrt{m+1}(1-2\eta)d & \text{ on } & \partial (1-d)B_2 \cap \{t_2-\tau \leq t \leq t_2\} .
\end{array}
\end{equation}
Let $(x,s) \in B_2$ and let 
$$d:= d(x,\partial B_2\cap\{t=s\})$$
 then since $P_0$ is the center of $B_2$ we have that $|P_0-(x,s)| = \epsilon-\delta t_0-d$ and
$$ |(x_0+d\nu,t_0)-(x,s)|\leq |P_0-(x,s)|+d = \epsilon-\delta t_0.$$
 Therefore from the definition of $U$ as an infimum and from Lemma~\ref{lem: nontangential estimate} for any $\eta>0$ there exists $d_0$ such that $d<d_0$ implies
\begin{equation}\label{eqn: growth estimate II}
 \sqrt{m+1}(1-2\eta)d\leq U(x_0+d\nu,t_0) \leq u(x,s).
 \end{equation}
 Now combining \eqref{eqn: growth estimate I} and \eqref{eqn: growth estimate II} with the fact that $B_2 \subset\Omega(u)$ and $\phi=0$ on $\partial B_2$ we get that
 \begin{equation}
 \begin{array}{lll}
  \phi(x,t) \leq u(x,t) & \text{ for }  & (x,t) \in (\partial (1-d)B_2 \cup \partial B_2) \cap \{t_2-\tau \leq t \leq t_2\}.
  \end{array}
  \end{equation}
  Then since $u-\phi$ is superharmonic by the strong minimum principle we get,
   \begin{equation}
 \begin{array}{lll}
  \phi(x,t) < u(x,t) & \text{ for }  & (x,t) \in B_2\setminus(1-d)B_2 \cap \{t_2-\tau \leq t \leq t_2\},
  \end{array}
  \end{equation}
 and $u(P_2) = \phi(P_2)=0$ so $\phi$ touches $u$ from below at $P_2$.  This is a contradiction of Lemma~\ref{lem: barrier property for grad flow} since from \eqref{eqn: key thing}
 $$ \frac{D\phi}{|D\phi|}(x_2,t_2)\cdot x_2 = \nu \cdot x_2 <\nu \cdot x_0<-r_0. $$
\end{proof}

\section{Convergence for Solutions with Global in Time Star-Shapedness}

Now we can combine the results of the previous sections to get our main result.  Under the assumption of sufficient roundness of the initial data $\Omega_0$ phrased in terms of $\rho$-reflection any continuum limit of the restricted gradient flow of the functional $\mathcal{J}$ is also a global in time viscosity solution of the problem \eqref{eqn: CLMV}.   To make this precise let us define the class of weak solutions arising from the gradient flow scheme described in section \ref{sec: Gradient Flow}.
\begin{DEF}
An evolution $\omega(t): [0,\infty) \to \mathcal{S}_r$ is an {\it energy solution} of \eqref{eqn: CLMV} if there exist $\omega_{M_k}(t)$, $M_k \to \infty$ as $k \to \infty$, which are minimizing movements of the restricted gradient flow scheme from Definition \ref{def: JKO scheme} with initial data $\omega(0)$ such that,
$$ d_H(\omega_{M_k}(t),  \omega(t) ) \to 0 \ \hbox{ as } \ k \to \infty \ \hbox{ locally uniformly in } [0,\infty).$$
A droplet profile $u : \real^N \times [0,\infty) \to [0,\infty)$ is an energy solution if $u = u[\omega(t)]$ for an energy solution $\omega(t)$.
\end{DEF}
Now we can say the following about any energy solution arising from an initial data with $\rho$-reflection.

\begin{thm}\label{thm: convergence thm}
Let $V>0$ and $\Omega_0$ a domain in $\real^N$ such that $\Omega_0$ has $\rho$-reflection with $\rho$ satisfying,
$$ \rho < C_N V^{\frac{1}{N+1}}, $$
where $C_N$ is a dimensional constant. Then there exists an energy solution $u: \real^N\times[0,+\infty)\to[0,+\infty)$. Moreover any energy solution $u$, the following holds:

\begin{itemize}
\item[(a)] $u$  is also a viscosity solution of the free boundary problem,
\begin{equation}\tag{P-V}
\left \{ \begin{array}{lll}
         -\Delta u(x,t)  = \lambda(t) & \text{ in }& \Omega_t(u)\\ \\
         u_t = (|Du|^2-1)|Du| & \text{ on } & \Gamma_t(u) \\ \\
         \{u(\cdot,0)>0\} = \Omega_0,
         \end{array}\right.
\end{equation}
where $\lambda(t)$ is chosen to enforce the volume constraint for all $t>0$,
$$ \int u(x,t)dx = V. $$
\item[(b)]The positivity set $\Omega_t(u)$ has $\rho$-reflection for all $t>0$ with $\inf_{x \in \Gamma_t(u)}(|x|-\rho)$ bounded below.  The energy $\mathcal{J}$ is decreasing along the flow and additionally $u$ satisfies the energy decay estimate for all $t>s\geq0$,
$$ \frac{1}{t-s}\widetilde{\textup{dist}}(\Omega_s(u),\Omega_t(u)) \lesssim_{\rho,N} \mathcal{J}(\Omega_s(u))-\mathcal{J}(\Omega_t(u)).$$
\item[(c)]The flow of the sets $\Omega_t(u)$ converges uniformly modulo translation to the radially symmetric equilibrium solution,
$$ \frac{\lambda_* }{2N}(r_*^2-|x|^2)_+,$$
where $r_*$ is given in \eqref{eqn: equilibrium radius} and $\lambda_*$ can be calculated from the volume constraint.  More precisely we mean that,
$$ \inf \{ d_H(\Omega_t(u),B_{r*}(x)) : x \in \overline{B_{\rho}(0)}\} \to 0 \hbox{ as } t \to \infty. $$
\end{itemize}

\end{thm}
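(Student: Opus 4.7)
I would first produce an energy solution and then pass $M\to\infty$. Choose $r_0>0$ small so that Lemma \ref{lem: almost radiality} applied to $\Omega_0$ gives $\Omega_0\in\mathcal{S}_{r_0}$. For each $M<\infty$, run the restricted minimizing-movement scheme of Definition \ref{def: JKO scheme} with parameter $r_0$; Lemma \ref{lem: htozero} extracts an $M$-Lipschitz-in-time (Hausdorff) continuum limit $\omega^M(t)$ satisfying the energy decay \eqref{eqn: cts energy decay}. The reflection argument that underlies Proposition \ref{prop: reflection comparison}, applied this time through the gradient-flow comparison Proposition \ref{prop: comparison for grad flow} (the reflected profile is strongly star-shaped with respect to a ball larger than $B_{r_0}$ by Lemma \ref{lem: almost radiality}, which is exactly what Proposition \ref{prop: comparison for grad flow} requires), transfers $\rho$-reflection from $\Omega_0$ to $\omega^M(t)$ for all $t\geq 0$. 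The a priori bound Lemma \ref{lem: reflection for all time}, enabled by the hypothesis $\rho<C_N V^{1/(N+1)}$, then yields uniform containment $\omega^M(t)\in\mathcal{S}_{r',R}$ with $r'>r_0$ and $R$ independent of $t$ and $M$. Proposition \ref{prop: comparison for grad flow} together with its supersolution analogue now promote $u[\omega^M(t)]$ to a viscosity solution of (P-$\lambda^M(t)$) for $\lambda^M(t):=\lambda[\omega^M(t)]$ and velocity $F(s)=\max\{s^2-1,M\}$; the volume constraint $\int u[\omega^M(t)]=V$ is built into the definition of $u[\cdot]$.

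The $M\to\infty$ passage is handled by Lemma \ref{lem: sending M to infty}: uniform strong star-shapedness plus polynomial growth yield equicontinuity (Corollary \ref{cor: equicontinuity}) and a subsequential uniform limit $u$ which is simultaneously an energy solution and a viscosity solution of (P-V), giving part (a). Part (b) follows by passing $\rho$-reflection, the monotonicity of $\mathcal{J}$, and the dissipation inequality to the limit via the Hausdorff continuity of $\mathcal{J}$ and $\widetilde{\textup{dist}}$ on $\mathcal{S}_{r',R}$ from Lemma \ref{lem: hausdorff estimates}.

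For part (c), set $J_\infty:=\lim_{t\to\infty}\mathcal{J}(\Omega_t(u))$, which exists by (b), and fix an arbitrary sequence $t_n\to\infty$. For each fixed $T>0$ the dissipation estimate in (b) gives
\begin{equation*}
\widetilde{\textup{dist}}^2\bigl(\Omega_{t_n}(u),\Omega_{t_n+T}(u)\bigr)\lesssim_{\rho,N} T\bigl(\mathcal{J}(\Omega_{t_n}(u))-\mathcal{J}(\Omega_{t_n+T}(u))\bigr)\longrightarrow 0.
\end{equation*}
Uniform membership in $\mathcal{S}_{r',R}$ and Corollary \ref{cor: equicontinuity}, via the compactness Lemma \ref{lem: compactness}, let me extract a subsequence along which the shifted flows $\Omega_{t_n+t}(u)$ converge in $C_{\textup{loc}}([0,\infty); d_H)$ to a continuous limit $\Omega^\infty(t)$. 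The display above, combined with the equivalence between $\widetilde{\textup{dist}}$ and a genuine distance on $\mathcal{S}_{r',R}$ proven inside Lemma \ref{lem: SQTineq}, forces $\Omega^\infty(t)\equiv\Omega^*$ constant in $t$. Viscosity stability (Lemma \ref{lem: uniform stability}) then makes $u[\Omega^*]$ a viscosity solution of (EQ) with constant Lagrange multiplier; the Lipschitz regularity of $\Omega^*$ inherited from $\rho$-reflection (Lemma \ref{lem: almost radiality}), together with Theorem \ref{thm: de silva} and Corollary \ref{smoothness}, upgrades it to a smooth domain, and Serrin's Theorem \ref{thm: serrin} pins $\Omega^*=B_{r_*}(x^*)$. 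The $\rho$-reflection of $\Omega^*$ forces $x^*\in\overline{B_\rho(0)}$, so every subsequential Hausdorff limit of $\Omega_t(u)$ is a translated copy of the equilibrium ball, which is the asserted convergence modulo translation.

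The principal obstacle is the constancy-in-time step: since $\widetilde{\textup{dist}}$ is only a pseudo-distance, one must convert the $L^2$-type vanishing above into genuine Hausdorff closeness, and this is exactly where the $\mathcal{S}_{r',R}$-uniform radial-coordinate equivalence built inside Lemma \ref{lem: SQTineq} is essential. A secondary delicate point is Hausdorff continuity of $\lambda[\cdot]$ on $\mathcal{S}_{r',R}$, required both to invoke Lemma \ref{lem: sending M to infty} at the $M\to\infty$ stage and to apply Lemma \ref{lem: uniform stability} at the long-time limit. Finally, the argument does not pin down a single limit ball; all subsequential limits are radial of radius $r_*$ and centered somewhere in $\overline{B_\rho(0)}$, which is precisely why the theorem is stated only modulo translation.
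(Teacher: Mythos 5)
Your long-time argument (part (c)) and the $M\to\infty$ passage essentially track the paper, but the first stage of your plan contains a genuine gap: the claim that $\rho$-reflection ``transfers from $\Omega_0$ to $\omega^M(t)$ for all $t\geq 0$'' via Proposition \ref{prop: comparison for grad flow}. Reflection comparison is a \emph{half-space} comparison: $u$ and its reflection $u(\phi_H(\cdot),t)$ coincide on the hyperplane $H$, and the comparison is asserted only on $H_+\times[0,T]$. Proposition \ref{prop: comparison for grad flow} is a whole-space comparison with globally ordered initial data, so it does not apply to this configuration; moreover the reflected set $\phi_H(\omega\cap H_+)$ is star-shaped with respect to $\phi_H(B_{r}(0))$, a ball \emph{not} centered at the origin, whereas the restricted scheme, the barrier property of Lemma \ref{lem: barrier property for grad flow} (through the normal condition \eqref{eqn: normal condition}) and Proposition \ref{prop: comparison for grad flow} all require star-shapedness with respect to origin-centered balls of radius strictly larger than $r_0$. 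Finally, even granting the star-shapedness, comparison against star-shaped sub/supersolutions does not by itself ``promote'' $u[\omega^M(t)]$ to a viscosity solution of (P-$\lambda^M$): that step needs Proposition \ref{prop: condition to be vs}, i.e.\ the knowledge that the gradient flow itself stays in $\mathcal{S}_{r'}$ with $r'>r_0$ — exactly the preservation statement the paper says it cannot prove directly for the minimizing-movement limit.

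The paper circumvents this by never applying the reflection machinery to $\omega_M(t)$ directly. It sets $\lambda_M(t):=\lambda[\omega_M(t)]$, builds the Perron viscosity solution $u_M$ of (P-$\lambda_M$) (Theorem \ref{thm: short time existence}), and proves that the set $I$ of times on which $u_M=u[\omega_M]$ is open and closed: on $I$ the solution $u_M$ is a volume-preserving \emph{viscosity} solution, so Proposition \ref{prop: reflection comparison}, Lemma \ref{lem: preservation} and Lemma \ref{lem: reflection for all time} apply to it and give $\rho$-reflection and membership in $\mathcal{S}_{r}$; closedness follows from short-time existence/uniqueness, and openness from Lemma \ref{lem: GK short time ss} (which yields $\Omega_t(u_M)\in\mathcal{S}_{r'}$ with $r'>\gamma r$ for a short additional time) combined with Proposition \ref{prop: comparison for grad flow}, which then forces $u_M=u[\omega_M]$ on the slightly larger interval. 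Only after this identification does one know that $\omega_M(t)$ has $\rho$-reflection and the uniform $\mathcal{S}_{r,R}$ bounds needed for Corollary \ref{cor: equicontinuity} and Lemma \ref{lem: sending M to infty}. Your proposal needs this identification argument (or a new reflection comparison valid for the restricted gradient flow) to close part (a); once that is in place, your treatment of (b) and (c) is in substance the paper's.
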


\begin{proof}
1. The existence proof as well as the energy estimate follow from a fairly straightforward combination of the results we have already proven.  We give an outline of the proof. Given the assumption on $\Omega_0$ having $\rho$-reflection we know due to the apriori estimate Lemma \ref{lem: reflection for all time} that any solution of \eqref{eqn: CLMV} will be in $\mathcal{S}_r$ for some $r>0$ as long as it exists. Then we construct the restricted gradient flow solution with initial data $\Omega_0$ as in Section \ref{sec: Gradient Flow} where the restriction on the radius of the strong star-shapedness is strictly weaker than that from the apriori estimate.  Given the Lagrange multplier $\lambda(t)$ associated with the restricted gradient flow we then solve \eqref{eqn: CLML}.  Then the idea is that the viscosity solution of \eqref{eqn: CLML} is strongly star-shaped with a larger radius than the restriction on the gradient flow and so we will be able to use Proposition \ref{prop: comparison for grad flow} to show that the two solutions agree for all time.

\medskip

Let $b>0$ so that 
\begin{equation*}
\begin{array}{lll}
B_{(1+b)\rho}(0)\subset \Omega& \hbox{ and } &(1+b)\rho <C_NV^{\frac{1}{N+1}}.
\end{array}
\end{equation*}
  Then we expect thanks to the apriori estimate Lemma \ref{lem: reflection for all time} that $(1+b)\rho$ will be contained in $\Omega_t$ for all time. In particular we expect $\Omega_t$ will be in $\mathcal{S}_{r}$ for $r = b\rho$.  Let $R=R(r)>0$ be so large that any set in $\mathcal{S}_{r}$ which touches $B_R$ from the inside must have larger energy than $\Omega_0$ and then define $\gamma = \tfrac{r}{2R}$.  That it is possible to choose $R$ in this way is described in Lemma \ref{lem: htozero}.  Let $M>0$ and let $\omega_M(t)=\omega_{M, \gamma r}(t)$ be the restricted gradient flow solution with initial data $\Omega_0$ as constructed in the last section.  The notation indicates that $\omega_{M,\gamma r}(t)$ is restricted to remain in $\mathcal{S}_{r}$ and its free boundary can move no faster than $M$.  We suppress the dependence on $\gamma r$.  Define the putative Lagrange multiplier $\lambda_M$ to be
$$ \lambda_M(t) := \lambda[\omega_{M}(t)],$$
and let $u_M$ be the possibly discontinuous viscosity solution of $(P-\lambda_M)$ constructed by Perron's method in Theorem \ref{thm: short time existence}.  We will show that $u_M$ and $u[\omega_{M}]$ are the same.

\medskip

Let $I$ be the largest interval containing the origin on which $u_M$ and $u[\omega_M]$ agree. We will show that $I$ is open and closed in $[0,+\infty)$. Since $u_M$ agrees with $u[\omega_M]$ on $I$ it is continuous and has constant volume and thus it is a volume preserving viscosity solution.  Therefore Lemma \ref{lem: reflection for all time} implies that:
$$\Omega_t(u_M) \hbox{ has $\rho$-reflection and }  B_{ (1+b)\rho} \subset \Omega_t(u_M) \hbox{ for $t \in I$}.$$  
In particular $\Omega_t(u_M)$ is in $\mathcal{S}_{r}$ on $I$. Suppose $I= [0,T)$ for some $T>0$, then the short time existence theorem implies that, for some small $t_0(r,R)$, $u_M$ is continuous on $[0,T+t_0)$.  Since the set where two continuous functions agree is closed $I=[0,T]$.  Suppose $I=[0,T]$ where $T$ may be equal to zero.  Then thanks to Lemma \ref{lem: GK short time ss} there exists $t_M>0$ such that: 
$$\Omega_t(u_M)\in\mathcal{S}_{r'} \hbox{ for some }r'>\gamma r \hbox{ on } [0,T+t_M).$$  Then from Proposition \ref{prop: comparison for grad flow} $u[\omega_M]$ satisfies a strong comparison principle with respect to viscosity solutions which are strongly star-shaped with a larger radius than $\gamma r$ so $u_M = u[\omega_M]$ on $[0,T+t_M)$.  Therefore $I= [0,+\infty)$ and there exists a global in time continuous viscosity solution of \eqref{eqn: CLMV}$_M$ which has $\rho$-reflection and is in $\mathcal{S}_{r,R}$ for all $t>0$.

\medskip

2. Now we show the existence for \eqref{eqn: CLMV} without the restriction on a maximum speed.  The key point in this case is the equicontinuity afforded by the fact that the $\Omega_t(u_M) \in \mathcal{S}_{r,R}$ with $r,R$ independent of $M$.  In particular from Corollary \ref{cor: equicontinuity} we get for some $C>0$ and $ \alpha\in(0,1)$ independent of $M$,
\begin{equation}\label{eqn: holder for uM}
 d_H(\Gamma_t(u_M),\Gamma_s(u_M)) \leq C |t-s|^\alpha. 
 \end{equation}
Then we also derive thanks to Lemma \ref{lem: hausdorff estimates} the equicontinuity of the Lagrange multipliers $\lambda_M(t)$.  Taking a subsequence such that the $\lambda_M(t)$ converge uniformly on compact subsets of $[0,+\infty)$ to some $\lambda(t)$ we can apply Lemma~\ref{lem: sending M to infty}.  We get that along this subsequence the $u_M$ converge uniformly to a viscosity solution $u$ of \eqref{eqn: CLMV} with free boundary velocity $F(|Du|) = |Du|^2-1$.  Due to the uniform convergence of $\Omega_t(u_M)$ to $\Omega_t(u_M)$ in hausdorff distance sense the energy estimates for the $u_M$ and the H\"{o}lder regularity in time \eqref{eqn: holder for uM} carry over to $u$.  Then $u$ is an energy solution by the definition. Unfortunately by the compactness method we do not know whether there is any uniqueness of the limiting Lagrange multiplier $\lambda(t)$.

\medskip

3. Now we show that any subsequential limit of the $u(\cdot,t_n)$ must be a viscosity solution of the equilibrium problem \eqref{eqn: EQ}.  Note that the same result is true for all the $u_M$.

\begin{Claim}
Let $t_n\to \infty$ such that $\Omega_{t_n}(u)$ converges in Hausdorff topology to $\Omega_\infty$.  Then $u[\Omega_\infty]$ is a stationary solution of \eqref{eqn: CLMV}, that is it solves the equilibrium problem \eqref{eqn: EQ} in the viscosity sense.
\end{Claim}
Define $U_n: [0,+\infty) \to \mathcal{S}_{r,R}$ by
\begin{equation}
U_n(t) := \Omega_{t+t_n},
\end{equation}
 and we also consider the viscosity solutions of \eqref{eqn: CLMV} which lie above the $U_n(t)$,
\begin{equation}
\begin{array}{lll}
 v_n(x,t) := u[U_n(t)](x) & \text{ with Lagrange Multipliers } & \lambda_n(t) := \lambda[U_n(t)].
 \end{array}
 \end{equation}
 
 \medskip

 First we show that, uniformly in $t>0$,
 $$\mathcal{J}(U_n(t)) \to \mathcal{J}(\Omega_\infty).$$ 
  Since is $\mathcal{J}(U_n(t))$ is monotone decreasing for all $n>0$ we have for all $t>0$
 $$ \mathcal{J}(\Omega_\infty) = \inf_{s>0}  \mathcal{J}(\Omega_s) \leq \mathcal{J}(U_n(t)) \leq \mathcal{J}(U_n(0))= \mathcal{J}(\Omega_{t_n}),$$
 but due to Lemma \ref{lem: hausdorff estimates} along with the convergence $\Omega_n\to \Omega_\infty$ in Hausdorff distance,
 $$\mathcal{J}(\Omega_{t_n}) \searrow \mathcal{J}(\Omega_\infty).$$
 
 \medskip
 
  Now we will show that up to a subseqeunce the $v_n$ converge uniformly on compact time intervals.  Recalling the uniform H\"{o}lder estimates from \eqref{eqn: holder for uM} for $\Omega_t(u_M)$ which carry over in the limit to $\Omega_t(u)$,
\begin{equation}
\begin{array}{lll}
 d_H(U_n(t),U_n(s)) \leq C|t-s|^\alpha, 
 \end{array} 
 \end{equation}
 and from Lemma \ref{lem: htozero} the energy estimates,
 \begin{equation}\label{eqn: energy est for Un}
 \frac{C}{t-s}\widetilde{\text{dist}}(U_n(s),U_n(t))\leq \mathcal{J}(U_n(s))-\mathcal{J}(U_n(t)).
 \end{equation}
The paths $U_n$ are a sequence of equicontinuous maps into $( \mathcal{S}_{r,R},d_h)$ and so from the Compactness Lemma \ref{lem: compactness} there exists $U_\infty: [0,+\infty) \to \mathcal{S}_{r,R}$ with $U_\infty(0) = \Omega_\infty$ such that up to taking a subsequence, 
$$ U_n \to U_\infty \text{ uniformly on compact subsets of } [0,+\infty).$$

\medskip

  Now we show that $v_\infty$ is a stationary viscosity solution of \eqref{eqn: CLMV}.  From Lemma \ref{lem: hausdorff estimates} we get the following:
\begin{enumerate}[(i)]
\item $v_n \to v_\infty$ uniformly in $(x,t)$ on compact time intervals and therefore from the stability of viscosity solutions under uniform convergence -- Lemma \ref{lem: uniform stability} -- $v$ is a viscosity solution of \eqref{eqn: CLMV}.
\item $\widetilde{\text{dist}}(U_n(s),U_n(t)) \to \widetilde{\text{dist}}(U_\infty(s),U_\infty(t)) $ uniformly on compact subsets of $[0,+\infty)\times[0,+\infty)$.
\end{enumerate}
Combining (ii) with \eqref{eqn: energy est for Un} we derive the energy estimate for $U_\infty(\cdot)$ for all $t>s>0$:
 \begin{equation}
 \frac{C}{t-s}\widetilde{\text{dist}}(U_\infty(s),U_\infty(t))\leq \mathcal{J}(U_\infty(s))-\mathcal{J}(U_\infty(t)) = 0.
 \end{equation}
So $v_\infty = u[\Omega_\infty]$ is a stationary viscosity solution of \eqref{eqn: CLMV}.  Then due to Theorem~\ref{thm: serrin}, Theorem~\ref{thm: de silva} and Corollary~\ref{smoothness}  it follows that $\Omega_{\infty} = B_{r*}(x_0)$ for some point $x_0 \in \real^N$.  Actually $x_0$ is not completely arbitrary since we know that $\Omega_\infty$ must have $\rho$-reflection.  One can easily check that this implies $x_0 \in \overline{B_\rho(0)}$.

\medskip

4. Finally we show that the convergence is uniform modulo translation.  Suppose that there exists a sequence of times $t_n \to \infty$ and a $\delta>0$ such that
$$ \inf_{x \ \in \overline{B_\rho(0)}} d_H(\Omega_{t_n}(u),B_{r*}(x)) > \delta. $$
By taking a subsequence of the $t_n$ we may assume that $\Omega_{t_n}(u)$ converges in Hausdorff distance to some $\Omega_\infty$.  By part 3 of the argument $u[\Omega_\infty]$ must be equal to $B_{r*}(x_0)$ for some $x_0 \in B_\rho(0)$.  Choosing $n$ sufficiently large so that
$$ d(\Omega_{t_n}(u),B_{r*}(x_0)) < \delta $$
we derive a contradiction.

\end{proof}

Note that above theorem leaves the possibility that the drop oscillates between a family of round drops with its speed going to zero but not fast enough to converge to a single profile. Below we show that, if the drops are sufficiently regular at large times, then this does not happen. Such regularity, when the time is sufficiently large so that the profile of $u$ is sufficiently close to a round one is suspected to be true in the light of existing results introduced by Caffarelli et. al. (see e.g. the book \cite{CS}), but verifying this for our setting would be highly nontrivial and thus we do not pursue this question here.

\begin{prop}\label{conditional}
\textup{(Conditional uniqueness of the limit)} Suppose additionally that $u(\cdot,t)$ are uniformly $C^{1,\alpha}$, then $\Omega_t \to B_{r*}(x_0)$ for some $x_0$.
\end{prop}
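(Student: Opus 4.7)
The plan is to exploit the uniform $C^{1,\alpha}$ regularity to convert the $L^2$-in-time energy dissipation bound into an $L^1$-in-time bound on the speed of the drop's centroid, via a gradient-flow argument in the spirit of \L{}ojasiewicz--Simon, adapted to the translation-invariant manifold of equilibria.

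First, the uniform $C^{1,\alpha}$ bound together with Arzel\`a--Ascoli makes $\{u(\cdot,t)\}_{t\geq 0}$ precompact in $C^1(\real^N)$, and Corollary~\ref{cor: equicontinuity} gives equicontinuity of $t\mapsto\Omega_t$ in Hausdorff distance. Combined with Theorem~\ref{thm: convergence thm}(c), every subsequential limit of $\Omega_t$ is a ball $B_{r_*}(x_\infty)$ with $x_\infty\in\overline{B_\rho(0)}$, so the $\omega$-limit set
\[
\omega(u) := \bigcap_{s>0}\overline{\{\Omega_t:t\geq s\}}^{d_H}
\]
is a compact connected subset of the equilibrium manifold $\mathcal{M}:=\{B_{r_*}(x):x\in\overline{B_\rho(0)}\}$. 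It therefore suffices to show that the centroid $c(t):=|\Omega_t|^{-1}\int_{\Omega_t}x\,dx$ converges, in which case $\Omega_t\to B_{r_*}(\lim c(t))$.

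Second, under the $C^{1,\alpha}$ hypothesis the formal computation in Section~4 becomes rigorous, giving
\[
-\frac{d}{dt}\mathcal{J}(\Omega_t) = \int_{\Gamma_t}(|Du|^2-1)^2|Du|\,dS,
\]
so the normal velocity $V:=(|Du|^2-1)|Du|$ satisfies $\int_0^\infty\!\int_{\Gamma_t}V^2\,dS\,dt<\infty$, and $\dot c(t) = |\Omega_t|^{-1}\int_{\Gamma_t}x\,V\,dS$. For large $t$ I would parameterize $\partial\Omega_t$ as a small normal graph $\{c(t)+(r_*+f(\theta,t))\theta:\theta\in S^{N-1}\}$ and linearize: $V = -Lf + O(\|f\|_{C^{1,\alpha'}}^2)$, where $L$ is the self-adjoint operator on $L^2(S^{N-1})$ obtained by linearizing $f\mapsto |Du|^2-1$ at the equilibrium. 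Translation invariance of $\mathcal{J}$ places the first spherical harmonics $\theta\mapsto e_i\cdot\theta$ in $\ker L$, so pairing $V$ against $x-c(t)\sim r_*\theta$ annihilates the linear term by self-adjointness, giving the quadratic-in-perturbation bound
\[
|\dot c(t)|\ \leq\ C\,\|f(\cdot,t)\|_{L^2(S^{N-1})}^2
\]
uniformly on a $C^{1,\alpha'}$-tubular neighborhood of $\mathcal{M}$.

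The hard part is closing the argument via a \L{}ojasiewicz--Simon inequality modulo the translation kernel: on the same tubular neighborhood,
\[
\mathcal{J}(\Omega)-\mathcal{J}_\infty\ \leq\ C\Bigl(\int_\Gamma V^2\,dS\Bigr)^{1-\delta}\qquad\text{for some }\delta\in[0,\tfrac{1}{2}).
\]
Its proof requires an analytic expansion of $\mathcal{J}$ around $\mathcal{M}$, the spectral gap of $L$ on the orthogonal complement of its translation kernel, and Simon's implicit-function argument to absorb nonlinear terms. Granting this inequality, combination with the dissipation identity yields polynomial decay of $\mathcal{J}(\Omega_t)-\mathcal{J}_\infty$, hence of $\|f(\cdot,t)\|_{L^2}^2$; the previous step then gives $\int_0^\infty|\dot c(t)|\,dt<\infty$ and convergence of $c(t)$, completing the proof.
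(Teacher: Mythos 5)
Your proposal has a genuine gap at its central step: the \L{}ojasiewicz--Simon inequality modulo the translation kernel is not proved but only ``granted,'' and nothing in the paper (or in your sketch) supplies it. Establishing it here would require an analytic (or at least $C^2$-in-a-Banach-scale) expansion of $\mathcal{J}$ as a functional of normal graphs over $\mathcal{M}$, a spectral gap for the linearization of the map $f\mapsto |Du|^2-1$ on the complement of the first harmonics, and Simon's Lyapunov--Schmidt argument -- a substantial piece of analysis that cannot be waved through, and without which the whole chain (polynomial decay of $\mathcal{J}-\mathcal{J}_\infty$, hence of $\|f\|_{L^2}^2$, hence $\int_0^\infty|\dot c|\,dt<\infty$) collapses. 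A second, independent problem is your claim that the uniform $C^{1,\alpha}$ hypothesis makes the Section~4 dissipation identity rigorous: that identity needs time regularity (differentiability of $t\mapsto\Omega_t$, existence of $u_t$ up to $\Gamma_t$, the boundary moving classically with speed $(|Du|^2-1)|Du|$), none of which follows from a uniform \emph{spatial} $C^{1,\alpha}$ bound; indeed Remark~1.3 of the paper states explicitly that the solutions are not regular enough for that computation, and the only dissipation information actually available for energy solutions is the discrete estimate \eqref{eqn: cts energy decay} in terms of $\widetilde{\text{dist}}$, not the pointwise identity $-\frac{d}{dt}\mathcal{J}=\int_{\Gamma_t}(|Du|^2-1)^2|Du|\,dS$ you use.

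The paper's proof is much lighter and uses the extra regularity in a different way: along a subsequence $t_n$ with $\Omega_{t_n}\to B_{r_*}(x_0)$, the uniform $C^{1,\alpha}$ bound forces $Du(\cdot,t_n)\to Du_{EQ}$ uniformly, so the boundary normals of $\Gamma_{t_n}$ become nearly radial about $x_0$ and the boundary oscillation becomes small; by Lemma~\ref{lem: lip norm closeness} this means $\Omega_{t_n}$ has $\rho$-reflection about $x_0$ for \emph{every} $\rho>0$ once $n$ is large. Since $\rho$-reflection (about that center) is preserved forward in time, every later subsequential limit has $\rho$-reflection about $x_0$ for all $\rho>0$ and hence is the ball $B_{r_*}(x_0)$ itself, giving uniqueness of the limit without any dissipation-rate or centroid estimate. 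If you want to salvage your route, you would have to actually prove the \L{}ojasiewicz--Simon inequality and justify the dissipation identity for energy solutions; as written, the proposal does neither.
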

\begin{proof}
Let $t_n \to \infty$ be a sequence of times along which $\Gamma_{t_n}(u)$ converges in Hausdorff distance.  The limit is a ball $\partial B_{r*}(x_0)$ for some $x_0$ which is compatible with the reflection symmetry of $\Omega$. By translating we may assume that $x_0 = 0$.  Note under this translation $\Omega_t(u)$ no longer need have $\rho$-reflection or be strongly star shaped with respect to the origin, this will not affect the proof.  From Lemma \ref{lem: hausdorff estimates} we get that $u(\cdot,t_n)$ converge uniformly to 
$$ u_{EQ} = \frac{\lambda_*}{2N}(r_*^2-|x|^2)_+. $$
Due to the assumption, we have 
$$ \sup_n||u(\cdot,t_n)||_{C^{1,\alpha}}<C_1<+\infty. $$
In particular the $Du(\cdot,t_n)$ are uniformly bounded and equicontinuous so they must converge uniformly to $Du_{EQ}$.  Let $\delta>0$ small enough that $C_1 \delta ^\alpha \leq 1/2$ and choose $n$ sufficiently large that 
\begin{equation*}
\begin{array}{lll}
d_H(\Gamma_{t_n}(u),\partial B_{r*}) \leq \delta &\hbox{ and }&  ||Du(\cdot,t_n)-Du_{EQ}||_\infty \leq \delta.
\end{array}
\end{equation*}
Now let $x \in \Gamma_{t_n}(u)$ and let $y=y(x) \in \partial B_{r_*}$ such that $|x-y| = d(x,\partial B_{r_*})$.  We calculate,
$$ |Du(x,t_n)-Du_{EQ}(y)| \leq ||Du(\cdot,t_n)-Du_{EQ}||_\infty+C_1d(x,\partial B_{r_*})^\alpha \leq C\delta^\alpha $$
so since $|Du_{EQ}|=1$ on $\partial B_{r*}$ we have that $|Du(x,t_n)|\geq 1/2$ and
  $$ |\tfrac{Du}{|Du|}(x,t_n)-\tfrac{y}{|y|}| \leq C\delta^\alpha. $$
Rephrasing the above in terms of the interior normal field $\nu_n$ to $\Gamma_{t_n}(u)$ we get that
\begin{equation}
  \langle \nu_n(x),x\rangle \geq 1-o(1).
  \end{equation}
  In particular this means, thanks to Lemma \ref{lem: lip norm closeness}, that for any $\rho>0$ there exists $n$ sufficiently large so that $\Omega_{t_n}(u)$ has $\rho$-reflection.  Since $\rho$-reflection is preserved under the flow for $\rho$ small this means that for every $\rho>0$ there is a $T$ such that for $t\geq T$, $\Omega_t(u)$ has $\rho$-reflection.  Thus any subsequential limit of the $\Omega_t(u)$ has $\rho$-reflection for every $\rho>0$ and must be a ball centered at the origin.

\end{proof}
There is one nontrivial case where we can say that the limiting equilibrium solution is unique.  When the initial data is symmetric with respect to $N$ orthogonal hyperplanes through the origin in addition to all the conditions given in the statement of Theorem \ref{thm: convergence thm} then the limit is unique.  In this case the reflection symmetries are preserved by the equation and so any Hausdorff distance limit of the $\Omega_t(u)$ will share these symmetries.  Then it is basic to check that the only ball of radius $r^*$ which is symmetric with respect to $N$ orthogonal axes through the origin is in fact centered at the origin.  We record this fact in the following corollary:

\begin{cor}
If $\Omega_0$ satisfies all the conditions of Theorem \ref{thm: convergence thm} and additionally is symmetric with respect to $N$ mutually orthogonal hyperplanes through the origin then any solution $u$ of \eqref{eqn: CLMV} with initial data $\Omega_0$ constructed from the discrete gradient flow scheme of Section \ref{sec: Gradient Flow} satisfies,
$$ d_H(\Omega_t(u),B_{r*}(0)) \to 0 \ \hbox{ as } \ t \to \infty. $$

\end{cor}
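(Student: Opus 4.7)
Let $H_1, \dots, H_N$ denote the $N$ mutually orthogonal hyperplanes through the origin with respect to which $\Omega_0$ is symmetric, and write $\phi_i$ for the reflection through $H_i$. The plan has three steps: propagate the symmetries along the discrete scheme (and hence to the continuum limit), use them to identify the center of any subsequential Hausdorff limit as the origin, and then upgrade subsequential to full convergence.

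\textbf{Step 1 (Symmetry propagation).} The energy $\mathcal{J}$, the pseudo-distance $\widetilde{\text{dist}}^2$, the class $\mathcal{S}_{r_0}$ of sets star-shaped with respect to $B_{r_0}(0)$, and the Hausdorff-distance constraint appearing in $\mathcal{A}^h(\cdot)$ are all invariant under each reflection $\phi_i$. Therefore, whenever $\omega_{k,h}$ is $\phi_i$-symmetric, the minimization problem \eqref{eqn: JKOscheme} that defines $\omega_{k+1,h}$ is itself $\phi_i$-symmetric, and the set of its minimizers is mapped to itself by $\phi_i$. In particular, the compactness argument in Lemma \ref{lem: JKOdef}, applied to the non-empty subclass of $\phi_i$-symmetric admissible sets (which contains $\omega_{k,h}$ itself), produces a $\phi_i$-symmetric minimizer, which by the symmetry of the functional is also a minimizer over the full admissible class. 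Choosing such a minimizer at each step and for all $i$ simultaneously, induction yields approximations $\omega_{k,h}$ invariant under every $\phi_i$. Passing to the continuum limit as in Lemma \ref{lem: htozero}, and using that $\phi_i$ is an isometry of the Hausdorff metric (so $\phi_i(\omega(t)) = \lim_h \phi_i(\omega_{k,h}) = \lim_h \omega_{k,h} = \omega(t)$), we obtain an energy solution for which $\Omega_t(u)$ is $\phi_i$-invariant for every $i$ and every $t \geq 0$.

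\textbf{Step 2 (Pinning the center).} Let $t_n \to \infty$. The uniform strong star-shapedness of $\Omega_t(u)$ from Theorem \ref{thm: convergence thm}(b), together with the compactness lemma for $\mathcal{S}_{r,R}$ in $d_H$, allows us to extract a subsequence along which $\Omega_{t_n}(u)$ converges to some $\Omega_\infty$. By Theorem \ref{thm: convergence thm}(c), $\Omega_\infty = B_{r_*}(x_0)$ for some $x_0 \in \overline{B_\rho(0)}$. Since each $\Omega_{t_n}(u)$ is $\phi_i$-invariant by Step 1 and $\phi_i$ is a Hausdorff isometry, the limit inherits $\phi_i$-invariance: $\phi_i(B_{r_*}(x_0)) = B_{r_*}(x_0)$, equivalently $\phi_i(x_0) = x_0$, so $x_0 \in H_i$. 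Intersecting over $i = 1, \dots, N$ forces $x_0 \in H_1 \cap \cdots \cap H_N = \{0\}$, hence $\Omega_\infty = B_{r_*}(0)$.

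\textbf{Step 3 (Full convergence).} Every subsequential Hausdorff limit of $\Omega_t(u)$ coincides with $B_{r_*}(0)$, and the family $\{\Omega_t(u)\}_{t\geq 0}$ is precompact in $d_H$ by uniform containment in $\mathcal{S}_{r,R}$. A standard contradiction argument (if $d_H(\Omega_{t_n}(u), B_{r_*}(0)) \geq \delta$ for some sequence $t_n \to \infty$, extract a Hausdorff-convergent subsequence and apply Step 2) delivers $d_H(\Omega_t(u), B_{r_*}(0)) \to 0$.

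\textbf{Main obstacle.} The only delicate point is Step 1: a minimizer in \eqref{eqn: JKOscheme} is not a priori unique, so one cannot simply invoke uniqueness to inherit $\phi_i$-symmetry from the $\phi_i$-symmetry of the functional. The resolution is to carry out the minimization inside the $\phi_i$-invariant subclass of admissible sets; compactness in $d_H$ guarantees a minimizer exists there, and invariance of $\mathcal{J}$ and $\widetilde{\text{dist}}^2$ under $\phi_i$ guarantees this restricted minimizer is also a minimizer over the full class. Once Step 1 is secured, Steps 2 and 3 are formal consequences of Theorem \ref{thm: convergence thm}.
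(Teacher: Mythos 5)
There is a genuine gap in your Step 1, and it occurs exactly at the point you flag as the ``only delicate point.'' Invariance of $\mathcal{J}$, $\widetilde{\text{dist}}^2$ and the admissible class under $\phi_i$ only tells you that $\phi_i$ maps the set of minimizers of \eqref{eqn: JKOscheme} onto itself; it does \emph{not} produce a $\phi_i$-symmetric minimizer, and a minimizer over the $\phi_i$-symmetric subclass need not be a minimizer over the full class (symmetry of a functional is perfectly compatible with all global minimizers being non-symmetric, in which case the symmetric-subclass minimizer is strictly worse). So the inference ``this restricted minimizer is also a minimizer over the full class'' is unjustified, and the sets you select are not known to be minimizing movements in the sense of Definition \ref{def: JKO scheme}; consequently the limit you build is not known to be an energy solution, and the barrier/comparison machinery of Section \ref{sec: Gradient Flow} (e.g. Lemma \ref{lem: barrier property for grad flow}, whose perturbed sets $U_\eta$ are local and would not respect the symmetry constraint) is not available for it. A second, independent mismatch: the corollary asserts the conclusion for \emph{any} solution constructed from the scheme, whereas your construction, even if repaired, would only handle one specially selected symmetric solution.

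The paper's route avoids both problems by propagating the symmetry at the level of the equation rather than of the scheme: by Theorem \ref{thm: convergence thm}(a) any energy solution $u$ is a viscosity solution of \eqref{eqn: CLML} with its own multiplier $\lambda(t)$, and $\Omega_t(u)$ stays in $\mathcal{S}_{r,R}$ for all time. Since the Laplacian, the free boundary law and $\lambda[\cdot]$ are invariant under each reflection $\phi_i$ and $\Omega_0$ is $\phi_i$-symmetric, $u(\phi_i(\cdot),t)$ is a viscosity solution of the \emph{same} problem with the same data and the same star-shapedness; the comparison/uniqueness results for star-shaped solutions (Theorem \ref{thm: short time existence} iterated via Lemma \ref{lem: strong comparison}, or Proposition \ref{prop: comparison for grad flow}) then force $u(\phi_i(x),t)=u(x,t)$ for all $t$. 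Your Steps 2 and 3 are fine and coincide with the paper's argument: any Hausdorff subsequential limit is a ball $B_{r_*}(x_0)$ inheriting all $N$ reflection symmetries, hence $x_0=0$, and precompactness upgrades this to full convergence. If you replace Step 1 by the equation-level symmetry argument, the proof is complete and covers every energy solution as the statement requires.
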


\appendix

\section{Geometric Properties}\label{sec: geometry appendix}

  Let $\Omega$ be an open bounded domain in $\mathbb{R}^N$ which is strictly star-shaped with respect to $0$.  Let $\rho>0$ such that $B_\rho(0) \subseteq \Omega$ and let $H$ be a hyperplane in $\mathbb{R}^N$ such that $H\cap B_\rho(0) = \emptyset$.  Let $H_+$ be the open half-space of $H$ which contains $B_\rho(0)$ and $H_{-}$ be the interior of its complement.  Then define 
$$ \Omega_+ = \Omega \cap H_+ \text{ and } \Omega_- = \Omega \cap H_{-}$$
and define the reflection through $H$ by
$$ \phi_H(x) = x-2\langle x-y, \nu_H\rangle\nu_H$$
where $\nu_H$ is the unit normal to $H$ (pointing inward towards $H_-$ for concreteness) and $y \in H$.  
\begin{replem}{lem: in an annulus}
Suppose $\Omega$ has $\rho$-reflection, then 
$$ \sup_{x \in \partial\Omega} |x|-\inf_{x\in \partial\Omega}|x| \leq 4\rho. $$
\end{replem}
\begin{proof}
 Take $x_0 \in \partial\Omega$ such that
 $$ |x_0| = \inf_{x\in \partial\Omega}|x| . $$
 Let $H$ be any hyperplane tangent to $B_\rho(0)$ such that $x_0 \in H_+$.
\\
1. Claim:  The reflections $\phi_H(x_0)$ with respect to the hyperplanes described above cover all directions in the sphere, more precisely
 $$ \{\frac{\phi_H(x_0)}{|\phi_H(x_0)|}: H \text{ tangent to $B_\rho(0)$ and }x_0 \in \overline{H}_+\}= S^{N-1}. $$
 We index $H$ by its normal vector, so for $\nu \in S^{N-1}$ let $H_\nu$ be the hyperplane through $0$ normal to $\nu$ and then define:
 $$\phi_\nu(x) = \phi_{H_\nu+\rho\nu}(x).$$
 Let $\omega \in S^{N-1}$.  Without loss, by changing coordinate names, we may assume that $\widehat{x}_0= e_1$ and $\omega\in \text{span}\{e_1,e_2\}$.  We restrict ourselves to hyperplanes with normal direction $\nu \in \text{span}\{e_1,e_2\}$ and thereby reduce to the case $N=2$.   Let $\varphi$ such that $\cos\varphi = \rho/|x_0|$.  Then for any $\theta\geq\varphi$ the plane through 
 $$ \rho((\cos\theta) e_1+(\sin\theta) e_2) $$
 has $x_0$ in the same half-space as $B_\rho$.  Let us consider the continuous map $f : [0,\pi]\to[0,\pi]$ defined by,
 $$ f(\theta) = \cos^{-1}\left\langle\tfrac{\phi_{\nu(\theta)}(x_0)}{|\phi_{\nu(\theta)}(x_0)|}, e_1\right\rangle \hbox{ where } \nu(\theta) = \cos\left(\varphi+\left(\tfrac{\pi-\varphi}{\pi}\right)\theta\right) e_1+\sin\left(\varphi+\left(\tfrac{\pi-\varphi}{\pi}\right)\theta\right) e_2.$$
 We show that $f(0)=0$ and $f(\pi)=\pi$ and thus $f$ maps $[0,\pi]$ onto itself.  We chose $\varphi$ so that $x_0 \in H_{\nu(0)}+\rho\nu(0)$,
 $$ \langle x_0-\rho\nu(0),  \nu(0)\rangle = \langle x_0, \nu(0)\rangle-\rho = |x_0| \cos{\varphi} -\rho =0.$$
 Therefore $\phi_{\nu(0)}(x_0) = x_0$ and $f(0)=0$.  Meanwhile since $\nu(\pi)=\pi$ we compute directly that
  $$\phi_{\nu(\pi)}(x_0) = (\rho-|x_0|)e_1$$
   and thus $f(\pi)=\pi$.  Now in order to show that $\omega$ is hit we just choose $\theta$ so that: 
 $$f(\theta) = \cos^{-1}\langle \omega , e_1 \rangle$$
 or in other words,
 $$ \frac{\phi_{\nu(\theta)}(x_0)}{|\phi_{\nu(\theta)}(x_0)|} = \omega. $$
 
 \medskip
 
 2. Let $\omega \in S^{N-1}$, by $\Omega$ strictly star-shaped with respect to $0$ there exists $t>0$ such that $t\omega \in \partial\Omega$ and $s\leq t$ implies $s \omega \in \Omega$, $s>t$ implies $s \omega \in \Omega^C$.  We want to show that 
 $$ t \leq |x_0|+4\rho. $$
 Let $H$ be such that $x_0 \in H_+$ and $\frac{\phi_H(x_0)}{|\phi_H(x_0)|}= \omega$, which is possible by part 1 of the proof above.  Then by star-shapedness $\lambda x_0 \in \overline{\Omega}^C$ for $\lambda \in (1,+\infty)$ and the analagous statement for the reflected domain, 
 $$\lambda \phi_H(x_0) \in \overline{\phi_H(\Omega_+)}^C \text{ for } \lambda \in (1,+\infty).$$
 Moreover since $\Omega$ has $\rho$-reflection $\Omega_{-} \subseteq \phi_H(\Omega_+)$ so in particular
 $$ \frac{t}{|\phi_H(x_0)|}\phi_H(x_0) = t\omega \ \in \ \overline{\Omega_-} \subseteq \overline{\phi_H(\Omega_+)}$$
 so $ t \leq |\phi_H(x_0)|$.  Now, noting that $-\rho\nu_H \in H$ due to $H$ being tangent to $B_\rho(0)$,
 \begin{align*}
 \phi_H(x_0) = x - 2\langle x_0+\rho\nu_H, \nu_H\rangle\nu_H = (x_0-\langle x_0, \nu_H\rangle\nu_H)-(2r+\langle x_0, \nu_H\rangle)\nu_H
 \end{align*} 
 so by pythagoras
 \begin{align*}
 |\phi_H(x_0)|^2 &=  |x_0-\langle x_0, \nu_H\rangle\nu_H|^2+(2r+\langle x_0, \nu_H\rangle)^2 \\
 & = |x_0-\langle x_0, \nu_H\rangle\nu_H|^2+|\langle x_0, \nu_H\rangle|^2+4\rho^2+4\rho\langle x_0, \nu_H\rangle \\
 & = |x_0|^2 +4\rho^2+4\rho\langle x_0, \nu_H\rangle
 \end{align*} 
 rearranging (and noticing that $\langle x_0, \nu_H\rangle >- \rho$ due to our assumption that $x_0 \in H_+$),
 \begin{align*} 
 t\leq |\phi_H(x_0)| &\leq |x_0|+4\frac{\rho^2+\rho\langle x_0, \nu_H\rangle}{|\phi_H(x_0)|+|x_0|}
 \\
 &\leq |x_0|+4\frac{\rho^2+\rho|x_0|}{|\phi_H(x_0)|+|x_0|}
 \\
 &\leq |x_0|+4\frac{2r}{1+|\phi_H(x_0)|/|x_0|}
 \\
 & \leq |x_0|+4\rho,
 \end{align*}
 where we have used $|x_0|\geq \rho$ and $|\phi_H(x_0)|\geq |x_0|$ in the last two lines.  This completes the proof.
 \end{proof}

\begin{replem}{lem: almost radiality}
Suppose $\Omega$ has $\rho$-reflection. Then $\Omega$ satisfies the following:

\begin{itemize}
\item[(a)] for all $x\in \partial\Omega$ there is an exterior cone to $\Omega$ at $x$,
\begin{equation}
\begin{array}{lll}
 x+C\left(x,\phi_x\right) \subset \real^N\setminus\Omega & \text{ where } & \cos\phi_x = \frac{\rho}{|x|}, \ \phi_x \in (0,\pi/2),
 \end{array}
 \end{equation}
 and $C(x,\phi_x)$ is the cone in direction $x$ of opening angle $\phi_x$ as defined in \eqref{eqn: cone def};
\item[(b)] $\Omega\in \mathcal{S}_{r}$ where
$$ r = r(\rho,\inf_{x\in \partial\Omega}|x|) = (\inf_{x\in \partial\Omega}|x|^2-\rho^2)^{1/2}.$$
\end{itemize}
\end{replem}

\begin{proof}
 Let $x_0\in \partial\Omega$ and let $\mathcal{R}$ be the collection of planes which pass through $x_0$ and are admissible for reflection i.e.
$$ \mathcal{R} = \{H \text{ hyperplane in $\mathbb{R}^N$}: \ x_0 \in H \text{ and } \ H\cap \overline{B_{\rho}(0)} = \emptyset\}.  $$
Notice that if $H \in S$ then for any $a>0$ the plane $H+a\nu_H \subset H_-$ so $H+a\nu_H$ is also admissible for reflection. We can also characterize $\mathcal{R}$ as hyperplanes $H \subset \mathbb{R}^N$ such that $\inf_{y\in H} |y| \geq \rho $ and $x_0 \in H$,
$$ \rho \leq \inf_{y\in H} |y| = |\langle x_0,\nu_H\rangle|.$$
So if we think of these planes as being indexed by their normal vectors,
$$ \mathcal{R} = \{H: \langle \nu_H,x_0 \rangle \geq \rho \text{ and } x_0 \in H\}. $$
 Now let $y \in x_0+C(x_0,\phi_{x_0})$ as above, then from the definition of $\phi_{x_0}$,
$$ \langle \frac{y-x_0}{|y-x_0|},x_0 \rangle > \rho$$
so the plane $H_y$ with normal $\nu_{H_y} = \frac{y-x_0}{|y-x_0|}$ through $x_0$ is in $\mathcal{R}$.  Now we will use the reflection comparison with respect to the plane
$$ \widehat{H} = H_y+\frac{1}{2}(y-x_0)$$
which is admissible for reflection by the remark above that $\widehat{H} \subset H_{y-}$. Then from the $\rho$-reflection property of $\Omega$,
$$ y = \phi_{\widehat{H}}(x_0) \in \mathbb{R}^N\setminus\phi_{\widehat{H}}(\Omega \cap \widehat{H}_+) \subseteq \mathbb{R}^N\setminus(\Omega \cap \widehat{H}_-)$$
and therefore $y \in \real^N\setminus\Omega$.  Since $y$ was arbitrary,
$$ C\left(x_0,\phi_{x_0}\right) \subset \mathbb{R}^N\setminus\Omega.$$
This proves (a),  (b) follows from (a) and  Lemma \ref{lem: sstolip}.
\end{proof}
Conversely we can show that given a domain $\Omega$ which is uniformly close to a ball around zero and has some uniform condition on the directions of its normal vectors (i.e. star-shapedness with respect to a large ball) has $\rho$-reflection.  So $\rho$-reflection is a natural condition to describe closeness to being round.

\begin{lem}\label{lem: lip norm closeness}
Let $\Omega \in \mathcal{S}_{0}$, $n(x)$ be normal to $\partial\Omega$, and let $\rho >0$ such that $B_\rho(0) \subset \Omega$.  Suppose that for $\mathcal{H}^{N-1}$ almost every $x \in \partial\Omega$
\begin{equation}\label{eqn: rho ref normal condition}
 |\langle n(x), x\rangle|^2 \geq |x|^2-\rho^2/5 
 \end{equation}
and also,
\begin{equation}\label{eqn: rho ref osc condition}
 \sup _{x,y \in \partial\Omega} |x|^2-|y|^2 \leq \rho^2 
 \end{equation}
then $\Omega$ has $\rho$-reflection.
\end{lem}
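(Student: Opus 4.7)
The plan is to run a moving-planes argument in each direction $\nu \in S^{N-1}$. Using the notation of Section 3.5, define
\begin{equation*}
s^*(\nu) := \inf\{s > 0 : \phi_{H(t)}(\Omega \cap H_+(t)) \subseteq \Omega \text{ for every } t \geq s\};
\end{equation*}
to establish $\rho$-reflection it suffices to show $s^*(\nu) \leq \rho$ for all $\nu$. Suppose for contradiction that $s^* := s^*(\nu) > \rho$. Since $\Omega \in \mathcal{S}_0$ has a Lipschitz boundary by Lemma \ref{lem: sstolip}, the outward unit normal $n$ exists a.e.\ on $\partial\Omega$. A standard continuity/perturbation argument shows that at the critical $s = s^*$ one of two scenarios must occur: \textbf{(B)} the moving plane is tangent to $\partial\Omega$, i.e., there exists $z_0 \in \partial\Omega \cap H(s^*)$ with $n(z_0) \cdot \nu = 0$; or \textbf{(A)} the reflected boundary is internally tangent to $\partial\Omega$, i.e., there exist $x_0 \in \partial\Omega$ with $t := x_0 \cdot \nu > s^*$ and $y_0 := \phi_{H(s^*)}(x_0) \in \partial\Omega$, and $n(y_0) = R_\nu n(x_0)$, where $R_\nu := I - 2\nu\nu^T$.

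Case \textbf{(B)} is immediate: writing $z_0 = s^*\nu + z_0^\perp$ with $z_0^\perp \perp \nu$, the condition $n(z_0) \cdot \nu = 0$ gives $|n(z_0) \cdot z_0|^2 \leq |z_0^\perp|^2 = |z_0|^2 - (s^*)^2$, whereas \eqref{eqn: rho ref normal condition} requires $|n(z_0) \cdot z_0|^2 \geq |z_0|^2 - \rho^2/5$. Combining yields $(s^*)^2 \leq \rho^2/5$, contradicting $s^* > \rho$.

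The crux is case \textbf{(A)}. The identity $|x_0|^2 - |y_0|^2 = 4 s^*(t-s^*)$ combined with \eqref{eqn: rho ref osc condition} and $s^* > \rho$ gives $t - s^* \leq \rho^2/(4 s^*) < \rho/4$, so $2 s^* - t > 3\rho/4 > 0$ and $|y_0| \geq |x_0|/\sqrt{2}$ (the latter from $|x_0|^2 - |y_0|^2 \leq \rho^2$ together with $|y_0| \geq \rho$). Setting $\hat{x}_0 := x_0/|x_0|$ and decomposing $n(x_0) = \cos\alpha_x \hat{x}_0 + \sin\alpha_x u_x$ with unit $u_x \perp \hat{x}_0$ (and analogously for $n(y_0)$), the star-shapedness of $\Omega$ and \eqref{eqn: rho ref normal condition} yield $\cos\alpha_x,\, \cos\alpha_y \geq 2/\sqrt{5}$ and $\sin\alpha_x \leq \rho/(\sqrt{5}\,|x_0|)$, $\sin\alpha_y \leq \rho/(\sqrt{5}\,|y_0|)$. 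Dotting the tangency relation $n(y_0) = R_\nu n(x_0)$ with $\nu$ yields $n(x_0) \cdot \nu + n(y_0) \cdot \nu = 0$, which expands to
\begin{equation*}
\cos\alpha_x \frac{t}{|x_0|} + \cos\alpha_y \frac{2 s^*-t}{|y_0|} = -\sin\alpha_x (u_x \cdot \nu) - \sin\alpha_y (u_y \cdot \nu).
\end{equation*}
The left-hand side is positive and, using $|y_0| \leq |x_0|$ and $s^* > \rho$, bounded below by $(2/\sqrt{5})(s^* + 3\rho/4)/|x_0| > 7\rho/(2\sqrt{5}\,|x_0|)$; the right-hand side has modulus at most $\sin\alpha_x + \sin\alpha_y \leq (1+\sqrt{2})\rho/(\sqrt{5}\,|x_0|)$, using $|y_0| \geq |x_0|/\sqrt{2}$. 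Since $7/2 > 1+\sqrt{2}$, this is a contradiction.

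The main difficulty is the tight quantification in case \textbf{(A)}. The argument closes only because the hypothesis constant $\rho^2/5$ simultaneously produces the lower bound $\cos\alpha \geq 2/\sqrt{5}$ on the radial component of the normal and the compatible tangential estimate $\sin\alpha \leq \rho/(\sqrt{5}\,|x|)$; combined with the oscillation control $|x_0|^2 - |y_0|^2 \leq \rho^2$ this is just enough to force the strict inequality $7/2 > 1+\sqrt{2}$ above. The continuity/perturbation step underlying the moving-planes dichotomy is standard for Lipschitz sets and requires no new ideas.
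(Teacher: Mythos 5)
Your quantitative core is correct, and in the interior-tangency case it actually takes a slightly different route from the paper's: the paper splits that case further according to whether $\langle y,\nu\rangle \leq t/4$, using the oscillation bound \eqref{eqn: rho ref osc condition} in one subcase and the normal bounds in the other, whereas you exploit the single identity $n(x_0)\cdot\nu + n(y_0)\cdot\nu = 0$ coming from the reflected-normal relation and feed in the oscillation bound (through $|x_0|^2-|y_0|^2 = 4s^*(t-s^*)$ and $|y_0|\geq |x_0|/\sqrt{2}$) together with the normal bounds at both contact points in one computation. I checked the algebra in both of your cases (the bound $(s^*)^2\leq\rho^2/5$ in the tangential case, and the comparison of $\tfrac{2}{\sqrt{5}}\,\tfrac{s^*+3\rho/4}{|x_0|}$ against $\tfrac{(1+\sqrt{2})\rho}{\sqrt{5}\,|x_0|}$ in the tangency case); it is sound, and arguably cleaner than the paper's.

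The genuine gap is the step you dismiss as ``standard \dots requires no new ideas.'' The lemma is stated for $\Omega\in\mathcal{S}_{0}$, so $\partial\Omega$ is merely Lipschitz (Lemma \ref{lem: sstolip}): the normal exists only $\mathcal{H}^{N-1}$-a.e., and \eqref{eqn: rho ref normal condition} is assumed only a.e. Your argument, however, applies the normal condition and the tangency relations $n(y_0)=R_\nu n(x_0)$, respectively $n(z_0)\cdot\nu=0$, at the specific contact points $x_0$, $y_0$, $z_0$. Nothing guarantees that the contact occurs at points where $n$ exists, let alone at points where the a.e.\ inequality holds; contact at a corner of a Lipschitz boundary is exactly where moving-plane arguments need extra work, and even the dichotomy (A)/(B) itself---in particular ruling out that the critical position is produced by a touching on the plane at a point with $n\cdot\nu\neq 0$---rests on a local graph/continuation argument that is standard only for $C^1$ boundaries. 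The paper's proof is structured precisely to avoid this: it first proves the statement for $C^1$ boundaries, where continuity of $n$ upgrades the a.e.\ hypotheses to everywhere and the critical-position dichotomy is legitimate, and then treats general $\Omega\in\mathcal{S}_0$ by approximating with $C^1$ domains and using that $\rho$-reflection is preserved under Hausdorff convergence. Your proof needs this reduction (or an equivalent smoothing/viscosity-type touching argument) inserted before the contact-point analysis; with it, the rest of your computation goes through.
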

\begin{rem}
The conditions \eqref{eqn: rho ref normal condition} and \eqref{eqn: rho ref osc condition} should be interpreted together as a smallness requirement on the Lipschitz norm distance between $\partial\Omega$ and the nearest sphere or alternatively that $\Omega \in \mathcal{S}_{r,R}$ for $r$ sufficiently large depending on $R$ and $\rho$.
\end{rem}
\begin{proof}
 We first prove the result when $\partial\Omega$ is $C^1$.  Then derive the result for nonsmooth $\Omega$ by density.  Let $n(x) \in C(\real^N;\real^N)$ be a vector field normal to $\partial\Omega$. Let $\nu \in S^{N-1}$ and define
$$H = \{x: x\cdot \nu = 0\}, \quad H_+ = \{x: x\cdot \nu > 0\}, \quad H_- = \{x: x\cdot \nu < 0\}$$ 
the hyperplane normal to $\nu$ through the origin and corresponding half spaces.  For $s>0$ define the translates of $H$,
$$ H(s) = H+s\nu, \quad H_+(s) = H_++s\nu, \quad H_-(s) = H_-+s\nu.$$
We want to show that for $s \geq \rho$
\begin{equation}\label{eqn: C1 bdry containment}
 \phi_{H(s)}(\Omega) \cap H_-(s) \subseteq \Omega\cap H_-(s).
\end{equation}
We make the following notations for simplicity,
\begin{equation*}
\begin{array}{lll}
 \Omega_s := \Omega\cap H_-(s), &\text{and}& \widetilde{\Omega}_s :=  \phi_{H(s)}(\Omega) \cap H_-(s).
 \end{array}
 \end{equation*}
From $\Omega \subset B_R(0)$ we know that $H(R)$ does not intersect $\Omega$ and therefore \eqref{eqn: C1 bdry containment} holds for $s \geq R$.  Now move the plane $H$ inward towards the origin until it touchs $\Omega$ for the first time at
$$ s_0 := \inf \{ s>0: H(s) \cap \Omega = \emptyset\}.$$
Then the containment \eqref{eqn: C1 bdry containment} holds until $s_{\text{min}}(\nu)$ which we call $t$ for convenience.  We want to show that $t \leq \rho$. Note that $t \leq s_0$ since for $s\geq s_0$ the intersection $\Omega\cap H_-(s)$ is empty and so \eqref{eqn: C1 bdry containment} holds trivially.  Moreover $t<s_0$ because $\partial\Omega$ is $C^1$.  At $t$ there are two possibilities. The first is that $\widetilde{\Omega}_{t}$ touches $\Omega_{t}$ from the inside at some point off of $H(t)$, that is there exists $x \in \partial \Omega \cap H_+(t)$ such that:
$$\phi_{H(t)}(x) \in \partial\Omega_{t}\cap  \partial\widetilde{\Omega}_t\setminus H(t).$$
The second is that $H(t)$ intersects $\partial\Omega$ at a point where $\nu$ is tangent to $\partial\Omega$.
 
\medskip

In the first case, call $y = \phi_{H(t)}(x)$ to be the point in $\partial\Omega_{t}\cap  \partial\widetilde{\Omega}_t\setminus H(t)$. Note that $\phi_H(n(x))$ is the normal to $\partial\Omega$ at $y$.  Initially we suppose that
$$ \langle y , \nu \rangle \leq \frac{t}{4}. $$
Then a simple calculation, or some geometry, shows that:
$$  t = \tfrac{1}{2}(\langle x,\nu \rangle+\langle y , \nu \rangle). $$
Now we derive a bound for $t$ from above in this situation,
\begin{align*}
 |x|^2 &= |x-y|^2+|y|^2+2\langle x-y,y\rangle   = \langle x-y , \nu \rangle^2+|y|^2+2|x-y|\langle \nu , y \rangle \\
 & = (2t-2\langle y,\nu \rangle)^2+|y|^2-4(t-\langle y,\nu \rangle)\langle \nu , y \rangle  \\
 &=4t^2-12 t\langle \nu , y \rangle+8\langle \nu , y \rangle^2+|y|^2 \\
 & \geq t^2+|y|^2. 
  \end{align*}
Keeping this in mind we now work in the case when 
$$ \langle y , \nu \rangle > \frac{t}{4}. $$
Because $x$ lies to the in $H_+(\nu)$ and we have assumed a bound on the angle between $x$ and $n(x)$ we also get a bound on the angle between $n(x)$ and $\nu$,
$$ \langle n(x) , \nu \rangle  = \langle n(x) -\widehat{x},\nu\rangle + \frac{t}{|x|} \geq -2\left(1-\sqrt{1-\frac{\rho^2}{5|x|^2}}\right)+ \frac{t}{|x|} \geq  \frac{t}{|x|}-\frac{\rho^2}{5|x|^2}.$$
Meanwhile, $\phi_H(n(x))$ inherits the opposite bound,
$$\langle \phi_H(n(x)),\nu \rangle = -\langle n(x) , \nu \rangle \leq -\frac{t}{|x|}+\frac{\rho^2}{5|x|^2}.$$
Due to our assumption on the angle between $y$ and $\nu$ we also get a bound in the other direction,
\begin{align*}
\langle \phi_H(n(x)),\nu \rangle  &= \langle \phi_H(n(x)) -\widehat{y},\nu\rangle+\langle\widehat{y},\nu\rangle  > -\frac{\rho^2}{5|y|^2}+\frac{t}{4|y|}.
\end{align*}
Combining the above two bounds we get that,
$$ t \leq \frac{\rho ^2 |x|}{5|y|^2}+\frac{\rho^2}{5|x|}-\frac{t|x|}{4|y|},$$
then calling $\gamma = |x|/|y|$ and using $\rho \leq \min\{|x|,|y|\}$ we rearrange to get,
$$ t \leq \frac{1}{5}\left(\rho \frac{\rho}{|y|} \frac{\gamma}{1+\tfrac{1}{4}\gamma}+\rho \frac{\rho}{|x|}\right) \leq \rho.$$

\medskip

In the second case, let $x \in \partial\Omega \cap H(t)$ such that,
$$\langle n(x) , \nu \rangle= 0,  \ \ \hbox{ and therefore } \ \  |\langle x , \nu \rangle|^2+|\langle x , n(x) \rangle|^2 \leq |x|^2,$$
Rearranging and noting that $x \in H(t)$ implies $|\langle x, \nu \rangle| = t$,
$$ t \leq \left(|x|^2-(|x|^2-\rho^2))\right)^{1/2}= \rho. $$

\medskip

Finally in the case of general $\Omega \in \mathcal{S}_0$ one can approximate $\partial\Omega$ by boundaries of $C^1$ domains which converge in Lipschitz norm (in the appropriately interpreted sense) and use the fact the $\rho$-reflection is preserved by convergence in Hausdorff distance.

\end{proof}

\begin{lem}\label{lem: hausdorff estimates}
Let $\Omega_j$ for $j\in \{1,2\}$ and $U$ be in $\mathcal{S}_{r,R}$, and let $\alpha=\alpha(r,R) \in (0,1)$ from the H\"{o}lder estimates for harmonic functions in Lipschitz domains Lemma~\ref{lem: holder in lip domain}. Then the following estimates hold:
\begin{align} 
 d_H(\Omega_1,\Omega_2) &\leq d_H(\partial\Omega_1,\partial\Omega_2) \label{eqn: hausdorff bdry est}
\\  \label{hausdorff to lambda estimate} |\lambda[\Omega_1]-\lambda[\Omega_2]|&\lesssim_{r} d_H(\Omega_1,\Omega_2)
\\ \label{hausdorff to u estimate}
 ||u[\Omega_1]-u[\Omega_2]||_\infty &\lesssim_{r,R} d_H(\Omega_1,\Omega_2)^{\alpha}
\\ \label{hausdorff to L1 estimate}
 |\Omega_1 \Delta \Omega_2|&\lesssim_{r,R} d_H(\Omega_1,\Omega_2)  
\\ \label{hausdorff to E estimate}
 |\mathcal{J}(\Omega_1)-\mathcal{J}(\Omega_2)|&\lesssim_{r,R} d_H(\Omega_1,\Omega_2)^{\alpha}
\\ \label{hausdorff to tildedist estimate}
 |\widetilde{\textup{dist}}^2(\Omega_1,U)- \widetilde{\textup{dist}}^2(\Omega_2,U)| \ \ \text{ and }  &\ \  |\widetilde{\textup{dist}}^2(U,\Omega_1)- \widetilde{\textup{dist}}^2(U,\Omega_2)| \ \ \lesssim_{r,R}d_H(\Omega_1,\Omega_2) 
  \end{align}
 \end{lem}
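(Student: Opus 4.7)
The plan is to establish the six estimates in the order listed, since each depends on its predecessors, with the common tool being the radial parametrization afforded by Lemma~\ref{lem: sstolip}: every $\Omega \in \mathcal{S}_{r,R}$ admits a representation $\Omega = \{t\theta : \theta \in S^{N-1},\ 0 \leq t < \rho_\Omega(\theta)\}$ for a Lipschitz function $\rho_\Omega : S^{N-1} \to [r,R]$, and the radial distance $||x| - \rho_\Omega(\hat{x})|$ is comparable to the Euclidean distance $d(x,\partial\Omega)$ up to constants depending only on $r,R$.

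For \eqref{eqn: hausdorff bdry est}, given $x \in \Omega_1 \setminus \overline{\Omega_2}$, the ray from the origin through $x$ meets $\partial\Omega_2$ at $x_2 = \rho_2(\hat{x})\hat{x}$ and $\partial\Omega_1$ at $x_1 = \rho_1(\hat{x})\hat{x}$ with $|x_1| \geq |x| > |x_2|$, so $\mathrm{dist}(x,\Omega_2) \leq |x - x_2| \leq |x_1 - x_2| \leq d_H(\partial\Omega_1,\partial\Omega_2)$. For \eqref{hausdorff to lambda estimate}, the same radial projection argument combined with strong star-shapedness yields the sandwich $(1 + Cd/r)^{-1}\Omega_1 \subseteq \Omega_2 \subseteq (1 + Cd/r)\Omega_1$ for $d := d_H(\Omega_1,\Omega_2)$; I then use the scaling identity $\lambda[(1+\epsilon)\Omega] = (1+\epsilon)^{-(N+2)}\lambda[\Omega]$, obtained by rescaling $u_\epsilon(x) = (1+\epsilon)^{-N}u[\Omega]((1+\epsilon)^{-1}x)$ in the variational formula, together with the monotonicity of $\lambda$ under set inclusion and the a priori bound $\lambda[\Omega_i] \leq \lambda[B_r(0)]$ to conclude $|\lambda[\Omega_1] - \lambda[\Omega_2]| \lesssim_r d$.

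For \eqref{hausdorff to u estimate}, I reuse the argument from the proof of Lemma~\ref{lem: sending M to infty}: writing $u_i := u[\Omega_i]$, on $\Omega_1 \Delta \Omega_2$ the exterior cones from Lemma~\ref{lem: sstolip}(ii) permit application of the boundary H\"older estimate Lemma~\ref{lem: holder in lip domain} to give $|u_i|(x) \lesssim_{r,R} d^\alpha$; on $\Omega_1 \cap \Omega_2$ the difference $u_1 - u_2$ solves $-\Delta(u_1 - u_2) = \lambda[\Omega_1] - \lambda[\Omega_2]$ with the $d^\alpha$ control on $\partial(\Omega_1 \cap \Omega_2)$, so the maximum principle combined with \eqref{hausdorff to lambda estimate} gives the full bound. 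For \eqref{hausdorff to L1 estimate}, the symmetric difference $\Omega_1 \Delta \Omega_2$ is contained in the $d$-tubular neighborhood of $\partial\Omega_1$, which has Lebesgue measure $\lesssim_{r,R} \mathcal{H}^{N-1}(\partial\Omega_1) \cdot d \lesssim_{r,R} d$ since Lipschitz perimeters in $\mathcal{S}_{r,R}$ are uniformly bounded. For \eqref{hausdorff to E estimate}, the integration by parts identity $\int_\Omega |Du[\Omega]|^2 = \lambda[\Omega] V$ gives $\mathcal{J}(\Omega) = \lambda[\Omega] V + |\Omega|$, and combining \eqref{hausdorff to lambda estimate} and \eqref{hausdorff to L1 estimate} yields even a linear-in-$d$ bound, which implies the claimed $d^\alpha$ estimate.

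Finally, for \eqref{hausdorff to tildedist estimate}, I write
$$\widetilde{\mathrm{dist}}^2(\Omega_1, U) - \widetilde{\mathrm{dist}}^2(\Omega_2, U) = \int(\mathbf{1}_{\Omega_1\Delta U} - \mathbf{1}_{\Omega_2\Delta U})\,d(x,\partial\Omega_2)\,dx + \int \mathbf{1}_{\Omega_1\Delta U}[d(x,\partial\Omega_1) - d(x,\partial\Omega_2)]\,dx.$$
The second integrand is pointwise bounded by $d_H(\partial\Omega_1,\partial\Omega_2)$, which by strong star-shapedness is comparable up to constants depending on $r,R$ to $d_H(\Omega_1,\Omega_2)$, so this contribution is $\lesssim_{r,R} d \cdot |\Omega_1 \Delta U| \lesssim_{r,R} d$; the first integrand is supported on $(\Omega_1 \Delta U) \Delta (\Omega_2 \Delta U) \subseteq \Omega_1 \Delta \Omega_2$ and $d(x,\partial\Omega_2) \leq R$, so this contribution is $\lesssim_{r,R} |\Omega_1 \Delta \Omega_2| \lesssim_{r,R} d$ via \eqref{hausdorff to L1 estimate}. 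The estimate for $\widetilde{\mathrm{dist}}^2(U,\Omega_i)$ is analogous. The main delicate step is \eqref{hausdorff to lambda estimate}, where strong star-shapedness is used essentially to convert metric closeness of the domains into scaling closeness via the radial projection; once this is in hand, the remaining estimates reduce to standard elliptic and set-geometric arguments.
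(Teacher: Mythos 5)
Most of your argument runs along the same lines as the paper's: the dilation sandwich plus the scaling law $\lambda[a\Omega]=a^{-(N+2)}\lambda[\Omega]$ for \eqref{hausdorff to lambda estimate}, boundary H\"older barriers on $\Omega_1\Delta\Omega_2$ plus the maximum principle on $\Omega_1\cap\Omega_2$ for \eqref{hausdorff to u estimate}, a perimeter/tube bound for \eqref{hausdorff to L1 estimate} (minor slip: for $x\in\Omega_1\setminus\Omega_2$ it is closeness to $\partial\Omega_2$, not $\partial\Omega_1$, that is immediate, so use the union of the two tubes), and the split of the integrals for \eqref{hausdorff to tildedist estimate}. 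Your route to \eqref{hausdorff to E estimate} via $\int|Du[\Omega]|^2=\lambda[\Omega]V$ is a genuine, and cleaner, shortcut: it gives a bound linear in $d_H$ and bypasses the sup-norm estimate that the paper uses there.

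There is, however, a real gap in your proof of \eqref{eqn: hausdorff bdry est}. You bound $d(x,\Omega_2)\le|x-x_2|\le|x_1-x_2|$ and then assert $|x_1-x_2|\le d_H(\partial\Omega_1,\partial\Omega_2)$, where $x_1,x_2$ are the \emph{radial} boundary points in the direction $\hat{x}$. The Hausdorff distance only controls the distance from $x_1$ to the \emph{nearest} point of $\partial\Omega_2$, not to the radial one, and for sets in $\mathcal{S}_{r,R}$ the boundary may be nearly parallel to the ray (angle as small as $\arcsin(r/|x|)$), so the radial gap can be as large as $(R/r)\,d_H(\partial\Omega_1,\partial\Omega_2)$: take two nearly-radial, parallel boundary pieces at Euclidean separation $\epsilon$ with $r\ll R$. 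Your own comparability remark only rescues the estimate with a factor $R/r$, i.e. $d_H(\Omega_1,\Omega_2)\lesssim_{r,R}d_H(\partial\Omega_1,\partial\Omega_2)$, which is weaker than the stated constant-one inequality (though it would suffice for the paper's applications). The paper proves the sharp statement by a different, purely scaling argument: for $x_0\in\Omega_1\setminus\Omega_2$ set $h=d(x_0,\Omega_2)$ and let $x_1=sx_0\in\partial\Omega_1$ with $s>1$; for any $x\in B_h(x_1)$ one has $|s^{-1}x-x_0|=s^{-1}|x-x_1|\le h$, so $s^{-1}x\notin\Omega_2$, and star-shapedness of $\Omega_2$ with respect to the origin then forces $x=s(s^{-1}x)\notin\Omega_2$; hence $B_h(x_1)\cap\Omega_2=\emptyset$, so $h\le d(x_1,\partial\Omega_2)\le d_H(\partial\Omega_1,\partial\Omega_2)$, and taking suprema (and exchanging the roles of $\Omega_1,\Omega_2$) gives \eqref{eqn: hausdorff bdry est} with constant $1$. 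You should replace the radial-projection step by this argument (or explicitly settle for the $\lesssim_{r,R}$ version and say so).
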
 

\begin{proof}
We will start by showing \eqref{hausdorff to L1 estimate}, the only necessary assumption on the $\Omega_j$ is that one of their boundaries be rectifiable with $\mathcal{H}^{N-1}$ Hausdorff measure bounded in terms of $r$ and $R$,
$$ |\Omega_1\Delta\Omega_2|\leq d_H(\Omega_1,\Omega_2)\mathcal{H}^{N-1}(\partial\Omega_1)\lesssim_{r,R}d_H(\Omega_1,\Omega_2).$$
Then \eqref{hausdorff to tildedist estimate} follows easily from \eqref{hausdorff to L1 estimate},
\begin{align*}
 |\widetilde{\text{dist}}^2(U,\Omega_1)-\widetilde{\text{dist}}^2(U,\Omega_2)|  &=\left| \int_{\Omega_1\Delta U}d(x,\partial U)dx-\int_{\Omega_2\Delta U}d(x,\partial U)dx\right| 
 \\
 & \lesssim_R |\Omega_1 \Delta \Omega_2| 
 \end{align*}
 and because the $\widetilde{\text{dist}}^2$ is asymmetric we also check
 \begin{align*}
 |\widetilde{\text{dist}}^2(\Omega_1,U)-\widetilde{\text{dist}}^2(\Omega_2,U)|  &=\left| \int_{\Omega_1\Delta U}d(x,\partial \Omega_1)dx-\int_{\Omega_2\Delta U}d(x,\partial\Omega_2)dx\right| 
 \\
 & \lesssim_R |\Omega_1 \Delta \Omega_2| +\int_{(\Omega_1\cap\Omega_2)\Delta U }|d(x,\partial\Omega_1)-d(x,\partial\Omega_2)|dx 
 \\
 &\lesssim_R d_H(\Omega_1,\Omega_2)+\int_{(\Omega_1\cap\Omega_2)\Delta U } d_H(\Omega_1,\Omega_2)dx \\
 & \lesssim_R d_H(\Omega_1,\Omega_2)
 \end{align*}
 where in the third line we have used that if $y_1\in \partial\Omega_1$ there exists $y_2 \in \partial\Omega_2$ with $|y_1-y_2| \leq d_H(\Omega_1,\Omega_2)$ and therefore $d(x,\partial\Omega_2) \leq d(x,\partial\Omega_1)+d_H(\Omega_1,\Omega_2)$ and vice versa.  Note that \eqref{hausdorff to E estimate} is an easy consequence of \eqref{hausdorff to u estimate} and \eqref{hausdorff to lambda estimate},
 $$|\mathcal{J}(\Omega_1)-\mathcal{J}(\Omega_2)|\leq ||u[\Omega_1]-u[\Omega_2]||_\infty\max\{|\Omega_1|\,,\,|\Omega_2|\}+|\Omega_1\Delta\Omega_2|,$$
 so we are left to prove the first three estimates.
 
 \medskip
 
 First recall the behavior of $\lambda$ under spatial dilations,  
 \begin{equation}\label{dilations}
 \lambda[a\Omega] = a^{-(N+2)}\lambda[\Omega].
 \end{equation}
    We will show that taking,
    \begin{equation}\label{dilation factor}
   a:= \frac{r}{d_H(\partial\Omega_1,\partial\Omega_2)+r}
   \end{equation}
    one gets the following containments, 
   \begin{equation}\label{dilation containments}
   a\Omega_1\subseteq\Omega_2\subseteq \frac{1}{a}\Omega_1.
   \end{equation}
    This will imply by \eqref{dilations} that
    $$ a^{-(N+2)}\lambda[\Omega_1] \geq \lambda[\Omega_2] \geq  a^{N+2}\lambda[\Omega_1]$$
    and by rearranging,
    $$ |\lambda[\Omega_1]-\lambda[\Omega_2]| \leq \max\{\lambda[\Omega_1],\lambda[\Omega_2]\}(1-a^{N+2}).$$
    Now to get an estimate of the form \eqref{hausdorff to lambda estimate}, for $t\geq0$ note that 
    $$ 1-(1+t)^{-(N+2)}\leq (N+2)t$$ 
    to get
    $$|\lambda[\Omega_1]-\lambda[\Omega_2]| \leq r^{-1}(N+2)\lambda[B_r]d_H(\partial\Omega_1,\partial\Omega_2).$$
   
   \medskip
  
    Let us prove \eqref{dilation containments} for $a$ chosen as in \eqref{dilation factor}. If $x \in \Omega_1\setminus\Omega_2$ let $t \in (0,1)$ such that $tx \in \partial\Omega_2$.   Then from the star-shaped property there is an exterior cone to $\Omega_2$ at $tx$,
    \begin{equation*}
    \begin{array}{lll}
     tx+C\left(x,\theta_{tx}\right) \subset \mathbb{R}^N\setminus\Omega_2 & \text{ where } & \sin\theta_{tx} = \frac{r}{t|x|}, \ \theta_{tx} \in (0,\pi/2)
     \end{array}
     \end{equation*}
    so that 
    $$B_{\frac{(1-t)}{t}r}(x) \subset tx+C\left(x,\theta_{tx}\right) \subset \mathbb{R}^N\setminus\Omega_2.$$
    Then $d(x,\partial\Omega_2)\geq \left(\frac{1}{t}-1\right)r$ and moreover,
    $$ \left(\frac{1}{t}-1\right)r \leq d_H(\partial\Omega_1,\partial\Omega_2).$$
    or by rearranging,
    $$  a\leq t.$$
    Then by making the same argument for $x \in \Omega_2\setminus\Omega_1$ we get
    $$a \Omega_1 \subseteq \Omega_2 \subseteq \frac{1}{a}\Omega_1. $$
    
    \medskip
    
Now we show \eqref{hausdorff to u estimate}. Without loss of generality suppose that $\lambda[\Omega_1]\geq\lambda[\Omega_2]$.  Consider the difference of the droplet profiles on the two domains,
$$w(x) = u[\Omega_1](x)-u[\Omega_2](x). $$
First we estimate the size of $w(x)$ on $\Omega_1\Delta\Omega_2$.  If $x \in \Omega_1\Delta\Omega_2$ let $h = d(x,\partial(\Omega_1\cup\Omega_2))$ then
$$ h \leq d_H(\Omega_1,\Omega_2) \,\,\text{ and } \,\, B_h(x) \subset \Omega_1\cup\Omega_2 \,\,\text{ with } \,\, \partial B_h(x) \cap \partial(\Omega_1\cup\Omega_2) \neq \emptyset.$$
By the standard construction of barriers for domains with the exterior cone property there exist $0<\alpha<1$ and $C>0$ depending on the uniform lower bound on the opening angle of the exterior cones such that
$$ |w(x)| \leq Ch^{\alpha}+ \frac{\lambda[\Omega_1]}{2N}h^2.$$
So on $\Omega_1\Delta\Omega_2$ and in particular on $\partial (\Omega_1 \cap \Omega_2)$ we have that
$$ |w(x)| \lesssim_{r,R} \max\{d_H(\Omega_1,\Omega_2)^{\alpha},d_H(\Omega_1,\Omega_2)^2\}\lesssim_{r,R}d_H(\Omega_1,\Omega_2)^{\alpha}, $$
where the last inequality is due to the fact that $d_H(\Omega_1,\Omega_2) \leq 2R$.  Finally on intersection $x \in \Omega_1\cap \Omega_2$,
$$ |w(x)| \lesssim_{r,R}d_H(\Omega_1,\Omega_2)^{\alpha}+|\lambda[\Omega_1]-\lambda[\Omega_2]|\lesssim_{r,R}d_H(\Omega_1,\Omega_2)^{\alpha}.$$

  Finally we show \eqref{eqn: hausdorff bdry est}.  Let $x_0 \in \Omega_1\setminus\Omega_2$, then $tx \in \real^N\setminus\Omega_2$ for all $t>1$.  In particular let $s>1$ such that $x_1=sx_0 \in \partial\Omega_1$.  We claim that:
\begin{equation}\label{eqn: dist ineq}
h:=d(x_0,\Omega_2) \leq d(x_1,\Omega_2). 
\end{equation}
Let $x \in B_h(x_1)$, then consider the point $s^{-1}x$,
$$|s^{-1}x-x_0| = s^{-1}|x-x_1| \leq |x-x_1| = h. $$
Therefore $s^{-1}x \in B_h(x_0) \subset \real^N\setminus\Omega_2$ and so from the star-shapedness of $\Omega_2$ the point $x = ss^{-1}x$ is as well, this proves the claimed inequality \eqref{eqn: dist ineq}.  In particular:
$$ \sup_{x \in \Omega_1} d(x,\Omega_2) = \sup_{x \in \Omega_1\setminus\Omega_2}d(x,\partial\Omega_2) \leq \sup_{x\in\partial\Omega_1} d(x,\partial\Omega_2), $$
and by noting that $\Omega_1$ and $\Omega_2$ play symmetric roles:
$$ d_H(\Omega_1,\Omega_2) \leq d_H(\partial\Omega_1,\partial\Omega_2). $$
\end{proof}

Define the metric space of boundaries of strongly star-shaped sets,
$$\partial\mathcal{S}_{r,R} := \{\partial \Omega : \Omega \in \mathcal{S}_{r,R}\} $$
with metric $d_H$.  This space embeds continuously into
$$ \{ f \in C^{0,1}(S^{N-1}) : r \leq f \leq R \text{ and } ||Df||_{L^{\infty}} \leq C(r,R)\} $$
with the $L^{\infty}$ distance.  As a direct consequence of this we get the following compactness for $\partial\mathcal{S}_{r,R}$:
\begin{lem}\label{lem: compactness}
\textup{(Compactness)} The metric space $(\partial\mathcal{S}_{r,R},d_H)$ is compact:
\begin{enumerate}[(i)] 
\item Suppose that $\Gamma_j \in (\partial\mathcal{S}_{r,R},d_H)$ for some $r,R>0$ and all $j \in \mathbb{N}$. Then $\{\Gamma_j\}_{j\in \mathbb{N}}$ has a subsequence that converges and any subsequential limit is also in $\partial\mathcal{S}_{r,R}$.
\item Let $I$ be a compact interval in $\real$ and $\Gamma_j : I \to (\partial\mathcal{S}_{r,R},d_h)$ for $j \in \mathbb{N}$ are an equicontinuous sequence of paths in $(\partial\mathcal{S}_{r,R},d_h)$. Then there is a subsequence of the $\Gamma_j(\cdot)$ that converges uniformly on $I$ to a path $\Gamma: I \to (\partial\mathcal{S}_{r,R},d_h)$.
\end{enumerate}
\end{lem}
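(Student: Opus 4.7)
The plan is to exploit the natural radial parametrization of boundaries of strongly star-shaped sets. For $\Omega\in\mathcal{S}_{r,R}$, since $B_r(0)\subset\Omega\subset B_R(0)$ and every ray from the origin meets $\partial\Omega$ in a unique point, we have the representation $\partial\Omega=\{f_\Omega(\theta)\theta:\theta\in S^{N-1}\}$ with $r\leq f_\Omega\leq R$. The whole proof will reduce to classical Arzel\`a--Ascoli once I establish that the map $\Omega\mapsto f_\Omega$ is a bi-Lipschitz embedding of $(\partial\mathcal{S}_{r,R},d_H)$ into $(C(S^{N-1}),\|\cdot\|_\infty)$ whose image lies in a uniformly Lipschitz class.

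First I will verify that the $f_\Omega$ are uniformly Lipschitz with a constant depending only on $r,R$. This is a direct consequence of the interior/exterior cone conditions of Lemma~\ref{lem: sstolip}: at every boundary point $f_\Omega(\theta)\theta$ there is an exterior cone whose half-opening angle is bounded below by $\sin^{-1}(r/R)$, which by an elementary trigonometric argument forces the radial coordinate to satisfy $|f_\Omega(\theta_1)-f_\Omega(\theta_2)|\leq L(r,R)|\theta_1-\theta_2|$. For the bi-Lipschitz equivalence, the inequality $d_H(\partial\Omega_1,\partial\Omega_2)\leq\|f_{\Omega_1}-f_{\Omega_2}\|_\infty$ is immediate by comparing the points $f_{\Omega_i}(\theta)\theta$, while the reverse inequality $\|f_{\Omega_1}-f_{\Omega_2}\|_\infty\leq C(r,R)\,d_H(\partial\Omega_1,\partial\Omega_2)$ follows from the uniform Lipschitz bound: if $|f_{\Omega_1}(\theta_0)-f_{\Omega_2}(\theta_0)|=\delta$, then any point of $\partial\Omega_2$ within Euclidean distance $\delta/(2L+2)$ of $f_{\Omega_1}(\theta_0)\theta_0$ would violate the Lipschitz graph property of $\partial\Omega_2$.

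With these two ingredients in hand, both parts of the lemma are reduced to standard compactness statements. For (i), a sequence $\Gamma_j=\partial\Omega_j\in\partial\mathcal{S}_{r,R}$ yields $f_{\Omega_j}$ in a bounded, equi-Lipschitz subset of $C(S^{N-1})$; Arzel\`a--Ascoli provides a uniformly convergent subsequence $f_{\Omega_{j_k}}\to f_\infty$, and then $\Gamma_{j_k}\to\Gamma_\infty:=\{f_\infty(\theta)\theta\}$ in $d_H$ by the bi-Lipschitz estimate. For (ii), the maps $t\mapsto f_{\Omega_j(t)}$ are equicontinuous maps from the compact interval $I$ into the compact metric space given by the image of $\partial\mathcal{S}_{r,R}$ in $C(S^{N-1})$, so the Arzel\`a--Ascoli theorem for maps valued in a compact metric space supplies a uniformly convergent subsequence.

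The only genuinely nontrivial point, and thus the main obstacle, is verifying that the limit $\Omega_\infty=\{x:|x|\leq f_\infty(\hat x)\}$ actually lies in $\mathcal{S}_{r,R}$ (so that the limiting boundary lives in the same space). The uniform bounds $r\leq f_\infty\leq R$ and the Lipschitz bound on $f_\infty$ pass to the limit by uniform convergence, giving $B_r(0)\subset\Omega_\infty\subset B_R(0)$. To obtain star-shapedness with respect to every $y\in B_r(0)$, I will pass to the limit in the exterior cone characterization of Lemma~\ref{lem: sstolip}(ii): each $\Omega_j^c$ contains the explicit open cone at every boundary point with uniform opening, and Hausdorff convergence of the complements preserves this property in the limit. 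This closes the proof.
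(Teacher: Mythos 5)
Your proof is correct and takes essentially the same route as the paper, which likewise identifies $(\partial\mathcal{S}_{r,R},d_H)$ with a bounded, uniformly Lipschitz family of radial functions on $S^{N-1}$ under the $L^\infty$ norm and then invokes Arzel\`a--Ascoli. The only minor difference is how closedness of the class is verified: the paper does it via the estimate $d_H(\partial\Omega_1,\partial\Omega_2)\lesssim_{r,R} d_H(\Omega_1,\Omega_2)$, whereas you pass the star-shapedness (exterior cone) condition directly to the limit of the radial parametrizations, which works equally well.
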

\begin{proof}
Arzela-Ascoli.  The only interesting point is that $\partial\mathcal{S}_{r,R}$ is closed in $(\mathcal{K},d_H)$ where $\mathcal{K}$ is the class of compact subsets of $\real^N$.  It suffices to establish the estimate:
\begin{equation}
 d_H(\partial\Omega_1,\partial\Omega_2) \lesssim_{r,R} d_H(\Omega_1,\Omega_2) 
 \end{equation}
where $\Omega_j \in \mathcal{S}_{r,R}$.  Let $x \in \partial\Omega_1$ and $y \in \partial\Omega_2$ such that:
$$ |x-y|=d(x,\partial\Omega_2) = d_H(\partial\Omega_1,\partial\Omega_2). $$
If $x \in \real^N\setminus \Omega_2$ then,
$$d_H(\partial\Omega_1,\partial\Omega_2)=|x-y| = d(x,\Omega_2) \leq d_H(\Omega_1,\Omega_2). $$
Otherwise $x  \in \Omega_2$, then let $t>0$ such that $(1+t)x \in \partial\Omega_2$. From the strong star-shapedness of $\Omega_1$ we know that
$$d((1+t)x,\Omega_1) \geq t|x| \frac{r}{|x|} \geq |(1+t)x-x|\frac{r}{R}\geq |x-y|\frac{r}{R}.$$
Rearranging we get that
$$ d_H(\partial\Omega_1,\partial\Omega_2) =|x-y| \leq \frac{R}{r}d((1+t)x,\Omega_1) \leq \frac{R}{r}d_H(\Omega_1,\Omega_2). $$
\end{proof}

\section{Sup and Inf Convolutions}\label{sec: convolutions}

For the comparison principle the convolutions in space only give spatial regularity of the free boundary at a point where a sup-convolved subsolution touches an inf-convolved supersolutions from below.  To fix this convolutions over space-time ellipsoids are used.  These were introduced in \cite{CaffarelliVazquez99} for viscosity solutions of the porous medium equation.  Let $u$ be a subsolution of \eqref{eqn: CLML} and $v$ a supersolutionon $\mathbb{R}^N\times[0,+\infty)$, let $r>0$ and $c \geq 0$.  Define the sup-convolution of $u$
\begin{equation}\label{spacetimesupconv}
u^{r,c}(x,t)  = \sup_{|(y,s)-(x,t)|\leq r-c t} u(y,s),
\end{equation}
and the inf-convolution of $v$
\begin{equation}\label{spacetimeinfconv}
v_{r,c}(x,t)  = \inf_{|(y,s)-(x,t)|\leq r -c t} v(y,s).
\end{equation}
For simplicity of presentation and since we want to emphasize the $c = 0 $ case separately we define
$$ \overline{u}(x,t) = u^{r,0}(x,t) \quad \text{ and } \quad \underline{v}(x,t) = v_{r,c}(x,t).$$
As for the convolutions in space only, $c$ will boost the speed of the free boundary of $\underline{v}$.
\begin{lem}\label{lem: spacetimeconvs}
(Space-time convolutions)  
\begin{enumerate}[(a)]
\item Let $\overline{u}$ as defined above the sup-convolution of $u$.
\begin{enumerate}[(i)]
\item
  $\overline{u}$ is a subsolution of \eqref{eqn: CLML} in $\mathbb{R}^N\times [r,+\infty)$ with $\widetilde{\lambda}(t) = \sup_{|s-t|\leq r} \lambda(s)$.  
  \item
  The positive phase $\Omega(\overline{u})$ has the interior ball property. More specifically, let $P_0 = (x_0,t_0) \in \partial\Omega(\overline{u})$ then the positivity set of $u$ has an exterior ball of radius $r$ centered at $P_0$,
$$ \mathcal{B}_r(P_0) := \{(y,s) \in \mathbb{R}^N\times[0,+\infty): |(y,s)-(x_0,t_0)| <r\}\subset \Omega(\overline{u})^C$$
 and there exists $P_1 \in \partial\mathcal{B}_r(P_0)\cap\partial\Omega(\overline{u})$.  Then $ \mathcal{B}_r(P_1)$ touches $ \Omega_+(\overline{u})$ from the inside at $P_0$. 
 \end{enumerate} 
 \item Let $\overline{v}$ as defined above the inf-convolution of $v$.
\begin{enumerate}[(i)]
\item
  $\overline{v}$ is a supersolution of \eqref{eqn: CLML} in $\mathbb{R}^N\times [r,r/c)$ with $\widetilde{\lambda}(t) = \inf_{|s-t|\leq r-ct} \lambda(s)$.  
  \item
  The positive phase $\Omega(\underline{v})$ has an exterior ellipsoid at every point of its free boundary. More specifically, let $P_0 \in \partial\Omega_+(\underline{v})$ then $\mathcal{B}_{r-c t_0}(P_0)$ touches the free boundary of $v$ from the inside i.e. 
  $$\mathcal{B}_{r-c t_0}(P_0) \subseteq \Omega(v) \quad \text{and there exists} \quad P_1 \in \partial\mathcal{B}_{r-c t_0}(P_0)\cap\partial\Omega(v).$$
  The ellipsoid 
  $$ E_{r,\delta}(P_1) = \{(y,s): |(y,s)-(x_1,t_1)|<r-c s\}$$
  touches $\Omega(\underline{v})$ from the outside at $P_0$.
  \item Let $H_1$ be the hyperplane orthogonal to $P_1-P_0$ passing through $P_1$, i.e. the tangent hyperplane to $ \mathcal{B}_{r-c t_0}(P_0)$ at $P_1$ and $(\nu,m)\in \mathbb{R}^n\times \mathbb{R}_+$ be the normal vector to $H_1$ rescaled so that $|\nu| = 1$.  We call $m$ the advance speed of the free boundary of $v$ at $P_1$.  This terminology is justified when $v \in C^{2,1}(\mathbb{R}^n\times\mathbb{R}_+)$.   Let $H_0$ be the tangent hyperplane to $E_{r,\delta}(P_1)$ at the point $P_0$ and $(\nu ',m')$ it's normal vector rescaled as before.  Then $m'=m+\delta$, the inf-convolution boosts the advance speed of the free boundary by $\delta$.
    \end{enumerate} 
\end{enumerate}
\end{lem}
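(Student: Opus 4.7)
My plan is to treat parts (a) and (b) in parallel, since the sup and inf convolutions are dual operations. For the subsolution claim (a)(i), I write $\overline u = \sup_{|(h,\tau)|\le r} u_{h,\tau}$, where $u_{h,\tau}(x,t) := u(x+h,t+\tau)$ is a subsolution with inhomogeneity $\lambda(t+\tau) \le \widetilde\lambda(t)$, and the free boundary condition is translation invariant. Upper semicontinuity of $\overline u$ follows from Lemma~\ref{lem: sup inf continuity}. If a strict classical supersolution $\phi$ touches $\overline u$ from above at $(x_0,t_0)$, then by upper semicontinuity the sup is attained at some admissible $(h_0,\tau_0)$, and the translate $\phi(\cdot - h_0, \cdot - \tau_0)$ is a strict classical supersolution with inhomogeneity $\widetilde\lambda(t-\tau_0) \ge \lambda(t)$ that touches $u$ from above at $(x_0+h_0, t_0+\tau_0)$, contradicting the subsolution property of $u$. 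Part (b)(i) is analogous with the inf; the boost $+c$ in the free boundary speed comes from the shrinking admissible set $\{|(h,\tau)|\le r-ct\}$, via the same device already used in the proof of Lemma~\ref{lem: spaceconvs}.

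For the interior ball property (a)(ii), if $P_0 \in \partial\Omega(\overline u)$ with $\overline u(P_0)=0$, then $u \equiv 0$ on $\overline{\mathcal B_r(P_0)}$ by definition of the sup-convolution, so $\mathcal B_r(P_0)\subset \Omega(u)^c$. Approaching $P_0$ by $Q \in \Omega(\overline u)$ and extracting a compactness limit produces $P_1 \in \partial \mathcal B_r(P_0) \cap \partial\Omega(u)$. For any $Q$ in the strict interior of $\mathcal B_r(P_1)$, one has $P_1 \in \mathcal B_r(Q)$ strictly, so points of $\Omega(u)$ near $P_1$ lie in $\mathcal B_r(Q)$, giving $\overline u(Q) > 0$. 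The exterior ellipsoid claim (b)(ii) is dual: at $P_0 \in \partial\Omega(\underline v)$ with $\underline v(P_0)=0$, lower semicontinuity of $v$ and compactness yield a minimizer $P_1 \in \overline{\mathcal B_{r-ct_0}(P_0)}$ with $v(P_1)=0$. If $P_1$ lay in the strict interior of this ball, it would remain admissible for all nearby $P$, forcing $\underline v \equiv 0$ in a neighborhood of $P_0$ and contradicting $P_0\in \partial\Omega(\underline v)$; hence $P_1 \in \partial \mathcal B_{r-ct_0}(P_0)$. A symmetric argument using that $\underline v > 0$ on the $\Omega(\underline v)$ side near $P_0$ shows $v>0$ strictly on $\mathcal B_{r-ct_0}(P_0)$, so $P_1 \in \partial\Omega(v)$. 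Finally, for $(y,s)$ in the interior of $E_{r,c}(P_1)$ we have $|(y,s)-P_1| < r-cs$, so $P_1\in \mathcal B_{r-cs}((y,s))$ and $\underline v(y,s)\le v(P_1)=0$, giving the claimed touching from outside.

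Part (b)(iii) reduces to a direct calculus computation with level-set functions. Writing the defining function of the ball as $\psi_1(y,s) = |(y,s)-P_0|^2 - (r-ct_0)^2$, the space-time gradient at $P_1$ is $2(x_1-x_0,\; t_1-t_0)$. For the ellipsoid we use $\psi_2(y,s) = |(y,s) - P_1|^2 - (r-cs)^2$, whose gradient at $P_0$ is $2\bigl(x_0-x_1,\; (t_0-t_1)+ c(r-ct_0)\bigr)$. Using $|P_0-P_1| = r-ct_0$, the spatial parts at the two touching points have the same norm $2|x_1-x_0|$, while the temporal component for the ellipsoid exceeds the corresponding component of the (oppositely oriented) ball normal by exactly $2c(r-ct_0)$. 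Dividing by $2|x_1-x_0|$ to normalize the spatial part to unit length, the temporal component picks up the additional summand $c(r-ct_0)/|x_1-x_0|$; matching this to the paper's convention for $(\nu,m)$ as the inward-oriented rescaled normal and identifying the advance speed via this rescaling yields precisely $m'=m+c$.

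The main obstacle I anticipate is the bookkeeping in (b)(iii): the sign and normalization conventions for $(\nu,m)$ as the ``inward'' normal rescaled to $|\nu|=1$ must be tracked consistently, and the geometric meaning of ``advance speed'' must be interpreted correctly when the relevant normal points into $\Omega$ rather than out. The subsolution arguments (a)(i), (b)(i) are standard viscosity solutions arguments once the admissible translations are identified, and the ball/ellipsoid properties (a)(ii), (b)(ii) are immediate consequences of the attainment of the sup/inf at boundary points together with the observation that a minimizer cannot lie in the interior of the admissible set without spreading the vanishing to a neighborhood.
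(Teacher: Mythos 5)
The paper actually states Lemma \ref{lem: spacetimeconvs} without proof (it is the space--time analogue of Lemma \ref{lem: spaceconvs}, in the spirit of \cite{CaffarelliVazquez99}), so there is no official argument to compare yours against; judged on its own, your treatment is the intended one for most parts. Parts (a)(i) and (b)(i) via translations $u(\cdot+h,\cdot+\tau)$, attainment of the sup/inf, and the shrinking-radius device already used in the proof of Lemma \ref{lem: spaceconvs} are correct, and (a)(ii), (b)(ii) are the standard attainment arguments (one small refinement in (a)(ii): rather than assuming $\overline{u}(P_0)=0$, approach $P_0$ from $\{\overline{u}=0\}$ to conclude $u\equiv 0$ on the open ball, which avoids any issue with $\overline{u}$ being merely upper semicontinuous).

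There is, however, a genuine error at the end of (b)(iii), and your own computation exposes it. After dividing by $2|x_0-x_1|$ so that the spatial normal is a unit vector, you correctly find that the temporal component of the ellipsoid normal exceeds that of the (reversed) ball normal by $c(r-ct_0)/|x_0-x_1|$. But $|P_0-P_1|=r-ct_0$ gives $(r-ct_0)/|x_0-x_1|=\sqrt{1+m^2}$, so the boost is $c\sqrt{1+m^2}$, and the concluding sentence ``yields precisely $m'=m+c$'' does not follow: it holds only when $t_1=t_0$. Since the advance speed is the scale-invariant quantity (time component divided by $|\nu|$), no bookkeeping of normalizations can convert $c\sqrt{1+m^2}$ into $c$; under the lemma's stated convention ($|\nu|=1$, $m$ the advance speed) the exact relation is $m'=m+\delta\sqrt{1+m^2}$, so the equality in the lemma is itself loose on this point. (What is exactly true is the vector identity: if $(\nu',m')$ denotes the \emph{unit} space--time normal of the ball, the ellipsoid's normal is parallel to $(\nu',m'+\delta)$, but then $m'$ is no longer an advance speed.) The fix is harmless: record the exact formula, or just the one-sided bound $m'\geq m+\delta$ (valid since $\sqrt{1+m^2}\geq 1$), which is all that the application in the proof of Proposition \ref{prop: comparison for grad flow} requires, where the ordering of the two solutions combined with this bound still yields the needed inequality between the advance speeds. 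As written, though, the last step of your (b)(iii) asserts an identity that your computation contradicts, so that part needs to be restated and rejustified.
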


\section{Higher Regularity for the Equilibrium Problem}
In this appendix we describe the method in\cite{DeSilva09} and \cite{KN} to show that if $u$ is a solution of the equilibrium problem
\begin{equation}\tag{\ref{eqn: EQ}}
\left \{ \begin{array}{lll}
         -\Delta u(x) = \lambda &\text{ in }& \Omega= \Omega(u), \\ \\
|Du|=1 &\text{ on } &\Gamma=\Gamma(u). 
         \end{array}\right.
\end{equation}
with $\Omega \in \mathcal{S}_0$ then $\Gamma$ is $C^{2,\alpha}$.  The first step is Theorem \ref{thm: de silva} from \cite{DeSilva09} which says that $\Gamma$ is $C^{1,\alpha}$.  As a consequence of this $u \in C^{1,\alpha}(\overline{\Omega})$.  Then one uses a transformation of the problem first used for this purpose by \cite{KN} that has the effect of flattening the free boundary while making the PDE and the boundary condition nonlinear. Below we describe the transformation in detail.

\medskip

We may assume that $0 \in \Gamma$ and $1=|Du|(0) = \partial_n u(0)$.  Then near $0$ we make the change of variables:
\begin{equation*}
\begin{array}{lll}
y_j =x_j  \ \ \text{ for } \ \ 1\leq j \leq n-1, &
y_n = u, &
v = x_n,
\end{array}
\end{equation*}
Then we can write the $y$ derivatives of $v$ in terms of the $x$ derivatives of $u$ so that $v(y)$ solves the problem:
\begin{align}
\begin{cases}
\sum_{j=1}^n \partial_j(F_j(Dv))= \lambda \ \  \text{ for } \ y_n>0 \\
(\partial_nv)^2-\sum_{j=1}^{n-1}(\partial_jv)^2 = 1 \ \ \text{ on } \ \ y_n =0
\end{cases}
\end{align}
where 
\begin{align}
\begin{cases}
F_j(p) = \dfrac{p_j}{p_n} \ \ \text{ for } \ \ 1\leq j\leq n-1, \\
F_n(p) = -\frac{1}{2}\dfrac{1+\sum_{j=1}^{n-1}(p_j)^2}{(p_n)^2}.
\end{cases}
\end{align}

It is straightforward to check that this operator is uniformly elliptic when $\partial_nv$ is bounded from below. This fact as well as the nondegeneracy of $\partial_n v$ from the boundary condition enables the argument in \cite{KN} to go through for our problem and yield the desired regularity result in Corollary~\ref{smoothness}.


\medskip

Let us show that $(C.1)$ is the correct equation.  Based on the definitions of $y_j$ and $v$, we have
\begin{equation}\label{identity}
 u(y_1,...,y_{n-1},v(y_1,...,y_n)) = y_n.
 \end{equation}
   Taking the $n$-th partials in \eqref{identity} we get
 $$ \partial _nu=\dfrac{1}{\partial_nv}, \quad  -\partial_{nn}u = \dfrac{\partial_{nn}v}{(\partial_n v)^3} \ \ \text{ and } \ \ \partial_{nj}u =  \dfrac{\partial_{nn}v \, \partial_j v}{(\partial_n v)^3}-\dfrac{\partial_{nj}v}{(\partial_n v)^2}$$
  where any derivative of $u$ is assumed to be evaluated at $(y_1,...,y_{n-1},v(y_1,...,y_n))$ and any derivative of $v$ at $(y_1,...,y_n)$.  Similarly taking derivatives in \eqref{identity} for $1\leq j \leq n-1$, we have $ \partial_ju+\partial_nu\,\partial_jv = 0$ and thus 
 \begin{align*} 
 -\partial_{jj}u&=2\partial_{nj}u\,\partial_jv+\partial_{nn}u(\partial_jv)^2+\partial_nu\,\partial_{jj}v  
 \\
 &=  \dfrac{\partial_{nn}v \, (\partial_j v)^2}{(\partial_n v)^3}-\dfrac{\partial_{nj}v\,\partial_j v}{(\partial_n v)^2}+\dfrac{\partial_{jj}v}{\partial_n{v}}.
 \end{align*}
 Then we substitute into (EQ) to get
 \begin{align*} 
 \lambda &= -\sum_{j=1}^{n-1} \partial_{jj} u -\partial_{nn}u = \sum_{j=1}^{n-1} \left[\dfrac{\partial_{nn}v \, (\partial_j v)^2}{(\partial_n v)^3}-2\dfrac{\partial_{nj}v\,\partial_j v}{(\partial_n v)^2}+\dfrac{\partial_{jj}v}{\partial_n{v}}\right]+\dfrac{\partial_{nn}v}{(\partial_n v)^3}
 \\
 & = \sum_{j=1}^{n-1}\dfrac{\partial_{jj}v}{\partial_n{v}}+\left(\dfrac{1+\sum_{j=1}^{n-1}(\partial_jv)^2}{(\partial_n v)^3}\right)\partial_{nn}v-2\sum_{j=1}^{n-1}\dfrac{\partial_{nj}v\,\partial_j v}{(\partial_n v)^2}
 \\
 & = \sum_{j=1}^{n-1}\partial_j\left(\frac{\partial_jv}{\partial_nv}\right)-\partial_n\left(\frac{1}{2}\dfrac{1+\sum_{j=1}^{n-1}(\partial_jv)^2}{(\partial_nv)^2}\right).
 \end{align*}
 The calculation of the boundary condition is easier
 $$ 1= |Du|^2 = \sum_{j=1}^{n-1} \left(\frac{\partial_jv}{\partial_nv}\right)^2+\frac{1}{(\partial_n v)^2}.$$

\bibliographystyle{plain}
\bibliography{DropletStabilityArticles}

\begin{thebibliography}{10}

\bibitem{aftalon}
A~Aftalion, J~Busca, and W~Reichel.
\newblock Approximate radial symmetry for overdetermined boundary value
  problems.
\newblock {\em Adv. Differential Equations}, 4(6):907Ð932, 1999.

\bibitem{AD}
G.~Alberti and A.~DeSimone.
\newblock Quasistatic evolution of sessile drops and contact angle hysteresis.
\newblock {\em Archive for Ration. Mech. Anal.}, 202(1):295--348, 2011.

\bibitem{AGS08}
L.~Ambrosio, N.~Gigli, and G.~Savar{\'e}.
\newblock {\em Gradient flows in metric spaces and in the space of probability
  measures}.
\newblock Lectures in Mathematics ETH Z\"urich. Birkh\"auser Verlag, Basel,
  second edition, 2008.

\bibitem{BCCN09}
G~Bellettini, V~Caselles, A~Chambolle, and M~Novaga.
\newblock The volume preserving crystalline mean curvature flow of convex sets
  in {$\Bbb R^N$}.
\newblock {\em J. Math. Pures Appl. (9)}, 92(5):499--527, 2009.

\bibitem{blake}
TD~Blake and KJ~Ruschak.
\newblock Wetting: static and dynamic contact lines.
\newblock {\em Liquid film coating}, 1, 1997.

\bibitem{BrandleVazquez05}
C.~Br\"andle and J.~L. V\'azquez.
\newblock {Viscosity solutions for quasilinear degenerate parabolic equations
  of porous medium type}.
\newblock {\em Indiana Univ. Math. J.}, 54(3):817--860, 2005.

\bibitem{brandolini}
B~Brandolini, C~Nitsch, P~Salani, and C~Trombetti.
\newblock On the stability of the {S}errin problem.
\newblock {\em J. Differential Equations}, 245(6):1566--1583, 2008.

\bibitem{BrothersZiemer88}
J.~E. Brothers and W.~P. Ziemer.
\newblock Minimal rearrangements of {S}obolev functions.
\newblock {\em J. Reine Angew. Math.}, 384:153--179, 1988.

\bibitem{CS}
L.A. Caffarelli and S.~Salsa.
\newblock {\em A geometric approach to free boundary problems}, volume~68.
\newblock Amer Mathematical Society, 2005.

\bibitem{CaffarelliVazquez99}
L.A. Caffarelli and J.~L. Vazquez.
\newblock Viscosity solutions for the porous medium equation.
\newblock In {\em Differential Equations: La Pietra 1996, Proceeding of
  Symposia in Pure Mathematics}, volume~65. American Mathematical Society,
  1999.

\bibitem{Chambolle04}
A.~Chambolle.
\newblock An algorithm for mean curvature motion.
\newblock {\em Interfaces Free Bound.}, 6(2):195--218, 2004.

\bibitem{user}
M.G. Crandall, H.~Ishii, and P.L. Lions.
\newblock UserÕs guide to viscosity solutions of second order partial
  differential equations.
\newblock {\em Bull. Amer. Math. Soc}, 27(1):1--67, 1992.

\bibitem{DeSilva09}
D.~{De Silva}.
\newblock {Free boundary regularity for a problem with right hand side}.
\newblock {\em ArXiv e-prints}, (0912.2057).

\bibitem{GlasnerKim09}
K.~Glasner and I.~C. Kim.
\newblock Viscosity solutions for a model of contact line motion.
\newblock {\em Interfaces Free Bound.}, 11(1):37--60, 2009.

\bibitem{glasner}
KB~Glasner.
\newblock A boundary integral formulation of quasi-steady fluid wetting.
\newblock {\em Journal of Computational Physics}, 207(2):529--541, 2005.

\bibitem{greenspan}
HP~Greenspan.
\newblock On the motion of a small viscous droplet that wets a surface.
\newblock {\em J. Fluid Mech}, 84(1):125--143, 1978.

\bibitem{GrunewaldKim11}
N.~Grunewald and I.~Kim.
\newblock A variational approach to a quasi-static droplet model.
\newblock {\em Calculus of Variations and Partial Differential Equations},
  41:1--19, 2011.

\bibitem{hocking}
L.M. Hocking and M.J. Miksis.
\newblock Stability of a ridge of fluid.
\newblock {\em Journal of Fluid Mechanics}, 247:157--157, 1993.

\bibitem{JKO98}
R.~Jordan, D.~Kinderlehrer, and F.~Otto.
\newblock The variational formulation of the {F}okker-{P}lanck equation.
\newblock {\em SIAM J. Math. Anal.}, 29(1):1--17, 1998.

\bibitem{Kim03}
I.~C. Kim.
\newblock Uniqueness and existence results on the hele-shaw and the stefan
  problems.
\newblock {\em Archive for Rational Mechanics and Analysis}, 168:299--328,
  2003.

\bibitem{KimPozar12}
I.C. Kim and N.~Poz{\'a}r.
\newblock Nonlinear elliptic-parabolic problems.
\newblock {\em arXiv preprint arXiv:1203.2224}, 2012.

\bibitem{KN}
D.~Kinderlehrer, L.~Nirenberg, and J.~Spruck.
\newblock Regularity in elliptic free boundary problems i.
\newblock {\em Journal d'Analyse Math{\'e}matique}, 34(1):86--119, 1978.

\bibitem{LuckhausSturzenhecker95}
S.~Luckhaus and T.~Sturzenhecker.
\newblock Implicit time discretization for the mean curvature flow equation.
\newblock {\em Calc. Var. Partial Differential Equations}, 3(2):253--271, 1995.

\bibitem{Mielke}
A.~Mielke.
\newblock Modelling and analysis of rate-indepedent processes.
\newblock {\em Lipschitz Lectures}, 2007.

\bibitem{Serrin71}
J.~Serrin.
\newblock A symmetry problem in potential theory.
\newblock {\em Archive for Rational Mechanics and Analysis}, 43:304--318, 1971.

\end{thebibliography}
\end{document}